\definecolor{darkgreen}{rgb}{0,0.45,0} 
\title{Enriched indexed categories}
\author{Michael Shulman}
\date{\today}
\thanks{This material is based upon work supported by the National
  Science Foundation under a postdoctoral fellowship and agreement
  No. DMS-1128155.  Any opinions, findings, and conclusions or
  recommendations expressed in this material are those of the author
  and do not necessarily reflect the views of the National Science
  Foundation.}
\address{Department of Mathematics and Computer Science\\
  University of San Diego\\
  5998 Alcala Park, San Diego, CA 92110, USA}
\keywords{monoidal category, enriched category, indexed category, fibered category}
\let\ea\expandafter
\def\mdef#1#2{\ea\ea\ea\gdef\ea\ea\noexpand#1\ea{\ea\ensuremath\ea{#2}\xspace}}
\def\alwaysmath#1{\ea\ea\ea\global\ea\ea\ea\let\ea\ea\csname your@#1\endcsname\csname #1\endcsname
  \ea\def\csname #1\endcsname{\ensuremath{\csname your@#1\endcsname}\xspace}}
\def\foreachletter#1#2#3{\foreachcount=#1
  \ea\loop\ea\ea\ea#3\@alph\foreachcount
  \advance\foreachcount by 1
  \ifnum\foreachcount<#2\repeat}
\def\foreachLetter#1#2#3{\foreachcount=#1
  \ea\loop\ea\ea\ea#3\@Alph\foreachcount
  \advance\foreachcount by 1
  \ifnum\foreachcount<#2\repeat}
\def\definescr#1{\ea\gdef\csname s#1\endcsname{\ensuremath{\mathscr{#1}}\xspace}}
\def\definecal#1{\ea\gdef\csname c#1\endcsname{\ensuremath{\mathcal{#1}}\xspace}}
\def\definebold#1{\ea\gdef\csname b#1\endcsname{\ensuremath{\mathbf{#1}}\xspace}}
\def\definebb#1{\ea\gdef\csname l#1\endcsname{\ensuremath{\mathbb{#1}}\xspace}}
\def\definefrak#1{\ea\gdef\csname k#1\endcsname{\ensuremath{\mathfrak{#1}}\xspace}}
\def\definehat#1{\ea\gdef\csname #1hat\endcsname{\ensuremath{\widehat{#1}}\xspace}}
\def\defineul#1{\ea\gdef\csname u#1\endcsname{\ensuremath{\underline{#1}}\xspace}}
\def\autofmt@n#1\autofmt@end{\mathrm{#1}}
\def\autofmt@b#1\autofmt@end{\mathbf{#1}}
\def\autofmt@l#1#2\autofmt@end{\mathbb{#1}\mathsf{#2}}
\def\autofmt@c#1#2\autofmt@end{\mathcal{#1}\mathit{#2}}
\def\autofmt@s#1#2\autofmt@end{\mathscr{#1}\!\mathit{#2}}
\def\auto@drop#1{}
\def\autodef#1{\ea\ea\ea\@autodef\ea\ea\ea#1\ea\auto@drop\string#1\autodef@end}
\def\@autodef#1#2#3\autodef@end{%
  \ea\def\ea#1\ea{\ea\ensuremath\ea{\csname autofmt@#2\endcsname#3\autofmt@end}\xspace}}
\def\autodefs@end{blarg!}
\def\autodefs#1{\@autodefs#1\autodefs@end}
\def\@autodefs#1{\ifx#1\autodefs@end%
  \def\autodefs@next{}%
  \else%
  \def\autodefs@next{\autodef#1\@autodefs}%
  \fi\autodefs@next}
\DeclareSymbolFont{bbold}{U}{bbold}{m}{n}
\DeclareSymbolFontAlphabet{\mathbbb}{bbold}
\newcommand{\lDelta}{\ensuremath{\mathbbb{\Delta}}\xspace}
\newcommand{\ltwo}{\ensuremath{\mathbbb{2}}\xspace}
\let\sm\wedge
\newcommand{\op}{^{\mathrm{op}}}
\let\adj\dashv
\newcommand{\pullback}[1][dr]{\save*!/#1-1.2pc/#1:(-1,1)@^{|-}\restore}
\let\iso\cong
\mdef\Id{\mathrm{Id}}
\mdef\id{\mathrm{id}}
\mdef\ten{\mathrel{\otimes}}
\DeclareMathOperator\lan{Lan}
\DeclareMathOperator\colim{colim}
\DeclareMathOperator\coeq{coeq}
\DeclareMathOperator\eq{eq}
\newcommand{\too}[1][]{\ensuremath{\overset{#1}{\longrightarrow}}}
\let\toot\rightleftarrows
\let\toto\rightrightarrows
\let\into\hookrightarrow
\let\maps\colon
\let\xto\xrightarrow
\def\slashedarrowfill@#1#2#3#4#5{%
  $\m@th\thickmuskip0mu\medmuskip\thickmuskip\thinmuskip\thickmuskip
   \relax#5#1\mkern-7mu%
   \cleaders\hbox{$#5\mkern-2mu#2\mkern-2mu$}\hfill
   \mathclap{#3}\mathclap{#2}%
   \cleaders\hbox{$#5\mkern-2mu#2\mkern-2mu$}\hfill
   \mkern-7mu#4$%
}
\def\rightslashedarrowfill@{%
  \slashedarrowfill@\relbar\relbar\mapstochar\rightarrow}
\newcommand\xslashedrightarrow[2][]{%
  \ext@arrow 0055{\rightslashedarrowfill@}{#1}{#2}}
\mdef\hto{\xslashedrightarrow{}}
\mdef\htoo{\xslashedrightarrow{\quad}}
\let\xhto\xslashedrightarrow
\def\toiso{\xto{\smash{\raisebox{-.5mm}{$\scriptstyle\sim$}}}}
\newif\ifhyperref
  \let\your@state\state
  \def\state#1{\my@state#1}
  \def\my@state#1.{\gdef\currthmtype{#1}\your@state{#1.}}
  \let\your@staterm\staterm
  \def\staterm#1{\my@staterm#1}
  \def\my@staterm#1.{\gdef\currthmtype{#1}\your@staterm{#1.}}
  \let\defthm\newtheorem
  \def\switchtotheoremrm{\let\defthm\newtheoremrm}
  \def\currthmtype{}
    \def\autoref#1{\ref*{label@name@#1}~\ref{#1}}
    \def\autoref#1{\ref{label@name@#1}~\ref{#1}}
    \let\old@label\label%
    \def\label#1{%
      {\let\your@currentlabel\@currentlabel%
        \edef\@currentlabel{\currthmtype}%
        \old@label{label@name@#1}}%
      \old@label{#1}}
\newtheorem{thm}{Theorem}[section]
\let\c@equation\c@subsection
\numberwithin{equation}{section}
\mdef\ep{\varepsilon}
\mdef\ph{\varphi}
\let\al\alpha
\let\be\beta
\let\Si\Sigma
\let\om\omega
\let\ka\kappa
\mdef\topg{\sTop_\cG\xspace}
\let\V\sV
\let\W\sW
\let\SS\S
\let\S\bS
\let\I\lI
\mdef\uV{\underline{\V}}
\mdef\usV{\underline{\V}}
\mdef\usA{\underline{\sA}}
\mdef\usB{\underline{\sB}}
\mdef\usC{\underline{\sC}}
\mdef\usK{\underline{\sK}}
\mdef\usW{\underline{\sW}}
\mdef\ubC{\underline{\bC}}
\mdef\ubV{\underline{\bV}}
\mdef\ubA{\underline{\bA}}
\mdef\uWW{\underline{\underline{\sW}}}
\let\fam\sFam
\let\self\sSelf
\mdef\tV{{\textstyle\int}\V}
\mdef\tW{{\textstyle\int}\sW}
\def\tot{\textstyle\int}
\let\U\underline
\def\ord#1{#1_{\textrm{o}}}
\let\oast\varoast
\mdef\ellbar{{\overline{\ell}}}
\mdef\cPd{\cP^{\dagger}}
\mdef\cPk{\cP_{\ka}}
\mdef\cPkd{\cP^{\dagger}_{\ka}}
\mdef\Yd{Y^{\dagger}}
\mdef\uYd{\uY^{\dagger}}
\let\ch\kZ
\newcommand{\cten}[3]{\{#1,#2\}^{#3}}
\newcommand{\cat}[1]{\ensuremath{#1\text{-}\cCat}\xspace}
\newcommand{\icat}[1]{\ensuremath{#1\text{-}\mathcal{C}\text{\textsc{at}}}\xspace}
\newcommand{\CAT}[1]{\ensuremath{#1\text{-}\cCAT}\xspace}
\newcommand{\PROF}[1]{\ensuremath{#1\text{-}\cPROF}\xspace}
\newcommand{\iprof}[1]{\ensuremath{#1\text{-}\mathcal{P}\text{\textsc{rof}}}\xspace}
\newcommand{\prof}[1]{\ensuremath{#1\text{-}\cProf}\xspace}
\newcommand{\lprof}[1]{\ensuremath{#1\text{-}\lProf}\xspace}
\newcommand{\Lprof}[1]{\ensuremath{#1\text{-}\lPROF}\xspace}
\newcommand{\bimor}[1]{\ensuremath{#1\text-\cBimor}\xspace}
\newcommand{\mmor}[1]{\ensuremath{#1\text-\cMultimor}\xspace}
\newcommand{\VCat}{\cat{\sV}}
\newcommand{\VCAT}{\CAT{\sV}}
\newcommand{\VProf}{\prof{\sV}}
\newcommand{\VPROF}{\PROF{\sV}}
\newcommand{\VBimor}{\bimor{\V}}
\newcommand{\Vmmor}{\mmor{\V}}
\newcommand{\iVCAT}{\icat{\V}}
\newcommand{\iVPROF}{\iprof{\V}}
\newcommand{\VFib}{\ensuremath{\V\text-\cFIB}\xspace}
\def\mod#1#2{\prescript{}{#1}{#2}}
\newcommand{\e}{{\ensuremath{\epsilon}}}
\renewcommand{\ea}{{\ensuremath{\epsilon a}}}
\newcommand{\eb}{{\ensuremath{\epsilon b}}}
\newcommand{\ec}{{\ensuremath{\epsilon c}}}
\newcommand{\ex}{{\ensuremath{\epsilon x}}}
\newcommand{\ey}{{\ensuremath{\epsilon y}}}
\newcommand{\ez}{{\ensuremath{\epsilon z}}}
\newcommand{\eA}{{\ensuremath{\epsilon A}}}
\newcommand{\eB}{{\ensuremath{\epsilon B}}}
\begin{document}
\maketitle

\begin{abstract}
  We develop a theory of categories which are simultaneously (1)
  indexed over a base category \S with finite products, and (2)
  enriched over an \S-indexed monoidal category \V.  This includes
  classical enriched categories, indexed and fibered categories, and
  internal categories as special cases.  We then describe the
  appropriate notion of ``limit'' for such enriched indexed
  categories, and show that they admit ``free cocompletions''
  constructed as usual with a Yoneda embedding.
\end{abstract}

\tableofcontents

\section{Introduction}
\label{sec:introduction}

It is well-known that ordinary category theory admits several
important generalizations, such as the following.
\begin{itemize}
\item A category \emph{enriched} in a monoidal category \bV has a set
  of objects, but hom-objects belonging to \bV.
\item A category \emph{internal} to a category \S with pullbacks has
  both an \S-object of objects and an \S-object of morphisms.
\item A category \emph{indexed} or \emph{fibered} over a category \S
  has sets of objects and morphisms, each of which lives over a
  specified object or morphism in \S.
\end{itemize}
Sometimes, however, we encounter category-like objects which appear
simultaneously enriched and internal, or enriched and indexed.  Here
are a few examples.
\begin{enumerate}
\item \emph{Parametrized homotopy theory} (as developed
  in~\cite{maysig:pht}) studies spaces and spectra parametrized over a
  given base space.  Each of these forms a category indexed over the
  category of base spaces.  In addition, however, parametrized spectra
  are enriched over parametrized spaces, in a sense which was
  recognized in~\cite{maysig:pht} but not given a general
  context.\label{item:egs1}
\item The free abelian group on a monoid is a ring; applying this
  functor homwise to an ordinary category, we obtain a category
  enriched over abelian groups.  Similarly, the suspension spectrum of
  a topological monoid is a ring spectrum, and so the fiberwise
  suspension spectrum of a topologically-internal category should be a
  category which is simultaneously internal to spaces and enriched
  over spectra.  Such categories played an important role
  in~\cite{kate:traces}.
\item \emph{Equivariant homotopy theory} studies spaces and spectra
  with actions of a topological group.  As the group in question
  varies, we find categories simultaneously indexed over groups and
  enriched over spaces.  More recently, \emph{global equivariant
    homotopy theory}~\cite{bohmann:globalspectra} studies equivariant
  spaces and spectra constructed in a coherent way across all groups
  of equivariance.  Such ``global spectra'' can be defined just like
  ordinary diagram spectra, if we work in the context of categories
  simultaneously indexed over groups and enriched over equivariant
  spaces.
\item In~\cite{bunge:bddcplt}, enriched indexed categories (which were
  discovered independently by Bunge) provide a general context to
  compare completions such as the Karoubi completion, stack
  completion, Grothendieck completion, and Cauchy completion.
\item The category of abelian sheaves is simultaneously indexed over
  base spaces and enriched over abelian groups.  Similarly, chain
  complexes of sheaves are indexed over spaces and enriched over chain
  complexes.
\item When doing mathematics relative to a base topos, we must replace
  small categories by internal ones and large categories by indexed
  ones.  Therefore, wherever enriched category theory is used in
  classical mathematics, in topos-relative mathematics we should
  expect to combine it with internalization and indexing.
\end{enumerate}

In this paper we show that enriched indexed categories
support a category theory as rich and powerful as all three classical
cases.  To a large extent, this is entirely straightforward.
Unsurprisingly, the resulting theory exhibits aspects that are
characteristic both of classical enriched category theory and internal
and indexed category theory.  Notable among the former is the need for
a notion of \emph{weighted} limit.  Notable among the latter is the
nontriviality of the passage from \emph{small} categories (e.g.\
internal ones) to \emph{large} ones (e.g.\ indexed ones).

\subsection{Some remarks about formal category theory.}
\label{sec:fct}

The theory of enriched indexed categories is clarified by using tools
from formal category theory, which are already known to encompass both
enriched and internal/indexed categories separately.  These tools
center around \emph{profunctors} between categories (in the classical
case, a profunctor from $A$ to $B$ is a functor $B\op\times
A\to\nSet$).  Every functor gives rise to an adjoint pair of
profunctors, and altogether categories, functors, and profunctors of
any fixed sort form a \emph{proarrow
  equipment}~\cite{wood:proarrows-i} (or ``framed
bicategory''~\cite{shulman:frbi}).

The central observations are the following:
\begin{enumerate}
\item In any equipment, there is a formal notion of a \emph{weighted
    limit} in an object (e.g.\ a category) weighted by a morphism
  (e.g.\ a profunctor);
  see~\cite{street-walters:yoneda,wood:proarrows-i}.  Starting from
  this we can develop large amounts of category theory purely
  formally.\label{item:e1}
\item For any well-behaved equipment \lW, there is an equipment of
  ``\lW-enriched categories'', functors, and profunctors; see
  e.g.~\cite{bcsw:variation-enr,street:enr-cohom,gs:freecocomp}.
  Moreover, we can remove the qualifier ``well-behaved'' by
  generalizing to \emph{virtual equipments} in the sense
  of~\cite{cs:multicats} (in which not all profunctors may be
  composable).  This was originally observed
  in~\cite{leinster:fc-multicategories,leinster:higher-opds,leinster:gen-enr-cats}.
  \label{item:e2}
\end{enumerate}
Observation~\ref{item:e1} means that in order to automatically obtain
a formally well-behaved theory of ``enriched indexed categories'',
essentially all we need is to define suitably related notions of
category, functor, and profunctor.  And observation~\ref{item:e2}
means that for this, we can start with a simpler (virtual) equipment
and apply the general enriched-categories construction.  Finally, the
relevant simpler equipment to begin with was already constructed
in~\cite{shulman:frbi}, starting from an indexed monoidal category \V
(the relevant ``base for enrichment'').

It would seem, then, that there is very little left to do; so why is
this paper so long?  There are several reasons.

Firstly, for the purposes of exposition, application, and wide
accessibility, it seems valuable to have explicit descriptions of what
the formal equipment-theoretic notions reduce to in our particular
case of interest, not requiring the reader to be familiar with the
literature of formal category theory.  For this reason, I will
minimize references to equipments, generally confining them to remarks
and to the proofs of lemmas (all of which could also easily be done
``by hand'').

Secondly, not all of the formal category theory existing in any
equipment has yet been generalized to the virtual case.  (The
generalization should be entirely straightforward, but for the most
part it has not yet been written out.)  However, the virtual case is
necessary in order to deal with \emph{large} categories, since even
when the enriching category is cocomplete, profunctors between large
categories may not be composable.\footnote{In some of the literature,
  such as~\cite{street-walters:yoneda}, this is avoided by invoking an
  embedding theorem to make the enriching category into an even larger
  one.  However, an analogous process for enriched indexed categories
  would be rather more complicated and obscure the important ideas.}
A reader who is so inclined can read parts of
sections~\ref{sec:large-cats}, \ref{sec:limits-colimits},
and~\ref{sec:psh} of this paper as contributions to this theory, since
wherever possible, we give proofs that apply in any virtual equipment.


Finally, enriched indexed categories share with classical indexed
categories the property of having multiple not-obviously-equivalent
definitions.  Given an indexed monoidal category \V, there is an
obvious notion of \emph{small \V-category}, which directly generalizes
internal categories and small enriched categories.  On the other hand,
there is also a fairly obvious notion of (large) \emph{indexed
  \V-category}, which generalizes locally small indexed categories
(incarnated as ``locally internal categories'' in the sense
of~\cite{penon:locintern}).  There are plenty of good examples of both
definitions, but it is not entirely trivial how to regard a small
\V-category as an indexed one!

The equipment-theoretic approach actually yields a \emph{third}
notion, which we will call simply a \emph{(large) \V-category}.  This
notion manifestly includes small \V-categories as a special case, but
its connection to indexed \V-categories is not entirely trivial.  In
the special case of ordinary (unenriched) indexed categories, this relationship
was established by~\cite{bcsw:variation-enr,cplt-locintern}.  Thus, we
spend some time producing the analogous correspondence between indexed
\V-categories and large \V-categories.  It turns out to behave even
better when we work with equipments, rather than merely bicategories
as~\cite{bcsw:variation-enr,cplt-locintern} did.

The nontriviality of this correspondence also means that it also takes
a little work to rephrase equipment-theoretic notions (such as
weighted limits) in the language of indexed \V-categories.
Pleasingly, the results are exactly what one might hope for.

\subsection{Historical remarks.}
\label{sec:historical-remarks}

Apparently, the idea of enriched indexed categories was first proposed
by Lawvere~\cite{lawvere:tdcsutdb}.  A formal definition was given by
Gouzou and Grunig in~\cite{gg:fib-rel}, corresponding to what I will
call an ``indexed \V-category''.  They did not apply general
equipment-theoretic methods (which did not exist at that time).
Perhaps because of a lack of applications, the definition did not
become well-known, and the theory was not extensively developed.

I discovered enriched indexed categories myself around 2007, with
examples such as~\cite{maysig:pht} and~\cite{kate:traces} in mind.
Michal Przybylek~\cite{przybylek:enriched-internal} independently
invented them at about the same time as well.  After a brief
discussion on the categories mailing list, Thomas Streicher very
kindly sent me a copy of the work of Gouzou and Grunig.  Seeing this,
and not having any real applications in mind yet, and feeling that it
was all a special case of equipment theory, I put the notion aside for
a while.

However, recently two new applications have appeared.  Firstly, the
notion of ``global equivariant spectrum'' developed
in~\cite{bohmann:globalspectra} requires categories that are both
indexed over groups and enriched in spaces with group actions, and was
made possible by an early draft of this paper.  Secondly, during
Octoberfest 2012 in Montreal, I found that Marta Bunge had also
independently arrived at the same notion of ``indexed \V-category'',
with the goal of comparing various idempotent completion
monads~\cite{bunge:bddcplt}.

This suggests that it is time to publish a careful development of the
theory, using modern technology, and with due credit given to everyone
who discovered it independently.

\subsection{Outline of the paper.}
\label{sec:outline}

We begin in \SS\ref{sec:indexed-moncats} by studying \emph{indexed
  monoidal categories}, which provide the ``base'' of enrichment and
indexing for our enriched indexed categories.  More specifically,
there is a category \S which provides the base of the indexing, and an
\S-indexed category \V with a monoidal structure which provides the
enrichment.  Much of this theory can be found in~\cite{shulman:frbi}
and~\cite{ps:indexed}, but we recall it all here for convenience.  We
also give a large number of examples.

In \SS\SS\ref{sec:small-cats}--\ref{sec:large-cats}, we define
respectively the three kinds of \V-category: small, indexed, and
large.  In each case we also define the relevant notions of
\V-functor, \V-natural transformation, and \V-profunctor, and give
several examples.  In \SS\ref{sec:small-cats} and
\SS\ref{sec:large-cats} we also study profunctors in some more detail,
making use of some equipment-theoretic notions.

Then in \SS\ref{sec:v-fibrations} we compare the notions of
\V-category.  Small \V-categories are manifestly a special case of
large ones.  As for the indexed ones, we identify a particular
subclass of large \V-categories, called \emph{\V-fibrations}, and a
subclass of functors between them, called \emph{indexed}, which form a
2-category that is 2-equivalent to the 2-category of indexed
\V-categories.  We regard this correspondence as closely analogous to
the classical equivalence between pseudofunctors (indexed categories)
and fibrations, although it is not strictly a generalization of it.

Moreover, it turns out that the 2-category of \V-fibrations and
indexed \V-functors is \emph{biequivalent} to the entire 2-category of
large \V-categories and all \V-functors.
The equivalence also carries over to profunctors, so we have a
complete equivalence of ``category theories''.

It seems that in applications, the \V-categories which act like the
``large categories'' in classical category theory are always
\V-fibrations (or, equivalently, indexed \V-categories), while those
that act like the ``small categories'' are not always so.  (Sometimes
they are small in the sense of \SS\ref{sec:small-cats}; other times
they are small only in a weaker, non-elementary sense.)  Thus, it is
useful to have the context of large \V-categories which includes both.

In \SS\ref{sec:cocuf} we consider ``change of enrichment'' along a
morphism $\V\to\V'$.  The definitions are all straightforward and
mostly omitted; mainly we give a lot of examples to show the
generality of the concept.  A particularly important case is that of
the ``underlying indexed category'' of an enriched indexed category,
which generalizes the classical ``underlying ordinary category'' of an
enriched category.

In \SS\ref{sec:limits-colimits}--\ref{sec:indexed-limits} we study the
very important topic of limits and colimits.  In
\SS\ref{sec:limits-colimits} we work purely equipment-theoretically,
defining limits in terms of profunctors and proving their basic
properties abstractly.  Then in \SS\ref{sec:indexed-limits} we
specialize these notions to the case of \emph{indexed} \V-categories,
where they turn out to reduce exactly to a combination of well-known
indexed and enriched notions of limit.  In the case of ordinary (unenriched)
indexed categories, this perspective on limits was explored
in~\cite{cplt-locintern} and sequels such
as~\cite{chbase-locintern,desc-locintern,bw:ends}.  On the other hand,
the same combination of indexed and enriched notions of limit was
studied in~\cite{gg:fib-rel}, but without the equipment-theoretic
context for justification and formal properties.

With the basic theory of limits and colimits available, there are of
course many different directions in which to develop category theory.
We choose only two: presheaf categories in \SS\ref{sec:psh}, and
monoidal structures in \SS\ref{sec:monoidal}.

The goal of \SS\ref{sec:psh} is to prove that presheaf \V-categories
are free cocompletions.  The arguments are purely formal and
equipment-theoretic.  Rather than restrict ourselves to presheaves on
small categories, we consider more generally \emph{small presheaves}
in the sense of~\cite{dl:lim-smallfr}, which form free cocompletions
of not-necessarily-small categories.

Finally, in \SS\ref{sec:monoidal} we study monoidal \V-categories,
using for the first time in an essential way the symmetry of \V.  We
define two tensor products of \V-categories, one ``indexed'' and one
not, which extend the biequivalence of \SS\ref{sec:v-fibrations} to a
monoidal biequivalence.  Monoidal \V-categories are then pseudomonoids
with respect to either of these tensor products.  We also define
\emph{closed} monoidal \V-categories by way of profunctors,
essentially specializing the general definitions
of~\cite{ds:monbi-hopfagbd,dms:antipodes,street:frob-psmon} to
\V-categories.  As in the classical case, closed monoidal
\V-categories correspond closely to monoidal adjunctions involving \V.
We conclude with the Day convolution monoidal structure for
\V-presheaf categories, which, combined with the previous theory,
yields some of the most important examples, from~\cite{maysig:pht}
and~\cite{bohmann:globalspectra}.

\subsection{Acknowledgments.}
\label{sec:acknowledgements}

I would like to thank my thesis advisor, Peter May, for many things
too numerous to mention.  I would also like to thank Anna Marie
Bohmann and Marta Bunge, for providing the final impetus for
publication.  Finally, I would like to thank Hurricane Sandy, for
providing a week free from other commitments, in which I was able to
polish my notes into a readable paper; and the Institute for Advanced
Study, for providing an electricity generator during that week.

\section{Indexed monoidal categories}
\label{sec:indexed-moncats}

Let \S be a category with finite products.  We write $\Delta_X : X\to
X\times X$ for the diagonal of $X\in\S$, and for related maps such as
$X\times Y \to X\times X\times Y$.  Similarly, we write $\pi_X$ for
any projection map in which $X$ is projected away, such as $X\to 1$ or
$X\times Y\to Y$.  If this would be ambiguous, such as for the product
projections $X\times X\to X$, we use numerical subscripts which again
denote the copy being projected \emph{away}; thus $\pi_1:X\times X\to
X$ is the projection \emph{onto} the \emph{second} copy.

Our enriched indexed categories will be indexed over \S.  Their
enrichment, on the other hand, will not be over a monoidal category in
the classical sense, but over the following type of category.

\begin{defn}
  An \textbf{\S-indexed monoidal category} is a pseudofunctor
  $\V:\S\op \to\nMonCat$, where \nMonCat is the 2-category of
  monoidal categories, strong monoidal functors, and monoidal
  transformations.
\end{defn}

As usual, we write the image of $X\in \S$ as $\V^X$, and the image of
$f:X\to Y$ as $f^*:\V^Y \to \V^X$.  We write the tensor product and
unit of $\V^X$ as $\otimes_X$ and $\I_X$.  The monoidality of $f^*$
means we have isomorphisms such as
\begin{equation}
  f^*A \otimes_X f^*B \;\cong\; f^*(A\otimes_Y B)
  \qquad\text{and}\qquad \I_X \cong f^* \I_Y.
\end{equation}

Of course, by applying the ``Grothendieck construction'' we can
equally regard \V as a fibration $\tV \to \S$.  Moreover, the monoidal
structures on the fibers $\V^X$ can equivalently be described by
giving a monoidal structure on the category $\tV$ such that
\begin{enumerate}
\item the fibration $\tV \to \S$ is strict monoidal, and
\item the tensor product of \tV preserves cartesian
  arrows.\label{item:tensprescart}
\end{enumerate}
In~\cite{shulman:frbi} this is called a \textbf{monoidal fibration};
see there for a proof of the equivalence.  If we write $\otimes$ and
\I for the monoidal structure and unit of \tV, then the relationships
between these and the fiberwise monoidal structures are as follows.
For $A\in\V^X$ and $B\in\V^Y$, we have
\begin{equation}
  A \otimes B = \pi_Y^* A \otimes_{X\times Y} \pi_X^* B
  \quad\in\V^{X\times Y}
  \qquad\text{and}\qquad
  \I = \I_1
  \quad\in \V^{1},
\end{equation}
while for $A,B\in \V^X$ we have
\begin{equation}
  A\otimes_X B = \Delta_X^* (A\otimes B)
  \quad\in\V^X
  \qquad\text{and}\qquad
  \I_X = \pi_X^* \I
  \quad\in\V^X.
\end{equation}
We sometimes call $\otimes$ the \textbf{external} product, and
$\otimes_X$ the \textbf{fiberwise} or \textbf{internal} one.
Property~\ref{item:tensprescart} gives us isomorphisms such as
\begin{equation}
  f^* A \otimes g^* B \;\cong\; (f\times g)^*(A\otimes B).
\end{equation}

We say that \V is \textbf{symmetric} if the pseudofunctor $\V:\S\op
\to\nMonCat$ lifts to $\nSymMonCat$; this is equivalent to asking \tV
and the fibration $\tV\to\S$ to be symmetric monoidal.  We say that \V
is \textbf{cartesian} if each category $\V^X$ is cartesian monoidal,
or equivalently if \tV is cartesian monoidal.

The two most important examples, corresponding to classical enrichment
and classical internalization/indexing, are as follows.

\begin{eg}\label{eg:fam-mf}
  Let \bV be an ordinary monoidal category, let $\S=\nSet$, and let
  $\V^X = \bV^X$ be the category of $X$-indexed families of objects of
  \bV with the pointwise tensor product.  We call this the
  \textbf{naive indexing} of \bV and write it as $\fam(\bV)$.  Its
  total category $\tot\fam(\bV)$ is the category $\nFam(\bV)$ of all
  set-indexed families of objects of \bV, where a morphism from
  $(A_x)_{x\in X}$ to $(B_y)_{y\in Y}$ consists of a function $f:X\to
  Y$ and a family of morphisms $f_x:A_x \to B_{f(x)}$.  The external
  product is defined by
  \begin{equation}
    (A\otimes B)_{(x,y)} = A_x \otimes B_y.
  \end{equation}
\end{eg}

\begin{eg}\label{eg:self-mf}
  Let \S be a category with finite limits, and let $\V^X = \S/X$, with
  the cartesian product (which is pullback in \S).  This is a
  cartesian monoidal fibration called the \textbf{self-indexing} of
  \S; we write it as $\self(\S)$.  Its total category $\tot\self(\S)$
  is the category $\S^\ltwo$ of arrows in \S, its external product is
  just the cartesian product in \S.
\end{eg}

See~\cite{shulman:frbi} and~\cite{ps:indexed} for further study of
indexed monoidal categories; the latter includes an informal string
diagram calculus.

Now, as is the case with classical enriched category theory, we
frequently need completeness conditions on \V.  By a
\textbf{fiberwise} limit or colimit, we mean a limit or colimit in a
fiber category $\V^X$ which is preserved by all functors $f^*$.
If \ka is a regular cardinal, we say that \V is \textbf{fiberwise
  \ka-complete} if it has all fiberwise limits of cardinality $<\ka$,
and we say \V is \textbf{fiberwise complete} if it has all small
fiberwise limits.  Of course we have similar notions of fiberwise
cocompleteness.

The other important sort of (co)limit for indexed categories is the
following.

\begin{defn}
  \V has \textbf{\S-indexed coproducts} if
  \begin{enumerate}
  \item each functor $f^*:\V^Y\to \V^X$ has a left adjoint $f_!$, and
  \item for any pullback square
    \begin{equation}
      \vcenter{\xymatrix{
          \ar[r]^h\ar[d]_k &
          \ar[d]^f\\
          \ar[r]_g &
        }}
    \end{equation}
    in \S, the induced Beck-Chevalley transformation $k_! h^* \to g^*
    f_!$ is an isomorphism.
  \end{enumerate}
  Dually, \V has \textbf{\S-indexed products} if each $f^*$ has a
  right adjoint $f_*$ satisfying an analogous condition.
\end{defn}

It is well-known that the adjoints $f_!$ exist if and only if the
fibration $\tV\to\S$ is also an opfibration.

\begin{eg}
  $\fam(\bV)$ has any fiberwise limits and colimits that \bV has, and
  has \nSet-indexed (co)products iff \bV has (co)products.
\end{eg}

\begin{eg}
  If \S has finite limits, then $\self(\S)$ has fiberwise finite
  limits.  It is fiberwise complete if \S is complete, and has any
  fiberwise colimits that \S has.  It always has \S-indexed
  coproducts, and has \S-indexed products if and only if \S is locally
  cartesian closed.
\end{eg}

We say that \V is \textbf{\ka-complete} if it is fiberwise
\ka-complete and has indexed products, and similarly it is
\textbf{\ka-cocomplete} if it is fiberwise \ka-cocomplete and has
indexed coproducts.

Now in the case when \V has indexed coproducts, there is a third
variant of the monoidal structure.  For $A\in\V^{X\times Y}$ and
$B\in\V^{Y\times Z}$, we define
\[A\ten_{[Y]} B = \pi_{Y!} \Delta_Y^* (A\ten B),
\]
which lies in $\V^{X\times Z}$.  We can also express this in terms of
the fiberwise product as:
\[A\ten_{[Y]} B \iso
\pi_{Y!} \big(\pi_Z^*A \ten_{X\times Y\times Z} \pi_X^* B\big).
\]
We call this the \textbf{canceling product} because the object $Y$ no
longer appears in the base of the result.  We have induced
isomorphisms such as
\begin{equation}
  (f\times 1)^* A \otimes_{[Y]} (1\times g)^* B \;\cong\;
  (f\times g)^*(A\otimes_{[Y]} B)\label{eq:canceling-compat-4}
\end{equation}
for $f:X'\to X$ and $g:Z'\to Z$.

\begin{eg}
  When $\V=\fam(\bV)$, the canceling product is
  \[(A\ten_{[Y]} B)_{(x,z)} = \coprod_{y\in Y} A_{(x,y)}\ten B_{(y,z)}.\]
\end{eg}

\begin{eg}
  When $\V=\self(\S)$, the canceling product is just a pullback in \S,
  but which then forgets the map to the object we pulled back over.
\end{eg}

Classically, in a monoidal category one often needs colimits to be
preserved by the tensor product in each variable.  For fiberwise
colimits, we can simply impose this condition fiberwise.  For indexed
coproducts, the relevant condition is the following.

\begin{defn}
  If \V is a monoidal fibration with indexed coproducts, we say that
  \textbf{\ten preserves indexed coproducts} if for any $f\maps X\to
  Y$ in \S and any $A\in\sC^Y$ and $B\in\sC^X$, the canonical map
  \begin{equation}\label{eq:projection}
    f_!(f^*A \otimes_X B) \to A \otimes_Y f_! B
  \end{equation}
  is an isomorphism, and symmetrically.
  (This condition is sometimes called ``Frobenius reciprocity'', or
  said to make the adjunction $f_! \dashv f^*$ into a ``Hopf
  adjunction''.)
\end{defn}

An exercise in pasting mates implies that this condition is equivalent
to the external product $\otimes$ preserving opcartesian arrows,
yielding isomorphisms such as
\begin{equation}
  (f\times g)_!(A\otimes B) \;\cong\; f_! A \otimes g_! B.
\end{equation}
When this condition holds, the canceling product has its own
derived commutativity isomorphisms, namely
\begin{align}
  (1\times f)^*A \ten_{[X]} B &\iso A\ten_{[Z]} f_!B
  \label{eq:canceling-compat-1}\\
  (f\times 1)_!A \ten_{[Y]} B &\iso f_!\big(A\ten_{[Y]}B\big)\\
  (1\times f)_!A \ten_{[Z]} B &\iso A\ten_{[X]} f^*B.
  \label{eq:canceling-compat-3}
\end{align}


Finally, we consider what it means for a monoidal fibration to be
closed.  For simplicity, we consider only the symmetric case (in the
non-symmetric case, we would have two homs of each type, a ``right
one'' and a ``left one'').

\begin{thm}\label{thm:three-homs}
  Let \V be an \S-indexed symmetric monoidal category with indexed
  products, and indexed coproducts preserved by $\otimes$.  Then the
  following are equivalent.
  \begin{enumerate}
  \item Each fiber $\V^X$ is closed symmetric monoidal and each
    restriction functor $f^*$ is closed symmetric monoidal.  This
    means that for $B,C\in\V^X$ we have $\uV^X(B,C)\in\V^X$ and
    isomorphisms
    \[\V^X(A\ten_X B,C) \iso \V^X\left(A,\uV^X(B,C)\right),\]
    natural in $A$, and moreover the canonical maps
    \begin{equation}\label{eq:intclosedmap}
      f^*\uV^Y(B,C) \too \uV^X(f^*B,f^* C)
    \end{equation}
    are isomorphisms.\label{item:closed-1}
  \item For any $B\in\V^Y$ and $C\in\V^{X\times Y}$, we have a
    $\uV^{[Y]}(B,C)\in \V^{X}$ and isomorphisms
    \[\V^{X\times Y}(A\ten B, C) \iso \V^{X}\left(A, \uV^{[Y]}(B,C)\right)\]
    natural in $A$, and moreover the resulting canonical maps
    \[f^*\uV^{[Y]}(B,C) \too \uV^{[Y]}(B,(f\times 1)^*C)\]
    are isomorphisms.\label{item:closed-2}
  \item For any $B\in\V^Y$ and $C\in\V^X$ we have a
    $\uV(B,C)\in\V^{X\times Y}$ and isomorphisms
    \begin{equation}\label{eq:closed-3-adjn}
      \V^X(A\ten_{[Y]} B, C) \iso \V^{X\times Y}(A, \uV(B,C))
    \end{equation}
    natural in $A$, and moreover the resulting canonical maps
    \[(f\times g)^*\uV(B,C) \to \uV(g^*B,f^*C)\]
    are isomorphisms.\label{item:closed-3}
  \end{enumerate}
  When these conditions hold, we say that \V\ is \textbf{closed}.
\end{thm}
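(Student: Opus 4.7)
The three conditions express three adjunctions, one for each of the three tensor products on \V: the fiberwise $\otimes_X$ in (i), the external $\otimes$ in (ii), and the canceling $\otimes_{[Y]}$ in (iii). Since these tensor products are related to each other by pullback along diagonals and pushforward along projections --- and the hypotheses (fiberwise limits, indexed products, and indexed coproducts preserved by $\otimes$) supply all the requisite adjoints $\Delta_X^* \dashv \Delta_{X*}$ and $\pi_{Y!} \dashv \pi_Y^* \dashv \pi_{Y*}$, together with Frobenius reciprocity --- the three notions of hom should be interderivable by corresponding base change operations.

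My plan is to prove (i) $\Leftrightarrow$ (ii) and (i) $\Leftrightarrow$ (iii) via four explicit formulas. For (i) $\Rightarrow$ (ii), define $\uV^{[Y]}(B,C) := \pi_{Y*}\uV^{X\times Y}(\pi_X^* B, C)$; the adjunction follows by combining $\pi_Y^* \dashv \pi_{Y*}$ with $A\otimes B = \pi_Y^* A \otimes_{X\times Y} \pi_X^* B$. Conversely, for (ii) $\Rightarrow$ (i), define $\uV^X(B,C) := \uV^{[X]}(B, \Delta_{X*} C)$, using $\Delta_X^* \dashv \Delta_{X*}$ together with $A\otimes_X B = \Delta_X^*(A\otimes B)$. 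For (i) $\Rightarrow$ (iii), define $\uV(B,C) := \uV^{X\times Y}(\pi_X^* B, \pi_Y^* C)$; after observing $\Delta_Y^*(A\otimes B) = A \otimes_{X\times Y} \pi_X^* B$, one has $A\otimes_{[Y]} B = \pi_{Y!}(A \otimes_{X\times Y} \pi_X^* B)$, and the adjunction drops out from $\pi_{Y!} \dashv \pi_Y^*$ combined with (i). For (iii) $\Rightarrow$ (i), define $\uV^X(B,C) := \Delta_X^* \uV(B,C)$; the key identity needed is $\Delta_{X!}A \otimes_{[X]} B \iso A \otimes_X B$, which follows from Frobenius reciprocity (rewriting $\Delta_{X!}A \otimes B$ as $(\Delta_X \times 1)_!(A\otimes B)$) combined with Beck-Chevalley for the pullback of $\Delta_X \times 1$ against the diagonal appearing in $\otimes_{[X]}$.

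The main obstacle will be verifying the ``moreover'' clauses concerning compatibility with base change. For (i) $\Rightarrow$ (iii) and (iii) $\Rightarrow$ (i), compatibility follows directly from the corresponding clause in the hypothesis plus identities such as $(f\times g)^*\pi_X^* = \pi_{X'}^* g^*$ or $\Delta_X \circ f = (f\times f) \circ \Delta_{X'}$. For (i) $\Rightarrow$ (ii) it reduces to combining (i)'s compatibility with the Beck-Chevalley isomorphism $f^* \pi_{Y*} \iso \pi_{Y*}(f\times 1)^*$. The subtlest case is (ii) $\Rightarrow$ (i): because (ii) asserts compatibility only when the outer $X$ changes (leaving $B\in\V^Y$ fixed), but (i) requires compatibility when both the base and the argument $B$ change simultaneously, one must appeal to uniqueness of adjoints --- both $f^*\uV^X(B,C)$ and $\uV^{X'}(f^* B, f^* C)$ represent the functor $A' \mapsto \V^X(f_! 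A' \otimes_X B, C)$ on $(\V^{X'})\op$, where Frobenius reciprocity is used to identify $f_!(A' \otimes_{X'} f^* B)$ with $f_! A' \otimes_X B$.
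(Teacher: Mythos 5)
Your proposal is correct and takes essentially the same route as the paper: the four formulas you give ($\uV^{[Y]}(B,C) \cong \pi_{Y*}\uV^{X\times Y}(\pi_X^*B,C)$, $\uV^X(B,C)\cong\uV^{[X]}(B,\Delta_{X*}C)$, $\uV(B,C)\cong\uV^{X\times Y}(\pi_X^*B,\pi_Y^*C)$, and $\uV^X(B,C)\cong\Delta_X^*\uV(B,C)$) are exactly the interderivability relations the paper records, with the adjunctions and Frobenius reciprocity used in the same way. The only difference is one of detail: the paper leaves the compatibility of the canonical maps as ``an exercise in diagram chasing'' and cites \cite{shulman:frbi} for (i)$\Leftrightarrow$(ii), whereas you correctly identify and resolve the one genuinely subtle point there (that (ii)'s base-change clause fixes $B$ while (i)'s does not) via uniqueness of representing objects.
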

\begin{proof}
  The relationships between the three kinds of hom-functors are
  \begin{equation}
    \begin{array}{rclcl}
      \uV^X(B,C) &\iso& \uV^{[X]}\big(B,\Delta_{X*} C\big)
      &\iso& \Delta_X^*\uV(B,C)\\
      \uV(B,C) &\iso& \uV^{Y\times X}\big(\pi_X^*B,\pi_Y^*C\big)
      &\iso& \uV^{[Y]}\big(B, \Delta_{Y*} \pi_Y^* C\big)\\
      \uV^{[Y]}(B,C) &\iso&
      \pi_{Y*}\uV^{X\times Y}\big(\pi_X^*B, C\big) &\iso&
      \pi_{Y*}\Delta_Y^*\uV(B,C).
    \end{array}
  \end{equation}
  Checking that the canonical maps coincide is an exercise in diagram
  chasing.  The equivalence of~\ref{item:closed-1}
  and~\ref{item:closed-2} can be found in~\cite{shulman:frbi}.
\end{proof}

When the conditions of \autoref{thm:three-homs} hold, we say that \V is
\textbf{closed}.  We call $\uV^X(-,-)$ the \textbf{fiberwise hom},
$\uV(-,-)$ the \textbf{external hom}, and $\uV^{[X]}(-,-)$ the
\textbf{canceling hom}.

\begin{eg}\label{eg:enriched-homs}
  If \bV\ is complete and cocomplete closed symmetric monoidal with
  internal-homs $\ubV(-,-)$, then $\fam(\bV)$ is closed; we have
  \begin{align}
    \uV^X(B,C) &= \Big(\ubV(B_x,C_x)\Big)_{x\in X}\\
    \uV^{[Y]}(B,C) &= \left(\prod_{y\in Y} \ubV(B_y,C_{x,y}) \right)_{x\in X}\\
    \uV(B,C) &= \Big(\ubV(B_x,C_y)\Big)_{x\in X,y\in Y}.
  \end{align}
\end{eg}

\begin{eg}
  $\self(\S)$ is closed just when \S is locally cartesian closed.
\end{eg}

\begin{rmk}\label{rmk:weaker-homs}
  The construction of the canceling hom from the fiberwise or external
  hom, and vice versa, do require indexed products as assumed.  This
  is natural when looking at \autoref{eg:enriched-homs}, in which the
  canceling hom involves a product whereas the other two do not.

  On the other hand, the \emph{definitions} of the fiberwise and
  external homs in terms of each other do not require any indexed
  products or coproducts, although the adjunction
  isomorphism~\eqref{eq:closed-3-adjn} does require indexed coproducts
  since it involves the canceling tensor product.  Thus, in the
  absence of any completeness or cocompleteness conditions on \V, we
  should define closedness by~\ref{item:closed-1}, and we are free to
  use the external hom defined by $\uV(B,C) = \uV^{Y\times
    X}(\pi_X^*B, \pi_Y^*C)$, although not its universal
  property~\eqref{eq:closed-3-adjn}.  (In fact, the external hom does
  have a universal property even in the absence of indexed coproducts,
  but we defer mention of it until \SS\ref{sec:monoidal}, where it
  will seem more natural.)
\end{rmk}

Classically, the tensor product in a closed monoidal category
preserves colimits in each variable.  It is similarly immediate that
the tensor product in an indexed closed monoidal category preserves
\emph{fiberwise} colimits in each variable, while for indexed colimits
we have:

\begin{lem}
  If \V is closed and has indexed coproducts, then its indexed
  coproducts are preserved by $\otimes$.
\end{lem}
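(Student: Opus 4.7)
The plan is to verify that the projection map~\eqref{eq:projection},
namely $f_!(f^*A \otimes_X B) \to A \otimes_Y f_! B$ for $f\maps X\to Y$ in \S, $A\in\V^Y$, and $B\in\V^X$, is an isomorphism; by the exercise in pasting mates mentioned just before the statement, this is exactly what it means for \ten to preserve indexed coproducts. I will prove this via Yoneda, by producing a natural isomorphism on the represented functors $\V^Y(-,C)$ for $C\in\V^Y$, and then observing that the constructed iso agrees with the canonical projection map.

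First, I would assemble the following chain of natural isomorphisms in $C\in\V^Y$:
\begin{align}
  \V^Y\big(A\ten_Y f_! B,\,C\big)
  &\iso \V^Y\big(f_! B,\, \uV^Y(A,C)\big) \\
  &\iso \V^X\big(B,\, f^*\uV^Y(A,C)\big) \\
  &\iso \V^X\big(B,\, \uV^X(f^*A, f^*C)\big) \\
  &\iso \V^X\big(f^*A \ten_X B,\, f^*C\big) \\
  &\iso \V^Y\big(f_!(f^*A \ten_X B),\, C\big).
\end{align}
The first and fourth use the fiberwise internal-hom adjunction from condition~\ref{item:closed-1} of \autoref{thm:three-homs}; the second and fifth use $f_! \adj f^*$; and the third uses the isomorphism~\eqref{eq:intclosedmap}, which is also part of condition~\ref{item:closed-1}. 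Yoneda then delivers an isomorphism $f_!(f^*A\ten_X B) \toiso A\ten_Y f_! B$. The symmetric version follows immediately from the symmetry hypothesis on \V.

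The main obstacle is verifying that the morphism produced by this composite is in fact the canonical projection map, rather than some other isomorphism between the same objects. This is the standard but tedious diagram chase: one unwinds each of the five displayed isomorphisms into its defining triangle identities (the units and counits of $f_!\adj f^*$ and of the fiberwise closed structure, together with the mate transformation underlying~\eqref{eq:intclosedmap}), and traces the identity morphism on $f_!(f^*A\ten_X B)$ through the chain, comparing the result to the explicit construction of~\eqref{eq:projection} as the mate of the coherence isomorphism $f^*(A\ten_Y f_! B) \iso f^*A \ten_X f^* f_! B$ composed with the unit $B\to f^* f_! B$. Since the projection map is itself defined as a mate, standard mate-calculus yields the coincidence. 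Alternatively, once the existence of \emph{some} natural iso is known, the universality of the canonical map as the unique~\V-natural extension of the identity (via the unit $B\to f^* f_! B$) forces the two to agree.
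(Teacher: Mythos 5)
Your proposal is correct and is essentially the paper's own argument: the paper's one-line proof observes that the projection map~\eqref{eq:projection} is the mate (conjugate) of~\eqref{eq:intclosedmap} across the adjunctions $f_!\adj f^*$ and $(-\ten A)\adj \uV(A,-)$, hence one is invertible iff the other is, and your chain of represented-functor isomorphisms is precisely the hom-set-level unwinding of that mate correspondence. Your closing worry about whether the constructed isomorphism agrees with the canonical map is resolved exactly as you suggest, since the conjugate correspondence is a bijection preserving and reflecting invertibility.
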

\begin{proof}
  The morphism~\eqref{eq:projection} is a mate
  of~\eqref{eq:intclosedmap}, such that each is an isomorphism if and
  only if the other is.
\end{proof}

We can also define combination fiberwise/external/canceling products
and homs, which satisfy a more symmetric-looking adjunction.  If
$A\in\V^{X\times Y\times Z}$, $B\in\V^{Y\times Z\times W}$, and
$C\in\V^{X\times Y\times W}$, then we define
\begin{alignat*}{2}
  A\ten_{Y,[Z]} B &\;=\; \pi_{Z!} \Delta_{Y\times Z}^* \big(A\ten B\big)
  &&\;\iso\; \pi_{Z!}(\pi_{W}^*A\ten_{X\times Y\times Z\times W}
  \pi_X^*B)\\
  \uV^{Y,[W]}(B,C) &\;=\; \pi_{W*}\Delta_{Y\times W}^*\uV(B,C)
  &&\;\iso\; \pi_{W*} \uV^{X\times Y\times Z\times W}(\pi_X^*B, \pi_Z^*C).
\end{alignat*}
We then have
\begin{equation}\label{eq:general-closed}
  \V^{X\times Y\times W}\big(A\ten_{Y,[Z]} B, C\big) \iso
  \V^{X\times Y\times Z}\big(A, \uV^{Y,[W]}(B,C)\big).
\end{equation}
All the other products, homs, and adjunctions can be seen as special
cases of these when $X$, $Y$, $Z$, and/or $W$ are taken to be the
terminal object $1$, and in such cases we omit them from the notation.

Furthermore, when \V\ is closed, the base change and tensor-hom
adjunctions also automatically become enriched, in a suitable sense.

\begin{prop}\label{thm:bc-enriched}
  For any $f\maps X\to Y$ we have natural isomorphisms
  \begin{align}
    \uV^Y(f_!B, C) &\;\iso\; f_*\uV^X(B,f^*C)
    \mathrlap{\qquad\text{and}}\label{eq:bc-enriched-left}\\
    \uV^Y(C,f_*B) &\;\iso\; f_*\uV^X(f^*C,B).\label{eq:bc-enriched-right}
  \end{align}
\end{prop}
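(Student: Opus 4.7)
My plan is to prove each isomorphism by a Yoneda argument, showing the two sides represent the same functor $\V^Y\op \to \nSet$, using the three available adjunctions: $f_! \dashv f^* \dashv f_*$, the fiberwise tensor-hom adjunction (from $\V$ closed), and Frobenius reciprocity (which holds because $\V$ is closed with indexed coproducts, by the preceding lemma).

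For the first isomorphism~\eqref{eq:bc-enriched-left}, I would fix an arbitrary $A \in \V^Y$ and compute both $\V^Y\bigl(A, \uV^Y(f_!B, C)\bigr)$ and $\V^Y\bigl(A, f_*\uV^X(B, f^*C)\bigr)$, checking that they agree naturally in $A$. On the right-hand side, one applies $f^* \dashv f_*$ followed by the fiberwise tensor-hom adjunction in $\V^X$ to obtain $\V^X\bigl(f^*A \ten_X B,\, f^*C\bigr)$. On the left-hand side, the fiberwise tensor-hom adjunction in $\V^Y$ gives $\V^Y\bigl(A \ten_Y f_!B,\, C\bigr)$; then Frobenius reciprocity~\eqref{eq:projection} rewrites $A \ten_Y f_!B \iso f_!(f^*A \ten_X B)$, and the adjunction $f_! \dashv f^*$ again yields $\V^X\bigl(f^*A \ten_X B,\, f^*C\bigr)$. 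Both sides match, and naturality in $A$ follows from the naturality of each adjunction isomorphism and of the Frobenius map.

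For the second isomorphism~\eqref{eq:bc-enriched-right}, the argument is analogous but uses the monoidality of $f^*$ instead of Frobenius reciprocity. Again computing $\V^Y(A, -)$ of both sides: the right-hand side gives $\V^X\bigl(f^*A \ten_X f^*C,\, B\bigr)$ after $f^* \dashv f_*$ and the fiberwise tensor-hom, and then the strong monoidal isomorphism $f^*A \ten_X f^*C \iso f^*(A \ten_Y C)$ followed by $f^* \dashv f_*$ in reverse converts this to $\V^Y(A \ten_Y C, f_*B)$; the left-hand side lands in the same place in one step via the fiberwise tensor-hom adjunction in $\V^Y$. The symmetry of $\V$ is used implicitly to identify the two hom-objects $\uV^Y(C, f_*B)$ and $\uV^Y(f_*B, C)$--style directions; since $\V$ is assumed symmetric, the adjunction $-\ten_Y C \dashv \uV^Y(C, -)$ is available with $C$ in the indicated slot.

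The steps above are all formal manipulations with adjunctions, so there is no serious obstacle; the only thing to be careful about is that all the intermediate canonical maps (in particular Frobenius reciprocity and the strong monoidal constraint of $f^*$) are natural in the relevant variables, which is guaranteed by the hypotheses. One could alternatively derive everything equipment-theoretically by observing that the adjunction $f_! \dashv f^*$ (respectively $f^* \dashv f_*$) lifts to a $\V$-enriched adjunction once closedness is available, and~\eqref{eq:bc-enriched-left}--\eqref{eq:bc-enriched-right} are then just the definitions of what it means for an adjunction to be enriched; but the direct Yoneda proof above is shorter and self-contained.
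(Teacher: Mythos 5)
Your proof is correct, but it is not the route the paper takes. You verify the two isomorphisms by a Yoneda argument: testing both sides against an arbitrary $A\in\V^Y$ and chaining through $f_!\adj f^*\adj f_*$, the fiberwise tensor--hom adjunctions, Frobenius reciprocity~\eqref{eq:projection} (for the first isomorphism), and the strong monoidal constraint of $f^*$ (for the second). All of these ingredients are available under the standing hypotheses, and the computation closes up, so the argument is sound. The paper instead dispatches each isomorphism in a single sentence by identifying it as a \emph{mate}: \eqref{eq:bc-enriched-right} is a mate of the strong monoidality isomorphism $f^*(A\ten B)\iso f^*A\ten f^*B$ across the adjunction $f^*\adj f_*$, and \eqref{eq:bc-enriched-left} is, across the chain $f_!\adj f^*\adj f_*$, equivalent to $f^*$ being \emph{closed} monoidal, i.e.\ to the invertibility of~\eqref{eq:intclosedmap}. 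The mate formulation buys two things your version leaves implicit: it pins down \emph{which} canonical map is being inverted (relevant when these isomorphisms are used later), and it records the sharper fact that each isomorphism is \emph{equivalent to}, not merely a consequence of, the corresponding monoidality property of $f^*$. Your closing remark about the adjunctions becoming enriched is essentially the same observation, and unwinding it would reproduce the paper's proof; conversely, your Yoneda computation is more self-contained and makes the role of Frobenius reciprocity explicit.
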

\begin{proof}
  The isomorphism~\eqref{eq:bc-enriched-right}, for any colax/lax
  adjunction $f^*\adj f_*$ between closed monoidal categories, is
  actually equivalent to $f^*$ being strong monoidal (it is a mate of
  $f^*(A\ten B)\iso f^*A\ten f^*B$).  The
  isomorphism~\eqref{eq:bc-enriched-left} is perhaps less well-known,
  but a similar argument shows that given a chain of adjunctions
  $f_!\adj f^*\adj f_*$ between closed monoidal categories with $f^*$
  strong, the isomorphism~\eqref{eq:bc-enriched-left} is equivalent to
  $f^*$ being \emph{closed} monoidal.
\end{proof}

\begin{rmk}
  In fact,~\eqref{eq:bc-enriched-left} makes sense and is true even if
  \V lacks indexed products in general, as an assertion that
  $\uV^Y(f_!B, C)$ has the universal property that $f_*\uV^X(B,f^*C)$
  would have if it existed.  The same is true of the first isomorphism
  in the following proposition.
\end{rmk}

\begin{prop}\label{thm:bc-enriched-ext}
  For any $f\maps X\to Y$ we have natural isomorphisms
  \begin{align*}
    \uV(f_!B,C) &\iso (1\times f)_*\uV(B,C) \\
    \uV(B,f^*C) &\iso (f\times 1)^*\uV(B,C) \\
    \uV(f^*C,B) &\iso (1\times f)^*\uV(C,B) \\
    \uV(C,f_*B) &\iso (f\times 1)_*\uV(C,B).
  \end{align*}
\end{prop}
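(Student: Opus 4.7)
The plan is to verify each of the four isomorphisms by the Yoneda lemma, using the defining universal property of the external hom from Theorem \ref{thm:three-homs}(iii), the compatibility isomorphisms \eqref{eq:canceling-compat-4}, \eqref{eq:canceling-compat-1}--\eqref{eq:canceling-compat-3} between canceling products and base-change functors, and the usual base-change adjunctions $f_!\adj f^*\adj f_*$ together with their product analogues $(1\times f)_!\adj(1\times f)^*\adj(1\times f)_*$ and $(f\times 1)_!\adj(f\times 1)^*\adj(f\times 1)_*$.

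Consider the first isomorphism, taking $f\maps X\to Y$, $B\in\V^X$, and $C\in\V^W$, so that both sides live in $\V^{W\times Y}$. For any test object $A\in\V^{W\times Y}$, the tensor--hom adjunction \eqref{eq:closed-3-adjn} for $\uV$ identifies $\V^{W\times Y}(A,\uV(f_!B,C))$ with $\V^W(A\ten_{[Y]}f_!B,C)$; formula \eqref{eq:canceling-compat-1} rewrites the inner canceling product as $(1\times f)^*A\ten_{[X]}B$; a second use of \eqref{eq:closed-3-adjn} makes this $\V^{W\times X}((1\times f)^*A,\uV(B,C))$; and finally the adjunction $(1\times f)^*\adj(1\times f)_*$ turns it into $\V^{W\times Y}(A,(1\times f)_*\uV(B,C))$, so Yoneda delivers the iso. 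The remaining three isomorphisms follow the same template: the second uses \eqref{eq:canceling-compat-3} and $f_!\adj f^*$ after pushing $f^*C$ through the outer hom; the third uses \eqref{eq:canceling-compat-1} together with $(1\times f)_!\adj(1\times f)^*$; and the fourth uses \eqref{eq:canceling-compat-4} (specialized by taking the second factor to be an identity, giving $f^*(A\ten_{[W]}C)\iso(f\times 1)^*A\ten_{[W]}C$) together with $f^*\adj f_*$.

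An alternative route is to work directly from the formula $\uV(B,C)\iso\uV^{Y\times X}(\pi_X^*B,\pi_Y^*C)$ recorded in Remark \ref{rmk:weaker-homs}, combining pseudofunctoriality of $\V$ (to commute the relevant $(-)^*$ with the projections) with the enriched base-change isomorphisms of Proposition \ref{thm:bc-enriched} applied to $\pi_X$ and $\pi_Y$. This version has the added bonus, noted in the remark preceding the proposition, that the first isomorphism is meaningful as a universal-property assertion and provable even when $\V$ lacks indexed products.

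The main obstacle is purely bookkeeping: one must keep track of which factor of a product like $W\times X\times Y$ each $(-)^*$, $(-)_!$, or $(-)_*$ acts on, and correspondingly choose the correct variant ($f\times 1$ versus $1\times f$, and $\ten_{[X]}$ versus $\ten_{[Y]}$) in each invocation of the compatibility isomorphisms. No genuinely new idea appears beyond the formal machinery already set up in the earlier parts of the section.
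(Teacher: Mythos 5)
Your method is exactly the paper's: the paper disposes of all four isomorphisms in one line as ``mates of the compatibility relations \eqref{eq:canceling-compat-4} and \eqref{eq:canceling-compat-1}--\eqref{eq:canceling-compat-3}'', and your Yoneda argument (conjugating the adjunction \eqref{eq:closed-3-adjn} by the base-change adjunctions) is just that mate construction written out; your first case is worked correctly in full detail, and your closing remark about the first isomorphism surviving the absence of indexed products matches the paper's own remark. One bookkeeping slip, of the very kind you warn against: your assignment of compatibility relations to the second and third isomorphisms is wrong. For $\uV(B,f^*C)\iso(f\times 1)^*\uV(B,C)$ the adjunction $f_!\adj f^*$ produces $f_!(A\ten_{[Z]}B)$, so what you need is the \emph{unnumbered middle} relation $(f\times 1)_!A\ten_{[Y]}B\iso f_!(A\ten_{[Y]}B)$, not \eqref{eq:canceling-compat-3}; and for $\uV(f^*C,B)\iso(1\times f)^*\uV(C,B)$ the test object yields $A\ten_{[X]}f^*C$, which is rewritten by \eqref{eq:canceling-compat-3} (read right to left), not by \eqref{eq:canceling-compat-1} --- the latter involves $f_!$ in the second slot of the canceling product and is already spoken for by the first isomorphism. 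With those two citations corrected the proof goes through as you describe.
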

\begin{proof}
  These are mates of the compatibility
  relations~\eqref{eq:canceling-compat-4}
  and~\eqref{eq:canceling-compat-1}--\eqref{eq:canceling-compat-3} for
  the canceling product.
\end{proof}

\begin{prop}\label{thm:bc-enriched-canceling}
  For any $f\maps X\to Y$ we have isomorphisms
  \begin{align*}
    \uV^{[Y]}(f_!B,C) &\iso \uV^{[X]}(B,(1\times f)^*C)\\
    \uV^{[X]}(f^*B,C) &\iso \uV^{[Y]}(B,(1\times f)_*C)\\
    f^*\uV^{[Z]}(B,C) &\iso \uV^{[Z]}(B,(f\times 1)^*C)\\
    f_*\uV^{[Z]}(B,C) &\iso \uV^{[Z]}(B,(f\times 1)_*C)
  \end{align*}
\end{prop}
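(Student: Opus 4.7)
The approach mirrors the proof of Proposition~\ref{thm:bc-enriched-ext}: each of the four isomorphisms should arise as the adjoint mate of one of the canceling-product compatibility relations~\eqref{eq:canceling-compat-4}--\eqref{eq:canceling-compat-3}, combined with a base-change adjunction. Concretely, I would use the defining adjunction of the canceling hom from Theorem~\ref{thm:three-homs}(ii) together with $f_!\dashv f^*\dashv f_*$ to show that both sides of each isomorphism represent the same \V-valued functor on a fiber $\V^{(-)}$, and then invoke Yoneda. This is essentially the same mate calculus used in Propositions~\ref{thm:bc-enriched} and~\ref{thm:bc-enriched-ext}, just applied one dimension up.

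For the first isomorphism, with $f\maps X\to Y$, $B\in\V^X$, and $C\in\V^{W\times Y}$, I would show that for every test object $A\in\V^W$ there are natural isomorphisms
\begin{align*}
\V^W\!\left(A,\, \uV^{[Y]}(f_!B,C)\right)
  &\;\iso\; \V^{W\times Y}\!\left(A\ten f_!B,\; C\right) \\
  &\;\iso\; \V^{W\times Y}\!\left((1\times f)_!(A\ten B),\; C\right) \\
  &\;\iso\; \V^{W\times X}\!\left(A\ten B,\; (1\times f)^*C\right) \\
  &\;\iso\; \V^W\!\left(A,\; \uV^{[X]}(B,\,(1\times f)^*C)\right),
\end{align*}
using in order: the defining adjunction for $\uV^{[Y]}$, the projection formula $A\ten f_!B \iso (1\times f)_!(A\ten B)$ coming from the assumption that $\otimes$ preserves indexed coproducts, the adjunction $(1\times f)_!\dashv(1\times f)^*$, and finally the defining adjunction for $\uV^{[X]}$. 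Yoneda then produces the desired isomorphism, and naturality in $A$ at each step ensures that the result is the canonical map.

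The remaining three isomorphisms follow the same template, differing only in which compatibility between $\ten$ and base change is invoked and which base-change adjunction is used: for the second, one inserts $A\ten f^*B \iso (1\times f)^*(A\ten B)$ together with $(1\times f)^*\dashv(1\times f)_*$; for the third, $f_!A\ten B \iso (f\times 1)_!(A\ten B)$ with $f_!\dashv f^*$; for the fourth, $f^*A\ten B \iso (f\times 1)^*(A\ten B)$ with $f^*\dashv f_*$. In each case the ``outer'' base change (on the hom) is between $\V^X$ and $\V^Y$ because the argument $B$ lives in a common base, while the ``inner'' base change (on $C$) acts only on the $W$-factor of the product base.

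I do not expect a serious obstacle. The only potential subtlety is bookkeeping: verifying that the isomorphism extracted from Yoneda agrees with the \emph{canonical} map one would write down directly. This is handled exactly as in Proposition~\ref{thm:bc-enriched-ext}, by observing that each step in the chain above is a mate of a canceling-product compatibility, and the composite mate of a composite natural isomorphism coincides with the composite of mates.
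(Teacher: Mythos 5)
Your proposal is correct and takes essentially the same approach as the paper: the paper's proof simply declares these to be the mates under adjunction of the compatibility relations such as $(1\times f)_!(A\ten B)\iso A\ten f_!B$, and your representability-plus-Yoneda chains are precisely the spelled-out verification of those mate computations, with the right compatibility isomorphism and the right base-change adjunction matched to each of the four cases.
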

\begin{proof}
  These are the mates under adjunction of the compatibility relations
  such as $(1\times f)_!(A\ten B) \iso A\ten f_!B$.
\end{proof}

It is a standard result that in any closed symmetric monoidal
category, such as $\V^X$, the ordinary hom-tensor adjunction
isomorphism
\[\V^X(A\ten_X B, C)\iso \V^X\big(A, \uV^X(B,C)\big)\]
enriches to an isomorphism of internal-hom objects in $\V^X$:
\[\uV^X(A\ten_X B, C)\iso \uV^X\big(A, \uV^X(B,C)\big).\]
It follows that in a closed monoidal fibration we also have other
enriched hom-tensor adjunction isomorphisms, such as the `purely
external' isomorphism
\[\uV(A\ten B,C) \iso \uV(A,\uV(B,C)).\]

B\'enabou has used the word \emph{cosmos} for a complete and
cocomplete closed symmetric monoidal category (the ideal situation for
classical enriched category theory).  By analogy, we define:

\begin{defn}
  An \textbf{\S-indexed cosmos} is an \S-indexed closed symmetric
  monoidal category which is \om-complete and \om-cocomplete.
\end{defn}

We have chosen \om as the cardinality of completeness and
cocompleteness in this definition so as to make the notion an
elementary one.  That is, at least if we reformulate indexed
categories using fibrations, then indexed cosmoi are models of a
first-order theory, making them appropriate for foundational contexts
such as category theory over a base topos.  However, many indexed
cosmoi arising in other applications are also fiberwise complete and
cocomplete.

Any cosmos \bV in B\'enabou's sense gives rise to a \nSet-indexed
cosmos $\fam(\bV)$, although not every \nSet-indexed cosmos arises in
this way.  Moreover, we will see in \SS\ref{sec:large-cats} that any
fiberwise cocomplete indexed cosmos gives rise to a cosmos in the
sense of~\cite{street:cauchy-enr}.


We now collect a number of further examples.

\begin{eg}\label{eg:const-mf}
  For any \S, and any ordinary monoidal category \bV, the constant
  pseudofunctor $X \mapsto \bV$ is an \S-indexed monoidal category with
  indexed products and coproducts preserved by $\otimes$.  Limits and
  colimits in \bV\ give fiberwise limits and colimits, and it is
  closed if \bV is.  The fiberwise, external, and canceling products
  and homs are all identical.  We call this a \emph{constant} indexed
  monoidal category and denote it by $\sConst(\S,\bV)$.  Its total
  category $\tot\sConst(\S,\bV)$ is $\S\times\bV$.
\end{eg}

\begin{eg}\label{eg:const-pt}
  As a particular case of the previous example, we may take \S to be
  the terminal category $\star$, in which case
  $\tot\sConst(\star,\bV)\cong\bV$.
\end{eg}

\begin{eg}\label{eg:famv}
  Recall that $\nFam(\S)$ denotes the category of all set-indexed
  families of objects of \S.  For any \S-indexed category \V, there is
  a $\nFam(\S)$-indexed category $\fam(\V)$, whose total category is
  $\nFam(\tV)$ and whose fiber over $(X_i)_{i\in I} \in\nFam(\S)$ is
  $\prod_{i\in I} \V^{X_i}$.  It inherits a monoidal structure,
  closedness, and fiberwise limits and colimits from \V, while it has
  indexed (co)products if and only if \V has both indexed (co)products
  and small fiberwise ones.

  Noting that $\nFam(\star)\cong\nSet$, we see that by applying this
  construction to $\sConst(\star,\bV)$ we reproduce our original
  example $\fam(\bV)$ from \autoref{eg:fam-mf}.
\end{eg}


\begin{eg}
  If \V is an \S-indexed monoidal category and $F:\S'\to\S$ is any
  functor, then there is an $\S'$-indexed monoidal category $F^*\V$
  defined by $(F^*\V)^X = \V^{F(X)}$: the fiberwise monoidal structure
  of \V passes immediately to $F^*\V$.
  (If $F$ does not preserve finite products, then the external product
  of $F^*\V$ may differ from that of \V.)
\end{eg}

\begin{eg}\label{eg:top-mf}
  The category of topological spaces has pullbacks, but is not locally
  cartesian closed, so its self-indexing does not have indexed
  products and is not closed.
  Its subcategories of ``compactly generated spaces'' and
  ``$k$-spaces'' are cartesian closed, but still not locally cartesian
  closed.
  However, the references given
  in~\cite[\SS1.3]{maysig:pht} show that if we take \S to be the
  category of \emph{compactly generated} spaces, and for $X\in\S$ we take
  $\sK^X$ to be the category of \emph{$k$-spaces} over $X$, then we do obtain
  an indexed cosmos \sK.
  Since not every $k$-space is compactly generated, this indexed
  cosmos \sK is larger than $\self(\S)$ (which is not an indexed
  cosmos).
\end{eg}

\begin{eg}\label{eg:ret-mf}
  If \S is a category with finite limits and finite colimits which are
  preserved by pullback, then there is an \S-indexed monoidal category
  $\self_*(\S)$ whose fiber $\self_*(\S)^X$ is the category of
  \emph{sectioned} objects over $X$.  For such $A$ and $B$, the
  \emph{fiberwise smash product} is the following pushout.
  \[\xymatrix{A\sqcup_X B \ar[r]\ar[d] & A\times_X B \ar[d]\\
    X \ar[r] & A \sm_X B.}\]
  This defines a monoidal structure with the unit object $X\to X\sqcup
  X \to X$, which has indexed coproducts preserved by $\sm$.  If \S\
  is locally cartesian closed, it is an indexed cosmos.

  More generally, a similar construction can be applied to any \V with
  fiberwise finite limits and colimits, with the fiberwise colimits
  preserved by $\otimes$.  For instance, starting from
  \autoref{eg:top-mf} we obtain an indexed cosmos $\sK_*$ of sectioned
  topological spaces.
\end{eg}

\begin{eg}\label{eg:ab-mf}
  Let \S be locally cartesian closed with countable colimits, and let
  $\sAb(\S)^X$ be the category of abelian group objects in $\S/X$.
  The countable colimits in \S\ enable us to define free abelian group
  objects.  Thus by~\cite[D5.3.2]{ptj:elephant}, $\sAb(\S)$ has
  indexed products and coproducts and fiberwise finite limits and
  colimits.  The countable colimits also enable us to define a tensor
  product, making $\sAb(\S)$ into an indexed cosmos.

  More generally, if \V is an \S-indexed cartesian cosmos with
  countable fiberwise colimits, we can define an \S-indexed cosmos
  $\sAb(\V)$ whose fiber over $X$ is the category of abelian groups in
  $\V^X$.  For example, from \autoref{eg:top-mf} we obtain an indexed
  cosmos of topological abelian groups.
\end{eg}

\begin{eg}\label{eg:modules-mf}
  Let \V be an \S-indexed cosmos and $R$ a commutative monoid object
  in $\V^1$, so that $\pi_X^*R$ is a commutative monoid in $\V^X$ for
  any $X$.  Let $(\mod R\V)^X$ be the category of objects in $\V^X$
  with a $\pi_X^*R$-action.  This is closed symmetric monoidal with
  finite limits and colimits; its tensor product is the coequalizer of
  the two actions
  \begin{equation}
    A \otimes_X (\pi_X^*R) \otimes_X B \;\toto\; A\otimes_X B.
  \end{equation}
  Since each $f^*$ preserves $\pi^*R$-actions and their limits, tensor
  product, and hom, $\mod R\V$ is an \S-indexed closed symmetric
  monoidal category with finite fiberwise limits and colimits.

  To show that it has indexed products and coproducts, we verify that
  $f_!$ and $f_*$ preserve $\pi^*R$-actions.  Let $f\maps X\to Y$;
  then if $M$ is a $\pi_X^*R$-object in $\V^X$, we have
  \begin{align*}
    \pi_Y^*R \ten_Y f_!M &\iso f_!(f^* \pi_Y^*R \ten_X M)\\
    &\iso f_!(\pi_X^*R \ten_X M)
  \end{align*}
  so we can use the action $\pi_X^*R\ten_X M\to M$ to induce an action
  of $\pi_Y^*R$ on $f_!M$.  On the other hand, since $f_*$ is lax
  monoidal (being the right adjoint of the strong monoidal $f^*$), it
  preserves monoids and their actions; thus $f_*\pi_X^*R$ is a monoid in
  $\V^Y$ and $f_*M$ is a $f_*\pi_X^*R$-object.  However,
  $f_*\pi_X^*R= f_*f^*\pi_Y^*R$ and the unit $\pi_Y^*R \to
  f_*f^*\pi_Y^*R$ is a monoid homomorphism, so this induces a
  $\pi_Y^*R$-action on $M$.  It is straightforward to check
  that these functors $f_!$ and $f_*$ are left and right adjoints to
  $f^*$ on $\pi^*R$-actions.  The Beck-Chevalley condition follows
  from that for \V, so $\mod R\V$ is an indexed cosmos.

  For example, if $(\S,R)$ is a ringed topos, such as the topos of
  sheaves on a scheme, then $\mod R{\sAb(\S)}$ is an indexed cosmos of
  sheaves of modules over the structure sheaf.
\end{eg}

\begin{eg}
  \autoref{eg:ab-mf} also works with the theory of abelian groups
  replaced by any other \emph{finitary commutative theory}
  (see~\cite[3.10]{borceaux:handbook-2}), such as the theory of
  $R$-modules for a fixed commutative ring $R$ (in sets), or the
  theory of pointed sets.  Applying the latter case to $\self(\S)$, we
  recover \autoref{eg:ret-mf}.

  In fact, we can also consider \emph{internal} finitary commutative
  theories in \S, such as modules for an internal commutative ring
  object in \S.  This gives another way to approach
  \autoref{eg:modules-mf}.
\end{eg}

\begin{eg}\label{eg:gactions-mf}
  On the other hand, if \V is an \S-indexed \emph{cartesian} indexed
  cosmos, and $G\in\V^1$ is any monoid (not necessarily commutative),
  then each category of $(\pi_X^* G)$-modules in $\V^X$ is also
  cartesian monoidal, with $(\pi_X^*G)$-action induced on the products
  by the diagonal of $G$.  We denote this indexed monoidal category by
  $G\V$; note that when $G$ is commutative, the underlying indexed
  category of $G\V$ is the same as that of $\mod G \V$.  For instance,
  $G$ could be a topological group in $\sK$, yielding the indexed
  monoidal category of equivariant unsectioned spaces
  from~\cite{maysig:pht}.  Applying \autoref{eg:ret-mf} we obtain the
  sectioned versions.
\end{eg}

\begin{eg}\label{eg:actions}
  For \S with finite products, let $\nGrp(\S)$ denote the category of
  group objects in \S.  For any such group object $G$, let
  $\sAct(\S)^G = G\S$ denote the category of objects with a
  $G$-action.  With fiberwise cartesian monoidal structures (and
  $G$-actions induced by the diagonal), this yields a
  $\nGrp(\S)$-indexed cartesian monoidal category $\sAct(\S)$.

  Since $G\S$ is monadic over \S with monad $(G\times -)$, $\sAct(\S)$
  has any fiberwise limits that \S has, and any fiberwise colimits
  that \S has and that are preserved by $\times$ in each variable.
  And if \S is cartesian closed, then $\sAct(\S)$ is closed, with the
  exponentials in $G\S$ being those of \S with a conjugation action.

\begin{figure}
  \centering
  $\vcenter{\xymatrix@C=4pc{G\times K \ar[r]^{f\times 1_K}\ar[d]_{1_G\times g}
    & H\times K \ar[d]^{1_H\times g}\\
    G\times L\ar[r]_{f\times 1_L} & H\times L}}$
  \hspace{2cm}
  $\vcenter{\xymatrix@C=3pc{G \ar[r]^{\Delta}\ar[d]_{\Delta}
    & G\times G \ar[d]^{1_G\times \Delta}\\
    G\times G\ar[r]_-{\Delta\times 1_G} & G\times G\times G}}$
  \\
  $\vcenter{\xymatrix@C=3pc{G \ar[r]^-{(1_G,f)}\ar[d]_f
    & G\times H \ar[d]^{f\times 1_H}\\
    H\ar[r]_-{\Delta_H} & H\times H}}$
  \caption{Homotopy pullback squares}
  \label{fig:pullbacks}
\end{figure}

  Now if \S has coequalizers preserved by $\times$ in each variable,
  then the restriction functors $f^*:G\to H$ have left adjoints.
  Namely, for a group homomorphism $f:G\to H$ and $X\in G\S$, we
  define $f_! X$ to be the coequalizer of the two maps
  \[ H\times G \times X \;\toto\; H\times X
  \]
  induced by the action of $G$ on $X$, and by $f$ followed by the
  multiplication of $H$.  Similarly, if \S is cartesian closed and has
  equalizers, then $f^*$ has a right adjoint, with $f_* X$ defined to
  be the equalizer of the analogous pair of maps
  \[ X^H \;\toto\; X^{G\times H}.
  \]
  Unfortunately, these adjoints do \emph{not} satisfy the
  Beck-Chevalley condition for all pullback squares in \S.
  However, we do have the Beck-Chevalley condition for three important
  classes of pullback squares, shown in Figure~\ref{fig:pullbacks}.
  (For example, this follows from~\cite[B2.5.11]{ptj:elephant}.)  Note
  that these squares are all pullbacks in any category with finite
  products, whether or not it has all pullbacks.

  Only once or twice in this paper will we use the Beck-Chevalley
  condition for fully general pullback squares; in most cases we will
  only need it for these particular ones, along with their transposes,
  and their cartesian products with fixed objects.
  In~\cite{ps:indexed}, we said that an indexed category had
  \textbf{indexed homotopy coproducts} if its restriction functors
  $f^*$ have left adjoints satisfying the Beck-Chevalley condition for
  these pullback squares.  But since $\sAct(\S)$ is the only example
  we will discuss in this paper\footnote{There are other examples
    discussed in~\cite{ps:indexed}, in which the fiber categories
    $\V^X$ are ``homotopy categories''.  However, these examples are
    usually not very interesting to enrich in, both because they tend
    to lack fiberwise limits and colimits, and because the resulting
    enriched categories would be only ``up to unspecified homotopy'',
    not up to \emph{coherent} homotopy.} which fails to have true
  indexed (co)products, we will not bother to add the adjective
  ``homotopy'' everywhere in this paper.  Instead we will merely
  remark on the one or two places where the fully general
  Beck-Chevalley condition is used.
\end{eg}

\begin{eg}
  If \V is an \S-indexed monoidal category and \bD\ a small category,
  define $(\V^\bD)^X=(\V^X)^\bD$, with the pointwise monoidal
  structure.  Then $\V^\bD$ is an \S-indexed monoidal category, which
  inherits closedness, fiberwise limits and colimits, and indexed
  products and coproducts from \V.

  For example, if $G$ is an ordinary monoid, we can consider it as a
  one-object category and obtain an indexed cosmos $\V^G$ of objects in
  \V\ with a $G$-action.
  On the other hand, taking $\bD=\lDelta\op$ to be the simplex
  category and \S a topos of sheaves, we obtain an indexed cosmos
  $\self(\S)^{\lDelta\op}$ of simplicial sheaves.
\end{eg}

\begin{eg}
  Now let \bD\ be a small \emph{monoidal} category and \V\ a fiberwise
  complete and cocomplete \S-indexed cosmos.  Then the category
  $(\V^X)^\bD$ is also closed monoidal under the \emph{Day convolution
    product}; see~\cite{day:closed}.  This gives a different monoidal
  structure to the indexed category $\V^\bD$.  Since the Day monoidal
  structure and internal-homs are constructed out of limits and
  colimits from those in $\V^X$, and $f^*$ preserves all of these, the
  transition functors $f^*$ are also strong and closed with respect to
  the convolution product.  Thus we have a second indexed cosmos
  structure on $\V^\bD$ in this case.

  For example, if we take \Sigma\ to be the category of finite sets
  and permutations and $\V$ to be the indexed cosmos $\sK_*$ of
  sectioned spaces, then $\sK_*^\Sigma$ is the indexed cosmos of
  symmetric sequences of sectioned spaces.  The spheres give a
  canonical commutative monoid $S$ in $(\sK_*^\Sigma)^1$ with respect
  to the convolution product, so by \autoref{eg:modules-mf} we have an
  indexed cosmos $\mod S{(\sK_*^\Sigma)}$ of \emph{parametrized
    (topological) symmetric spectra}.  We will consider parametrized
  orthogonal spectra in \SS\ref{sec:monoidal}.
\end{eg}

\begin{eg}\label{eg:factsys}
  Suppose \S is equipped with an orthogonal factorization system
  $(\cE,\cM)$ which is \emph{stable} in that \cE is preserved by
  pullback.  Then defining $\cM(\S)^X = \cM/X$ gives an \S-indexed
  monoidal category.  For instance, if \S is a regular category, then
  $(\cE,\cM)$ could be (regular epi, mono), in which case $\cM(\S)^X$
  is the poset of subobjects of $X$.
\end{eg}

\begin{eg}\label{eg:psh}
  Let \S have pullbacks and \bV be an ordinary monoidal category (such
  as \nSet), and define $\sPsh(\S,\bV)^X = \bV^{(\S/X)\op}$, with the
  pointwise monoidal structure.  The functor $f^*$ is defined by
  precomposition with $f:\S/X \to \S/Y$.  Since \S has pullbacks,
  $f:\S/X \to \S/Y$ has a right adjoint, hence so does $f^*$.  The
  Beck-Chevalley condition follows from that for pullbacks in \S, so
  the resulting \S-indexed monoidal category $\sPsh(\S,\bV)$ has
  indexed products.  It also inherits fiberwise limits and colimits
  from \bV.

  If \S is small and \bV is complete and cocomplete, then
  $\sPsh(\S,\bV)$ is closed and has indexed coproducts, hence is an
  \S-indexed cosmos.  If \S is not small, then $\sPsh(\S,\bV)$ may not
  have these properties, but if \S is \emph{locally} small and \bV is
  complete, cocomplete, and closed (i.e.\ a classical B\'enabou
  cosmos), then it has an important class of them.

  Namely, for any $Z\xto{g} X$, consider the \emph{representable}
  presheaf $F_g \in \bV^{(\S/X)\op}$, defined by $F_g(W\xto{h} X) =
  (\S/X)(h,g) \cdot \I$, a copower of copies of the unit object \I of
  \bV.  Now as in~\cite{dl:lim-smallfr}, we define an object of
  $\bV^{(\S/X)\op}$ to be \textbf{small} if it is a small
  (\bV-weighted) colimit of such representables.  Representable
  objects are closed under restriction, since $f^*(F_g) \cong
  F_{f^*g}$; hence so are small objects.  Similarly, representable
  objects are closed under tensor products, since $F_g \otimes_X F_h
  \cong F_{g\times_X h}$; hence so are small objects.

  Now $f^*:\bV^{(\S/Y)\op}\to \bV^{(\S/X)\op}$ has a partial left
  adjoint $f_!$ defined on small objects, which takes them to small
  objects: we define $f_!(F_g) = F_{fg}$ and extend cocontinuously.
  Similarly, all homs $\uV^{?}(A,B)$ exist when $A$ is small: we
  define $\uV^X(F_g,B)(W\xto{h} X) = B(g\times_X h)$ and extend
  cocontinuously, and construct the other homs from this as usual.
\end{eg}

\section{Small \V-categories}
\label{sec:small-cats}

Let \S be a category with finite products and \V an \S-indexed
monoidal category, with corresponding fibration $\tV\to\S$.  In this
section we describe a notion of ``small \V-category'' which directly
generalizes internal categories and small enriched categories.  (In
\SS\ref{sec:large-cats} we will see that there is also another, less
elementary, notion of ``smallness'' for \V-categories.)

We will use the following notation:
\begin{equation}
  \xymatrix{A \ar[r]^{\phi}\ar@{ |-> }[d] &
    B\ar@{ |-> }[d]\\
    X\ar[r]_-{f} & Y}
\end{equation}
to indicate that $\phi:A\to B$ is a morphism in $\tV$ lying over
$f:X\to Y$ in \S.  Of course, to give such a $\phi$ is equivalent to
giving a morphism $A\to f^* B$ in $\V^X$, but using morphisms in $\tV$
often makes commutative diagrams less busy (since there are fewer
$f^*$'s to notate).

\begin{defn}\label{def:small-vcat}
  A \textbf{small \V-category} $A$ consists of:
  \begin{enumerate}
  \item An object $\eA\in \S$.
  \item An object $\uA \in \V^{\eA\times\eA}$.
  \item A morphism in \tV:
    \begin{equation}\label{eq:identities}
      \xymatrix{\I_\eA \ar[r]^{\mathrm{ids}}\ar@{ |-> }[d] &
        \uA \ar@{ |-> }[d]\\
        \eA\ar[r]_-{\Delta} & \eA\times \eA}
    \end{equation}
  \item A morphism in \tV:
    \begin{equation}\label{eq:composition}
      \xymatrix{\uA \ten_\eA \uA
        \ar[r]^-{\mathrm{comp}}\ar@{ |-> }[d] &
        \uA \ar@{ |-> }[d]\\
        \eA\times \eA\times \eA \ar[r]_-{\pi_2} &
        \eA\times \eA}
    \end{equation}
    in which the fiberwise tensor product is over the middle copy of
    \eA.
  \item The following diagrams commute:
    \[\xymatrix{\uA \ten_\eA (\uA\ten_\eA \uA) \ar[r]^\iso
      \ar[d]_{\mathrm{comp}} &
      (\uA\ten_\eA \uA) \ten_\eA \uA \ar[r]^<>(.5){\mathrm{comp}} &
      \uA\ten_\eA \uA\ar[d]^{\mathrm{comp}}\\
      \uA\ten_\eA \uA\ar[rr]_{\mathrm{comp}} &&
      \uA}\]
    \[\xymatrix{\uA \ar[r]^<>(.5)\iso\ar@(dr,l)[drr]_{=} &
      \I_\eA \ten_{\eA} \uA \ar[r]^{\mathrm{ids}} &
      \uA \ten_\eA \uA \ar[d]^{\mathrm{comp}} &
      \uA \ten_\eA \I_\eA \ar[l]_{\mathrm{ids}} &
      \uA \ar[l]_<>(.5)\iso \ar@(dl,r)[dll]^{=}\\
      && \uA }\]
  \end{enumerate}
\end{defn}

\begin{eg}\label{eg:enriched}
  If \bV is an ordinary monoidal category, then a small
  $\fam(\bV)$-category is precisely a small \bV-enriched category.  It
  has a set $\eA$, an $(\eA\times\eA)$-indexed family
  \begin{equation}
    (\uA(a,b))_{(b,a)\in \eA\times\eA}\label{eq:enriched-matrix}
  \end{equation}
  of objects of \bV, an identities map with components $\I\to
  \uA(a,a)$, and a composition map with components
  $\uA(b,c)\ten \uA(a,b) \too \uA(a,c)$.
\end{eg}

\begin{rmk}
  In $\eA\times\eA$, we interpret the \emph{first} copy of \eA\ as the
  \emph{codomain} and the \emph{second} copy as the \emph{domain};
  hence the reversal of order in the subscript
  in~(\ref{eq:enriched-matrix}).  This is so that we end up composing
  morphisms in the usual order.
\end{rmk}

\begin{egs}\label{eg:internal-and-pointed}
  If \S has finite limits, then a small $\self(\bS)$-category is
  precisely a category internal to \S.  Similarly, if \cM is the class
  of monomorphisms in \S as in \autoref{eg:factsys}, then a small
  $\sM(\bS)$-category is precisely an internal poset in \bS.

  When \S satisfies the hypotheses of \autoref{eg:ret-mf}, then a
  small $\self_*(\bS)$-category may be called a \emph{pointed
    \bS-internal category}.  It consists of an \S-internal category
  $A_1 \toto A_0$ together with a morphism $A_0\times A_0 \to A_1$
  assigning to every pair of objects a ``zero morphism'' between them,
  which is preserved by composition on each side.
\end{egs}

\begin{eg}
  For \S and \bV as in \autoref{eg:const-mf}, a small
  $\sConst(\S,\bV)$-category consists of an object of \S together with
  a monoid in \bV.
\end{eg}

\begin{eg}
  A small $\sAb(\S)$-category consists of an internal category
  $A_1\toto A_0$ in \S, together with the structure of an abelian
  group on the object $A_1$ of $\S/(A_0 \times A_0)$ which is
  preserved by composition in each variable.
\end{eg}

\begin{egs}
  A small $\mod R\V$-category is a small \V-category with an action of
  $(\pi_{(\eA\times\eA)})^* R$ on $\uA \in \V^{\eA\times \eA}$, which
  is suitably preserved by the composition in each variable.
  By contrast, a small $G\V$-category is a small \V-category with an
  action of $(\pi_{(\eA\times\eA)})^* G$ on $\uA$, which is preserved
  by the composition in both variables together.
\end{egs}

\begin{defn}\label{eg:discrete-vcats}
  If \V has indexed coproducts preserved by $\ten$, then for any
  object $X\in\S$, there is a small \V-category $\delta X$ with
  $\e(\delta X) = X$ and $\underline{\delta X} = (\Delta_X)_!\I_X$.
  We call it the \textbf{discrete \V-category on $X$}.
\end{defn}


\begin{defn}\label{def:small-vfunctor}
  Let $A$ and $B$ be small \V-categories.  A \textbf{\V-functor}
  $f\maps A\to B$ consists of:
  \begin{enumerate}
  \item A morphism $\e f\maps \eA\to \eB$ in \bS.
  \item A morphism in \tV:
    \[\xymatrix@C=3pc{\uA \ar[r]^{f}\ar@{ |-> }[d] & \uB \ar@{ |-> }[d]\\
      \eA\times \eA\ar[r]_{\e f\times \e f} &  \eB\times \eB.}\]
  \item The following diagrams commute:
    \[\vcenter{\xymatrix@C=3pc{
        \I_{\eA} \ar[r]^-{\mathrm{ids}}\ar[d] & \uA \ar[d]^{f}\\
      \I_{\eB}\ar[r]_-{\mathrm{ids}} & \uB}} \qquad\text{and}\qquad
    \vcenter{\xymatrix@C=3pc{\uA\ten_{\eA} \uA \ar[r]^-{\mathrm{comp}}
      \ar[d]_{f\ten f} & \uA \ar[d]^{f}\\
      \uB\ten_\eB \uB \ar[r]_-{\mathrm{comp}} & \uB.}}\]
  \end{enumerate}
\end{defn}

\begin{egs}
  Evidently a $\fam(\bV)$-functor is a \bV-enriched functor, and a
  $\self(\S)$-functor is an \S-internal functor.  The other specific
  examples are similar.
\end{egs}

\begin{rmk}\label{rmk:disc-functors}
  If $\delta X$ is the discrete \V-category on $X\in\S$ as in
  \autoref{eg:discrete-vcats}, then a \V-functor $f:\delta X\to A$ is
  uniquely determined by its underlying morphism $\e f : X\to \eA$ in
  \S (the unit axiom forces its action on homs to be induced by
  $\mathrm{ids}:\I_{\eA} \to \uA$).

  If \V lacks indexed coproducts, in which case $\delta X$ may not
  exist as a \V-category, it is nevertheless often convenient to abuse
  language and allow the phrase ``\V-functor $\delta X \to A$'' to
  refer simply a morphism $X\to\eA$ in \S.
\end{rmk}

\begin{defn}\label{def:small-vnat}
  Let $f,g\maps A\to B$ be \V-functors.  A \textbf{\V-natural
    transformation} $\alpha\maps f\to g$ consists of:
  \begin{enumerate}
  \item A morphism
    \begin{equation}\label{eq:vnat-datum}
      \vcenter{\xymatrix@C=3pc{\I_{\eA} \ar[r]^{\alpha}\ar@{ |-> }[d]
        & \uB \ar@{ |-> }[d]\\
        \eA \ar[r]_-{(\e g,\e f)} & \eB\times \eB.}}
    \end{equation}
  \item The following diagram commutes.
    \begin{equation}
      \vcenter{\xymatrix@C=3pc{\uA \ar[r]^-\iso\ar[d]_\iso
          & \uA\ten_\eA \I_\eA
        \ar[r]^{f\ten \alpha} &
        \uB\ten_\eB \uB \ar[d]^{\mathrm{comp}}\\
        \I_\eA \ten_\eA \uA \ar[r]_{\alpha\ten g} & \uB\ten_\eB \uB
        \ar[r]_-{\mathrm{comp}} & \uB.}}\label{eq:vnat-axiom}
    \end{equation}
  \end{enumerate}
\end{defn}

\begin{egs}
  A $\self(\bS)$-natural transformation is an \bS-internal natural
  transformation, and a $\fam(\bV)$-natural transformation is a
  \bV-enriched one.  The other specific examples are similar.
\end{egs}

\begin{rmk}\label{rmk:disc-vnat}
  If $\delta X$ is a discrete \V-category as in
  \autoref{eg:discrete-vcats} and $f,g:\delta X\to A$ are determined
  by morphisms $\e f,\e g:X\to\eA$ as in \autoref{rmk:disc-functors},
  then a \V-natural transformation $\al:f\to g$ consists solely of a
  morphism~\eqref{eq:vnat-datum} (the axiom~\eqref{eq:vnat-axiom} is
  automatic).
  Thus, as in \autoref{rmk:disc-functors}, we may abuse language by
  referring to a morphism~\eqref{eq:vnat-datum} as a ``\V-natural
  transformation'' between ``\V-functors $\delta X\to A$'' even when
  \V lacks indexed coproducts.
\end{rmk}

\begin{thm}\label{thm:vcat}
  Small \V-categories, \V-functors, and \V-natural transformations
  form a 2-category $\VCat$.
\end{thm}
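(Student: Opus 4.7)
The plan is to mimic the standard construction of the 2-category $\bV\text-\cCat$ of enriched categories, translating everything to morphisms of $\tV$ lying over specified base maps in $\S$ and using the monoidal pseudofunctoriality of $\V$ (isomorphisms such as $f^*(A\ten B)\iso f^*A\ten f^*B$ and $f^*\I_Y\iso\I_X$) to make the fiberwise structure cohere with changes of base. With that dictionary in hand, each construction is forced by its classical analogue.

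First I would define $\id_A$ to be $(\id_\eA,\id_\uA)$, and the composite of $f\maps A\to B$ and $g\maps B\to C$ to have $\e(gf)=\e g\circ\e f$ and underlying morphism $\uA\xto{f}\uB\xto{g}\uC$ in $\tV$ (automatically lying over $(\e g\,\e f)\times(\e g\,\e f)$). The two $\V$-functor axioms for $gf$ follow by pasting those for $f$ and $g$; strict associativity and unitality of $\V$-functor composition are then immediate. Next I would define the identity 2-cell on $f$ as $\I_\eA \xto{\mathrm{ids}} \uA \xto{f} \uB$ and the vertical composite of $\alpha\maps f\Rightarrow g$ and $\beta\maps g\Rightarrow h$ as
\[ \I_\eA \iso \I_\eA\ten_\eA\I_\eA \xto{\beta\ten\alpha} \uB\ten_\eB\uB \xto{\mathrm{comp}} \uB. \]
Checking the $\V$-naturality axiom~\eqref{eq:vnat-axiom} for this composite is a diagram chase that invokes only~\eqref{eq:vnat-axiom} for $\alpha$ and $\beta$ separately together with associativity of $\mathrm{comp}$ in $B$; the unit and associativity laws for vertical composition follow from those of $B$ in the same way. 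Hence each $\VCat(A,B)$ becomes a category.

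For horizontal composition I would use whiskering. For $f\maps A\to B$ and $\alpha\maps g\Rightarrow h$ with $g,h\maps B\to C$, define $\alpha f\maps gf\Rightarrow hf$ as the cartesian restriction of $\alpha$ along $\e f$ (using $\I_\eA\iso(\e f)^*\I_\eB$); for $\beta\maps p\Rightarrow q$ with $p,q\maps A\to B$ and $g\maps B\to C$, define $g\beta\maps gp\Rightarrow gq$ as $\I_\eA\xto{\beta}\uB\xto{g}\uC$. Horizontal composition $\beta\ast\alpha$ is then the vertical composite of the two whiskerings, and the principal verification is that the two possible orders agree, i.e.\ the middle-four interchange law. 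After unwinding the indexed-monoidal isomorphisms, this reduces to a diagram chase invoking only~\eqref{eq:vnat-axiom} for $\alpha$ and $\beta$, preservation of $\mathrm{comp}$ by the relevant $\V$-functors, and associativity of composition in $C$. The remaining 2-category coherences (associativity of horizontal composition and unit laws for $\id_A$ and $\id_f$ against both compositions) are similarly routine diagram chases; the interchange law is the main, if modest, obstacle.
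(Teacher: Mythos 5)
Your proposal is correct and follows essentially the same route as the paper, which simply records the vertical composite $\I_\eA \cong \I_\eA\ten_\eA\I_\eA \to \uB\ten_\eB\uB \xto{\mathrm{comp}} \uB$, declares composition of functors and whiskering obvious, and leaves the axioms to the reader; you have merely spelled out the routine verifications (interchange being the main one). The only discrepancy is the order of the tensor factors in the vertical composite ($\be\ten\al$ versus the paper's $\al\ten\be$), which is a matter of matching the codomain-first convention for $\uB$ and does not affect the substance of the argument.
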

\begin{proof}
  The composition of \V-functors is obvious.
  The composition of \V-natural transformations $\al:f\to g$ and
  $\be:g\to h$ has components
  \begin{equation}
    \I_\eA \cong \I_\eA \otimes_{\eA} \I_\eA
    \xto{\al\otimes \be} \uB \otimes_{\eB} \uB
    \xto{\mathrm{comp}} \uB.
  \end{equation}
  The ``whiskering'' of a natural transformation on either side by a
  functor is likewise obvious; we leave the verification of the axioms
  to the reader.
\end{proof}

\begin{eg}\label{eg:uf-via-disc}
  When \V has indexed coproducts preserved by $\ten$, the ``discrete
  \V-category'' operation defines a 2-functor $\delta:\S\to\VCat$.
  Thus, any small \V-category $A$ induces an indexed category
  \[ \VCat(\delta-,A) : \S\op\to\nCat.
  \]
  The same construction works formally even if \V lacks indexed
  coproducts, using the conventions of Remarks~\ref{rmk:disc-functors}
  and~\ref{rmk:disc-vnat}.  In \SS\ref{sec:cocuf} we will identify
  this ``underlying indexed category'' with a special case of a
  general ``change of cosmos'' construction.
\end{eg}


\begin{defn}\label{def:smallprof}
  Let $A$ and $B$ be small \V-categories.  A \textbf{\V-profunctor}
  $H:A\hto B$ consists of:
  \begin{enumerate}
  \item An object $\uH \in \V^{\eA\times \eB}$.
  \item Morphisms in \tV:
    \[\vcenter{\xymatrix@C=3pc{\uA \ten_\eA \uH \ar[r]^-{\mathrm{act}}\ar@{ |-> }[d] &
        \uH \ar@{ |-> }[d]\\
        \eA\times\eA\times \eB\ar[r]_-{1\times \pi\times 1} & \eA\times\eB}}
    \qquad\text{and}\quad
    \vcenter{\xymatrix@C=3pc{\uH \ten_\eB \uB \ar[r]^-{\mathrm{act}}\ar@{ |-> }[d] &
        \uH \ar@{ |-> }[d]\\
        \eA\times\eB\times \eB\ar[r]_-{1\times \pi\times 1} &
        \eA\times\eB}}
    \]
  \item The following diagrams commute.
    \[\vcenter{\xymatrix{\uA \ten_\eA \uA \ten_\eA \uH
        \ar[r]^-{1\ten\mathrm{act}}\ar[d]_{\mathrm{comp}\ten 1} &
        \uA\ten_\eA \uH\ar[d]^{\mathrm{act}}\\
        \uA\ten_\eA \uH\ar[r]_-{\mathrm{act}} & \uH}}
    \qquad
    \vcenter{\xymatrix{\uH \ten_\eB \uB \ten_\eB \uB
        \ar[r]^-{\mathrm{act}\otimes 1}\ar[d]_{1\otimes \mathrm{comp}} &
        \uH\ten_\eB \uB\ar[d]^{\mathrm{act}}\\
        \uH\ten_\eB \uB\ar[r]_-{\mathrm{act}} & \uH}}\]
    \[\vcenter{\xymatrix{\uH \ar[r]^-{\mathrm{ids}}\ar[dr]_{=} &
        \uA\ten_\eA\uH \ar[d]^{\mathrm{act}}\\
        & \uH}}
    \qquad
    \vcenter{\xymatrix{\uH \ar[r]^-{\mathrm{ids}}\ar[dr]_{=} &
        \uH\ten_\eB\uB \ar[d]^{\mathrm{act}}\\
        & \uH}}\qquad
    \vcenter{\xymatrix{\uA\ten_\eA \uH \ten_\eB \uB
      \ar[r]^-{\mathrm{act}\otimes 1}\ar[d]_{1\otimes\mathrm{act}} &
      \uH \ten_\eB \uB \ar[d]^{\mathrm{act}}\\
      \uA\ten_\eA \uH \ar[r]_-{\mathrm{act}} &
      \uH.}}
    \]
  \end{enumerate}
  We have an obvious notion of \textbf{morphism of profunctors},
  yielding a category which we denote $\VProf(A,B)$.
\end{defn}

\begin{egs}
  A $\fam(\bV)$-profunctor is equivalent to a \bV-functor $B\op\ten
  A\to \bV$, which is the classical notion of enriched profunctor; see
  for example~\cite{borceaux:handbook-3}.  Similarly,
  $\self(\S)$-profunctors give the usual notion of internal
  profunctor; see e.g.~\cite[\SS B2.7]{ptj:elephant}.
\end{egs}

\begin{eg}\label{eg:unit-prof}
  For any small \V-category $A$, there is a \textbf{unit}
  \V-profunctor $A\maps A\hto A$ defined by the hom-object \uA.  The
  action maps are simply composition in \uA.
\end{eg}

\begin{rmk}\label{rmk:disc-prof}
  If $A$ or $B$ is a discrete \V-category as in
  \autoref{eg:discrete-vcats}, then the corresponding action map is
  necessarily the unitality isomorphism.  In particular, a profunctor
  $\delta X \hto \delta Y$ is simply an object of $\V^{X\times Y}$,
  while a profunctor $\delta X\hto B$ or $A\hto \delta Y$ is simply an
  object of $\V^{X\times \eB}$ or $\V^{\eA\times Y}$ with a one-sided
  action of $B$ or $A$, as appropriate.
  Thus, as in Remarks~\ref{rmk:disc-functors} and~\ref{rmk:disc-vnat},
  we allow ourselves to abuse language by referring to such data as
  ``\V-profunctors'' even if \V lacks indexed coproducts.
\end{rmk}

\begin{eg}
  If $H:A\hto B$ is a \V-profunctor and $f:A'\to A$ and $g:B'\to B$
  are \V-functors, then there is a \V-profunctor $H(g,f):A'\hto B'$
  defined by
  \[\underline{H(g,f)} = (\e g\times \e f)^*\uH.
  \]
  The action maps are defined by composing the action maps of \uH with
  $f$.

  In particular, from any \V-functor $f:A\to B$ and the unit
  profunctor $B:B\hto B$, we obtain \textbf{representable profunctors}
  $B(1,f):A\hto B$ and $B(f,1):B\hto A$.
\end{eg}

Classically, profunctors can be composed with a `tensor product of
functors'.  In the enriched case, the composite of $H\maps B\op \ten
A\to\bV$ and $K\maps C\op\ten B\to \bV$ is the coend
\begin{multline*}
  (H\odot K)(c,a) = \int^B H(b,a) \ten K(c,b)\\
  = \coeq \left(\coprod_{b_1,b_2\in B} H(b_2,a) \ten B(b_1,b_2) \ten K(c,b_1)
                 \toto \coprod_{b\in B} H(b,a) \ten K(c,b) \right).
\end{multline*}

\begin{rmk}
  We write the composite of profunctors in ``diagrammatic'' order, so
  that $H:A\hto B$ and $K:B\hto C$ yield $H\odot K:A\hto C$.
\end{rmk}

By making these colimits indexed or fiberwise, as appropriate, we obtain:

\begin{lem}\label{thm:smallprofcomp}
  If \V has indexed coproducts preserved by $\ten$ and fiberwise
  coequalizers then any two \V-profunctors $H:A\hto B$ and $K:B\hto C$
  have a \textbf{composite} defined by
  \begin{equation}\label{eq:vdist-comp}
    \underline{H\odot K} =
    \coeq \Big(\uH \ten_{[\eB]} \uB \ten_{[\eB]} \uK
    \;\toto\; \uH \ten_{[\eB]} \uK \Big).
  \end{equation}
  Composition of profunctors is associative up to coherent
  isomorphism, with units as in \autoref{eg:unit-prof}, yielding a
  bicategory \VProf.
\end{lem}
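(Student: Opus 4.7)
The plan is to check that the proposed formula for $H\odot K$ gives a well-defined $\V$-profunctor $A\hto C$, then to verify the unit and associativity axioms for the bicategory $\VProf$ directly, all by reducing to standard coend-style arguments within $\V$.

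First, I would clarify the coequalizer. The object $\uH\ten_{[\eB]}\uK$ lies in $\V^{\eA\times\eC}$ by the type of the canceling product, since $\uH\in\V^{\eA\times\eB}$ and $\uK\in\V^{\eB\times\eC}$. The two parallel arrows in the coequalizer are induced by the right $B$-action on $H$ (viewed as a map $\uH\ten_{[\eB]}\uB\to\uH$ in $\V^{\eA\times\eB}$, using that indexed coproducts are preserved by $\ten$) tensored with $\uK$, and symmetrically by the left $B$-action on $K$ tensored with $\uH$. The coequalizer exists by the fiberwise-coequalizer hypothesis. Next I would define the left $A$-action and right $C$-action on $\uH\odot\uK$: I apply $\uA\ten_{[\eA]}(-)$ and $(-)\ten_{[\eC]}\uC$ to the coequalizer diagram; since $\ten$ preserves indexed coproducts and hence fiberwise colimits in each variable, these remain coequalizers, so the $A$-action on $\uH$ and the $C$-action on $\uK$ descend to maps out of $\uH\odot\uK$. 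The associativity, unit, and middle-four interchange axioms of Definition~\ref{def:smallprof} for $H\odot K$ all follow from the corresponding axioms for $H$ and $K$ by the universal property of the coequalizer.

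For the unit laws I would prove $A\odot H\cong H$ by showing that the coequalizer
\[
\uA\ten_{[\eA]}\uA\ten_{[\eA]}\uH \;\toto\; \uA\ten_{[\eA]}\uH
\]
is a split (reflexive) coequalizer with coequalizing object $\uH$. The splitting on the right is supplied by $\mathrm{ids}:\I_{\eA}\to\uA$, and the extra arrow on the left is the action $\uA\ten_{[\eA]}\uH\to\uH$; the split-coequalizer identities then reduce to the unit and associativity axioms for the left $A$-action on $\uH$. The resulting isomorphism $A\odot H\cong H$ is immediately compatible with the $A$- and $B$-actions, and the dual argument yields $H\odot B\cong H$.

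For associativity, given $H:A\hto B$, $K:B\hto C$, $L:C\hto D$, I would form the double coequalizer of
\[
\uH\ten_{[\eB]}\uB\ten_{[\eB]}\uK\ten_{[\eC]}\uC\ten_{[\eC]}\uL
\;\rightrightarrows\;\uH\ten_{[\eB]}\uK\ten_{[\eC]}\uL,
\]
(in fact a coequalizer of four arrows, two for $B$ and two for $C$) and show that both $(H\odot K)\odot L$ and $H\odot(K\odot L)$ are computed by it, by iterated application of the fact that $\ten$ (hence $\ten_{[-]}$) preserves fiberwise coequalizers in each variable. The pentagon and triangle coherence axioms then reduce to the naturality of these isomorphisms in the respective variables, which is automatic. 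The main obstacle is keeping the bookkeeping of the canceling products and the various compatibility isomorphisms from Section~\ref{sec:indexed-moncats} straight during this Fubini-style argument; once one has internalized that $\ten_{[\eB]}$ can be freely moved past fiberwise coequalizers on either side, the rest is the standard construction of the bicategory of modules over a family of monads, and in particular could alternatively be deduced from the general module-bicategory construction applied to the equipment built in~\cite{shulman:frbi}.
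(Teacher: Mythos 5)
Your proposal is correct and takes essentially the same route as the paper, whose proof simply records the verification as straightforward, notes that the compatibility of the canceling product with the relevant coequalizers and the Beck--Chevalley condition are what make the induced $A$- and $C$-actions and the associativity work, and points (as you do in your final sentence) to the formal alternative of applying the $\lMod(\lMat(\V))$ construction of~\cite{shulman:frbi}. The one inaccuracy is your claim that $\ten$ preserving indexed coproducts implies it preserves fiberwise coequalizers in each variable --- these are independent conditions, and the needed preservation should instead be read as part of the lemma's hypothesis (compare the explicit wording of \autoref{thm:vproflargecomp}).
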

\begin{proof}
  Straightforward.  The compatibility conditions for indexed
  coproducts (including the Beck-Chevalley condition), and the
  preservation of fiberwise coequalizers by restriction, are necessary
  to give $\underline{H\odot K}$ actions by $A$ and $C$, and to show
  associativity.
\end{proof}

In fact, this is a formal consequence of known results.  It is shown
in~\cite{shulman:frbi} that if \V has indexed coproducts preserved by
$\ten$, then it gives rise to a ``framed bicategory'', or equivalently
a ``proarrow equipment'' in the sense
of~\cite{wood:proarrows-i,wood:proarrows-ii}, which we may denote
$\lMat(\V)$.  If \V furthermore has fiberwise coequalizers, then we
can form the further equipment $\lMod(\lMat(\V))$ as defined
in~\cite{shulman:frbi} (which according to~\cite{gs:freecocomp} is the
free cocompletion of $\lMat(\V)$ under tight Kleisli objects).  The
equipment $\lMod(\lMat(\V))$ consists of small \V-categories,
\V-functors, and \V-profunctors, and so we might denote it
$\lprof{\V}$.  Its bicategory of proarrows is precisely \VProf.

If \V lacks indexed coproducts, then a construction analogous to that
in~\cite{shulman:frbi} produces instead a \emph{virtual equipment} in
the sense of~\cite{cs:multicats}, and thereby another virtual
equipment $\lprof{\V}$.  We leave the details to the reader, but we
note the following corollaries.  We refer the proofs to the cited
references, but no knowledge of equipments or framed bicategories will
be necessary for the rest of this paper, so the reader is free to take
these results on faith or to re-prove them by hand (which is not
difficult).

\begin{lem}\label{thm:equip}
  For \V-functors $f,g:A\to B$, there are natural bijections
  \begin{align}
    \VCat(A,B)(f,g) &\cong \VProf(A,B)(B(1,f),B(1,g))\\
    &\cong \VProf(B,A)(B(g,1),B(f,1)).
  \end{align}
  When \V has indexed coproducts preserved by $\ten$ and fiberwise
  coequalizers, the first of these is the action on 2-cells
  of a locally fully faithful pseudofunctor $\VCat \to \VProf$, which
  is the identity on objects and sends $f:A\to B$ to $B(1,f):A\hto B$.
  Furthermore, we have an adjunction $B(1,f) \dashv B(f,1)$ in \VProf.
\end{lem}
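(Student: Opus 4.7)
My plan is to address the three claims of the lemma in sequence.

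For the bijections, I would directly unpack \autoref{def:small-vnat}: a \V-natural transformation $\alpha\maps f\to g$ is a morphism $\alpha\maps\I_\eA\to\uB$ in \tV over $(\e g,\ef)$ satisfying~\eqref{eq:vnat-axiom}. Given $\alpha$, I construct the corresponding morphism $\phi_\alpha\maps B(1,f)\to B(1,g)$ by tensoring in $\alpha$ on the appropriate slot and post-composing with $\mathrm{comp}\maps\uB\ten_\eB\uB\to\uB$; the right $B$-action compatibility of $\phi_\alpha$ comes from associativity of $\mathrm{comp}$, while the left $A$-action compatibility is exactly what~\eqref{eq:vnat-axiom} asserts. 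The inverse sends a profunctor morphism $\phi$ to its restriction along $\mathrm{ids}\maps\I_\eB\to\uB$ on the $\eB$-side, and the round-trip identities are diagram chases using the unit axioms of $B$. The second bijection is formally dual, using $B(f,1)$ and the mirror-image actions.

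For the pseudofunctor $\VCat\to\VProf$, the essential content is the compositor $B(1,f)\odot C(1,g)\toiso C(1,gf)$ for composable \V-functors $f\maps A\to B$ and $g\maps B\to C$ (the unit constraint $B(1,1_B)\iso B$ is immediate). This is a form of the co-Yoneda lemma: in the coequalizer~\eqref{eq:vdist-comp} defining the composite, the right $B$-action on $\underline{B(1,f)}$ (given by $\mathrm{comp}$ in $B$) and the left $B$-action on $\underline{C(1,g)}$ (given by $\mathrm{comp}$ in $C$ precomposed with $\e g$) combine via the \V-functor axioms of $g$ to collapse the coequalizer to $\underline{C(1,gf)}$. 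Coherence and functoriality of the compositors then follow from associativity and unitality of the $\mathrm{comp}$ maps, and local fully-faithfulness is the first bijection already established.

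For the adjunction $B(1,f)\dashv B(f,1)$, I would define the unit $A\to B(1,f)\odot B(f,1)$ by identifying the composite with $(\ef\times\ef)^*\uB$ via the same co-Yoneda argument, and then taking the functor's action on homs $\uA\to(\ef\times\ef)^*\uB$. The counit $B(f,1)\odot B(1,f)\to B$ is induced by $\mathrm{comp}\maps\uB\ten_\eB\uB\to\uB$, which descends to the coequalizer precisely because its two parallel maps become equal after postcomposition with $\mathrm{comp}$ (associativity of composition in $B$). The triangle identities then reduce to the unit-preservation axiom of the \V-functor $f$ together with the unit and associativity axioms of $\uB$. The main technical obstacle throughout is the compositor and its underlying co-Yoneda identification, since these require careful manipulation of the coequalizer~\eqref{eq:vdist-comp} and compatibility with base change via~\eqref{eq:canceling-compat-4}--\eqref{eq:canceling-compat-3} and the Beck--Chevalley condition on \V; the remaining verifications are essentially formal once this is in place. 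Alternatively, as remarked before the statement, the lemma is the specialization to $\lprof{\V}$ of the general fact in equipment theory that every tight morphism has a companion and a conjoint which are adjoint, and both assignments extend to pseudofunctors from tight into loose morphisms; see~\cite{shulman:frbi,cs:multicats,gs:freecocomp}.
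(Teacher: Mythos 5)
Your proposal is correct, but it takes a genuinely different route from the paper. The paper's proof is a two-line citation: the bijections are \cite[Cor.~7.22]{cs:multicats} applied in the virtual equipment together with \cite[Prop.~6.2]{cs:multicats} (which identifies 2-cells between units restricted along functors with \V-natural transformations as defined in \autoref{def:small-vnat}), and the pseudofunctoriality and adjunction are \cite[Props.~4.5 and~5.3]{shulman:frbi}. You instead carry out the ``by hand'' verification that the paper explicitly offers the reader as an alternative: the bijection via tensoring $\alpha$ into one slot and composing, with inverse given by restricting along $\mathrm{ids}$; the compositor $B(1,f)\odot C(1,g)\iso C(1,gf)$ via the co-Yoneda collapse of the coequalizer~\eqref{eq:vdist-comp}; and the companion--conjoint adjunction with unit given by $f$'s action on homs and counit induced by $\mathrm{comp}$. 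All of this is sound, and it has the virtue of being self-contained and of making visible that the first bijection needs no hypotheses on \V (your construction never composes profunctors), whereas the paper's citations require the reader to unwind the general equipment machinery. What the paper's approach buys is brevity and the automatic validity of the statement in the virtual setting; what yours buys is an elementary argument a reader unfamiliar with \cite{cs:multicats} can check directly. One small simplification available to you: the coequalizer computing $B(1,f)\odot C(1,g)$ is split (the splitting is induced by $\mathrm{ids}$ of $B$), so the co-Yoneda identification does not really lean on the Beck--Chevalley condition as heavily as your last paragraph suggests; the base-change compatibilities enter only in checking that the resulting isomorphism respects the actions.
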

\begin{proof}
  The first statement is~\cite[Cor.~7.22]{cs:multicats} applied in the
  virtual equipment \VProf, together
  with~\cite[Prop.~6.2]{cs:multicats} to identify the left-hand side
  with \V-natural transformations as we have defined them.  The rest
  is~\cite[Props.~4.5 and~5.3]{shulman:frbi}.
\end{proof}

\begin{lem}
  If \V is an indexed cosmos, then \VProf is \emph{closed} in that we
  have natural isomorphisms
  \[ \VProf(A,C)(H\odot K, L) \cong \VProf(A,B)(H, K\rhd L)
  \cong \VProf(B,C)(K, L \lhd H).
  \]
\end{lem}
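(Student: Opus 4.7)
The plan is to imitate the classical end formula $(K\rhd L)(a,b)=\int_c [K(b,c),L(a,c)]$ for the internal hom of enriched profunctors. By the symmetry of $\V$, the left hom $L\lhd H$ is constructed analogously, with the canceling performed over $\eA$ in place of $\eC$, so I focus on $K\rhd L\in\VProf(A,B)$.

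The underlying object would be defined as the fiberwise equalizer
\[
\underline{K\rhd L} \;:=\; \eq\Bigl(\uV^{[\eC]}(\uK,\uL) \toto \uV^{[\eC]}(\uK\ten_{[\eC]}\uC,\uL)\Bigr)
\]
in $\V^{\eA\times\eB}$, where one arrow is induced by the right $C$-action on $\uK$ via the contravariant functoriality of $\uV^{[\eC]}(-,\uL)$, and the other is induced by the right $C$-action on $\uL$: first transposed via hom-tensor to $\uL\to\uV^{[\eC]}(\uC,\uL)$, then combined with the canonical identification $\uV^{[\eC]}(\uK,\uV^{[\eC]}(\uC,\uL))\iso\uV^{[\eC]}(\uK\ten_{[\eC]}\uC,\uL)$. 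The canceling hom exists by \autoref{thm:three-homs}\ref{item:closed-2} and the equalizer by fiberwise completeness, so the indexed cosmos hypothesis is exactly what we need. I would then equip $\underline{K\rhd L}$ with a left $A$-action induced from the $A$-action on $\uL$ and a right $B$-action induced from the $B$-action on $\uK$, each by naturality of the canceling hom; both descend to the equalizer thanks to the mixed profunctor axioms relating the $A$- and $B$-actions to the $C$-actions, and the profunctor axioms for $K\rhd L$ are inherited from those of $K$ and $L$.

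For the universal property, I would use the hom-tensor adjunction~\eqref{eq:general-closed} (with $Y=1$, $Z=\eB$, $W=\eC$) in the form
\[
\V^{\eA\times\eC}(\uH\ten_{[\eB]}\uK,\uL) \;\iso\; \V^{\eA\times\eB}(\uH,\uV^{[\eC]}(\uK,\uL)).
\]
Under this transpose, the equalizing condition on a morphism $\uH\to\underline{K\rhd L}$ corresponds exactly to $C$-equivariance of the corresponding morphism $\uH\ten_{[\eB]}\uK\to\uL$; compatibility with the right $B$-action on $\uH$ and the $B$-action on $\uK$ corresponds to the coequalizing condition for the parallel pair in~\eqref{eq:vdist-comp}, so the transpose factors uniquely through $\underline{H\odot K}$; and compatibility with the $A$-actions corresponds to $A$-equivariance of the resulting map. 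This exactly describes a morphism of profunctors $H\odot K\to L$, and reversing the correspondence gives the inverse bijection.

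The main obstacle is bookkeeping: tracking the interplay among the fiberwise, canceling, and external products and homs, and checking that each equivariance condition matches across the adjunction. All the compatibilities required follow from \autoref{thm:three-homs} and Propositions~\ref{thm:bc-enriched}--\ref{thm:bc-enriched-canceling}, together with the mate calculus for the indexed cosmos. Alternatively, and more conceptually, this is the expected formal fact that the (virtual) equipment $\lprof{\V}$ of small \V-categories and \V-profunctors inherits closedness from that of the indexed monoidal category $\V$, by a straightforward extension of well-known results for classical enriched equipments.
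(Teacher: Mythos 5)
Your construction is exactly the one the paper gives: the paper defines $\U{K\rhd L}$ as the fiberwise equalizer of $\V^{[\e C]}(\uK,\uL)\toto\V^{[\e C]}(\uK,\V^{[\e C]}(\uC,\uL))$ (canonically the same parallel pair as yours after the hom-tensor transposition you note), and likewise cites the formal equipment-theoretic results of~\cite{shulman:frbi} as the alternative route. Your proposal is correct and takes essentially the same approach, just with the verification of the universal property spelled out in more detail.
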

\proof
  By~\cite[Theorems~14.2 and~11.5]{shulman:frbi}, or as a direct
  construction using fiberwise equalizers:
  \begin{align}
    \U{K\rhd L} &= \mathrm{eq}\Big( \V^{[\e C]}(\uK,\uL)
    \;\toto\; \V^{[\e C]}(\uK,\V^{[\e C]}(\uC,\uL)) \Big)\\
    \U{L\lhd H} &= \mathrm{eq}\Big( \V^{[\e A]}(\uH,\uL)
    \;\toto\; \V^{[\e A]}(\uH,\V^{[\e A]}(\uA,\uL)) \Big).
    \tag*{\endproofbox}
  \end{align}

We also have a couple versions of the Yoneda lemma.

\begin{lem}\label{thm:yoneda1}
  For $f:A\to B$, $H:C\hto B$, and $K:B\hto C$, there are natural bijections
  \begin{align}
    \VProf(B(1,f),H) &\cong \VProf(A,H(f,1)) \mathrlap{\qquad\text{and}}\\
    \VProf(B(f,1),K) &\cong \VProf(A,K(1,f)).
  \end{align}
\end{lem}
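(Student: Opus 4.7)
The plan is to derive both bijections from the adjunction $B(1,f) \dashv B(f,1)$ established in \autoref{thm:equip}, together with the ``sliding'' identifications $H \odot B(f,1) \iso H(f,1)$ for $H:A\hto B$ and dually $B(1,f) \odot K \iso K(1,f)$ for $K:B\hto A$. Granted both, the first bijection follows by the chain
\[
\VProf(B(1,f), H) \;\iso\; \VProf(A\odot B(1,f), H) \;\iso\; \VProf(A, H\odot B(f,1)) \;\iso\; \VProf(A, H(f,1)),
\]
using unitality of $\odot$, the hom-set form of the adjunction, and the sliding identification; the second bijection is entirely dual.

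The sliding identification is a standard fact about companions and conjoints in an equipment, saying that composing a profunctor with a representable performs a restriction. I would verify it directly from the coequalizer formula~\eqref{eq:vdist-comp}. Since $\underline{B(f,1)}$ is $\uB$ restricted along $\e f$ on the appropriate slot, the two parallel arrows in
\[
\underline{H \odot B(f,1)} \;=\; \coeq\Big(\uH \ten_{[\eB]} \uB \ten_{[\eB]} \underline{B(f,1)} \toto \uH \ten_{[\eB]} \underline{B(f,1)}\Big)
\]
become equal after composing with the identities map $\I_\eB \to \uB$ of the unit profunctor $B$, and the coequalizer collapses via the base-change compatibilities~\eqref{eq:canceling-compat-1}--\eqref{eq:canceling-compat-3} of the canceling product to the pullback $(1\times \e f)^*\uH \iso \underline{H(f,1)}$.

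This strategy assumes \V has indexed coproducts and fiberwise coequalizers so that composites of profunctors exist (\autoref{thm:smallprofcomp}). For the general (virtual-equipment) case I would instead mimic the classical enriched Yoneda proof directly. Given $\alpha:B(1,f)\to H$, one forms $\tilde\alpha:A\to H(f,1)$ by ``evaluating at the identity'': precompose $\alpha$ with the map $\uA\to\underline{B(1,f)}$ obtained from the identities map of $B$ restricted along $\e f$, and reinterpret the result via base change as $\uA\to\underline{H(f,1)}$. The inverse uses the right $B$-action on $\uH$ to extend $\beta:A\to H(f,1)$ back to a map on $\underline{B(1,f)}$. Mutual inverse-ness follows from the unit axioms for the $B$-actions on $\uB$ and on $\uH$. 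I expect the main obstacle to be bookkeeping, not conceptual: keeping straight the base-change isomorphisms among $\eA$, $\eB$, and $\eA\times\eB$, and verifying compatibility with the $A$- and $B$-actions. No new ideas beyond those of the classical enriched Yoneda lemma are needed.
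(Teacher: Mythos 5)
Your argument is correct, but it takes a different route from the paper's. The paper disposes of this lemma by citing two general virtual-equipment results of Cruttwell--Shulman (their Theorems~7.16 and~7.20), and when it reproves the large-category analogue (\autoref{thm:yoneda1large}) it factors the bijection through the universal property of the unit profunctor together with the \emph{hom} form of the sliding lemma, $B(1,f)\rhd H\cong H(f,1)$: namely $\VProf(B(1,f),H)\cong\VBimor(A,B(1,f);H)\cong\VProf(A,H(f,1))$. You instead use the adjunction $B(1,f)\dashv B(f,1)$ from \autoref{thm:equip} together with the \emph{composite} form $H\odot B(f,1)\cong H(f,1)$ (\autoref{thm:yoneda2}), and your split-coequalizer verification of that sliding identity is sound: the coequalizer diagram for $H\odot B(f,1)$ is the restriction along $1\times\e f$ of the one computing $H\odot B\cong H$, and restriction preserves the (split) coequalizer. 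The two routes are mates of one another and both are purely formal. What the paper's hom-based route buys is that it never composes two non-representable profunctors, so it transports verbatim to the virtual equipment; your route needs the case split you describe, although the split is milder than you suggest, since every composite appearing in your chain ($A\odot B(1,f)$, $H\odot B(f,1)$, $B(1,f)\odot B(f,1)$) is a composite with a representable and hence exists as a restriction for arbitrary \V, with the adjunction itself available in the weak sense of \autoref{def:large-adj}. Your hands-on fallback is the right shape (evaluate at identities, extend back using the $B$-action, triangle identities from the unit axioms); the only imprecision is that the canonical map $\uA\to\underline{B(1,f)}$ is most directly the action-on-homs of $f$ viewed over $1\times\e f$, which agrees with the ``restricted identities'' description only after invoking the unit axiom.
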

\begin{proof}
  By~\cite[Theorems~7.16 and~7.20]{cs:multicats}.
\end{proof}

\begin{lem}\label{thm:yoneda2}
  If \V is an indexed cosmos, $f\maps A\to B$, $H\maps B\hto C$, and $K: C\hto B$
  then we have canonical isomorphisms
  \begin{equation}
    \begin{array}{rcccll}
      H(1,f) &\cong& H \lhd B(f,1) &\cong& B(1,f)\odot H & \text{and}\\
      K(f,1) &\cong& B(1,f) \rhd K &\cong& K \odot B(f,1).&
    \end{array}
  \end{equation}
\end{lem}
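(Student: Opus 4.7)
The two chains of isomorphisms (for $H$ and for $K$) are dual, obtained from each other by passage to opposite \V-categories (which swaps $\odot$ on the left with $\odot$ on the right, $\lhd$ with $\rhd$, and $B(1,f)$ with $B(f,1)$). So it suffices to establish the first chain
\[
  H(1,f) \;\cong\; H\lhd B(f,1) \;\cong\; B(1,f)\odot H.
\]
In equipment-theoretic terms, $B(1,f)$ and $B(f,1)$ are the companion and conjoint of the \V-functor $f$ in the virtual equipment \lprof{\V}, and both isomorphisms are standard consequences of this structure (cf.\ \cite{shulman:frbi,cs:multicats}).

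To prove $B(1,f)\odot H \cong H(1,f)$ I would compute the coequalizer of Lemma \ref{thm:smallprofcomp} explicitly. The hom-object $\underline{B(1,f)} = (\e f\times 1)^*\uB$ carries a canonical split $\uB$-module structure induced by the identities and composition of $B$ pulled back along $f$; this splitness collapses the defining coequalizer to the pullback $(\e f\times 1)^*\uH = \underline{H(1,f)}$. To prove $H\lhd B(f,1) \cong H(1,f)$ I would argue by Yoneda in \VProf. For any test profunctor $K\colon A\hto C$, the chain of natural bijections
\[
  \VProf(K,\,H\lhd B(f,1))
  \;\cong\;\VProf(B(f,1)\odot K,\,H)
  \;\cong\;\VProf(K,\,B(1,f)\odot H)
  \;\cong\;\VProf(K,\,H(1,f))
\]
arises by combining three ingredients: the first bijection is the defining universal property of $\lhd$ from the preceding (closed-structure) lemma, the second is the pre-composition adjunction $B(f,1)\odot - \dashv B(1,f)\odot -$ obtained from $B(1,f)\dashv B(f,1)$ in \VProf (Lemma \ref{thm:equip}), and the third is the companion identity just established. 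The Yoneda lemma in \VProf then yields $H\lhd B(f,1)\cong H(1,f)$.

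The main obstacle is the sidedness bookkeeping: the conventions for $H(g,f)$, for the operations $\odot$, $\lhd$, $\rhd$, and the complementary roles of $B(1,f)$ (companion) and $B(f,1)$ (conjoint) must be tracked carefully through the argument. In particular, it is the \emph{pre-composition} adjunction (not the post-composition one) arising from $B(1,f)\dashv B(f,1)$ that yields the desired isomorphism in the second step; confusing these directions would give the adjoint identity $\mathrm{Lan}_f\dashv f^*$ backwards. Once the conventions are pinned down the argument is entirely formal, and it adapts without modification to the virtual-equipment setting where \V may lack the indexed coproducts needed to compose all profunctors on the nose.
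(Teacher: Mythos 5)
Your argument is correct, but it takes a different route from the paper, whose entire proof of this lemma is a citation to~\cite[Prop.~5.11]{shulman:frbi}: what you have written is essentially a self-contained reconstruction of the content of that citation (and of~\cite[Theorems~7.16 and~7.20]{cs:multicats}, which the paper invokes for the large-category versions in \autoref{thm:coyoneda} and \autoref{thm:yoneda2large}). Both of your key steps are sound: the identification $B(1,f)\odot H \cong H(1,f)$ via the split (hence absolute) coequalizer collapsing $f^*\uB \ten_{[\eB]} \uH$ using the unit of $B$, and the representability argument $\VProf(M, H\lhd B(f,1)) \cong \VProf(B(f,1)\odot M, H) \cong \VProf(M, B(1,f)\odot H) \cong \VProf(M, H(1,f))$ using the closed structure of \VProf and the adjunction $B(1,f)\dashv B(f,1)$ from \autoref{thm:equip}. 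What your approach buys, beyond self-containedness, is exactly the observation the paper makes after \autoref{thm:yoneda2large}: because the coequalizer is split and the hom is then forced by Yoneda, these particular composites and homs exist without the full indexed-cosmos hypothesis, so the cocompleteness assumptions in the lemma as stated are not really needed for these specific objects.

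Three small points of bookkeeping. First, with the paper's convention $\underline{H(g,f)} = (\e g\times \e f)^*\uH$ and $\uH\in\V^{\eA\times\eB}$ for $H:A\hto B$, the hom-object of $B(1,f)$ is $(1\times\e f)^*\uB$ rather than $(\e f\times 1)^*\uB$; you flag sidedness as the main hazard and this is an instance of it, though it does not affect the structure of the argument. Second, your reduction of the second chain to the first by passing to opposite \V-categories implicitly uses the symmetry of \V (opposites are only defined in \SS\ref{sec:monoidal} using the twist); this is available since \V is an indexed cosmos, but a direct mirror-image argument avoids the dependency. Third, your test profunctor collides notationally with the $K$ of the statement; rename it.
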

\begin{proof}
  By~\cite[Prop.~5.11]{shulman:frbi}.
\end{proof}

In particular, the Yoneda lemma implies the usual sort of hom-object
characterization of adjunctions.

\begin{prop}\label{thm:yoneda-adjn}
  For \V-functors $f\maps A\to B$ and $g\maps B\to A$, there is a
  bijection between
  \begin{enumerate}
  \item Adjunctions $f\adj g$ in \VCat, and
  \item Isomorphisms $B(f,1) \iso A(1,g)$ of \V-profunctors $B\hto A$.
  \end{enumerate}
\end{prop}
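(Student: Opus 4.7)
The plan is to prove this by reducing to the equipment-theoretic statements already established. The two key ingredients are (a) the adjunction $B(1,f)\adj B(f,1)$ in $\VProf$ for every $\V$-functor $f$, together with the locally fully faithful pseudofunctor $\VCat\to\VProf$ sending $f$ to $B(1,f)$, both supplied by \autoref{thm:equip}; and (b) the Yoneda isomorphisms of \autoref{thm:yoneda2}, which let us identify the composite of two representable profunctors with a single representable.

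For the forward direction, if $f\adj g$ in $\VCat$, then applying the pseudofunctor of \autoref{thm:equip} (which, like any pseudofunctor, preserves adjunctions) yields $B(1,f)\adj A(1,g)$ in $\VProf$. Combining this with the canonical adjunction $B(1,f)\adj B(f,1)$ and invoking uniqueness of adjoints up to canonical isomorphism produces the desired iso $B(f,1)\iso A(1,g)$ in $\VProf(B,A)$.

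For the converse, transport the canonical adjunction $B(1,f)\adj B(f,1)$ along a given isomorphism $B(f,1)\iso A(1,g)$ to obtain $B(1,f)\adj A(1,g)$ in $\VProf$. Its unit and counit are 2-cells of the form $A\to B(1,f)\odot A(1,g)$ and $A(1,g)\odot B(1,f)\to B$, and by \autoref{thm:yoneda2} these composites are canonically isomorphic to $A(1,gf)$ and $B(1,fg)$, respectively. The unit and counit thereby become morphisms between representable profunctors, and the local fully faithfulness half of \autoref{thm:equip} transports them uniquely into $\V$-natural transformations $\eta\maps\id_A\to gf$ and $\ep\maps fg\to\id_B$. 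A second application of local fully faithfulness, now to the whiskered 2-cells figuring in the triangle identities in $\VProf$, guarantees that $(\eta,\ep)$ satisfies the triangle identities in $\VCat$, so $f\adj g$.

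The main point requiring care is verifying that these two processes are mutually inverse; this reduces to checking that the Yoneda isomorphisms of \autoref{thm:yoneda2} are compatible with the unit and counit of the canonical adjunction $B(1,f)\adj B(f,1)$, a coherence calculation within $\VProf$ that is routine equipment-theoretic bookkeeping but is the essential content of the proposition beyond the formal manipulations above.
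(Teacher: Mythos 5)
Your argument is correct and follows essentially the same route as the paper's proof: both reduce the statement to Lemma \ref{thm:equip}, using the canonical adjunction $B(1,f)\dashv B(f,1)$ together with uniqueness of right adjoints to translate the isomorphism $B(f,1)\iso A(1,g)$ into data exhibiting $A(1,g)$ as right adjoint to $B(1,f)$, and then local full faithfulness of the representable pseudofunctor to move that data back to $\VCat$. The paper compresses this into a single chain of equivalences of data (which also absorbs the mutual-inverse check you flag at the end), and adds only the remark that for general \V one should phrase the argument in the virtual equipment $\lprof{\V}$ rather than the bicategory \VProf.
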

\begin{proof}
  Since $B(f,1)$ is always right adjoint to $B(1,f)$, to give $B(f,1)
  \iso A(1,g)$ is equivalent to giving data exhibiting $A(1,g)$ as
  right adjoint to $B(1,f)$.  By \autoref{thm:equip}, this is
  equivalent to data exhibiting $g$ as right adjoint to $f$.  (As
  stated, this argument requires the bicategory \VProf to exist, but
  for general \V we can translate it into the language of the virtual
  equipment $\lprof{\V}$.)
\end{proof}

We postpone further development of the theory of \V-categories until
we have a good notion of \emph{large} \V-category, to minimize
repetition.

\section{Indexed \V-categories}
\label{sec:indexed-cats}

As in \SS\ref{sec:small-cats}, let \S be a category with finite
products and \V an \S-indexed monoidal category, with corresponding
fibration $\tV\to\S$.  In this section we describe a notion of
``indexed \V-category'' which directly generalizes classical indexed
categories.

Recall that if $F\maps \bV\to\bW$ is a lax monoidal functor and $A$ is
a \bV-category, there is an induced \bW-category $F_\bullet A$ with
the same objects as $A$ and hom-objects defined by
$\U{F_\bullet A}(x,y) = F(\uA(x,y))$.
Note moreover that if $F\maps \bV\to\bW$ is
a \emph{closed} monoidal functor, then it can be regarded as a fully
faithful \bW-functor $F_\bullet: F_\bullet\bV\to\bW$.

\begin{defn}\label{def:locally-vcat}
  An \textbf{indexed \sV-category} \sA consists of:
  \begin{enumerate}
  \item For each $X\in \S$, a $\sV^X$-enriched category $\sA^X$.
  \item For each $f\maps X\to Y$ in \S, a fully faithful
    $\sV^X$-enriched functor $f^*\maps (f^*)_\bullet \sA^Y\to \sA^X$.
  \item For each $X\xto{f} Y \xto{g} Z$ in \S, a $\sV^X$-natural
    isomorphism
    \[ (gf)^* \toiso f^* \circ (f^*)_\bullet (g^*)\]
    (where we implicitly identify $(f^*)_\bullet (g^*)_\bullet \sA^Z$
    with $((gf)^*)_\bullet \sA^Z$ in the domains of these functors).
  \item For each $X\in\S$, a $\sV^X$-natural isomorphism $(1_X)^*
    \cong 1_{\sA^X}$.
  \item The following diagrams of isomorphisms commute:
    \[ \xymatrix{
      (hgf)^* \ar[r] \ar[d] &
      f^* \circ (f^*)_\bullet ((hg)^*) \ar[d]\\
      (gf)^* \circ ((gf)^*)_\bullet (h^*) \ar[d] &
      f^* \circ (f^*)_\bullet (g^* \circ (g^*)_\bullet (h^*) ) \ar[d]\\
      f^* \circ (f^*)_\bullet (g^*) \circ ((gf)^*)_\bullet (h^*) \ar[r] &
      f^* \circ (f^*)_\bullet (g^*) \circ (f^*)_\bullet (g^*)_\bullet (h^*).
    }\]
    \[\xymatrix{
      (f 1_X)^* \ar[r] \ar@{=}[dr] &
      (1_X)^* \circ ((1_X)^*)_\bullet (f^*) \ar[d] &
      f^* \circ (f^*)_\bullet ((1_Y)^*) \ar[d] &
      (1_Y f)^* \ar[l] \ar@{=}[dl] \\
      & f^* & f^*
    }\]
    (These are the same coherence conditions as for an ordinary indexed
    category or pseudofunctor, with some $(f^*)_\bullet$'s added to
    make things make sense.)
   \end{enumerate}
  An \textbf{indexed \sV-functor} $F\maps \sA\to\sB$ consists of
  $\sV^X$-enriched functors $F^X\maps \sA^X\to \sB^X$ together with
  isomorphisms
  \begin{equation}\label{eq:locally-v-functor-iso}
    F^X \circ f^* \iso f^* \circ (f^*)_\bullet(F^Y)
  \end{equation}
  such that the following diagrams of isomorphisms commute:
  \[ \xymatrix{
    F^X \circ (gf)^* \ar[r] \ar[d] &
    F^X \circ f^* \circ (f^*)_\bullet (g^*) \ar[r] &
    f^* \circ  (f^*)_\bullet (F^Y) \circ (f^*)_\bullet (g^*) \ar[d] \\
    (gf)^* \circ ((gf)^*)_\bullet(F^Z) \ar[d] &&
    f^* \circ (f^*)_\bullet (F^Y \circ g^*) \ar[d]\\
    f^* \circ (f^*)_\bullet(g^*) \circ (f^*)_\bullet (g^*)_\bullet(F^Z) \ar[rr] &&
    f^* \circ (f^*)_\bullet (g^* \circ (g^*)_\bullet(F^Z))
  }\]
  \[ \xymatrix@C=3pc{
    F^X \circ (1_X)^* \ar[dr] \ar[r] &
    (1_X)^* \circ ((1_X)^*)_\bullet(F^X) \ar[d] \\
    & F^X.
  }\]
  Finally, an \textbf{indexed \sV-natural transformation} consists of
  $\sV^X$-natural transformations $\alpha^X : F^X \to G^X$ such that
  the following diagram of isomorphisms commutes:
  \begin{equation}
    \vcenter{\xymatrix{
        F^X \circ f^*\ar[r]\ar[d] &
        f^* \circ (f^*)_\bullet(F^Y) \ar[d]\\
        G^X \circ f^* \ar[r] &
        f^* \circ (f^*)_\bullet(G^Y).
      }}
  \end{equation}
  We denote the resulting 2-category by \iVCAT.
\end{defn}

The required fully-faithfulness of $f^*\maps (f^*)_\bullet \sA^Y\to
\sA^X$ may seem odd.  The following example should help clarify the
intent.

\begin{eg}\label{eg:enriched-as-indexed}
  Let \bV be an ordinary monoidal category and \bA a (large)
  \bV-enriched category.  We define an indexed $\fam(\bV)$-category
  $\fam(\bA)$ where, for a set $X$, $\fam(\bA)^X$ is the
  $\bV^X$-enriched category of $X$-indexed families of objects of \bA.
  That is, for $(A_x)_{x\in X}$ and $(B_x)_{x\in X}$ with each
  $A_x,B_x \in \bA$, the hom-object in $\bV^X$ is defined by
  \begin{equation}
    \U{\fam(\bA)}^X(A,B)_x = \ubA(A_x,B_x).
  \end{equation}
  For a function $f:Y\to X$, the $\bV^Y$-enriched category
  $(f^*)_\bullet \sA$ has hom-objects in $\bV^Y$:
  \begin{equation}
    \U{\fam(\bA)}^X(A,B)_y = \ubA(A_{f(y)},B_{f(y)}).
  \end{equation}
  Finally, the functor $f^*\maps (f^*)_\bullet \sA^X\to \sA^Y$ sends
  an $X$-indexed family $(A_x)_{x\in X}$ to the $Y$-indexed family
  defined by $(f^*A)_y = A_{f(y)}$.  Thus we have
  \begin{equation}
    \U{\fam(\bA)}^Y(f^*A,f^*B)_y = \ubA((f^*A)_y,(f^*B)_y)
    = \ubA(A_{f(y)},B_{f(y)}) = \U{\fam(\bA)}^X(A,B)_{f(y)}
  \end{equation}
  and hence $\U{\fam(\bA)}^Y(f^*A,f^*B) = f^*(\U{\fam(\bA)}^X(A,B))$,
  so that $f^*\maps (f^*)_\bullet \sA^X\to \sA^Y$ is fully faithful.
  This construction defines a 2-functor
  \[ \fam : \CAT{\bV} \to \icat{\fam(\bV)}.
  \]
  It is not essentially surjective, but it induces an equivalence on
  hom-categories (i.e.\ it is bicategorically fully-faithful).  We
  could characterize its essential image by imposing ``stack''
  conditions.
\end{eg}

Here are a few other important examples.

\begin{eg}\label{eg:v-as-indexed}
  If \V is symmetric and closed as in
  \autoref{thm:three-homs}\ref{item:closed-1}, then we can regard it
  as an indexed \V-category, by regarding each closed symmetric
  monoidal category $\V^X$ as enriched over itself, and each closed
  monoidal functor $f^*: \V^Y \to \V^X$ as a fully faithful
  $\V^X$-enriched functor $(f^*)_\bullet \V^Y \to \V^X$.
\end{eg}

\begin{eg}\label{eg:locintern}
  If \S has finite limits, then an indexed $\self(\S)$-category is
  precisely a \emph{locally internal category} over \S, as defined
  in~\cite{penon:locintern} or~\cite[\SS B2.2]{ptj:elephant}, and
  similarly for functors and transformations.
\end{eg}

\begin{eg}\label{eg:pht}
  Indexed \sK-categories and $\sK_*$-categories, where \sK and $\sK_*$
  are the indexed cosmoi from Examples~\ref{eg:top-mf}
  and~\ref{eg:ret-mf}, are ubiquitous throughout~\cite{maysig:pht}.
\end{eg}

\begin{eg}\label{eg:monads}
  Let \sA be an indexed \V-category, and $T$ a monad on \sA in the
  2-category \iVCAT.  This is easily seen to consist of
  \begin{enumerate}
  \item A $\sV^X$-enriched monad $T^X$ on $\sC^X$, for every $X$, and
  \item Isomorphisms $f^* \circ (f^*)_\bullet(T^Y) \iso T^X \circ f^*$
    which simultaneously make $T$ into an indexed \sV-functor and
    $f^*$ into a morphism of $\sV^X$-enriched monads from
    $(f^*)_\bullet(T^Y)$ to $T^X$.
  \end{enumerate}
  We can then form the Eilenberg-Moore object $\sAlg(T)$ in \iVCAT.
  Explicitly, $\sAlg(T)^X = \sAlg(T^X)$, with transition functors
  induced by the above morphisms of monads.

  For instance, if \V is an indexed cosmos and $R$ is a monoid in
  $\sV^1$, then there is a \V-monad on \V defined by $R^X A =
  \pi_X^*R\ten_X A$, whose algebras in $\sV^X$ are $\pi_X^*R$-modules.

  As another example, if \sV is an indexed cartesian cosmos with
  fiberwise countable colimits, then there is a \sV-monad $T$ on \sV
  for which $\sAlg(T)^X$ is the $\sV^X$-enriched category of monoids
  in $\V^X$.  More generally, we can consider algebras for any
  finite-product theory.

  Finally, if \sV is a cartesian cosmos with countable coproducts and
  $C$ is an \emph{operad} in $\sV^1$ as
  in~\cite{may:goils,kelly:operads}, then $\pi_X^*C$ is an operad in
  $\sV^X$, for any $X$.  The induced monad $\widehat{\pi_X^*C}$ on
  $\V^X$ is $\V^X$-enriched, because \sV\ is cartesian, and as $X$
  varies these fit together into a \sV-monad $\Chat$ on \V.  We thus
  obtain a \sV-category of $C$-algebras.  Taking \V to be \sK as in
  \autoref{eg:top-mf}, we obtain \sV-categories of parametrized
  $A_\infty$- and $E_\infty$-spaces.
\end{eg}

\begin{eg}\label{eg:indpsh}
  If \S has finite products and \bV is an ordinary monoidal category
  as in \autoref{eg:psh}, then an indexed $\sPsh(\S,\bV)$-category \sA
  consists of, in particular, for each $X\in\S$, a
  $\bV^{(\S/X)\op}$-enriched category $\sA^X$.  However, a
  $\bV^{(\S/X)\op}$-enriched category is equivalently a functor $\sA^X
  : (\S/X)\op \to \CAT{\bV}$ whose image consists of functors that are
  the identity on objects.

  Moreover, by definition of the functor $f^*:\bV^{(\S/X)\op} \to
  \bV^{(\S/Y)\op}$ and by full-faithfulness of $f^*: (f^*)_\bullet
  \sA^Y \to \sA^X$, for any objects $a,b$ of $\sA^Y$ the hom-object
  $\usA^Y(f)(a,b) \in \bV$ must be isomorphic to
  $\usA^X(1_X)(f^*a,f^*b)$, and similarly for all the category
  structure.  Thus, the \bV-category $\sA^Y(f)$ is completely
  determined by the \bV-category $\sA^X(1_X)$ and the function
  $f^*:\mathrm{ob}(\sA^Y(1_Y)) \to \mathrm{ob}(\sA^X(1_X))$.

  Furthermore, the action on hom-objects of the functors in the image
  of the functor $\sA^X : (\S/X)\op \to \CAT{\bV}$ assemble exactly
  into an extension of this function on objects to a \bV-functor
  $\sA^Y(1_Y) \to \sA^X(1_X)$.  Adding in the pseudofunctoriality
  constraints, we see that an indexed $\sPsh(\S,\bV)$-category is
  equivalently an ordinary pseudofunctor $\S\op \to \CAT{\bV}$.  In
  fact, the 2-category $\icat{\sPsh(\S,\bV)}$ is 2-equivalent to the
  2-category $[\S\op,\CAT{\bV}]$ of pseudofunctors, pseudonatural
  transformations, and modifications.

  In particular, an indexed $\sPsh(\S,\nSet)$-category is merely an
  ordinary \S-indexed category (with locally small fibers).  It is
  known (e.g.~\cite[B2.2.2]{ptj:elephant}) that locally internal
  categories can be identified with indexed categories that are
  ``locally small'' in an indexed sense.  This corresponds to
  identifying indexed $\self(\S)$-categories with indexed
  $\sPsh(\S,\nSet)$-categories whose hom-presheaves $\usA^X(a,b) \in
  \nSet^{(\S/X)\op}$ are all representable.  (The connection with the
  usual definition of ``locally small indexed category'' will be more
  evident in \SS\ref{sec:large-cats}.)
\end{eg}

\begin{rmk}
  In~\cite{gg:fib-rel} indexed \sV-categories are called
  \emph{\sV-enriched fibrations}.  We have chosen a different
  terminology because indexed \sV-categories are more analogous to
  pseudofunctors than to fibrations.  In \SS\ref{sec:v-fibrations} we
  will see a more `fibrational' approach.
\end{rmk}

It is straightforward to define indexed \V-profunctors as well.

\begin{defn}\label{def:ivprof}
  For indexed \V-categories \sA and \sB, an \textbf{indexed
    \V-profunctor} consists of a $\V^X$-enriched profunctor $H^X:\sA^X
  \hto \sB^X$ for each $X$, together with isomorphisms $(f^*)_\bullet
  H^Y \cong H^X$ satisfying evident coherence axioms.  We obtain a
  category $\iVPROF(\sA,\sB)$ of indexed \V-profunctors.
\end{defn}

However, rather than develop the theory of indexed \V-categories any
further here, we will instead move on to a more general notion which
includes both small \V-categories and indexed \V-categories as special
cases.

\section{Large \V-categories}
\label{sec:large-cats}

Continuing with our minimal assumptions that \S has finite products
and \V is an \S-indexed monoidal category, we will now define a
different sort of ``large \V-category'' which more obviously includes
the small ones from \SS\ref{sec:small-cats}.  The relationship of
these large \V-categories with indexed \V-categories is less
immediately clear, but will turn out to be similar to the relationship
between fibrations and classical indexed categories.

\begin{defn}\label{def:large-vcat}
  A \textbf{large \sV-category} \sA\ consists of
  \begin{enumerate}
  \item A collection of objects $x,y,z,\dots$.
  \item For each object $x$, an object $\ex \in \bS$, called its
    \emph{extent}.
  \item For each $x,y$, an object $\usA(x,y)$ of $\V^{\e y\times \e x}$.
  \item For each $x$, a morphism in \tV:
    \begin{equation}\label{eq:identities-large}
      \xymatrix{\I_{\ex} \ar[r]^<>(.5){\mathrm{ids}}\ar@{ |-> }[d] &
        \usA(x,x) \ar@{ |-> }[d]\\
        \e x\ar[r]_<>(.5){\Delta} & \e x\times \e x}
    \end{equation}
  \item For each $x,y,z$, a morphism in \tV:
    \begin{equation}\label{eq:composition-large}
      \xymatrix{(\usA(y,z) \ten_{\ey} \usA(x,y))
        \ar[r]^<>(.5){\mathrm{comp}}\ar@{ |-> }[d] &
        \usA(x,z) \ar@{ |-> }[d]\\
        \ez\times \ey\times \ex \ar[r]_-{\pi_{\ey}} &
        \ez\times \ex}
    \end{equation}
  \item Composition is associative and unital, just as in
    \autoref{def:small-vcat}.
  \end{enumerate}
  If \sA\ and \sB\ are large \sV-categories, a \textbf{\sV-functor}
  $f\maps \sA\to\sB$ consists of:
  \begin{enumerate}
  \item For each object $x$ of \sA, an object $fx$ of \sB\ and a
    morphism $\e f_x\maps \ex \to \e(fx)$ in \bS.
  \item For each pair $x,y$ in \sA, a morphism in \tV:
    \[\xymatrix{\usA(x,y) \ar[rr]^<>(.5){f_{xy}}\ar@{ |-> }[d] &&
      \usB(fx,fy)\ar@{ |-> }[d]\\
      \ey\times \ex\ar[rr]_-{\e f_y \times \e f_x} &&
      \e(fy) \times \e (fx).}\]
  \item Composition and identities are preserved, as in
    \autoref{def:small-vfunctor}.
  \end{enumerate}
  If $f,g\maps \sA\to\sB$ are \sV-functors between large
  \sV-categories, a \textbf{\sV-natural transformation} $\alpha\maps
  f\to g$ consists of, for each object $x$, a morphism
  \[\xymatrix{\I_\ex \ar[rr]^<>(.5){\alpha_x}\ar@{ |-> }[d] &&
    \usB(fx,gx) \ar@{ |-> }[d]\\
    \e x \ar[rr]_-{(\e g_x,\e f_x)} &&
    \e(gx)\times \e(fx)}\]
  such that for all $x,y$ a diagram analogous to that in
  \autoref{def:small-vnat} commutes.
\end{defn}

\begin{lem}
  Large \sV-categories form a 2-category \VCAT, which contains the
  2-category \VCat of small \sV-categories as the full sub-2-category
  of large \V-categories with exactly one object.
\end{lem}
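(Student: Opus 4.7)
The plan is to define the 2-category structure on \VCAT by copying the constructions from the small case (\autoref{thm:vcat}) almost verbatim, and then to observe that the inclusion is essentially tautological.

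First I would give the composition of \V-functors $f \colon \sA \to \sB$ and $g \colon \sB \to \sC$: on an object $x$, set $(gf)x = g(fx)$ with extent map $\e g_{fx} \circ \e f_x$, and on hom-objects take the evident composite $\usA(x,y) \xrightarrow{f_{xy}} \usB(fx,fy) \xrightarrow{g_{fx,fy}} \usC(gfx,gfy)$ in \tV. Identity \V-functors are obvious. Vertical composition of \V-natural transformations $\al \colon f \to g$ and $\be \colon g \to h$ is given componentwise by
\[
\I_{\ex} \;\cong\; \I_{\ex}\ten_{\ex}\I_{\ex} \;\xto{\al_x\ten\be_x}\; \usB(fx,gx)\ten_{\e(gx)} \usB(gx,hx) \;\xto{\mathrm{comp}}\; \usB(fx,hx),
\]
exactly as in \autoref{thm:vcat}, and whiskering is defined by applying a functor componentwise. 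The associativity, unit, naturality and interchange laws are the same diagram chases as in the small case: all of them reduce, object-by-object and pair-of-objects-by-pair-of-objects, to the axioms in \autoref{def:small-vcat}, \autoref{def:small-vfunctor} and \autoref{def:small-vnat}, together with the functoriality of the external product and pullbacks in \tV. I would simply assert these verifications rather than carry them out.

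For the inclusion, I would observe that a large \V-category with a single object $x$ consists of an extent $\ex \in \S$, a single hom-object $\usA(x,x) \in \V^{\ex\times\ex}$, and identity and composition morphisms satisfying associativity and unitality; this is precisely the data of a small \V-category with $\e A = \ex$ and $\uA = \usA(x,x)$. Similarly, a \V-functor between one-object large \V-categories unpacks to a morphism in \S together with a morphism of hom-objects over the appropriate product, and a \V-natural transformation unpacks to a single morphism out of $\I_\ex$, with the axioms matching on the nose. Thus the assignment is a bijection on objects, an isomorphism on hom-categories, and manifestly preserves composition and identities; so it is a full and faithful 2-functor $\VCat \hookrightarrow \VCAT$ whose image is precisely the sub-2-category of one-object large \V-categories.

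The main (and only) ``obstacle'' is bookkeeping: writing the composition morphisms as maps in \tV rather than in individual fibers means that at each step one must invoke the compatibility of the external and fiberwise tensor products with restriction, and thread the projection/diagonal maps in the base through the diagrams. None of this is conceptually difficult, but it is tedious; I would confine these checks to a remark that they are identical, term by term, to the small case already handled in \autoref{thm:vcat}.
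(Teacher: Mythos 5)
Your proposal is correct and matches the paper's approach: the paper's entire proof is ``Just like \autoref{thm:vcat},'' i.e.\ it transfers the small-case constructions verbatim, exactly as you do. Your additional unpacking of the one-object identification is the intended (and routine) content of the second clause of the statement.
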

\begin{proof}
  Just like \autoref{thm:vcat}.
\end{proof}

The most obvious non-small example comes from \V itself.

\begin{eg}
  If \V is closed as in \autoref{thm:three-homs}\ref{item:closed-3},
  then we can define a large \V-category whose objects are the objects
  of \tV (that is, the disjoint union of the objects of the categories
  $\V^X$) and whose hom-objects are the external ones $\uV(x,y)$.
  (Recall from \autoref{rmk:weaker-homs} that the external-homs can be
  defined in terms of the fiberwise ones for arbitrary \V.  This
  explicit definition suffices to make them into a large \V-category.)
\end{eg}

\begin{rmk}\label{rmk:disc-large}
  When \V has indexed coproducts preserved by $\ten$, we have the
  ``discrete \V-category'' 2-functor
  \begin{equation}
    \delta:\S\to \VCat \into \VCAT.
  \end{equation}
  As in Remarks~\ref{rmk:disc-functors} and~\ref{rmk:disc-vnat}, we
  can make sense of ``\V-functors $f:\delta X\to \sA$'' and
  ``\V-natural transformations $\al:f\to g$'' between them, even when
  \V lacks indexed coproducts.  Namely, the former is simply a choice
  of an object $a\in\sA$ and a morphism $\e f_a:X\to\ea$ in \S, while
  the latter is simply a morphism $\I_X \to \usA(a,b)$ in \tV lying
  over $(\e g_b,\e f_a)$.  In particular, any large \V-category \sA
  induces an \S-indexed category
  \[ \VCAT(\delta-,\sA) : \S\op\to\nCat. \]
  Again, in \SS\ref{sec:cocuf} we will identify this with a special
  case of ``change of cosmos''.
\end{rmk}

\begin{defn}
  If \ka is a regular cardinal, then we say a large \V-category \sA is
  \textbf{\ka-small} if its collection of objects is a set of
  cardinality $<\ka$.  We say \sA is \textbf{\infty-small} or
  \textbf{set-small} if its collection of objects is a small set (of
  any cardinality).
\end{defn}

Note that if \S is a small ordinary category, then a \V-category \sA
is set-small if and only if for each $X\in\S$, there is a small set
of objects of \sA having extent $X$.

\begin{rmk}
  If we allow \ka to be an ``arity class'' in the sense
  of~\cite{shulman:exsite}, then according to this definition the
  ``small \V-categories'' of \SS\ref{sec:small-cats} may be called
  ``$\{1\}$-small''.
\end{rmk}

\begin{rmk}\label{rmk:large-as-small}
  Recall from \autoref{eg:famv} that an \S-indexed monoidal category
  \V gives rise to a $\nFam(\S)$-indexed monoidal category $\fam(\V)$.
  It is straightforward to identify set-small \V-categories, as
  defined above, with small $\fam(\V)$-categories, as defined in
  \SS\ref{sec:small-cats}.  (Of course, by allowing ``large families''
  we could include all large \V-categories.)

  In particular, for a classical monoidal category \bV, we can
  identify set-small $\sConst(\star,\bV)$-categories with small
  \bV-enriched categories in the classical sense---while we have
  already observed in \autoref{eg:famv} that $\fam(\bV) =
  \fam(\sConst(\star,\bV))$, and in \autoref{eg:enriched} that small
  $\fam(\bV)$-categories (in the sense of \SS\ref{sec:small-cats}) can
  also be identified with small \bV-enriched categories.

  Thus, although on the one hand small \V-categories are evidently a
  special case of large ones, we could equivalently regard large
  \V-categories as a special case of small ones, by the expedient of
  changing \V.
\end{rmk}

The theory of profunctors from \SS\ref{sec:small-cats} also
generalizes to large \V-categories, as long as we pay appropriate
attention to size questions.

\begin{defn}
  For large \V-categories \sA and \sB, a \textbf{\V-profunctor}
  $H:\sA\hto \sB$ consists of:
  \begin{enumerate}
  \item For each pair of objects $a$ of \sA and $b$ of \sB, an object
    $\uH(b,a) \in \V^{\e a\times \e b}$.
  \item Morphisms in \tV:
    \[\vcenter{\xymatrix@C=3pc{\uA(a,a') \ten_{\e a} \uH(b,a)
        \ar[r]^-{\mathrm{act}}\ar@{ |-> }[d] &
        \uH(b,a') \ar@{ |-> }[d]\\
        \e a'\times\e a\times \e b
        \ar[r]_-{\pi_{\ea}} &
        \e a'\times\e b}}
    \qquad\text{and}\quad
    \vcenter{\xymatrix@C=3pc{\uH(b,a) \ten_\eb \uB(b',b)
        \ar[r]^-{\mathrm{act}}\ar@{ |-> }[d] &
        \uH(b',a) \ar@{ |-> }[d]\\
        \e a\times\e b\times \eb'\ar[r]_-{\pi_{\eb}} &
        \ea\times\eb}}
    \]
  \item The following diagrams commute.
    \[\vcenter{\xymatrix{
        \uA(a',a'') \ten_{\ea'} \uA(a,a') \ten_\ea \uH(b,a)
        \ar[r]^-{1\ten\mathrm{act}}\ar[d]_{\mathrm{comp}\ten 1} &
        \uA(a',a'')\ten_{\ea'} \uH(b,a')\ar[d]^{\mathrm{act}}\\
        \uA(a,a'')\ten_\ea \uH(b,a)\ar[r]_-{\mathrm{act}}
        & \uH(b,a'')}}
    \]
    \[\vcenter{\xymatrix{
        \uH(b,a) \ten_\eb \uB(b',b) \ten_{\eb'} \uB(b'',b')
        \ar[r]^-{\mathrm{act}\otimes 1}\ar[d]_{1\otimes \mathrm{comp}} &
        \uH(b',a)\ten_{\eb'} \uB(b'',b')\ar[d]^{\mathrm{act}}\\
        \uH(b,a)\ten_\eb \uB(b'',b)\ar[r]_-{\mathrm{act}}
        & \uH(b'',a)}}\]
    \[\vcenter{\xymatrix{
        \uH(b,a) \ar[r]^-{\mathrm{ids}}\ar[dr]_{=} &
        \uA(a,a)\ten_\ea\uH(b,a) \ar[d]^{\mathrm{act}}\\
        & \uH(b,a)}}
    \qquad
    \vcenter{\xymatrix{\uH(b,a) \ar[r]^-{\mathrm{ids}}\ar[dr]_{=} &
        \uH(b,a)\ten_\eb\uB(b,b) \ar[d]^{\mathrm{act}}\\
        & \uH(b,a)}}\]
    \[\vcenter{\xymatrix{
        \uA(a,a')\ten_\ea \uH(b,a) \ten_\eb \uB(b',b)
      \ar[r]^-{\mathrm{act}\otimes 1}\ar[d]_{1\otimes\mathrm{act}} &
      \uH(b,a') \ten_\eb \uB(b',b) \ar[d]^{\mathrm{act}}\\
      \uA(a,a')\ten_\ea \uH(b',a) \ar[r]_-{\mathrm{act}} &
      \uH(b',a).}}
    \]
  \end{enumerate}
  We have an obvious notion of \textbf{morphism of profunctors},
  yielding a category which we denote $\VPROF(A,B)$.
\end{defn}

Of course, when \sA and \sB are small, this reduces exactly to
\autoref{def:smallprof}.

\begin{eg}\label{eg:unitprof}
  Any large \V-category \sA has a \emph{unit profunctor} $\sA:\sA\hto
  \sA$ which is made up of its hom-objects.
\end{eg}

\begin{eg}
  If $H:\sA\hto\sB$ is a \V-profunctor and $f:\sA'\to\sA$ and
  $g:\sB'\to\sB$ are \V-functors, then there is a \V-profunctor
  $H(g,f):\sA'\hto\sB'$ defined by
  \[ \U{H(g,f)}(b,a) = (\e f \times \e g)^* \big(\uH(g b, f a)\big) \]
  In particular, for a \V-functor $f:\sA\to\sB$, we have the
  \emph{representable \V-profunctors} $\sB(1,f):\sA\hto \sB$ and
  $\sB(f,1):\sB\hto\sA$.
\end{eg}

To define the composite of \V-profunctors $H:\sA\hto\sB$ and
$K:\sB\hto \sC$ in general, we need \V to have fiberwise colimits of
the size of the collection of objects of \sB.  Therefore, if this
collection is itself large (in the sense of the ambient set theory),
we cannot expect such composites to exist.  For this reason, it is
useful to introduce the following notion.

\begin{defn}
  For \V-profunctors $H:\sA\hto\sB$, $K:\sB\hto \sC$, and $L:\sA\hto
  \sC$, a \textbf{bimorphism} $\phi:H,K\to L$ consists of
  \begin{enumerate}
  \item For each $a,b,c$, a morphism in \tV:
    \begin{equation}
      \vcenter{\xymatrix{
          \uH(b,a) \otimes_{\eb} \uK(c,b) \ar[r]^-{\phi_{abc}}\ar@{ |-> }[d] &
          \uL(c,a)\ar@{ |-> }[d]\\
          \ea \times \eb\times \ec\ar[r] &
          \ea \times \ec
        }}
    \end{equation}
  \item The following diagrams commute:
    \begin{equation}\label{eq:bimorr}
      \vcenter{\xymatrix@C=3pc{
          \uH(b,a) \otimes_{\eb} \uK(c,b) \otimes_{\ec} \usC(c',c)
          \ar[r]^-{1\otimes\mathrm{act}}\ar[d]_{\phi\otimes 1} &
          \uH(b,a) \otimes_{\eb} \uK(c',b)\ar[d]^{\phi}\\
          \uL(c,a) \otimes_{\ec} \usC(c',c)\ar[r]_-{\mathrm{act}} &
          \uL(c',a)
        }}
    \end{equation}
    \begin{equation}\label{eq:bimorl}
      \vcenter{\xymatrix@C=3pc{
          \usA(a,a') \otimes_{\ea} \uH(b,a) \otimes_{\eb} \uK(c,b)
          \ar[r]^-{\mathrm{act}\otimes 1}\ar[d]_{1\otimes\phi} &
          \uH(b,a') \otimes_{\eb} \uK(c,b)\ar[d]^{\phi}\\
          \usA(a,a') \otimes_{\ea} \uL(c,a)\ar[r]_-{\mathrm{act}} &
          \uL(c,a')
        }}
    \end{equation}
    \begin{equation}\label{eq:bimorm}
      \vcenter{\xymatrix@C=3pc{
          \uH(b,a) \otimes_{\eb} \usB(b',b) \otimes_{\eb'} \uK(c,b')
          \ar[r]^-{1\otimes\mathrm{act}}\ar[d]_{\mathrm{act}\otimes 1} &
          \uH(b,a) \otimes_{\eb} \uK(c,b)\ar[d]^\phi\\
          \uH(b',a) \otimes_{\eb'} \uK(c,b')\ar[r]_-{\phi} &
          \uL(c,a)
        }}
    \end{equation}
  \end{enumerate}
\end{defn}

We write $\VBimor(H,K;L)$ for the set of bimorphisms $H,K\to L$.  It
is evident that such bimorphisms can be composed with ordinary
morphisms of profunctors $L\to L'$, $H'\to H$, and $K'\to K$, yielding
a functor
\[ \VBimor(-,-;-) : \VPROF(\sA,\sB)\op \times \VPROF(\sB,\sC)\op
\times \VPROF(\sA,\sC) \too \nSet
\]
We can now characterize some conditions under which composites exist.

\begin{lem}\label{thm:vproflargecomp}
  If \V has indexed coproducts, fiberwise coequalizers, and fiberwise
  \ka-small coproducts, all preserved by $\ten$, and \sB is \ka-small,
  then for any $H:\sA\hto\sB$ and $K:\sB\hto\sC$, the functor
  $\VBimor(H,K;-)$ is representable by some $H\odot K:\sA\hto\sC$.
\end{lem}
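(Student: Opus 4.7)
My plan is to generalize the coequalizer construction of \autoref{thm:smallprofcomp} by summing over the objects of \sB. Specifically, for $a\in\sA$ and $c\in\sC$, I would define $\U{H\odot K}(c,a) \in \V^{\ea\times\ec}$ to be the coequalizer
\begin{equation*}
  \coprod_{b,b' \in \sB} \uH(b,a) \ten_{[\eb]} \usB(b',b) \ten_{[\eb']} \uK(c,b')
  \;\toto\;
  \coprod_{b \in \sB} \uH(b,a) \ten_{[\eb]} \uK(c,b),
\end{equation*}
where the first map uses the right action $\uH(b,a)\ten_{[\eb]} \usB(b',b) \to \uH(b',a)$ of \sB on \uH (landing in the $b'$-summand), and the second uses the left action $\usB(b',b) \ten_{[\eb']} \uK(c,b') \to \uK(c,b)$ of \sB on \uK (landing in the $b$-summand). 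The \ka-smallness of \sB together with fiberwise \ka-small coproducts in \V makes sense of the outer coproducts, the indexed coproducts give the canceling tensors, and the fiberwise coequalizers give the coequalizer.

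Next I would equip $\U{H\odot K}$ with the left \sA-action and right \sC-action needed to make it a \V-profunctor. Since these actions involve only the outside variables $a$ and $c$, they commute with both parallel arrows; the assumption that $\ten$ preserves fiberwise coproducts, indexed coproducts, and fiberwise coequalizers then ensures that these actions descend through the construction. Associativity, unitality, and the middle compatibility axiom for $H\odot K$ should follow by routine diagram chases from the profunctor axioms of $H$ and $K$, the \V-category axioms of \sA, \sB, \sC, and the base-change compatibility relations~\eqref{eq:canceling-compat-1}--\eqref{eq:canceling-compat-3} for the canceling tensor.

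For the universal property, I would argue as follows. Given a bimorphism $\phi\maps H,K\to L$, each component $\phi_{abc}\maps \uH(b,a)\ten_\eb\uK(c,b)\to\uL(c,a)$ lies in \tV over the projection $\pi_\eb\maps\ea\times\eb\times\ec\to\ea\times\ec$, hence by the adjunction $\pi_{\eb!}\adj\pi_\eb^*$ transposes to a morphism $\uH(b,a)\ten_{[\eb]}\uK(c,b)\to\uL(c,a)$ in $\V^{\ea\times\ec}$. Assembling these over $b\in\sB$ yields a single morphism $\coprod_b \uH(b,a)\ten_{[\eb]}\uK(c,b) \to \uL(c,a)$; the middle bimorphism axiom~\eqref{eq:bimorm}, transposed through the same adjunction, is precisely the statement that this assembled morphism coequalizes the parallel pair above, so it factors uniquely through $\U{H\odot K}(c,a)$. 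The outer axioms~\eqref{eq:bimorr} and~\eqref{eq:bimorl} then translate into compatibility of this factorization with the \sA- and \sC-actions, i.e.\ into its being a morphism of \V-profunctors. Conversely, one recovers a bimorphism from a profunctor morphism $H\odot K\to L$ by precomposing with the coprojections and with the units $\uH(b,a)\ten_\eb\uK(c,b)\to\pi_\eb^*(\uH(b,a)\ten_{[\eb]}\uK(c,b))$ of the adjunction.

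I expect the main obstacle to be bookkeeping: keeping track of which of the external, fiberwise, and canceling tensors is in play at each step, repeatedly transposing between morphisms in \tV and morphisms in $\V^{\ea\times\ec}$ via $\pi_{\eb!}\adj\pi_\eb^*$, and verifying where each of the preservation hypotheses on $\ten$ is invoked. The hypotheses of the lemma are precisely those needed for all these steps to go through.
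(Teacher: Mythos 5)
Your construction is exactly the one the paper gives: the same coequalizer of $\coprod_{b,b'} \uH(b,a) \ten_{[\eb]} \usB(b',b) \ten_{[\eb']} \uK(c,b') \toto \coprod_{b} \uH(b,a) \ten_{[\eb]} \uK(c,b)$, with the two maps induced by the right action on $H$ and the left action on $K$. The paper's proof consists of little more than this displayed formula (deferring to the ``straightforward'' small case of \autoref{thm:smallprofcomp}), so your additional discussion of the actions and of the universal property via transposition along $\pi_{\eb!}\dashv\pi_\eb^*$ simply fills in the verifications the paper omits.
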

\proof
  As in \autoref{thm:smallprofcomp}, define $\underline{H\odot
    K}(c,a)$ to be the coequalizer of the parallel pair
  \begin{equation}
    \coprod_{b,b'\in \mathrm{ob}\sB}
    \big(\uH(b,a) \ten_{[\eb]} \usB(b',b) \ten_{[\eb']} \uK(c,b')\big)
    \toto
    \coprod_{b\in\mathrm{ob}\sB}
    \big(\uH(b,a) \ten_{[\eb]} \uK(c,b)\big).
    \tag*{\endproofbox}
  \end{equation}

We also have dual constructions of right and left homs, under
analogous hypotheses.

\begin{lem}\label{thm:vproflargeclosed}
  Suppose \V is symmetric and closed and has indexed products,
  fiberwise equalizers, and fiberwise \ka-small products, and we have
  \V-profunctors $H:\sA\hto \sB$, $K:\sB\hto\sC$, and and $L:\sA\hto
  \sC$.
  \begin{enumerate}
  \item If \sC is \ka-small, then $\VBimor(-,K;L)$ is
    representable by some $K\rhd L : \sA \hto \sB$.\label{item:rhom}
  \item If \sA is \ka-small, then $\VBimor(H,-;L)$ is
    representable by some $L\lhd H: \sB\hto \sC$.\label{item:lhom}
  \end{enumerate}
\end{lem}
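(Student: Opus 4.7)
The plan is to construct $K\rhd L$ in part~\ref{item:rhom} explicitly as an iterated fiberwise limit, precisely dually to the coequalizer-of-coproducts construction of $H\odot K$ in the proof of \autoref{thm:vproflargecomp}. Part~\ref{item:lhom} will then follow by an entirely symmetric argument (using the symmetry of \V to exchange the roles of the left and right sides), so I focus on part~\ref{item:rhom}.

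First, for each triple $a\in\sA$, $b\in\sB$, $c\in\sC$, I use closedness of \V together with indexed products (needed to form the canceling hom) to produce an object $X_{abc}\in\V^{\e a\times \e b}$ which corepresents the data of a single component $\phi_{abc}\maps\uH(b,a)\ten_{\e b}\uK(c,b)\to\uL(c,a)$. Explicitly, I take
\[ X_{abc} \;=\; \uV^{\e b,[\e c]}\bigl(\uK(c,b),\;\pi_{\e b}^*\uL(c,a)\bigr) \;\in\;\V^{\e a\times \e b},\]
so that~\eqref{eq:general-closed} gives a natural bijection between morphisms $\uH(b,a)\to X_{abc}$ in $\V^{\e a\times\e b}$ and morphisms $\phi_{abc}$ of the required form.

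Since $\sC$ has fewer than $\ka$ objects and \V has fiberwise $\ka$-small products, the fiberwise product $\prod_{c\in\mathrm{ob}\sC} X_{abc}$ exists in $\V^{\e a\times\e b}$. I then define
\[\U{K\rhd L}(b,a) \;=\; \eq\Bigl(\textstyle\prod_{c} X_{abc}\;\rightrightarrows\;\prod_{c,c'} Y_{abcc'}\Bigr),\]
where $Y_{abcc'}\in\V^{\e a\times\e b}$ is the analogous object corepresenting the common domain $\uH(b,a)\ten_{\e b}\uK(c,b)\ten_{\e c}\usC(c',c)\to\uL(c',a)$ of~\eqref{eq:bimorr}, and the two parallel arrows are the adjunct transposes of the two sides of that diagram. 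The required double product exists by regularity of $\ka$, and the equalizer exists by the fiberwise equalizer hypothesis.

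I next equip $\U{K\rhd L}$ with a left $\sA$-action and right $\sB$-action obtained as the adjunct transposes of~\eqref{eq:bimorl} (using the $\sA$-action on $\uL$) and~\eqref{eq:bimorm} (using the $\sB$-actions on $\uK$ and $\uL$), respectively. Their compatibility with the defining equalizer, with each other, and with the associativity and unitality axioms all reduce to naturality checks that go through because restriction and indexed-product functors preserve the fiberwise limits involved and commute with the hom-construction via Propositions~\ref{thm:bc-enriched} and~\ref{thm:bc-enriched-canceling}. The universal property is then a direct unwinding: a morphism of $\sA$-$\sB$-profunctors $H\to K\rhd L$ corresponds, via the defining equalizer and the adjunctions building $X_{abc}$, to a family $(\phi_{abc})$ satisfying~\eqref{eq:bimorr}, with its two bimodule compatibility conditions matching~\eqref{eq:bimorl} and~\eqref{eq:bimorm}. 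The main obstacle is not conceptual but rather the diagram bookkeeping needed to equip $\U{K\rhd L}$ with its bimodule structure and verify the coherence axioms; these checks are mechanical but tedious, and the proof of \autoref{thm:vproflargecomp} provides the template.
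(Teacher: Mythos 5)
Your construction coincides with the paper's: the object $X_{abc}=\uV^{\eb,[\ec]}\bigl(\uK(c,b),\pi_{\eb}^*\uL(c,a)\bigr)$ is, by the defining formula $\uV^{Y,[W]}(B,C)\iso\pi_{W*}\uV^{X\times Y\times Z\times W}(\pi_X^*B,\pi_Z^*C)$, literally the same object as the canceling hom $\V^{[\e c]}\bigl(\uK(c,b),\uL(c,a)\bigr)$ appearing in the paper's equalizer~\eqref{eq:largerhd}, and your equalizer of \ka-small fiberwise products is exactly that displayed parallel pair, with part~\ref{item:lhom} handled by the mirror-image formula just as in the paper. The only quibble is your parenthetical that the right $\sB$-action on $K\rhd L$ uses ``the $\sB$-actions on $\uK$ and $\uL$'' --- $L:\sA\hto\sC$ carries no $\sB$-action, and that structure map comes from the left $\sB$-action on $K$ alone --- but this slip does not affect the construction.
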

\proof
  We define $\U{K\rhd L}(b,a)$ to be the fiberwise equalizer of the
  following parallel pair of maps between fiberwise products:
  \begin{equation}\label{eq:largerhd}
    \prod_{c\in\mathrm{ob}\sC}
    \V^{[\e c]}\big(\uK(c,b),\uL(c,a)\big)
    \;\toto\;
    \prod_{c,c'\in\mathrm{ob}\sC}
    \V^{[\e c]}\Big(\uK(c,b),
    \V^{[\e c']}\big(\usC(c',c),\uL(c',a)\big)\Big)
  \end{equation}
  and similarly for $\U{L\lhd H}(c,b)$ we use:
  \begin{equation}
    \prod_{a\in\mathrm{ob}\sA} \V^{[\e a]}\big(\uH(b,a),\uL(c,a)\big)
    \;\toto\;
    \prod_{a,a'\in\mathrm{ob}\sA}
    \V^{[\e a]}\Big(\uH(b,a),
    \V^{[\e a']}\big(\usA(a,a'),\uL(c,a')\big)\Big).
    \tag*{\endproofbox}
  \end{equation}

\begin{rmk}\label{rmk:disc-homs}
  If \sC is a discrete \V-category $\delta Z$, then its actions on $K$
  and $L$ are trivial, and so the two morphisms in~\eqref{eq:largerhd}
  are in fact equal.  Thus, in this case we have $\U{K\rhd L}(b,a) =
  \V^{[Z]}(\uK(\star,b),\uL(\star,a))$.  Of course, similar
  observations hold when \sA is discrete, or in
  \autoref{thm:vproflargecomp} when \sB is discrete.
\end{rmk}

\begin{eg}\label{thm:psh-comphom}
  Recall from \autoref{eg:psh} that when $\V=\sPsh(\S,\bV)$, with \S a
  locally small category with pullbacks and \bV a classical cosmos, we
  do not have all indexed coproducts and homs, but only those
  satisfying some smallness conditions.  Let us say that a
  $\sPsh(\S,\bV)$-category \sA is \textbf{locally small} if each
  hom-object $\usA(a,a')$ is small in the sense of \autoref{eg:psh},
  and likewise that a $\sPsh(\S,\bV)$-profunctor $H:\sA\hto\sB$ is
  \textbf{locally small} if each $\uH(b,a)$ is small.  Then the proof
  of \autoref{thm:vproflargecomp} goes through as long as we assume
  additionally that \sB, $H$, and $K$ are locally small.  Likewise,
  \autoref{thm:vproflargeclosed}\ref{item:rhom} holds when \sC and $K$
  are locally small, and~\ref{item:lhom} holds when \sA and $H$ are
  locally small.

  The $\sPsh(\S,\bV)$-categories that arise ``in nature'' are
  generally not locally small.  However, we will see in
  \SS\ref{sec:indexed-limits} that locally small
  $\sPsh(\S,\bV)$-categories and profunctors are useful for describing
  weighted limits and colimits.
\end{eg}

These representing objects actually have a stronger universal
property, which is necessary (for instance) to show that they are
associative.  To express this property, we first observe that for
\V-profunctors $H_i:\sA_i \hto \sA_{i+1}$ and $K:\sA_0 \hto \sA_n$,
there is a more general notion of a \textbf{multimorphism}
$\phi:H_1,\dots,H_n \to K$.  This has components
\begin{equation}
  \vcenter{\xymatrix@C=4pc{
      \uH_1(a_1,a_0) \otimes_{\e a_1} \cdots \otimes_{\e a_n} \uH_n(a_n,a_{n-1})
      \ar[r]^-{\phi_{a_0,\dots,a_n}}\ar@{ |-> }[d] &
      \uK(a_n,a_0)\ar@{ |-> }[d]\\
      \ea_0 \times \cdots \times \ea_n\ar[r] &
      \ea_0 \times \ea_n
      }}
\end{equation}
satisfying axioms similar to~\eqref{eq:bimorr} and~\eqref{eq:bimorl}
for the actions of $\sA_0$ and $\sA_n$, and axioms similar
to~\eqref{eq:bimorm} for the actions of $\sA_1$ through $\sA_{n-1}$.
In the case $n=0$, the components of a multimorphism $\phi:()\to K$
are
\begin{equation}
  \vcenter{\xymatrix@C=4pc{
      \I_{\ea}
      \ar[r]^-{\phi_{a}}\ar@{ |-> }[d] &
      \uK(a,a)\ar@{ |-> }[d]\\
      \ea \ar[r]_-{\Delta} &
      \ea \times \ea
      }}
\end{equation}
and its only axioms are of the form~\eqref{eq:bimorr}
and~\eqref{eq:bimorl}.  We write $\Vmmor(H_1,\dots,H_n;K)$ for the set
of multimorphisms $H_1,\dots,H_n \to K$.  Multimorphisms can obviously
also be composed with ordinary morphisms, and also with each other in
a multicategory-like way, e.g.\ given $\phi:H_1,H_2 \to K_1$ and
$\psi:K_1,K_2 \to L$ we have $\psi(\phi,1):H_1,H_2,K_2 \to L$.

Formally, multimorphisms are the 2-cells of another virtual equipment,
which we denote \Lprof{\V}.  The following
is~\cite[Def.~5.1]{cs:multicats}.

\begin{defn}\label{def:composite}
  A \textbf{composite} of \V-profunctors $H:\sA\hto \sB$ and
  $K:\sB\hto\sC$ is a \V-profunctor $H\odot K : \sA \hto \sC$ and a
  bimorphism $\phi:H,K\to H\odot K$ such that composing with $\phi$
  induces bijections
  \begin{multline}
    \Vmmor(L_1,\dots,L_n,H\odot K,M_1,\dots,M_m) \toiso\\
    \Vmmor(L_1,\dots,L_n,H,K,M_1,\dots,M_m)
  \end{multline}
  for all well-typed $L_i,M_j$.
\end{defn}

When $n=m=0$, this just says that $H\odot K$ represents the functor
$\VBimor(H,K;-)$, so being a composite is a strengthening of that
universal property.  It is easy to verify that the proof of
\autoref{thm:vproflargecomp} actually constructs composites in the
sense of \autoref{def:composite}.

\begin{defn}
  A \textbf{left hom} of \V-profunctors $H:\sA\hto\sB$ and
  $K:\sA\hto\sC$ is a \V-profunctor $K\lhd H:\sB\hto \sC$ and a
  bimorphism $\phi:H,(K\lhd H) \to K$ such that composing with $\phi$
  induces bijections
  \[ \Vmmor(L_1,\dots,L_n;K\lhd H) \toiso
  \Vmmor(H,L_1,\dots,L_n;K)
  \]
  for all well-typed $L_i$.
  Similarly, a \textbf{right hom} of $H:\sB\hto\sC$ and
  $K:\sA\hto\sC$ is a \V-profunctor $H\rhd K:\sA\hto \sB$ and a
  bimorphism $\phi:(H\rhd K),H \to K$ which induces bijections
  \[ \Vmmor(L_1,\dots,L_n;H\rhd K) \toiso
  \Vmmor(L_1,\dots,L_n,H;K).
  \]
\end{defn}

Again, when $n=1$ these definitions reproduce the universal property
stated in \autoref{thm:vproflargeclosed}, whereas the proof of that
lemma produces objects with this stronger universal property.

\begin{rmk}\label{thm:objwise-comp}
  The omitted verification in Lemmas~\ref{thm:vproflargecomp}
  and~\ref{thm:vproflargeclosed} that the given objects do, in fact,
  form a profunctor with the desired property, applies verbatim to
  show that if $H:\sA\hto\sB$ and $K:\sB\hto\sC$ are \V-profunctors
  such that the composite $H(1,a) \odot K(c,1)$ exists for all
  $a\in\sA$ and $c\in\sC$, then the composite $H\odot K$ also exists.
  There is a similar result for homs.
\end{rmk}


We also note that the unit profunctors $\sA:\sA\hto\sA$ from
\autoref{eg:unitprof} have an analogous universal property.

\begin{lem}\label{thm:large-units}
  For any large \V-category \sA, there is a multimorphism $\phi:() \to
  \sA$ such that composing with $\phi$ induces bijections
  \[ \Vmmor(\vec L,\sA,\vec M;N)\toiso \Vmmor(\vec L,\vec M;N)
  \]
  for all well-typed $\vec L = L_1,\dots,L_n$ and $\vec M =
  M_1,\dots,M_m$ and $N$.
\end{lem}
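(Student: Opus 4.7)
The plan is to take $\phi:()\to\sA$ to have components $\mathrm{ids}_a:\I_{\ea}\to\usA(a,a)$ drawn from the \V-category structure of \sA. Since the left and right actions of the unit \V-profunctor $\sA:\sA\hto\sA$ on itself are by definition just composition in \sA, the multimorphism axioms for $\phi$ collapse immediately into the unit axioms of composition in the \V-category \sA.

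For the bijection, I would construct an inverse $\psi\mapsto\tilde\psi$ by absorbing the new $\sA$-slot into an adjacent profunctor via a \V-action. If $n\geq 1$, then $L_n:\sA_{n-1}\hto\sA$ carries a right $\sA$-action, and I set
\[ \tilde\psi(\dots, L_n, \usA(b,a), M_1, \dots) \;=\; \psi(\dots, L_n\cdot\usA(b,a), M_1, \dots), \]
where $L_n\cdot\usA(b,a)$ denotes the image under this action. If $n=0$ and $m\geq 1$, I dually use the left $\sA$-action on $M_1$; when both $n,m\geq 1$ the two recipes coincide by axiom~\eqref{eq:bimorm} for $\psi$ at its $\sA_n$-junction, so $\tilde\psi$ is unambiguous. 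In the corner case $n=m=0$, $N$ is itself of type $\sA\hto\sA$ and I use either of its own $\sA$-actions (the two agreeing by the $0$-ary multimorphism axioms for $\psi$). Checking that $\tilde\psi$ is a multimorphism is then bookkeeping: the axioms at all the old junctions are inherited from $\psi$, the new axiom at the $\sA_n$-junction of $\tilde\psi$ (between $L_n$ and the inserted $\sA$-slot) reduces to associativity of the right $\sA$-action on $L_n$, and the new axiom at the $\sA_{n+1}$-junction reduces to axiom~\eqref{eq:bimorm} for $\psi$ at its own $\sA_n$-junction.

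Finally I would verify the two inverse identities, both of which amount to unitality. Composing $\tilde\psi$ with $\phi$ substitutes $\mathrm{ids}$ into the $\sA$-slot, and the unit law $\mathrm{act}\circ(1\otimes\mathrm{ids})=1$ for the right $\sA$-action on $L_n$ (or the $M_1$- or $N$-action in the other cases) gives $\tilde\psi\circ\phi=\psi$. For the converse, given an arbitrary $\tilde\psi:\vec L,\sA,\vec M\to N$, I apply axiom~\eqref{eq:bimorm} for $\tilde\psi$ itself at the $\sA_n$-junction with one of the three $\sA$-entries specialised to an identity; together with unitality of composition in \sA this yields
\[ \tilde\psi(\dots, L_n, \usA(b,a), \dots) \;=\; \tilde\psi(\dots, L_n\cdot\usA(b,a),\, \mathrm{ids}_b, \dots), \]
which is exactly the statement that the extension of $\tilde\psi\circ\phi$ reproduces $\tilde\psi$. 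The main obstacle is picking the right specialisation of~\eqref{eq:bimorm} so that the unit laws collapse the full three-fold $\sA$-tensor to the desired two-term equality; once this is seen, the several boundary cases ($n=0$, $m=0$, and $n=m=0$) follow by symmetric variants of the same argument, with the $n=m=0$ case using the fact that the two $\sA$-actions on $N$ agree on $\psi$ precisely because $\psi$ satisfies the $0$-ary multimorphism axioms.
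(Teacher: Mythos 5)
Your proof is correct. Note that the paper's own proof of this lemma is a bare citation of \cite[Prop.~5.5]{cs:multicats}, which asserts that unit proarrows exist in any virtual equipment arising from the $\mathbb{M}\mathsf{od}$ construction; what you have written is precisely the content of that citation unfolded in the concrete setting of \Lprof{\V}. All the key points are in place: the nullary multimorphism is built from $\mathrm{ids}$ and its axioms collapse to the unit laws of \sA; the inverse bijection absorbs the inserted $\sA$-slot into $L_n$ (or $M_1$, or $N$ when $n=m=0$) via the relevant $\sA$-action, with agreement of the competing recipes guaranteed by~\eqref{eq:bimorm}; one round trip is unitality of the action, and the other is~\eqref{eq:bimorm} for the given multimorphism specialised at an identity, plus unitality of composition. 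Your identification of which axioms handle the two new junctions (associativity of the action at the $L_n$--$\sA$ junction, and~\eqref{eq:bimorm} for $\psi$ at the $\sA$--$M_1$ junction) is the right bookkeeping, and the $n=m=0$ case is correctly reduced to the nullary axioms. The only difference from the paper is one of packaging: the citation buys brevity and the knowledge that the statement holds for general $\mathbb{M}\mathsf{od}$-type virtual equipments, while your version buys self-containedness, in keeping with the paper's stated policy that such equipment-theoretic lemmas ``could also easily be done by hand.''
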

\begin{proof}
  By~\cite[Prop.~5.5]{cs:multicats}.
\end{proof}

One value of these stronger universal properties is that they
automatically imply associativity and unitality of composites
and homs.

\begin{lem}\label{thm:prof-assoc}
  If all necessary composites and homs exist, then we have the
  following isomorphisms:
  \begin{align}
    (H\odot K)\odot L &\cong H\odot (K\odot L)\label{eq:profassoc}\\
    (H\odot K) \rhd L &\cong H\rhd (K\rhd L)\\
    (H\lhd K) \lhd L &\cong H \lhd (K\odot L)\\
    (H\rhd K) \lhd L &\cong H \rhd (K \lhd L).
  \end{align}
\end{lem}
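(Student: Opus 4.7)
The plan is to deduce all four isomorphisms by a Yoneda-style argument in the virtual equipment $\Lprof{\V}$, exploiting the strengthened universal properties of composites and homs as representing \emph{multi}functors rather than merely bifunctors. Thus no concrete construction is required; one only needs to show that the two sides of each identity represent the same multifunctor of multimorphisms, and then appeal to the Yoneda principle that two representing objects are canonically isomorphic.

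For the associativity of composition, I would observe that for any well-typed lists $\vec L_0, \vec L_1$ and any profunctor $M$ of matching type, two applications of \autoref{def:composite} yield
\[\Vmmor\bigl(\vec L_0,\,(H\odot K)\odot L,\,\vec L_1;\,M\bigr) \iso \Vmmor\bigl(\vec L_0,\,H,\,K,\,L,\,\vec L_1;\,M\bigr),\]
and symmetrically for $H\odot(K\odot L)$. These bijections are natural in all arguments, so the two sides represent the same multifunctor, and transporting identities in each direction produces the required isomorphism~\eqref{eq:profassoc}.

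For the three mixed identities, the same strategy applies, with each side reducing to a canonical ``normal form'' multifunctor. I would check that $(H\odot K)\rhd L$ and $H\rhd(K\rhd L)$ both represent $\vec L \mapsto \Vmmor(\vec L,\,H,\,K;\,L)$, the first by the right-hom property followed by the composite property, the second by two right-hom applications. Dually, $(H\lhd K)\lhd L$ and $H\lhd(K\odot L)$ both represent $\vec L \mapsto \Vmmor(K,\,L,\,\vec L;\,H)$. Finally, $(H\rhd K)\lhd L$ and $H\rhd(K\lhd L)$ both represent $\vec L \mapsto \Vmmor(L,\,\vec L,\,H;\,K)$, where the outer left-hom introduces $L$ on the left, the outer right-hom introduces $H$ on the right, and the two orders of unfolding happen to agree.

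The argument is essentially formal bookkeeping, and the main obstacle (and the one point genuinely worth spelling out) is checking that the positions of the absorbed arguments really coincide in each normal form: the left-hom property absorbs on the \emph{left} of the input list, the right-hom property on the \emph{right}, and a composite can be inserted anywhere, so one must track insertion points carefully when chaining the bijections. Once this is verified, the multicategorical Yoneda argument concludes the proof; in fact all four isomorphisms are part of the general theory of virtual equipments developed in~\cite{cs:multicats} and could simply be cited from there.
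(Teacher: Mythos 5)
Your proposal is correct and follows essentially the same route as the paper: reduce both sides of each isomorphism to the same multimorphism functor via the strengthened (multi-)universal properties of $\odot$, $\lhd$, and $\rhd$, then conclude by Yoneda. The paper spells out only~\eqref{eq:profassoc} and declares the rest ``similar''; your explicit normal forms for the three mixed identities are the right ones.
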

\begin{proof}
  We prove only~\eqref{eq:profassoc}; the others are similar.  For any
  appropriately typed $M$, we have
  \begin{align}
    \VPROF((H\odot K)\odot L,M)
    &\cong \VBimor((H\odot K), L;M)\\
    &\cong \Vmmor(H,K,L;M)\\
    &\cong \VBimor(H, K\odot L;M)\\
    &\cong \VPROF(H\odot (K\odot L),M).
  \end{align}
  Thus, the Yoneda lemma gives~\eqref{eq:profassoc}.
\end{proof}

\begin{lem}\label{thm:large-unit-comp}
  For $H:\sA\hto \sB$, the composites $H\odot \sB$ and $\sA\odot H$,
  and the homs $\sB\rhd H$ and $H\lhd \sA$, all exist and are
  canonically isomorphic to $H$.
\end{lem}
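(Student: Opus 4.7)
The plan is to verify all four isomorphisms directly from the universal property of the unit profunctor given by \autoref{thm:large-units}, avoiding any appeal to \autoref{thm:vproflargecomp} or \autoref{thm:vproflargeclosed} (which would impose unneeded size restrictions on $H$). In each case, the representing object is $H$ itself, and the universal bimorphism is one of the two actions of $\sA$ or $\sB$ on $H$.

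I will spell out the case of $H\odot\sB$; the other three are entirely analogous. Let $\al:H,\sB\to H$ denote the right action, regarded as a bimorphism (it is one precisely by the right $\sB$-action axioms for the profunctor $H$). To show that $(H,\al)$ satisfies the universal property of \autoref{def:composite}, I must show that substituting $\al$ into the $H$-slot gives a bijection
\[ F:\Vmmor(\vec L,H,\vec M;N) \too \Vmmor(\vec L,H,\sB,\vec M;N) \]
for all well-typed $\vec L,\vec M,N$. By \autoref{thm:large-units}, there is a bijection $G$ in the opposite direction, given by substituting the unit multimorphism $\phi:()\to\sB$ into the $\sB$-slot. I claim $G\circ F=\id$: applied to a multimorphism $\chi$, the two successive substitutions amount to substituting into the $H$-slot of $\chi$ the multimorphism $H\to H$ obtained by first applying $\al$ and then plugging $\phi$ into its $\sB$-input. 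But this composite is exactly the map
\[ \uH(b,a)\xto{\mathrm{ids}}\uH(b,a)\ten_{\eb}\usB(b,b)\xto{\mathrm{act}}\uH(b,a), \]
which equals $\id_H$ by the right-unit axiom of the profunctor $H$. Since $G$ is already a bijection, this forces $F = G^{-1}$, and in particular $F$ is a bijection.

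The remaining three cases follow the same template. For $\sA\odot H$, one uses the left action $\sA,H\to H$ and the left-unit axiom of $H$. For the homs $\sB\rhd H$ and $H\lhd\sA$, one again uses the right and left actions, but now the bimorphism serves to induce bijections of the form $\Vmmor(\vec L;H)\cong\Vmmor(\vec L,\sB;H)$ or $\Vmmor(\vec L;H)\cong\Vmmor(\sA,\vec L;H)$, given by substituting a multimorphism $\chi:\vec L\to H$ into the $H$-input of the action bimorphism; the same unit-axiom argument shows this is inverse to the substitution of $\phi$ from \autoref{thm:large-units}.

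The main obstacle, such as it is, is purely notational bookkeeping: keeping straight which slot is being substituted into in each direction, and which unit axiom of $H$ witnesses the relevant triangle identity. Conceptually the statement is the familiar fact from virtual-equipment theory that the chosen units really are two-sided units for composition and homs, so the argument is forced.
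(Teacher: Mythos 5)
Your proof is correct and follows essentially the same route as the paper, whose entire proof is ``Just like the previous lemma,'' i.e.\ deduce everything from the universal property of units in \autoref{thm:large-units} together with a representability/Yoneda-style argument. The only difference is that you spell out the existence half explicitly --- checking via the unit axiom of $H$ that the action bimorphism induces the inverse of the bijection coming from \autoref{thm:large-units} --- which the paper leaves implicit.
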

\begin{proof}
  Just like the previous lemma.
\end{proof}

\begin{rmk}\label{rmk:comphom-abuse}
  We will henceforth abuse language by writing ``$H\odot K\cong L$''
  to mean that the composite $H\odot K$ exists and is canonically
  isomorphic to $L$, and similarly for left and right homs.
\end{rmk}

\begin{lem}\label{thm:coyoneda}
  For a \V-profunctor $H:\sA\hto \sB$ and a \V-functor $f:\sA'\to\sA$,
  we have $\sA(1,f)\odot H \cong H(1,f)$.  Similarly, for
  $g:\sB'\to\sB$, we have $H\odot\sB(g,1) \cong H(g,1)$.
\end{lem}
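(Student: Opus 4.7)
The plan is to verify directly, using \autoref{def:composite}, that $H(1,f)$ together with a canonical bimorphism serves as the composite $\sA(1,f)\odot H$; the statement for $H\odot\sB(g,1)\iso H(g,1)$ is entirely symmetric. Let $f\maps\sA'\to\sA$.

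\textbf{Step 1: constructing the bimorphism $\phi\maps\sA(1,f),H\to H(1,f)$.} For each triple $(a'\in\sA',\ a\in\sA,\ b\in\sB)$, start with the $\sA$-action specialized at the codomain $fa'$:
\[
\usA(a,fa')\ten_{\ea}\uH(b,a)\xto{\mathrm{act}}\uH(b,fa'),
\]
lying over $\pi_{\ea}\maps\e(fa')\times\ea\times\eb\to\e(fa')\times\eb$. Pull this morphism back along $\e f\times 1\times 1$ and use strong monoidality of restriction to identify $(\e f\times 1\times 1)^*\bigl(\usA(a,fa')\ten_\ea\uH(b,a)\bigr)$ with $\U{\sA(1,f)}(a,a')\ten_\ea\uH(b,a)$ and $(\e f\times 1)^*\uH(b,fa')$ with $\U{H(1,f)}(b,a')$. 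The result is the desired component $\phi_{a',a,b}$ over $\pi_\ea\maps\ea'\times\ea\times\eb\to\ea'\times\eb$.

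\textbf{Step 2: bimorphism axioms.} The right $\sB$-action axiom for $\phi$ is the pullback of the $\sA$-$\sB$ compatibility on $H$; the middle axiom, governing simultaneous $\sA$-action on $\sA(1,f)$ on the right and on $H$ on the left, is exactly associativity of the $\sA$-action on $H$; the left $\sA'$-action axiom reduces to the fact that $f$ preserves composition (which is precisely how $\sA'$ acts on $\sA(1,f)$), combined again with associativity of the $\sA$-action on $H$.

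\textbf{Step 3: the universal property.} One must show that composition with $\phi$ is a bijection
\[
\Vmmor(\vec L,H(1,f),\vec N;M)\toiso\Vmmor(\vec L,\sA(1,f),H,\vec N;M)
\]
for all well-typed $\vec L,\vec N,M$. The inverse sends $\psi$ to its ``identity-insertion'': precompose at the $(\sA(1,f),H)$ slot with the map
\[
\uH(b,fa')\xto{\sim}\I_{\e(fa')}\ten_{\e(fa')}\uH(b,fa')\xto{\mathrm{ids}\ten 1}\usA(fa',fa')\ten_{\e(fa')}\uH(b,fa'),
\]
pulled back along $\e f$ to land in $\U{\sA(1,f)}(fa',a')\ten_{\ea'}\uH(b,fa')$. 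That the two round-trips are the identity uses two ingredients: first, the unit triangle for the $\sA$-action on $H$ (acting by an identity is the identity), which handles the round-trip starting from $\Vmmor(\vec L,H(1,f),\vec N;M)$; second, the middle bimorphism axiom of an arbitrary $\psi$, which expresses the value of $\psi$ at a general input $(\alpha,\eta)\in\U{\sA(1,f)}(a,a')\ten\uH(b,a)$ as the value at $(fa',a',b)$ acted on via $\alpha$, thereby showing $\psi$ is uniquely determined by its identity-specialization.

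\textbf{Main obstacle.} The real content is simply the classical co-Yoneda identity $\int^{a\in\sA}\sA(a,fa')\ten H(b,a)\cong H(b,fa')$ rephrased in the virtual equipment $\Lprof\V$; all difficulty is in pullback/unit bookkeeping. Each slot of a multimorphism has its own extent, and the insertion of $\mathrm{ids}$ and the restriction along $\e f$ must be tracked through the fiberwise tensors so that the strong monoidality isomorphisms of $(\e f\times 1)^*$ correctly convert the honest $\sA$-action on $H$ into a map between the representable-style profunctors. None of this is conceptually difficult, but the diagrams are notation-heavy.
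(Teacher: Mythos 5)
Your proof is correct, but it takes a different route from the paper, which disposes of this lemma in one line by citing \cite[Theorem~7.16]{cs:multicats}: the general fact that in any virtual equipment, the restriction $H(1,f)$ of a proarrow along a tight arrow $f$ is a composite of the companion $\sA(1,f)$ with $H$ (and dually for conjoints). What you have done is unfold that abstract statement in the concrete virtual equipment $\Lprof{\V}$ and re-prove it by hand, which is exactly the alternative the paper explicitly licenses in \SS\ref{sec:small-cats} (``the reader is free to take these results on faith or to re-prove them by hand''). Your bimorphism is the right one (the left $\sA$-action on $H$, restricted along $\e f$), your reduction of the three bimorphism axioms to associativity of that action and to the commutation of the two actions on $H$ is accurate, and your inverse to ``compose with $\phi$'' --- inserting $\mathrm{ids}$ at $fa'$ and then using the middle multimorphism axiom together with unitality of composition in $\sA$ to show that any $\psi$ is determined by its identity-specialization --- is precisely the proof of the cited theorem transported into this setting. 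The trade-off is the expected one: the citation buys brevity and uniformity with the other equipment-theoretic facts quoted nearby (\autoref{thm:large-units}, \autoref{thm:yoneda2large}, \autoref{thm:yoneda1large}), while your version is self-contained at the cost of the extent and restriction bookkeeping you candidly flag at the end; none of that bookkeeping hides a genuine gap.
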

\begin{proof}
  By~\cite[Theorem~7.16]{cs:multicats}.
\end{proof}

In particular, for $\sA\xto{f}\sB \xto{g}\sC$, we have $\sB(1,f) \odot
\sC(1,g) \cong \sC(1,gf)$, so that representable profunctors are
``pseudofunctorial'', even though in general, \V-profunctors between
large \V-categories do not form a bicategory.  The same proof as in
\autoref{thm:equip} shows that this functor is fully faithful, i.e.\
we have natural bijections
\begin{align}
  \VCAT(\sA,\sB)(f,g)
  &\cong \VPROF(\sA,\sB)(\sB(1,f),\sB(1,g))\\
  &\cong \VPROF(\sB,\sA)(\sB(g,1),\sB(f,1)).
\end{align}
The dual statement for homs is the second Yoneda Lemma, as in
\ref{thm:yoneda2}.

\begin{lem}\label{thm:yoneda2large}
  For a \V-profunctor $H:\sA\hto \sB$ and a \V-functor $f:\sA'\to\sA$,
  we have $H\lhd \sA(f,1)\cong H(1,f)$. Similarly, for $g:\sB'\to\sB$,
  we have $\sB(1,g) \rhd H\cong H(g,1)$.
\end{lem}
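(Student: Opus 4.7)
The plan is to prove only the first isomorphism $H\lhd\sA(f,1)\cong H(1,f)$; the second is formally dual. I will show that $H(1,f)$ satisfies the universal property defining $H\lhd\sA(f,1)$, by exhibiting a bimorphism $\phi:\sA(f,1),H(1,f)\to H$ and verifying that post-composition with $\phi$ yields, for every composable sequence $L_1,\dots,L_n$, a natural bijection
\[ \Vmmor(L_1,\dots,L_n;H(1,f)) \toiso \Vmmor(\sA(f,1),L_1,\dots,L_n;H). \]

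The first step is to construct $\phi$. After unwinding definitions, $\U{\sA(f,1)}(a',a)$ and $\U{H(1,f)}(b,a')$ are base changes along $\e f$ of $\usA(fa',a)$ and $\uH(b,fa')$ respectively, and I take $\phi$ to be induced by the left $\sA$-action on $H$, that is, by $\usA(fa',a)\otimes_{\e(fa')}\uH(b,fa')\to\uH(b,a)$. The associativity, unitality, and commuting-action axioms for a bimorphism then follow directly from the corresponding axioms for the action of $\sA$ on $H$.

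The second step is to verify that $\phi$ induces the required universal bijection. This is the multimorphism version of the second Yoneda lemma, so I would simply cite \cite[Theorem~7.20]{cs:multicats} (or \cite[Prop.~5.11]{shulman:frbi} for its bicategorical counterpart, whose proof applies verbatim in the virtual equipment $\Lprof{\V}$). Concretely, the inverse to post-composition with $\phi$ sends $\psi:\sA(f,1),L_1,\dots,L_n\to H$ to the multimorphism obtained by precomposing $\psi$ in its $\sA(f,1)$-slot with the canonical ``identity at $f$'' element $\I_{\e a'}\to \U{\sA(f,1)}(a',fa')$ induced by $\mathrm{ids}_{fa'}$; the round-trip identities fall out of the unit axiom for $\sA$ and the action axioms for $H$.

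The main obstacle is bookkeeping: the various base changes $(\e f)^*$ that appear once one unfolds the definitions of representable profunctors and multimorphism composition can obscure the otherwise transparent underlying argument. Conceptually the proof is purely formal, being a direct instance of the Yoneda lemma in a virtual equipment, and does not require any new idea beyond translating the general result into the explicit \V-profunctor language of the paper.
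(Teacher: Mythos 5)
Your proposal is correct and follows essentially the same route as the paper, which proves this lemma by direct appeal to Theorem~7.20 of Cruttwell--Shulman (the Yoneda lemma in a virtual equipment); you have merely unfolded what that citation amounts to in the explicit \V-profunctor language, constructing the evaluation bimorphism from the $\sA$-action on $H$ and the inverse by inserting the identity element $\I_{\e a'}\to\U{\sA(f,1)}(a',fa')$. The bookkeeping of base changes along $\e f$ and the verification of the bimorphism axioms (which also use the functoriality of $f$ and the commutation of the two actions on $H$, not only the $\sA$-action axioms) are exactly the details the paper elides by citing the reference.
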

\begin{proof}
  This follows immediately from~\cite[Theorem~7.20]{cs:multicats}.
\end{proof}

In particular, although in the statement of \autoref{thm:yoneda2} we
assumed \V to be an indexed cosmos so that the homs $\lhd$ and $\rhd$
would exist \emph{a priori}, this version of it shows that that
assumption is unnecessary; the particular homs in question
\emph{automatically} exist.

The first Yoneda Lemma \ref{thm:yoneda1} follows immediately.

\begin{lem}\label{thm:yoneda1large}
  For any \V-functor $f:\sA\to \sB$ and \V-profunctors $H:\sA\hto \sB$
 and $K:\sB\hto \sA$, there are natural bijections
 \begin{align}
   \VPROF(\sA,\sB)(\sB(1,f),H) &\cong
   \VPROF(\sA,\sA)(\sA,H(f,1)) \qquad\text{and}\\
   \VPROF(\sB,\sA)(\sB(f,1),K) &\cong
   \VPROF(\sA,\sA)(\sA,K(1,f)).
 \end{align}
\end{lem}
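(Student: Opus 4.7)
The plan is to derive both bijections from the already-proved second Yoneda Lemma (\autoref{thm:yoneda2large}), combined with the universal property of homs from \autoref{thm:vproflargeclosed} (in the strong multimorphism form of \autoref{def:composite}) and the universal property of the unit profunctor from \autoref{thm:large-units}. The key observation is that, by the convention of \autoref{rmk:comphom-abuse}, an isomorphism $P \cong Q$ where $P$ is a hom or composite means that hom/composite exists and is canonically identified with $Q$; hence its universal property is available for use.

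For the first bijection, I would first apply \autoref{thm:yoneda2large} with $g = f : \sA \to \sB$ to rewrite
\[ H(f,1) \;\cong\; \sB(1,f) \rhd H. \]
The defining universal property of the right hom (with a single ``outer'' input) then gives
\[ \VPROF(\sA,\sA)(\sA, H(f,1)) \;\cong\; \VPROF(\sA,\sA)(\sA, \sB(1,f) \rhd H) \;\cong\; \Vmmor(\sA, \sB(1,f); H). \]
Finally, \autoref{thm:large-units} (with $\vec L = ()$ and $\vec M = (\sB(1,f))$) lets me absorb the unit profunctor out of the list of inputs, producing
\[ \Vmmor(\sA, \sB(1,f); H) \;\cong\; \Vmmor(\sB(1,f); H) \;=\; \VPROF(\sA,\sB)(\sB(1,f), H), \]
which is the desired isomorphism.

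The second bijection is entirely parallel, using the other half of \autoref{thm:yoneda2large}: for $f : \sA \to \sB$ (viewed in the role of ``$\sA' \to \sA$'' for the profunctor $K:\sB\hto\sA$) we get $K(1,f) \cong K \lhd \sB(f,1)$, and the universal property of the left hom gives
\[ \VPROF(\sA, K(1,f)) \;\cong\; \Vmmor(\sB(f,1), \sA; K) \;\cong\; \Vmmor(\sB(f,1); K) \;=\; \VPROF(\sB(f,1), K), \]
where the middle step again absorbs the unit via \autoref{thm:large-units}.

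Each intermediate isomorphism is natural in its arguments, being an instance of a representability statement, so naturality of the composite bijections follows automatically. There is really no obstacle here: all the content lies in \autoref{thm:yoneda2large}, and the remainder is formal manipulation of representing objects in the virtual equipment $\Lprof{\V}$.
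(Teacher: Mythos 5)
Your proposal is correct and is essentially the paper's own argument: the paper also derives both bijections by combining \autoref{thm:large-units} with \autoref{thm:yoneda2large} via the chain $\VPROF(\sB(1,f),H)\cong\VBimor(\sA,\sB(1,f);H)\cong\VPROF(\sA,H(f,1))$, which is exactly your chain read in the opposite direction. The only difference is that you spell out the intermediate appeal to the universal property of $\rhd$ and $\lhd$, which the paper leaves implicit.
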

\begin{proof}
  For the first, we have
  \begin{align}
    \VPROF(\sA,\sB)(\sB(1,f),H)
    &\cong \VBimor(\sA,\sB(1,f);H)\\
    &\cong \VPROF(\sA,\sA)(\sA,H(f,1))
  \end{align}
  by \autoref{thm:large-unit-comp} and \autoref{thm:yoneda2large}.  The
  second is analogous.
\end{proof}

Finally, to generalize the remaining lemmas about profunctors from
\SS\ref{sec:small-cats} we require the following weakened notion of
adjunction, which does not require all composites to exist.

\begin{defn}\label{def:large-adj}
  An \textbf{adjunction} $H\dashv K$ between \V-profunctors
  $H:\sA\hto\sB$ and $K:\sB\hto\sA$ consists of a composite $H\odot K$
  together with multimorphisms $\eta:()\to H\odot K$ and $\ep:K,H \to
  \sB$ such that the composites
  \begin{gather}
    H \xto{\eta,1} H\odot K,H
    \xto{\hat{\ep}} H \mathrlap{\qquad\text{and}} \\
    K \xto{1,\eta} K,H\odot K
    \xto{\check{\ep}} K
  \end{gather}
  are identities.  Here $\hat{\ep}$ denotes the unique bimorphism such
  that
  \[H,K,H \to H\odot K, H \xto{\hat\ep} H
  \qquad\text{is equal to}\qquad
  H,K,H \xto{1,\ep} H,\sB \to H,
  \]
  and similarly for $\check{\ep}$.
\end{defn}

It is easy to show that such adjunctions have all the same formal
properties as ordinary adjunctions in a bicategory.  We can now
duplicate essentially the same proofs from \SS\ref{sec:small-cats} of
the following.

\begin{lem}
  For any \V-functor $f:\sA\to\sB$ there is an adjunction
  $\sA(1,f)\dashv\sA(f,1)$.
\end{lem}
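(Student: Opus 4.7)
I read the statement as asserting the adjunction $\sB(1,f)\dashv\sB(f,1)$ (the expressions $\sA(1,f)$ and $\sA(f,1)$ are not well-typed for $f\maps\sA\to\sB$). The strategy is to produce the data of \autoref{def:large-adj} by invoking the calculus of profunctors already built up, rather than by inspecting components.

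\textbf{Step 1.} First I identify the composite. By \autoref{thm:coyoneda}, applied with $H=\sB(1,f)$ and $g=f$, the composite $\sB(1,f)\odot\sB(f,1)$ exists and is canonically isomorphic to the profunctor $\sB(f,f)\maps\sA\hto\sA$ whose underlying object at $(a',a)$ is $(\e f\times\e f)^*\usB(fa',fa)$, with left and right $\sA$-actions obtained by composing in $\sB$ after the action of $f$ on hom-objects.

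\textbf{Step 2.} Next I produce the unit and counit. Applying \autoref{thm:yoneda2large} with $H=\sB\maps\sB\hto\sB$ gives $\sB\lhd\sB(f,1)\iso\sB(1,f)$; the identity $\sB(1,f)\to\sB(1,f)$ corresponds under the defining universal property of the left hom to a bimorphism
\[
\ep\maps\sB(f,1),\sB(1,f)\longrightarrow\sB,
\]
which is our counit, and is just the composition of $\sB$ pulled back along $\e f$ on the middle copy. For the unit, \autoref{thm:yoneda1large} together with Step~1 yields a natural bijection
\[
\VPROF(\sB(1,f),\sB(1,f))\;\iso\;\VPROF(\sA,\sB(f,f)),
\]
and the image of the identity is a morphism of profunctors $\sA\to\sB(f,f)$ whose components are precisely the action $\usA(a',a)\to(\e f\times\e f)^*\usB(fa',fa)$ of the \V-functor $f$ on homs. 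Composing with the canonical multimorphism of \autoref{thm:large-units} produces the required $\eta\maps()\to\sB(1,f)\odot\sB(f,1)$.

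\textbf{Step 3 and obstacle.} Finally I verify the two triangle identities. Unwinding the multicategorical Yoneda bijections of Step~2, each triangle identity becomes the assertion that pushing $\eta$ and $\ep$ back through the corresponding bijection returns the identity 2-cell; this is immediate from the construction, modulo the compatibility of \autoref{thm:large-units} with multimorphism composition on either side. The main obstacle is purely bookkeeping: one must track several representability bijections at once to see that the two triangle composites correspond to the identity multimorphisms $\sB(1,f)\to\sB(1,f)$ and $\sB(f,1)\to\sB(f,1)$. No new ideas are needed beyond those already in \autoref{thm:coyoneda}, \autoref{thm:yoneda1large}, \autoref{thm:yoneda2large}, and \autoref{thm:large-units}; indeed, for a reader who prefers explicit computation, $\eta$ and $\ep$ as described above can be checked by hand to satisfy the triangle identities directly from the \V-functoriality of $f$ and the associativity/unit axioms of $\sB$.
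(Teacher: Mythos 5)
Your reading of the statement is right: for $f\maps\sA\to\sB$ the representables are $\sB(1,f)\maps\sA\hto\sB$ and $\sB(f,1)\maps\sB\hto\sA$, so the adjunction asserted is $\sB(1,f)\dashv\sB(f,1)$ (compare the small case in \autoref{thm:equip}), and your argument for it is correct. The paper itself offers no proof here beyond ``duplicate the proofs from \SS\ref{sec:small-cats}'', which in turn defers to \cite[Props.~4.5 and~5.3]{shulman:frbi}, where the unit and counit are simply written down in components --- the counit is composition in $\sB$ restricted along $\e f$, the unit is $\mathrm{ids}$ followed by the action of $f$ on hom-objects --- and the triangle identities are checked from the unit and associativity axioms of $\sB$ together with the functoriality of $f$. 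Your route manufactures the same two cells from the formal machinery already established: \autoref{thm:coyoneda} to identify the composite $\sB(1,f)\odot\sB(f,1)\cong\sB(f,f)$, \autoref{thm:yoneda2large} to extract the counit $\ep\maps\sB(f,1),\sB(1,f)\to\sB$ from $\sB\lhd\sB(f,1)\cong\sB(1,f)$, and \autoref{thm:yoneda1large} together with \autoref{thm:large-units} for the unit $\eta\maps()\to\sB(f,f)$. This keeps the whole proof at the level of universal properties of multimorphisms, at the cost of the representability bookkeeping you flag in Step~3; since the cells you produce coincide with the explicit ones, your closing observation that the two triangles can also be verified by hand from the \V-functoriality of $f$ and the unit/associativity axioms of $\sB$ is exactly the computation the cited references perform, and it closes the argument. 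No gap.
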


\begin{lem}\label{thm:yoneda-adj-large}
  For \V-functors $f:\sA\to\sB$ and $g:\sB\to\sA$, there is a
  bijection between adjunctions $f\dashv g$ in \VCAT and isomorphisms
  $\sB(f,1) \iso \sA(1,g)$ of \V-profunctors.
\end{lem}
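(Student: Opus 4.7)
The strategy mirrors the proof of \autoref{thm:yoneda-adjn}, but with care to accommodate the weakened notion of adjunction from \autoref{def:large-adj}, since not all relevant composites exist \emph{a priori} in the large setting. The plan is to combine two bijections.

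First, I would verify a uniqueness-of-right-adjoints principle in the virtual setting: given an adjunction $H \dashv K$ in the sense of \autoref{def:large-adj} and any isomorphism $\sigma : K \toiso K'$ of \V-profunctors, the composite $H \odot K'$ exists (being canonically isomorphic to $H \odot K$), and transporting $\eta$ and $\ep$ along $\sigma$ yields an adjunction $H \dashv K'$. Conversely, for a fixed $H$ with chosen adjoint $K$, any adjunction $H \dashv K'$ produces, via the standard uniqueness argument, an isomorphism $K \toiso K'$. Applied with $H = \sB(1,f)$ and $K = \sB(f,1)$, which form an adjunction by the preceding lemma, this gives a bijection between adjunctions $\sB(1,f) \dashv \sA(1,g)$ and isomorphisms $\sB(f,1) \toiso \sA(1,g)$.

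Second, I would show that adjunctions $\sB(1,f) \dashv \sA(1,g)$ in $\lPROF(\V)$ correspond bijectively to adjunctions $f \dashv g$ in \VCAT. Using \autoref{thm:coyoneda} twice we have canonical isomorphisms
\[ \sB(1,f) \odot \sA(1,g) \;\cong\; \sA(1,gf) \qquad\text{and}\qquad \sA(1,g) \odot \sB(1,f) \;\cong\; \sB(1,fg). \]
A unit $\eta:() \to \sB(1,f)\odot \sA(1,g)$ then amounts to a 2-cell $\sA \to \sA(1,gf)$, and a counit $\ep:\sA(1,g),\sB(1,f) \to \sB$ amounts (by the universal property of $\sB(1,fg)$ as composite) to a 2-cell $\sB(1,fg) \to \sB$. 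By the fully faithful embedding $\VCAT \to \lPROF(\V)$ on 2-cells (noted after \autoref{thm:coyoneda}, and whose proof is identical to \autoref{thm:equip}), these correspond to \V-natural transformations $\id_\sA \to gf$ and $fg \to \id_\sB$.

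The main obstacle, and crux of the argument, is to verify that the two triangle identities of \autoref{def:large-adj} translate under these identifications into the standard triangle identities for $f \dashv g$ in \VCAT. This amounts to unwinding how the canonical coyoneda isomorphism $\sB(1,f) \odot \sA(1,g) \cong \sA(1,gf)$ interacts with horizontal composition of representable 2-cells; equivalently, it uses the pseudofunctoriality of $(-) \mapsto \sB(1,-)$, which is itself a consequence of \autoref{thm:coyoneda} together with the fully faithful embedding. The resulting verification is a purely formal diagram chase, and once it is in place the composite of the two bijections above yields the desired correspondence between adjunctions $f \dashv g$ in \VCAT and isomorphisms $\sB(f,1) \iso \sA(1,g)$.
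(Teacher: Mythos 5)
Your proposal is correct and takes essentially the same approach as the paper: the paper's proof is simply to ``duplicate'' the proof of \autoref{thm:yoneda-adjn} --- combining uniqueness of right adjoints (given the adjunction $\sB(1,f)\dashv\sB(f,1)$) with local full-faithfulness of the representable pseudofunctor --- translated into the virtual equipment, which is exactly what you do. The extra care you take with the existence of the composites $\sB(1,f)\odot\sA(1,g)\cong\sA(1,gf)$ via \autoref{thm:coyoneda} and with the triangle identities is precisely what the paper's parenthetical remark about translating into the language of the virtual equipment leaves implicit.
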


We also note, for future reference:

\begin{lem}\label{thm:compadj}
  Suppose $H\dashv K$ and $L\dashv M$ are adjunctions as in
  \autoref{def:large-adj}, and that the composites $H\odot L$ and
  $M\odot K$ and $(H\odot L) \odot (M\odot K)$ exist.  Then there is
  an adjunction $H\odot L\dashv M\odot K$.
\end{lem}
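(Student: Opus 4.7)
My plan is to adapt the classical proof that adjunctions compose in a bicategory to the virtual equipment language of Definition~\ref{def:large-adj}. Writing $L:\sB\hto\sD$ and $M:\sD\hto\sB$ (so that the composites on both sides type-check), I would construct a unit $\eta:()\to (H\odot L)\odot(M\odot K)$ and a counit $\ep:M\odot K, H\odot L\to\sD$, and then verify the two triangle identities.

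For the unit, the key tool is the iterated universal property of composites (Definition~\ref{def:composite}): the bijections $\Vmmor(H,L,M,K;N)\cong \Vmmor(H\odot L, M\odot K;N)\cong\Vmmor((H\odot L)\odot(M\odot K);N)$ hold for every target $N$, so the identity on $(H\odot L)\odot(M\odot K)$ corresponds to a universal multimorphism $\Phi:H,L,M,K\to (H\odot L)\odot(M\odot K)$. Applying the universal property of $L\odot M$ in the middle two slots rewrites $\Phi$ as some $\Psi:H,L\odot M,K\to(H\odot L)\odot(M\odot K)$. Precomposing $\Psi$ with $\eta_2:()\to L\odot M$ in its middle slot yields a bimorphism $H,K\to(H\odot L)\odot(M\odot K)$, and the universal property of $H\odot K$ then produces a unique morphism $\Lambda:H\odot K\to (H\odot L)\odot(M\odot K)$ through which this bimorphism factors; I would set $\eta:=\Lambda\circ\eta_1$. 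Dually, for the counit I would first use Lemma~\ref{thm:large-units} to lift $\ep_2:M,L\to\sD$ to the unique multimorphism $M,\sB,L\to\sD$ extending it along the unit $()\to\sB$, substitute $\ep_1:K,H\to\sB$ into its middle slot to obtain $M,K,H,L\to\sD$, and then apply the iterated universal property of composites once more to transport this to $\ep:M\odot K, H\odot L\to\sD$.

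To verify the triangle identities, say $\hat\ep\circ(\eta,1_{H\odot L})=1_{H\odot L}$, I would precompose with the universal bimorphism $\phi_{HL}:H,L\to H\odot L$ and chase through the definitions. Each side then unfolds into a multimorphism out of the 4-tuple $H,L,M,K$, where the result falls out by one application of the triangle identity for $(\eta_1,\ep_1)$ and one for $(\eta_2,\ep_2)$, with the unit property of the profunctor $\sB$ (Lemma~\ref{thm:large-units}) mediating their interaction. The main obstacle is precisely this bookkeeping: in a bicategory one would simply paste 2-cells, but in the virtual setting each manipulation is a trip through a nested universal-property bijection, so some care is required to identify the correct slot at each step. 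Otherwise the argument is completely formal and parallels the corresponding proof for the (non-virtual) equipments of~\cite{shulman:frbi}.
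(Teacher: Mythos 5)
The paper offers no proof of this lemma at all --- it is covered by the preceding remark that adjunctions in the sense of \autoref{def:large-adj} ``have all the same formal properties as ordinary adjunctions in a bicategory'' --- and your proposal is a correct elaboration of exactly that intended routine argument: the unit built from $\eta_1$ and $\eta_2$ via the strong universal property of \autoref{def:composite}, the counit built from $\ep_1$ and $\ep_2$ via \autoref{thm:large-units}, and the triangle identities reduced to those of the two given adjunctions. The only (cosmetic) slip is that for the first triangle identity both sides become bimorphisms out of $H,L$ after precomposing with the universal bimorphism, with the four-fold tuple $H,L,M,K$ appearing only in the intermediate unfoldings.
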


\section{\V-fibrations}
\label{sec:v-fibrations}

At first glance, indexed \sV-categories and large \sV-categories
appear very different, but it turns out that both are `loose enough'
notions that they are essentially equivalent.  A starting point for
this equivalence is to recall from \autoref{thm:three-homs} that the
fiberwise-homs, which make \sV\ into an indexed \sV-category, and the
external-homs, which make it into a large \sV-category, are related as
follows:
\begin{align*}
  \uV(x,y) &\iso \uV^{Y\times X}\big(\pi_X^*x,\pi_Y^*y\big)\\
  \uV^X(x,y) &\iso \Delta_X^*\uV(x,y)
\end{align*}
Thus, it is natural to try to extend these operations to compare large
and indexed \sV-categories.  In one direction this is straightforward:
given an indexed \sV-category \sA, we define a large \sV-category
$\Theta\sA$ whose objects are the objects of the fiber
categories $\sA^X$, and whose hom-objects are
\[\U{\Theta\sA}(x,y) =  \usA^{Y\times X}\big(\pi_X^*x,\pi_Y^*y\big)\]
(where $x\in\sA^X$, $y\in\sA^Y$).  It is easy to verify that this
does, in fact, give a large \sV-category.  Similarly, if $F\maps
\sA\to\sB$ is an indexed \sV-functor, we define $\Theta F$ to take
each object $x\in\sA^X$ to $Fx\in\sB^X$, with $\e (\Theta F)_x =
1_X$ and the obvious action on hom-objects.  We leave to the reader
the definition of \Theta\ on natural transformations and the
verification that it defines a 2-functor
\[\Theta\maps \iVCAT\to\VCAT.\]
from indexed \V-categories to large \V-categories.

Now \Theta\ is nothing like a 2-equivalence of 2-categories.  Any
large \sV-category of the form $\Theta\sA$ has lots of objects, and
the number of objects is clearly preserved by isomorphisms in \VCAT.
In particular, no \emph{small} \sV-category can be isomorphic to
anything in the image of \Theta.  Similarly, the induced functors
\[\Theta\maps \iVCAT(\sA,\sB)\to \VCAT(\Theta\sA,\Theta\sB)\]
are clearly not isomorphisms, since every functor of the form $\Theta
F$ preserves extents strictly (that is, the maps $\e(\Theta F)_x$ are
identities).

However, as we will prove shortly, \Theta is nevertheless a
\emph{biequivalence}.  We will approach this by trying to construct an
inverse to \Theta.  One obvious place to start, given a large
\sV-category \sB, is to try to define an indexed \sV-category
$\Lambda\sB$ as follows.  We take the objects of $\Lambda\sB^X$ to be
the objects of \sB\ of extent $X$, and set
\[\U{\Lambda\sB}^X(x,y) = \Delta_X^*\usB(x,y).\]
It is easy to check that this defines a $\sV^X$-enriched category
$\Lambda\sB^X$, but when we come to try to define the reindexing
functors $f^*$ we are stuck.  Just because $x$ is an object of
\sB\ with extent $Y$ and we have an arrow $f\maps X\to Y$ in \bS,
there need not be any object at all with extent $X$; this is
glaringly obvious when \sB\ is a small \sV-category.



This should be regarded as similar to the problem we might encounter
when trying to define a inverse to the classical ``Grothendieck
construction'' which makes a pseudofunctor $\sA:\S\op\to\nCat$ into a
functor $\tot\sA \to \S$.  In that case, the answer is that we need
the input functor $\bA\to\S$ to be a \emph{fibration}; thus it makes
sense to look for ``fibrational'' conditions on large \V-categories.

If $x$ is an object of a large \V-category \sA, we will write $x$ also
for the \V-functor $\delta (\e x) \to \sA$ induced by $x$ and $1_{\e
  x}:\e x \to \e x$.  (Recall from \autoref{rmk:disc-large} that we
can make sense of this even if \V lacks indexed coproducts.)

\begin{defn}\label{def:restriction}
  Let \sA be a large \sV-category, $x$ an object of \sA, and $f\maps
  Y\to \ex$ a morphism in \bS.  A \textbf{restriction} of $x$ along
  $f$ is an object $f^*x$ of \sA such that $\e(f^*x) = Y$, together
  with an isomorphism between the \V-functors
  \begin{equation}
    \delta Y \xto{f^*x} \sA \qquad\text{and}\qquad
    \delta Y \xto{\delta f} \delta X \xto{x} \sA.
  \end{equation}
\end{defn}

Of course, by the Yoneda lemma, this can equivalently be expressed by
isomorphisms of profunctors
\begin{align}
  \sA(1,f^*x) &\cong \sA(1,\delta f \circ x) \qquad\text{or}\\
  \sA(f^*x,1) &\cong \sA(\delta f\circ x,1)
\end{align}
If we note that $\sA(1,\delta f \circ x) \cong \sA(1,x)(1,\delta f)$
and similarly, and evaluate these profunctors at some $y\in \sA$, we
obtain isomorphisms
\begin{align}
  \usA(y,f^*x) &\cong (f\times 1)^* \usA(y,x) \qquad\text{and}\\
  \usA(f^*x,y) &\cong (1\times f)^* \usA(x,y).
\end{align}
In the case $\sV=\self(\bS)$, this idea is due
to~\cite{cplt-locintern}, where restrictions are called
\emph{substitutions}.



We can now characterize the large \sV-categories in the image of
\Theta.

\begin{defn}
  A \textbf{\sV-fibration} is a large \sV-category such that for each
  object $x$ and each $f\maps Y\to \ex$, there exists a
  restriction $f^*x$.
\end{defn}

\begin{rmk}
  The phrase ``\V-fibration'' is, of course, motivated by the remarks
  above comparing $\Theta$ to the Grothendieck construction.
  Moreover, just as ordinary fibrations replace the ``algebraic''
  reindexing functors of an ordinary indexed category by cartesian
  arrows with a universal property, \V-fibrations replace the
  reindexing functors of an indexed \V-category by ``restrictions'' as
  defined above, which are objects with a sort of universal property.
  In particular, it is no longer necessary to specify the coherence
  isomorphisms in \autoref{def:locally-vcat}; they follow
  automatically from the universal property.
  
  However, the analogy is just an analogy:
  there is no \S-indexed monoidal category \V such that large
  \V-categories can be identified with arbitrary functors into \S.
  (\autoref{eg:indpsh} might suggest that $\sPsh(\S,\nSet)$ should
  have this property, but it does not).  Large \V-categories contain
  more data than an arbitrary functor into \S, which as we will see is
  in fact sufficient to characterize a corresponding \V-fibration.
\end{rmk}

\begin{rmk}
  On the other hand, \V-fibrations share the virtue of ordinary
  fibrations that for fixed \V, they are an elementary (first-order)
  notion, as contrasted with indexed \V-categories and classical
  indexed categories which are not.
\end{rmk}



\begin{prop}\label{thm:iso-theta-fib}
  A large \sV-category is isomorphic to one of the form $\Theta\sA$ if
  and only if it is a \sV-fibration.
\end{prop}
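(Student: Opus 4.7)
The plan is to prove the two implications separately, with the forward direction being routine and the reverse direction requiring the construction of a (partial) inverse $\Lambda$ to $\Theta$ on fibrations.

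For the ``only if'' direction, since being a $\sV$-fibration is preserved under isomorphism, it suffices to show that $\Theta\sA$ is itself a $\sV$-fibration for any indexed $\sV$-category $\sA$. Given $x \in \sA^Y$ (regarded as an object of $\Theta\sA$ with $\e x = Y$) and a morphism $f\maps Z \to Y$, the candidate restriction is $f^* x \in \sA^Z$, where $f^*\maps(f^*)_\bullet\sA^Y \to \sA^Z$ is the reindexing functor from the indexed structure of $\sA$. To verify the defining property, compute for any $y\in\sA^W$:
\begin{align*}
\U{\Theta\sA}(y,f^*x) &= \usA^{Z\times W}\big(\pi_W^* y, \pi_Z^*(f^*x)\big) \\
 &\cong \usA^{Z\times W}\big((f\times 1)^*\pi_W^* y, (f\times 1)^*\pi_Y^* x\big)\\
 &\cong (f\times 1)^*\usA^{Y\times W}\big(\pi_W^* y, \pi_Y^* x\big)
 = (f\times 1)^*\U{\Theta\sA}(y,x),
\end{align*}
where the middle step uses full faithfulness of the reindexing functor $(f\times 1)^*$ on the enriched category, together with its compatibility with $\pi_W^*$ and $\pi_Y^*$ (so that $(f\times 1)^*\pi_Y^* x \cong \pi_Z^* f^* x$).

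For the ``if'' direction, suppose $\sB$ is a $\sV$-fibration. Using the axiom of choice, fix a restriction $f^* x$ for every object $x$ of $\sB$ and every $f\maps Y\to \e x$. Define an indexed $\sV$-category $\Lambda\sB$ by taking $(\Lambda\sB)^X$ to have as objects the objects of $\sB$ of extent $X$, with hom-objects $\U{\Lambda\sB}^X(x,y) = \Delta_X^*\usB(x,y)$, and composition/identities inherited from $\sB$. The reindexing functor $f^*\maps X\to Y$ sends $x\in(\Lambda\sB)^Y$ to the chosen restriction $f^*x$, and its action on hom-objects is the canonical isomorphism obtained by combining the two restriction isomorphisms:
\[\usB(f^*x,f^*y) \;\cong\; (1\times f)^*\usB(x,f^*y) \;\cong\; (f\times f)^*\usB(x,y).\]
Applying $\Delta_X^*$ and using $\Delta_X^*(f\times f)^* \cong f^*\Delta_Y^*$ yields an isomorphism $\U{\Lambda\sB}^X(f^*x,f^*y) \cong f^*\U{\Lambda\sB}^Y(x,y)$, so $f^*$ is fully faithful, as required. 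The pseudofunctoriality constraints $(gf)^* \cong f^*\circ(f^*)_\bullet(g^*)$ and $(1_X)^*\cong \id$ follow automatically from the universal property of restrictions (they are determined up to unique coherent isomorphism), and these same universal properties force all the coherence pentagons and triangles of \autoref{def:locally-vcat} to commute.

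It remains to exhibit an isomorphism $\Theta\Lambda\sB \cong \sB$ in \VCAT. On objects this is immediate: an object of $\Theta\Lambda\sB$ is an object of some $(\Lambda\sB)^X$, i.e.\ an object of $\sB$ together with its extent. On hom-objects, for $x$ and $y$ of extents $X$ and $Y$:
\[\U{\Theta\Lambda\sB}(x,y) = \U{\Lambda\sB}^{Y\times X}\big(\pi_X^*x,\pi_Y^*y\big)
= \Delta_{Y\times X}^*\,\usB\big(\pi_X^*x,\pi_Y^*y\big).\]
Two applications of the restriction isomorphisms give $\usB(\pi_X^*x,\pi_Y^*y) \cong (\pi_Y\times \pi_X)^*\usB(x,y)$, and pulling back along $\Delta_{Y\times X}$ recovers $\usB(x,y)$ since $(\pi_Y\times \pi_X)\circ \Delta_{Y\times X} = \id_{Y\times X}$. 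A diagram chase using the uniqueness part of the restriction property shows that these hom-isomorphisms are compatible with composition and identities, producing the desired isomorphism of large $\sV$-categories.

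The main obstacle is bookkeeping in the reverse direction: checking that the chosen restrictions assemble into a genuine pseudofunctor (rather than merely a lax or normal one) and that the enriched-functor axioms for each $f^*$ hold. Both reduce to the fact that the universal property of restrictions is strong enough to determine all the comparison isomorphisms uniquely, so the coherence diagrams commute for formal reasons.
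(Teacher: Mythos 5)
Your proposal is correct and follows essentially the same route as the paper: the forward direction checks that the reindexing functors of an indexed $\sV$-category provide restrictions in $\Theta\sA$, and the converse completes the construction of $\Lambda\sB$ by choosing restrictions, using their essential uniqueness for full faithfulness and coherence, and verifying $\Theta\Lambda\sB\cong\sB$ via $\Delta^*(\pi\times\pi)^*\cong\id$. You have simply written out the computations that the paper leaves as "easy to check."
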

\begin{proof}
  If \sA is an indexed \V-category with transition functors
  $f^*:\sA^X\to\sA^Y$, then it is easy to check that for any
  $x\in\sA^X$, the object $f^*x\in\sA^Y$ is a restriction of $x$ along
  $f$ in $\Theta\sA$.  Thus $\Theta \sA$, and anything isomorphic to
  it, is a \sV-fibration.

  Conversely, given a \sV-fibration \sB, we complete the above
  construction of an indexed \sV-category $\Lambda\sB$ as follows.  We
  choose, for every $x$ and $f$, a restriction $f^*x$, and define the
  functor $f^*\maps (f^*)_\bullet\Lambda\sB^Y\to\Lambda\sB^X$ to take
  $x$ to $f^*x$.  The definition of restriction ensures that this can
  be extended to a fully faithful $\V^Y$-functor, and the essential
  uniqueness of restrictions ensures that they are coherent.  Finally,
  it is straightforward to check that $\Theta\Lambda\sB\iso\sB$ in
  \VCAT.
\end{proof}

As remarked above, in order to fully characterize the image of \Theta,
we will also need to limit the functors we consider.

\begin{defn}
  A \sV-functor $f:\sA\to\sB$ between large \sV-categories is called
  \textbf{indexed} if $\e f_x$ is an identity for all $x$.
\end{defn}

If we now let \VFib denote the sub-2-category of \VCAT consisting of
the \sV-fibrations, the indexed \sV-functors between them, and all the
\sV-natural transformations between those, then it is easy to extend
$\Lambda$ to a 2-functor $\VFib\to\iVCAT$.  Our double use of the word
`indexed' is unproblematic because of the following result.

\begin{thm}
  The 2-functors \Theta\ and \Lambda\ are inverse 2-equivalences
  between \iVCAT\ and \VFib.
\end{thm}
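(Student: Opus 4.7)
The plan is to extend $\Lambda$ to a 2-functor $\VFib \to \iVCAT$ and then establish 2-natural isomorphisms $\Lambda\Theta \cong \Id_{\iVCAT}$ and $\Theta\Lambda \cong \Id_{\VFib}$, so that $\Theta$ and $\Lambda$ form an inverse pair of 2-equivalences. To extend $\Lambda$ to 1-cells, observe that an indexed $\V$-functor $f: \sA \to \sB$ in $\VFib$ satisfies $\e f_x = 1_{\ex}$, so $f$ sends objects of extent $X$ to objects of extent $X$. Define the component $\Lambda f^X: \Lambda\sA^X \to \Lambda\sB^X$ on objects by $x \mapsto fx$ and on hom-objects by applying $\Delta_X^*$ to the hom-component of $f$. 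The coherence isomorphism~\eqref{eq:locally-v-functor-iso} relating $\Lambda f^X \circ f^*$ to $f^* \circ (f^*)_\bullet(\Lambda f^Y)$ amounts to exhibiting $f(f^*x)$ as a restriction of $f(x)$ along $f:X\to Y$ in $\sB$ and then invoking essential uniqueness of restrictions; $\V$-natural transformations pass to the fibers in the evident way.

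For $\Lambda\Theta \cong \Id_{\iVCAT}$: given an indexed $\V$-category $\sA$, the objects of $\Lambda\Theta\sA^X$ are the objects of $\Theta\sA$ of extent $X$, which are exactly the objects of $\sA^X$, and the fiberwise hom
\[ \Delta_X^* \U{\Theta\sA}(x,y) \;=\; \Delta_X^* \usA^{X\times X}(\pi^*x,\pi^*y) \;\cong\; \usA^X(x,y) \]
is canonically isomorphic to the original via full faithfulness of the enriched functor $\Delta_X^*: (\Delta_X^*)_\bullet \sA^{X\times X} \to \sA^X$ together with the identity $\Delta_X^* \pi^* = \Id$ (by pseudofunctoriality of the indexing). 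For $f: Y \to X$ in $\S$, the object $f^*x \in \sA^Y$ serves as a restriction of $x \in \sA^X$ in $\Theta\sA$, so choosing these restrictions in the construction of $\Lambda$ recovers the transition functors of $\sA$ on the nose. The reverse isomorphism $\Theta\Lambda \cong \Id_{\VFib}$ is given objectwise by \autoref{thm:iso-theta-fib}, acting identically on underlying objects with hom-isomorphisms induced by the chosen restrictions. Finally, an inverse to $\Theta$'s action on each hom-category is constructed by mimicking $\Lambda$: an indexed $\V$-functor $\Theta\sA \to \Theta\sB$ in $\VFib$ sends $x \in \sA^X$ to an object of $\sB^X$, and its fiberwise components assemble into an indexed $\V$-functor $\sA \to \sB$ (with coherence again from essential uniqueness of restrictions).

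The main obstacle is the coherence verification for the extended $\Lambda$ on functors: specifically, (i) showing that $f(f^*x)$ genuinely serves as a restriction in $\sB$ of $f(x)$ along $f:X\to Y$, which uses that any $\V$-functor induces comparison maps between representable $\V$-profunctors, so that the defining isomorphism $\sA(1,f^*x) \cong \sA(1,x)(1,\delta f)$ transports to the corresponding isomorphism in $\sB$ as required by \autoref{def:restriction}; and (ii) checking that the resulting isomorphism~\eqref{eq:locally-v-functor-iso} satisfies the hexagonal and triangular diagrams of \autoref{def:locally-vcat}. Both diagrams reduce ultimately to the fact that each arrow in them is a canonical restriction-comparison isomorphism, and such isomorphisms are automatically compatible with composition of restrictions by essential uniqueness. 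Analogous diagram chases establish 2-naturality of $\Lambda\Theta \cong \Id_{\iVCAT}$ and $\Theta\Lambda \cong \Id_{\VFib}$ in their 1- and 2-cells.
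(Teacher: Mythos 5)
The paper's own proof of this theorem is simply left to the reader, and your proposal is a correct elaboration along exactly the intended lines: you extend $\Lambda$ to 1- and 2-cells by using the fact that an indexed \V-functor automatically carries restrictions to restrictions (whisker the defining isomorphism of \autoref{def:restriction} with the functor), and you derive all the coherence diagrams and the 2-naturality of $\Lambda\Theta\cong\Id$ and $\Theta\Lambda\cong\Id$ from the essential uniqueness of restrictions, which is precisely the mechanism the paper already uses at the object level in \autoref{thm:iso-theta-fib}. The only cosmetic blemishes are the double use of the letter $f$ for both the \V-functor and the morphism of \S, and the statement ``$\Delta_X^*\pi^*=\Id$,'' which should be a canonical isomorphism supplied by pseudofunctoriality rather than an equality.
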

\begin{proof}
  Left to the reader.
\end{proof}

Since 2-equivalences preserve all 2-categorical structure, we can use
indexed \sV-categories and \sV-fibrations interchangeably, just as we
do for ordinary fibrations and pseudofunctors, and we will rarely
distinguish notationally between them.

\begin{rmk}\label{rmk:ivprof-eqv}
  This 2-equivalence also extends to profunctors.  We could define a
  virtual equipment of indexed \V-profunctors and show it is
  equivalent to the restriction of \Lprof{\V} to the \V-fibrations and
  indexed functors.  However, for our purposes it will suffice to note
  that for \V-fibrations \sA and \sB, we have an equivalence of
  categories
  \begin{equation}
    \iVPROF(\sA,\sB) \simeq \VPROF(\sA,\sB)
  \end{equation}
  connecting indexed \V-profunctors, as in \autoref{def:ivprof}, to
  \V-profunctors as considered in \SS\ref{sec:large-cats}.  This
  equivalence is constructed just as for the hom-objects of
  categories, by restricting along diagonals and projections.
\end{rmk}


In contrast to the classical case, however, it turns out that by
including the \emph{non-indexed} \sV-functors, we can put back in the
large \sV-categories that \emph{aren't} \sV-fibrations and still
maintain a biequivalence.  We first observe the following.

\begin{prop}\label{thm:fib-indexed-ok}
  If \sB\ is a \sV-fibration, then any \sV-functor $F\maps \sA\to\sB$
  is naturally isomorphic to an indexed one.
\end{prop}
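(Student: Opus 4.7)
The plan is to construct the indexed functor $F'$ by applying the fibration property of $\sB$ to each value $Fx$, and then check that the universal restriction isomorphisms automatically upgrade this pointwise construction to a $\V$-functor naturally isomorphic to $F$.

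For each object $x$ of $\sA$, I will use the hypothesis that $\sB$ is a $\V$-fibration to pick a restriction $F'x := (\e F_x)^* Fx$; this has extent $\e x$, so setting $\e F'_x = 1_{\e x}$ makes $F'$ indexed. The defining universal isomorphism of the restriction provides a component $\alpha_x \colon \I_{\e x} \to \usB(Fx, F'x)$ in \tV lying over $(1_{\e x}, \e F_x)$. For the action on hom-objects, applying the characterizing restriction isomorphisms $\usB(z, f^*w) \cong (f \times 1)^* \usB(z,w)$ and $\usB(f^*w, z) \cong (1 \times f)^* \usB(w, z)$ twice yields a canonical isomorphism
\[\usB(F'x, F'y) \;\cong\; (\e F_y \times \e F_x)^* \usB(Fx, Fy),\]
and I will define $F'_{xy}$ to be the transpose of $F_{xy}$ under this isomorphism.

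Next I will verify that $F'$ really is a $\V$-functor: preservation of composition and identities reduces to that of $F$ via the naturality of the two restriction isomorphisms above in their ``free'' arguments, which in turn follows from the universal property of restrictions. For the $\alpha_x$'s to assemble into a $\V$-natural transformation, axiom~\eqref{eq:vnat-axiom} requires that the two composites into $\usB(Fx, F'y)$---one going $\usA(x,y) \xto{F'_{xy}} \usB(F'x, F'y)$ and then composing with $\alpha_x$ on the right, the other going $\usA(x,y) \xto{F_{xy}} \usB(Fx, Fy)$ and then composing with $\alpha_y$ on the left---agree. By the very definition of $F'_{xy}$, this equality reduces, after transporting along the restriction isomorphism characterizing $\usB(F'x, F'y)$, to a tautology on $F_{xy}$. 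Each $\alpha_x$ is an isomorphism by construction, so $\alpha\colon F \iso F'$ is a natural isomorphism.

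The main obstacle is really just bookkeeping: several base-change operations in \tV must be carefully tracked through the verification of functoriality and naturality. A cleaner conceptual route, which I may opt to present, is to note that objectwise we have $\usB(b, F'x) \cong (\e F_x \times 1)^* \usB(b, Fx)$, which assembles into an isomorphism $\sB(1, F') \cong \sB(1, F)$ of profunctors $\sA \hto \sB$; via the full faithfulness of $F \mapsto \sB(1, F)$ from the discussion following \autoref{thm:coyoneda}, this isomorphism of profunctors is induced by a unique natural isomorphism $\alpha\colon F \iso F'$, delivering both $F'_{xy}$ and the naturality of $\alpha$ in one stroke.
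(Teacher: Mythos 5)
Your proposal is correct and follows essentially the same route as the paper: the paper's proof likewise defines $F'x$ to be a chosen restriction $(\e F_x)^*(Fx)$ and then asserts that the restriction isomorphisms of \autoref{def:restriction} assemble into a natural isomorphism $F\cong F'$, leaving the hom-object bookkeeping to the reader. You simply spell out the details (including the useful observation that the whole verification can be packaged as an isomorphism of representable profunctors plus full faithfulness of $F\mapsto\sB(1,F)$), which the paper omits.
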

\begin{proof}
  Given $F$, with components $\e F_x\maps \ex\to \e(Fx)$, we define
  $F'\maps \sA\to\sB$ by choosing $F'x$ to be a restriction $(\e
  F_x)^*(Fx)$ of $Fx$ along $\e F_x$.  It is easy to check that $F'$
  is an indexed \sV-functor, and that the isomorphisms $(\e F_x)^*(Fx)
  \cong (Fx) \circ \delta(\e F_x)$ from \autoref{def:restriction}
  assemble into a natural isomorphism $F\cong F'$.
\end{proof}

It follows that \VFib, while not a full sub-2-category of \VCAT, is a
`full sub-bicategory' in the sense that the inclusions
\[\VFib(\sA,\sB)\into \VCAT(\sA,\sB)\]
are equivalences of categories.  Thus, to prove that $\VFib\into\VCAT$
is a biequivalence, it suffices to check that every large \sV-category
is equivalent, in \VCAT, to a \sV-fibration.  This is included in the
following theorem.

\begin{thm}\label{thm:fibrepl}
  The (non-full) inclusion $\VFib\into\VCAT$ has a right 2-adjoint
  \Gamma; this means that for a \sV-fibration \sA\ and a large
  \sV-category \sB, we have natural isomorphisms of hom-categories
  \[\VFib(\sA,\Gamma\sB) \iso \VCAT(\sA,\sB).\]
  Moreover, the unit and counit $\sA\to \Gamma\sA$ and
  $\Gamma\sB\to\sB$ are internal equivalences, so this 2-adjunction is
  actually a biequivalence.
\end{thm}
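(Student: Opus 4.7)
The plan is to construct $\Gamma\sB$ as a ``free restriction completion'' of $\sB$, analogous to the Grothendieck construction that turns a pseudofunctor into a fibration. Objects of $\Gamma\sB$ are pairs $(x,f)$ with $x$ an object of $\sB$ and $f\maps Y\to\ex$ a morphism in $\bS$; the extent of $(x,f)$ is $Y$. Hom-objects are defined by pullback,
\[ \underline{\Gamma\sB}\big((x,f),(y,g)\big) := (g\times f)^*\usB(x,y)\;\in\; \V^{\e(y,g)\times\e(x,f)}, \]
with composition and identities inherited from $\sB$ via the pseudofunctoriality of restriction. On a \V-functor $G\maps\sB\to\sB'$, $\Gamma G$ sends $(x,f)$ to $(Gx,\e G_x\circ f)$, which is an indexed \V-functor. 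The category $\Gamma\sB$ is itself a \V-fibration: a restriction of $(x,f\maps Y\to\ex)$ along $h\maps Z\to Y$ is $(x,fh)$, and \autoref{thm:coyoneda} supplies the required isomorphism of \V-functors $\delta Z\to\Gamma\sB$.

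For the 2-adjunction, define the counit $\ep_\sB\maps\Gamma\sB\to\sB$ by $(x,f)\mapsto x$ with $\e(\ep_\sB)_{(x,f)}:=f$, and on hom-objects by the canonical arrow $(g\times f)^*\usB(x,y)\to\usB(x,y)$ in $\tV$. Given any \V-functor $F\maps\sA\to\sB$, set $\hat F(x):=(Fx,\e F_x)$ with $\e\hat F_x = 1_\ex$, so $\hat F$ is indexed; its action on hom-objects is the reinterpretation of $F_{xy}$ in $\tV$ as a morphism in $\V^{\ey\times\ex}$ into the pullback $(\e F_y\times\e F_x)^*\usB(Fx,Fy)$. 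The assignments $F\mapsto\hat F$ and $G\mapsto\ep_\sB\circ G$ are mutually inverse, and the same unpacking identifies indexed 2-cells $\hat F\to\hat G$ in $\VFib$ with \V-natural transformations $F\to G$ in $\VCAT$, giving the desired 2-natural isomorphism of hom-categories.

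For the biequivalence, the counit $\ep_\sB$ has a strict section $\si_\sB\maps x\mapsto(x,1_\ex)$ with $\ep_\sB\si_\sB=\id_\sB$, together with a \V-natural isomorphism $\si_\sB\ep_\sB\iso\id_{\Gamma\sB}$ whose component at $(x,f)$ arises from $\mathrm{ids}\maps\I_\ex\to\Delta_\ex^*\usB(x,x)$ pulled back along $f$ (the relevant pair of extent maps factors through $\Delta_\ex\circ f$). When $\sA$ is a \V-fibration, the unit $\eta_\sA\maps x\mapsto(x,1_\ex)$ admits a pseudo-inverse $\tau_\sA\maps(x,f)\mapsto f^*x$ obtained by choosing restrictions in $\sA$; the defining isomorphisms of restrictions then assemble into \V-natural isomorphisms $\tau_\sA\eta_\sA\iso\id_\sA$ and $\eta_\sA\tau_\sA\iso\id_{\Gamma\sA}$. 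The main obstacle is not conceptual but organizational: 2-functoriality of $\Gamma$, the \V-functor axioms for $\hat F$, compatibility of $\hat{(\cdot)}$ with composition in both $\sA$ and $\sB$, and the coherence of the unit and counit isomorphisms all reduce to straightforward diagram chases forced by pseudofunctoriality of pullback and by \autoref{thm:coyoneda}, but they are numerous and must be written out with care.
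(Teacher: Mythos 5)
Your proposal is correct and follows essentially the same route as the paper: $\Gamma\sB$ is built from formal restrictions $(x,f)$ with hom-objects $(g\times f)^*\usB(x,y)$, the counit forgets the formal restriction while recording $f$ as the extent map, the adjunction bijection is read off at the level of objects, and the counit is split by $x\mapsto(x,1_{\ex})$ with the comparison isomorphism assembled from the restriction isomorphisms. The only cosmetic difference is that you exhibit an explicit pseudo-inverse $\tau_\sA$ to the unit, whereas the paper deduces that the unit is an equivalence abstractly from the bicategorical full faithfulness of $\VFib\into\VCAT$ established in \autoref{thm:fib-indexed-ok}.
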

\begin{proof}
  We define the objects of $\Gamma\sB$ to be `formal
  restrictions' $f^*x$, where $x$ is an object of \sB\ with
  extent $X$ and $f\maps Y\to X$ is a map in \bS.  Of course, we set
  $e(f^*x)=Y$, and we define $\U{\Gamma\sB}(f^*x,g^*y)$ to be
  $(g\times f)^*\usB(x,y)$.  We leave it to the reader to define the
  rest of the structure and check that $\Gamma\sB$ is a
  \sV-fibration.

  Now, an indexed \sV-functor $\sA\to\Gamma\sB$ sends each
  object $a$ of \sA\ to a formal restriction $f^*x$ in
  $\Gamma\sB$, where $f\maps \e a\to\ex$ is a map in \bS.  On the
  other hand, a non-indexed \sV-functor $\sA\to\sB$ sends each
  object $a$ to an object $x$ and chooses a map $f\maps \e a\to
  \ex$, so at this level the bijection is obvious.  It is easy to
  check that it carries over to the action on hom-objects and to
  natural transformations, so that \Gamma\ defines a right 2-adjoint
  to the inclusion.

  Now since the inclusion is \emph{bicategorically} fully faithful, it
  follows automatically that the unit $\sA\to\Gamma\sA$ is an
  \emph{equivalence} (though not an isomorphism).  Thus, it remains to
  check that the counit $\ep\maps \Gamma \sB\to\sB$ is an equivalence
  for any large \sV-category \sB.

  Of course, the counit $\ep\maps \Gamma\sB\to\sB$ sends $f^*x$ to
  $\ep(f^*x) = x$ with $\e (\ep_{f^*x}) = f$.  We define a \sV-functor
  $\xi\maps \sB\to\Gamma\sB$ by sending each object $x$ of \sB\ to its
  formal restriction $1_{\ex}^*x$.  Clearly $\ep\xi$ is the identity
  on \sB.  The composition $\xi\ep$ sends the formal restriction
  $f^*x$ to $1_{\ex}^*x$, with $\e(\xi\ep)_{f^*x} = f$.  It suffices
  to show that $\xi\ep\iso\Id_{\Gamma \sB}$ in \VCAT, which we can do
  by assembling the isomorphisms from \autoref{def:restriction}, as we
  did in \autoref{thm:fib-indexed-ok}.
  %
\end{proof}

\begin{eg}
  If \bV is a classical monoidal category and \bC is a small
  \bV-enriched category, regarded as a small $\fam(\bV)$-category as
  in \autoref{eg:enriched}, then $\Gamma\bC$ is the
  $\fam(\bV)$-category $\fam(\bC)$ constructed in
  \autoref{eg:enriched-as-indexed}.
\end{eg}

\begin{eg}
  If $A$ is an \S-internal category regarded as a small
  $\self(\S)$-category, then $\Gamma A$ is the locally
  internal category classically associated to $A$.
\end{eg}

\begin{rmk}
  On the other hand, if we write $\VCAT_\mathrm{ind}$ for the
  sub-2-category of \VCAT\ containing all the \sV-categories but only
  the indexed \sV-functors, then the non-full inclusion
  $\VCAT_\mathrm{ind}\into\VCAT$ is \emph{not} a biequivalence.
%
  This is relevant because if we were to restrict ourselves to data
  contained in the \emph{bicategory} constructed from \V (rather than
  the whole equipment), then the indexed \sV-functors would be the
  only type of morphism available.  For this reason, the authors
  of~\cite{bcsw:variation-enr,cplt-locintern,chbase-locintern,desc-locintern}
  had to impose extra conditions at least as strong as being a
  \sV-fibration in order to obtain an equivalence with indexed
  \sV-categories (in the case $\sV=\self(\bS)$, which is the only one
  they considered).
\end{rmk}

Most \sV-categories which arise ``in nature'' are either small or are
\sV-fibrations.  We can regard the other large \V-categories as a
technical tool which makes it easier to relate these two most
important types.  (As we will see in \SS\ref{sec:limits-colimits},
set-sized \sV-categories are also convenient to use as diagram
shapes.)


\begin{rmk}
  We can also define an anologue of the general hom-functors from the
  end of \SS\ref{sec:indexed-moncats} for any \sV-fibration \sA; we
  set
  \begin{align*}
    \usA^{Y,[W]}(B,C) &= \pi_{W*}\Delta_{Y\times W}^*\usA(B,C)\\
    &\iso \pi_{W*} \usA^{X\times Y\times Z\times W}(\pi_X^*B,
    \pi_Z^*C).
  \end{align*}
  When we consider tensors, cotensors, and monoidal structures for
  \sV-categories, we will also find analogues for \sV-fibrations of
  the various types of monoidal structure on \sV.
\end{rmk}

\section{Change of cosmos and underlying indexed categories}
\label{sec:cocuf}

If \V is an \S-indexed monoidal category and \sW is a \bT-indexed one,
then by a \textbf{lax monoidal morphism} $\Phi:\V\to\sW$ we mean a
commutative square
\begin{equation}
  \xymatrix{\tV \ar[r]^{\Phi}\ar[d] & \tW \ar[d]\\
    \bS\ar[r]_{\Phi} & \bT}\label{eq:mon-mor-fib}
\end{equation}
such that $\Phi:\bS\to\bT$ preserves finite products (hence is strong
cartesian monoidal), $\Phi:\tV\to\tW$ is lax monoidal and preserves
cartesian arrows, and the square commutes in the 2-category of lax
monoidal functors.  If $\Phi:\bS\to\bT$ is an identity, as is often
the case, we say that $\Phi$ is a morphism \textbf{over \S.}

In this situation, we have induced operations $\Phi_\bullet$ from
small, large, and indexed \V-categories to the corresponding sort of
\sW-categories, and similarly for functors, transformations,
profunctors, multimorphisms, and so on, which we call \textbf{change
  of cosmos}.  Formally, $\Phi_\bullet$ is a (normal, lax) equipment
functor $\Lprof{\V} \to \Lprof{\sW}$, which in particular induces
2-functors $\cat\V \to \cat\sW$, $\CAT\V \to \CAT\sW$, and so on.

If $\Phi$ is strong monoidal and preserves indexed coproducts,
fiberwise coequalizers, and fiberwise coproducts of the appropriate
cardinalities, then $\Phi_\bullet$ preserves composition of
profunctors.  Similarly, if $\Phi$ is closed monoidal and preserves
indexed products, fiberwise equalizers, and fiberwise products of the
appropriate cardinalities, then $\Phi_\bullet$ preserves right and
left homs of profunctors.

We can furthermore assemble the operations $\Lprof{(-)}$ and
$(-)_\bullet$ into a 2-functor from a 2-category of indexed monoidal
categories as in~\cite{shulman:frbi} into the 2-category
$\mathit{v}\cE\mathit{quip}$ of~\cite[7.6]{cs:multicats}.  This can be
decomposed into the 2-functor $\mathbb{F}\mathrm{r}$
of~\cite[14.9]{shulman:frbi} (suitably generalized to the virtual
case) followed by a many-object version of the 2-functor
$\mathbb{M}\mathsf{od}$ of~\cite[3.9]{cs:multicats}.  In particular,
any monoidal adjunction between indexed monoidal categories induces an
adjunction between 2-categories (or equipments) of enriched indexed
categories.

We omit the details of all of this since we will not need them here;
instead we merely mention some important special cases.

\begin{eg}
  Any lax monoidal functor $\bV\to\bW$ gives rise to a lax monoidal
  morphism $\fam(\bV) \to \fam(\bW)$.  When we identify \bV-enriched
  categories with certain indexed $\fam(\bV)$-categories as in
  \autoref{eg:enriched-as-indexed}, the induced operations from
  $\fam(\bV)$-categories to $\fam(\bW)$-categories agree with the
  classical change-of-enrichment operations.
\end{eg}

\begin{eg}
  Any pullback-preserving functor $F\maps \bS\to\bT$ gives rise to a
  strong monoidal morphism $\self(\bS)\to\self(\bT)$.  The induced
  operations on internal categories and locally internal categories
  agree with the obvious ones.
\end{eg}

\begin{eg}\label{thm:free-monfib}
  If $\V$ has indexed coproducts preserved by $\ten$, then there is a
  strong monoidal morphism $\Si:\self(\bS) \too \V$, which takes an
  object $A\xto{a} X$ of $\self(\bS)^X = \bS/X$ to the object
  $a_!\I_A$ of $\V^X$.  The pseudonaturality isomorphism $\Si(f^*A)
  \cong f^*(\Si A)$ is just the Beck-Chevalley condition for indexed
  coproducts in \V, together with the fact that $f^*\I_Y \cong \I_X$.
  And for a pullback square
  \begin{equation}
    \vcenter{\xymatrix{
        C\ar[r]^q\ar[d]_p \ar[dr]|c &
        B\ar[d]^b\\
        A\ar[r]_a &
        X
      }}
  \end{equation}
  in \S, regarded as the fiberwise product $C=A\times_X B$ in
  $\self(\S)^X$, the comparison isomorphism $\Si(A) \otimes_X \Si(B)
  \toiso \Si(C)$ is the composite
  \begin{alignat}{2}
    a_! \I_A \otimes_X b_! \I_B
    &\cong a_!(\I_A \otimes_A a^* b_! \I_B)
    &\qquad\text{(since $\ten$ preserves indexed coproducts)}\\
    &\cong a_!(\I_A \otimes_A p_! q^* \I_B)
    &\qquad\text{(by the Beck-Chevalley condition)}\\
    &\cong a_!p_!(p^*\I_A \otimes_{C} q^* \I_B)
    &\qquad\text{(since $\ten$ preserves indexed coproducts)}\\
    &\cong c_!(\I_{C} \otimes_{C} \I_{C})
    &\qquad\text{(since $p^*$ and $q^*$ are strong monoidal)}\\
    &\cong c_!(\I_{C}).
  \end{alignat}
  It is evident that $\Si$ also preserves indexed coproducts.

  We say that the induced functor $\CAT{\self(\S)} \to \CAT{\V}$
  builds the \textbf{free \V-category} on a $\self(\S)$-category, and
  write it as $\V[-]$.  As a special case, the discrete \V-category
  $\delta X$ is the free \V-category on the discrete \S-internal
  category on $X$.

  (This is the primary place in this paper where we use the fully
  general Beck-Chevalley condition for \V, as opposed to the limited
  version described in \autoref{eg:actions}.  Thus we cannot expect to
  build free $\sAct(\S)$-categories.)
\end{eg}

\begin{eg}\label{eg:lsfib}
  Assuming \S is locally small, the fiberwise Yoneda embedding gives a
  strong monoidal morphism $\self(\S) \to \sPsh(\S,\nSet)$.  Since
  this morphism is fiberwise fully faithful, the induced functor from
  $\self(\S)$-categories to $\sPsh(\S,\nSet)$-categories is
  2-fully-faithful.  Hence, we can regard $\self(\S)$-categories as
  $\sPsh(\S,\nSet)$-categories---that is, classical \S-indexed
  categories with locally small fibers---with the property that all
  their hom-presheaves are representable.

  When expressed in terms of the fiberwise-homs $\usA^X(x,y)$ of an
  indexed $\sPsh(\S,\nSet)$-category, this requires that for any
  $x,y\in\sA^X$ the functor
  \begin{equation}
    \begin{array}{rcl}
      \S/X &\too& \nSet\\
      (Z\xto{f} X) &\mapsto& \sA^Z(f^*x,f^*y)
    \end{array}
  \end{equation}
  is representable.  And when expressed equivalently in terms of the
  external-homs $\usA(x,y)$ of a large $\sPsh(\S,\nSet)$-category, it
  requires that for any $x\in \sA^X$ and $y\in\sA^Y$, the functor
  \begin{equation}
    \begin{array}{rcl}
      \S/(X\times Y) &\too& \nSet\\
      (Z \xto{(f,g)} X\times Y) &\mapsto& \sA^Z(f^*x,g^*y)
    \end{array}
  \end{equation}
  is representable.  But this latter condition is exactly the usual
  notion of when an indexed category is ``locally small''.  Thus, we
  recover the theorem indentifying locally small indexed categories
  with ``locally internal categories'' (which, recall, are the same as
  indexed $\self(\S)$-categories).
\end{eg}

\begin{eg}\label{eg:underlying}
  In fact, for \emph{any} \V with locally small fibers, there is a lax
  monoidal morphism $\V \to \sPsh(\S,\nSet)$, which takes $A\in\V^X$
  to the functor
  \begin{equation}
    \begin{array}{rcl}
      (\S/X)\op &\too& \nSet\\
      (Y\xto{f} X) &\mapsto& \V^X(\I_Y,f^* A).
    \end{array}\label{eq:underlying}
  \end{equation}
  In the case $\V=\self(\S)$ this reduces to the fiberwise Yoneda
  embedding.  In general, it implies that any indexed \V-category
  $\sA$ has an underlying ordinary \S-indexed category, which we
  denote $\ord\sA$.  We call $\ord\sA$ the \textbf{underlying
    indexed category} of \sA; it should be regarded as an indexed
  version of the classical ``underlying ordinary category of an
  enriched category''.  In particular, if \V is closed, then by
  applying this construction to the \V-category \V we recover the
  original \S-indexed category \V.

  Tracing through the identification of indexed
  $\sPsh(\S,\nSet)$-categories with classical indexed categories, we
  see that the fiber category $(\ord\sA)^X$ over $X$ is the underlying
  ordinary category of the classical $\V^X$-enriched category $\sA^X$,
  generally denoted $\ord{(\sA^X)}$.  Thus, there is no ambiguity in
  writing $\ord{\sA^X}$ for this fiber.

  If \sA is a (large or small) \V-category that is not a \V-fibration,
  then it still has an underlying \S-indexed category, namely
  $\ord{(\Gamma \sA)}$ where $\Gamma$ is the functor from
  \autoref{thm:fibrepl}.  It is easy to see that this is precisely the
  underlying indexed category constructed in \autoref{eg:uf-via-disc}
  and \autoref{rmk:disc-large}.  However, if \sA \emph{is} a
  \V-fibration, then $\ord{(\Gamma \sA)}$ is rather larger than
  $\ord\sA$ (though still equivalent to it).
\end{eg}

\begin{eg}\label{eg:comprehension}
  If \V has the property that each functor~\eqref{eq:underlying} is
  representable, then the morphism $\V \to \sPsh(\S,\nSet)$ factors
  through $\self(\S)$, and so $\ord\sA$ is a $\self(\S)$-category for
  any \V-category \sA.  When \V is cartesian, this property of \V is
  precisely the \textbf{comprehension schema}
  of~\cite{lawvere:comprehension}, so we will henceforth extend that
  terminology to the non-cartesian case.

  In particular, if \V is closed, then the \V-category \V itself has
  an underlying $\self(\S)$-category.  It is easy to see that this
  means the \S-indexed category \V is locally small.  Indeed, for a
  closed \V, local smallness is equivalent to the comprehension
  schema.

  If \V satisfies the comprehension schema and also has indexed
  coproducts preserved by $\ten$, then the ``underlying'' morphism
  $\V\to\self(\S)$ is right adjoint to the ``free'' morphism
  $\self(\S)\to\V$ from \autoref{thm:free-monfib}.  Thus, the induced
  functors on enriched indexed categories are likewise adjoint.

  For instance, when $\V=\self_*(\S)$, we see that every pointed
  internal category (as in \autoref{eg:internal-and-pointed}) has an
  underlying ordinary internal category, and that this operation has a
  left adjoint which ``adjoins a disjoint section'' to the
  hom-objects.  Similarly, if \S satisfies the hypotheses of
  \autoref{eg:ab-mf}, then any $\sAb(\bS)$-category has an underlying
  $\self(\S)$-category, and this operation has a left adjoint which
  builds free abelian group objects on the hom-objects.
\end{eg}

\begin{eg}\label{eg:discrete-objects-adjn}
  Here is an example in which the base category changes.  Let \bS\
  have finite limits; then there is a lax monoidal morphism
  $\self(\bS)\to\fam(\bS)$, which takes $X\in\S$ to the set $\bS(1,X)$
  and an object $A\in\bS/X$ to its family of fibers.  Thus, any
  \bS-internal category gives rise to a small \bS-enriched category.
  For example, any internal topological category gives rise to a
  topologically enriched category by forgetting the topology on the
  set of objects.

  If \bS has small coproducts preserved by pullback, then this
  morphism has a strong monoidal left adjoint, which sends a set $X$
  to $\coprod_{x\in X} 1$ and an $X$-indexed family $\{A_x\}$ of
  objects of \bS to $\coprod_{x\in X} A_x$.  The corresponding
  operation on small \bS-enriched categories regards them as
  \bS-internal categories whose object-of-objects is ``discrete''
  (i.e.\ a coproduct of copies of the terminal object).
\end{eg}

\section{Limits and colimits}
\label{sec:limits-colimits}

We now begin studying limits and colimits for \V-categories.  Here is
where the equipment-theoretic machinery of profunctors is most
helpful, because it automatically gives us a general definition of
weighted limit with many good formal properties.  In this section we
recall this definition and some of these good properties; in the next
section we translate some examples into more concrete terms for
indexed \V-categories.

\begin{defn}
  Let $J\maps K\hto A$ be a \sV-profunctor and $f\maps A\to \sC$ a
  \sV-functor.  A \textbf{$J$-weighted colimit of $f$} consists of a
  \sV-functor $\ell\maps K\to \sC$ together with an isomorphism
  \begin{equation}
    \sC(\ell,1) \cong J\rhd \sC(f,1)\label{eq:colimiso}
  \end{equation}
  of profunctors $\sC \hto K$.  (Recall \autoref{rmk:comphom-abuse}.)

  If instead $J\maps A\hto K$, then a \textbf{$J$-weighted limit of
    $f$} consists of a \sV-functor $\ell\maps K\to X$ together with
  an isomorphism
  \begin{equation}
    \sC(1,\ell) \cong \sC(1,f) \lhd J.\label{eq:limiso}
  \end{equation}
  of profunctors $K\hto \sC$.
\end{defn}

In general, $K$, $A$, and \sC could be any large \V-categories.
However, in our examples, most often $K$ and $A$ will be small or
set-small, while \sC will be a \V-fibration (hence our choice of
typefaces).


We will consider many examples in \SS\ref{sec:indexed-limits}, but we
should at least mention the following one here, to clarify why this is
a reasonable definition of ``weighted limit''.

\begin{eg}
  If $\V=\fam(\bV)$ and we take $K$ to be the unit \sV-category
  $\delta 1$ and $A$ a small \V-category, then $J$ is simply a diagram
  on $A$.  If \sC is the indexed \V-category constructed from a
  (possibly large) \bV-enriched category \bC as in
  \autoref{eg:enriched-as-indexed}, then the above definitions reduce
  to the usual notion of weighted limit and colimit.  Specifically, if
  we assume $\ell$ to be an indexed \V-functor, then it is just an
  object of $\bC$, and (in the colimit case) the
  isomorphism~\eqref{eq:colimiso} means that
  \[\bC(\ell,x) \iso [A\op,\bC](J,\,\bC(f-,x))
  \]
  for all $x\in\bC$, which is the usual definition of $\ell$ being a
  $J$-weighted colimit of $f$.
\end{eg}

By contrast with the above classical situation, in general it turns
out to be very useful to allow $K$ to be an arbitrary \V-category.
Here is one example of what such more general ``limits'' include.

\begin{eg}\label{eg:limcomp}
  Let $j\maps K\to A$ and take $J = A(1,j)$.  Then for
  any $f\maps A\to \sC$, we have
  \[ A(1,j) \rhd \sC(f,1) \cong \sC(f j,1) \] by
  \autoref{thm:yoneda2large}.  Hence $fj$ is always a
  $A(1,j)$-weighted colimit of $f$.  Dually, $fj$ is also always an
  $A(j,1)$-weighted limit of $f$.

  In particular, if $j$ is the identity functor of $A$, then
  $A(1,1)=A$ is the identity profunctor of $A$, and $f$ is its own
  $A$-weighted (co)limit.
\end{eg}

One real advantage of allowing arbitrary profunctors as weights is
that a given profunctor can be used both as a weight for limits and a
weight for colimits.  This symmetry is necessary in order to even
\emph{state} the following fact, which will be very useful.

\begin{prop}\label{thm:limcolim-adj}
  For a \V-profunctor $J\maps A\hto B$ and \V-functors $f\maps B\to
  \sC$ and $g\maps A\to \sC$ which have respectively a $J$-weighted
  colimit
  \(A\xto{\colim^J f} \sC\)
  and a $J$-weighted limit
  \(B\xto{\lim^Jg} \sC\), we have
  a natural isomorphism
  \[\VCAT(\colim^J f, g) \cong \VCAT(f, \textstyle\lim^J g). \]
\end{prop}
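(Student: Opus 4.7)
The plan is to exhibit both $\VCAT(\ell, g)$ and $\VCAT(f, m)$ (writing $\ell = \colim^J f$ and $m = \lim^J g$) as naturally isomorphic to a common third set, namely the trimorphism set $\Vmmor(\sC(g,1), J, \sC(1,f); \sC)$, where the final $\sC$ denotes the unit profunctor of $\sC$. Both identifications come from combining the limit/colimit defining isomorphisms with the universal properties of the right and left homs and the Yoneda Lemma \autoref{thm:yoneda2large}.

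First I would rewrite the two hom-sets as bimorphism sets. Using the extension of \autoref{thm:equip} to large \V-categories (the paragraph immediately preceding \autoref{thm:yoneda2large}), I have $\VCAT(A,\sC)(\ell, g) \cong \VPROF(\sC, A)(\sC(g, 1), \sC(\ell, 1))$. Substituting the colimit defining isomorphism $\sC(\ell, 1) \cong J \rhd \sC(f, 1)$ and applying the universal property of the right hom (with one profunctor on the left) gives
\[
\VCAT(A,\sC)(\ell, g) \;\cong\; \VBimor(\sC(g,1), J; \sC(f, 1)).
\]
Dually, $\VCAT(B,\sC)(f, m) \cong \VPROF(B,\sC)(\sC(1, f), \sC(1, m))$, and substituting $\sC(1, m) \cong \sC(1, g) \lhd J$ together with the universal property of the left hom gives
\[
\VCAT(B,\sC)(f, m) \;\cong\; \VBimor(J, \sC(1, f); \sC(1, g)).
\]

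Next, I apply \autoref{thm:yoneda2large} to produce canonical isomorphisms $\sC(f, 1) \cong \sC(1, f) \rhd \sC$ and $\sC(1, g) \cong \sC \lhd \sC(g, 1)$ (the relevant homs existing automatically by that lemma). Substituting these into the two bimorphism sets and invoking the universal properties of $\rhd$ and $\lhd$ once more---this time with two profunctors on the left---yields
\begin{align*}
\VBimor(\sC(g,1), J; \sC(f, 1))
&\cong \VBimor(\sC(g, 1), J; \sC(1, f) \rhd \sC) \\
&\cong \Vmmor(\sC(g, 1), J, \sC(1, f); \sC),\\
\VBimor(J, \sC(1, f); \sC(1, g))
&\cong \VBimor(J, \sC(1, f); \sC \lhd \sC(g, 1)) \\
&\cong \Vmmor(\sC(g, 1), J, \sC(1, f); \sC).
\end{align*}
Composing the two chains yields the desired natural isomorphism.

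The main potential obstacle is purely bookkeeping: tracking the source, target, and variance of each profunctor, and correctly matching the roles of $A$, $B$, and $\sC$ at every step. There is no analytic difficulty, because the only composites or homs invoked either exist by hypothesis (those appearing in the limit/colimit defining isomorphisms) or exist automatically by \autoref{thm:yoneda2large}. Naturality of the resulting bijection in $f$, $g$, and $J$ follows from naturality at each intermediate step, since each step is an instance of a universal property.
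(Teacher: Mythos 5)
Your proposal is correct and follows essentially the same route as the paper's proof, which is exactly the chain $\VCAT(\colim^J f,g)\cong\VPROF(\sC(g,1),\sC(\colim^Jf,1))\cong\VBimor(\sC(g,1),J;\sC(f,1))\cong\VBimor(J,\sC(1,f);\sC(1,g))\cong\VPROF(\sC(1,f),\sC(1,\lim^Jg))\cong\VCAT(f,\lim^Jg)$. The only difference is that you make the middle step explicit by factoring it through the trimorphism set $\Vmmor(\sC(g,1),J,\sC(1,f);\sC)$ via the Yoneda isomorphisms $\sC(f,1)\cong\sC(1,f)\rhd\sC$ and $\sC(1,g)\cong\sC\lhd\sC(g,1)$, which is precisely the mechanism the paper leaves implicit.
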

\proof
  We calculate
  \begin{align*}
    \VCAT(\colim^J f, g)
    &\iso \VPROF(\sC(g,1), \sC(\colim^J f,1))\\
    &\iso \VBimor(\sC(g,1),J; \sC(f,1))\\
    &\iso \VBimor(J, \sC(1,f); \sC(1,g))\\
    &\iso \VPROF(\sC(1,f), \sC(1,{\textstyle\lim^J g}))\\
    &\iso \VCAT(f, \textstyle\lim^J g).
    \tag*{\endproofbox}
  \end{align*}

We immediately obtain a description of Kan extensions as particular 
weighted limits and colimits.

\begin{cor}\label{thm:pointwise-kan}
  Let $j\maps A\to K$ and $f\maps A\to X$.  Then any $K(j,1)$-weighted
  colimit of $f$ is an internal left extension of $f$ in \VCAT, and
  any $K(1,j)$-weighted limit of $f$ is an internal right extension of
  $f$ in \VCAT.
\end{cor}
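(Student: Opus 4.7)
The plan is to deduce both halves from \autoref{thm:limcolim-adj} together with \autoref{eg:limcomp}. I will sketch the colimit/left-extension case; the other is dual.

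Fix $\ell\maps K\to X$, a $K(j,1)$-weighted colimit of $f$, and let $g\maps K\to X$ be arbitrary. First I would invoke the dual half of \autoref{eg:limcomp}, applied to our $j\maps A\to K$ and to $g\maps K\to X$ (so that $A$ and $K$ trade roles compared to the statement there): it asserts that the composite $gj$ is automatically a $K(j,1)$-weighted limit of $g$, via the canonical isomorphism $X(1,gj)\cong X(1,g)\lhd K(j,1)$ which is supplied for free by the second Yoneda lemma (\autoref{thm:yoneda2large}). Importantly, this needs no hypothesis on $X$; the weighted limit always exists and is computed as plain composition with $j$.

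Next I would apply \autoref{thm:limcolim-adj} with the profunctor $J = K(j,1)\maps K\hto A$ and with the functors $f\maps A\to X$ and $g\maps K\to X$. By hypothesis, the $J$-weighted colimit of $f$ is $\ell$, and by the previous step the $J$-weighted limit of $g$ is $gj$. The conclusion is then the bijection
\[
\VCAT(\ell,g) \;\cong\; \VCAT(f,\,gj),
\]
natural in $g$, which is precisely the representable form of $\ell$ being an internal left extension of $f$ along $j$ in \VCAT. The right-extension half of the corollary is obtained by swapping the roles of limits and colimits and replacing $K(j,1)$ by $K(1,j)$ throughout.

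The only subtlety, and the place I expect most of the bookkeeping to sit, is that an internal left extension in a 2-category is by definition represented by composition with a single universal 2-cell $\eta\maps f\to \ell j$, rather than merely by an abstract bijection. To pin this down one sets $g=\ell$ in the isomorphism above, takes the image of $1_\ell$, and then verifies that the resulting $\eta$ reproduces the bijection for all other $g$ by pasting. Tracing this through the proof of \autoref{thm:limcolim-adj} amounts to following the chain of isomorphisms built from \autoref{thm:equip}, \autoref{thm:yoneda2large}, and the universal properties of $\rhd$ and of the unit profunctor; since each of those bijections is representably natural, the check reduces to a routine diagram chase with no new idea required.
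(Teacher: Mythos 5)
Your proof is correct and takes essentially the same route as the paper's: it combines \autoref{eg:limcomp} (precomposition with $j$ is automatically the $K(j,1)$-weighted limit, via \autoref{thm:yoneda2large}) with the natural bijection of \autoref{thm:limcolim-adj} to obtain $\VCAT(\ell,g)\cong\VCAT(f,gj)$. Note that the paper defines internal left extension as a (partial) left adjoint to precomposition, i.e.\ exactly by this representable bijection natural in $g$, so the closing "subtlety" you raise about extracting a single universal 2-cell is not actually needed for the statement as formulated.
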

\begin{proof}
  Internal left extension is defined to be a (partial) left adjoint to
  composition, and we observed in \autoref{eg:limcomp} that
  $\uK(j,1)$-weighted limits are composites with $j$.
\end{proof}

In general, not all 2-categorical left extensions have the stronger
universal property of a $K(1,j)$-weighted colimit.  Sometimes
extensions with this additional property are called \emph{pointwise}
(see~\cite[\SS X.5]{maclane}), but we
follow~\cite[Ch.~4]{kelly:enriched} in reserving the simple term
\emph{Kan extension} for the pointwise ones.

\begin{defn}
  A \textbf{left Kan extension} of $f\maps A\to \sC$ along $j\maps A\to
  K$ is a $K(j,1)$-weighted colimit of $f$.  Dually, a \textbf{right
    Kan extension} of $f$ along $j$ is a $K(1,j)$-weighted limit of
  $f$.
\end{defn}

The following theorem, which will also be very useful, also requires
the use of arbitrary profunctors as weights.

\begin{thm}\label{thm:composing-limits}
  Let $J_2\maps A\hto B$ and $J_1\maps B\hto C$ be weights and let
  $f\maps C\to \sC$ be a \sV-functor.  Suppose that $\ell_1$ is a
  $J_1$-weighted colimit of $f$ and $\ell_2$ is a $J_2$-weighted
  colimit of $\ell_1$, and that the composite $J_2\odot J_1$ exists.
  Then $\ell_2$ is a $(J_2\odot J_1)$-weighted colimit of $f$.
\end{thm}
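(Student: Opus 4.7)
The plan is to verify the defining isomorphism for a $(J_2 \odot J_1)$-weighted colimit by chaining together the given isomorphisms for $\ell_1$ and $\ell_2$, using the multimorphism universal properties of right homs and composites. We are given $\sC(\ell_1, 1) \cong J_1 \rhd \sC(f, 1)$ and $\sC(\ell_2, 1) \cong J_2 \rhd \sC(\ell_1, 1)$, and we want an isomorphism $\sC(\ell_2, 1) \cong (J_2 \odot J_1) \rhd \sC(f, 1)$, which by definition says exactly that $\ell_2$ is a $(J_2 \odot J_1)$-weighted colimit of $f$. In particular the right-hand hom will need to exist, and this will drop out of the argument for free.

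The key step is to fix an arbitrary test profunctor $H \maps \sC \hto A$ and to chain the natural bijections
\begin{align*}
\VPROF(H, \sC(\ell_2, 1))
&\cong \VPROF(H, J_2 \rhd \sC(\ell_1, 1))\\
&\cong \VBimor(H, J_2; \sC(\ell_1, 1))\\
&\cong \VBimor(H, J_2; J_1 \rhd \sC(f, 1))\\
&\cong \Vmmor(H, J_2, J_1; \sC(f, 1))\\
&\cong \VBimor(H, J_2 \odot J_1; \sC(f, 1)).
\end{align*}
The first and third bijections are the hypotheses on $\ell_2$ and $\ell_1$; the second and fourth are instances of the right-hom universal property of \autoref{thm:vproflargeclosed} (with one profunctor $H$, respectively two profunctors $H, J_2$, on the left); and the last bijection is the defining universal property of the composite $J_2 \odot J_1$ from \autoref{def:composite}, applied with the single leading profunctor $L_1 = H$. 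Naturality in $H$ then lets the Yoneda lemma conclude that $\sC(\ell_2, 1)$ represents $\VBimor(-, J_2 \odot J_1; \sC(f, 1))$, i.e.\ that it is a right hom $(J_2 \odot J_1) \rhd \sC(f, 1)$; so this hom exists and is canonically isomorphic to $\sC(\ell_2, 1)$, as desired.

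The only real point to be careful about is that the argument uses the universal properties of composites and right homs in their full strength, i.e.\ with additional profunctors allowed on either side, rather than only the bare representability statements of Lemmas \ref{thm:vproflargecomp} and \ref{thm:vproflargeclosed}. Since the paper explicitly sets up composites and homs as representing objects for $\Vmmor$ in the virtual equipment $\Lprof{\V}$, this costs nothing. Alternatively, one could bypass the Yoneda step and invoke the associativity isomorphism $J_2 \rhd (J_1 \rhd \sC(f, 1)) \cong (J_2 \odot J_1) \rhd \sC(f, 1)$ from \autoref{thm:prof-assoc} directly; the Yoneda-based chain above has the minor benefit of simultaneously producing existence of the relevant right hom.
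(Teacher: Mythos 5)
Your proposal is correct and is essentially the paper's own proof: the paper establishes exactly the same chain of bijections, just written in the opposite direction. The only difference is that the paper runs the chain against an arbitrary string $\vec H = H_1,\dots,H_n$ of leading profunctors rather than a single test profunctor $H$, which directly yields the full multimorphism universal property required of the right hom $(J_2\odot J_1)\rhd\sC(f,1)$ (not merely its $n=1$ representability); your argument generalizes verbatim to that setting, so nothing substantive is missing.
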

\proof
  For any $\vec H = H_1,\dots,H_n$, we have
  \begin{align}
    \Vmmor(\vec{H},J_2\odot J_1;\sC(f,1))
    &\cong \Vmmor(\vec{H},J_2,J_1;\sC(f,1))\\
    &\cong \Vmmor(\vec{H},J_2;\sC(\ell_1,1))\\
    &\cong \Vmmor(\vec{H};\sC(\ell_2,1)).
    \tag*{\endproofbox}
  \end{align}


Here is one example of the usefulness of
\autoref{thm:composing-limits}.  We say that a \V-functor
$f:\sA\to\sB$ is \textbf{fully faithful} if each morphism $\sA(x,y)
\to \sB(f(x),f(y))$ is cartesian over $\e f_x \times \e f_y$.  It is
easy to see that this is equivalent to the induced morphism $\sA \to
\sB(f,f)$ of profunctors $\sA\hto\sA$ being an isomorphism, or
equivalently that $\sB(1,f) \odot \sB(f,1) \cong \sA$.

\begin{cor}\label{thm:ff-extn-are-honest}
  Left and right Kan extensions along fully faithful \sV-functors are
  honest extensions.  In other words, if $j\maps A\to K$ is fully
  faithful and $\ell\maps K\to \sC$ is a left Kan extension of $f\maps
  A\to \sC$ along $j$, then $\ell j\iso f$.
\end{cor}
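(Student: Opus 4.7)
The plan is to exhibit $\ell j$ as an $A$-weighted colimit of $f$ (where $A$ denotes the unit profunctor of $A$ as in \autoref{eg:unitprof}), note that $f$ itself is trivially such a colimit, and then conclude by Yoneda-style uniqueness.

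First I would apply \autoref{eg:limcomp} to the functors $j\maps A\to K$ and $\ell\maps K\to\sC$ to see that $\ell j$ is a $K(1,j)$-weighted colimit of $\ell$. Combined with the hypothesis that $\ell$ is a $K(j,1)$-weighted colimit of $f$, \autoref{thm:composing-limits} (taking $J_1 = K(j,1)\maps K\hto A$ and $J_2 = K(1,j)\maps A\hto K$) will give that $\ell j$ is a $(K(1,j)\odot K(j,1))$-weighted colimit of $f$, provided the composite exists. But \autoref{thm:coyoneda} both guarantees existence and computes it:
\[K(1,j)\odot K(j,1) \;\iso\; K(j,1)(1,j) \;=\; K(j,j).\]
Fully faithfulness of $j$ is precisely the assertion (recorded in \SS\ref{sec:large-cats}) that the canonical map $A\to K(j,j)$ is an isomorphism of \V-profunctors $A\hto A$, so in fact $\ell j$ is an $A$-weighted colimit of $f$. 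On the other hand, \autoref{eg:limcomp} applied to the identity functor of $A$ shows $f$ is also an $A$-weighted colimit of itself.

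To finish I would use \autoref{thm:large-unit-comp} to identify $A\rhd \sC(f,1) \iso \sC(f,1)$, so the defining isomorphisms for both colimits collapse to
\[\sC(\ell j, 1) \;\iso\; \sC(f,1),\]
and then invoke the local full-faithfulness of the pseudofunctor $\VCAT\to\VPROF$ sending $f\mapsto \sC(f,1)$ (the discussion following \autoref{thm:coyoneda}) to lift this to an isomorphism $\ell j\iso f$ in $\VCAT$. The main obstacle is purely bookkeeping: one must keep the directions $K(j,1)\maps K\hto A$ and $K(1,j)\maps A\hto K$ straight when applying \autoref{thm:coyoneda}, and use fully faithfulness in the precise form $K(j,j)\iso A$ rather than a one-sided invertibility. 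Once those are aligned, the argument is a routine Yoneda manipulation, and no composites of profunctors beyond $K(1,j)\odot K(j,1)$ need to exist.
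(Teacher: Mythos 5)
Your proof is correct and follows essentially the same route as the paper's: express precomposition with $j$ as a $K(1,j)$-weighted colimit, apply \autoref{thm:composing-limits}, and use full faithfulness to identify $K(1,j)\odot K(j,1)$ with the unit profunctor $A$, whose weighted colimit of $f$ is $f$ itself. Your extra care about the existence of the composite (via \autoref{thm:coyoneda}) and the final Yoneda-style uniqueness step is just a more explicit spelling-out of what the paper leaves implicit.
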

\begin{proof}
  Left Kan extensions are $K(j,1)$-weighted colimits, while
  precomposition with $j$ is a $K(1,j)$-weighted colimit.  Thus, by
  \autoref{thm:composing-limits}, $\ell j$ is a $(K(1,j)\odot
  K(j,1))$-weighted colimit.  However, since $j$ is fully faithful,
  $(K(1,j)\odot K(j,1))$ is isomorphic to the identity profunctor, for
  which a weighted colimit of $f$ is just $f$ itself.  The case of
  right Kan extensions is dual.
\end{proof}


We now consider what it means for a \V-functor to preserve or reflect
limits.  Let $J\maps K\hto A$ be a weight and $d\maps A\to \sC$ a
\sV-functor, and suppose given a bimorphism
\begin{equation}
  \sC(\ell,1),J\xto{\psi} \sC(d,1)\label{eq:to-be-pres-lim}
\end{equation}
Let $f\maps \sC \to \sD$ be a \sV-functor, and consider the unique morphism
\begin{equation}
  \sC(f\ell,1),J \too \sC(fd,1)\label{eq:pres-lim-2cell}
\end{equation}  
whose composite with the universal bimorphism $\sC(f,1),\sC(\ell,1)
\xto{\phi}\sC(f\ell,1)$ is
\begin{align*}
  \sC(f,1), \sC(\ell,1), J \xto{1,\psi}
  \sC(f,1), \sC(d,1) \xto{\phi}
  \sC(fd,1).
\end{align*}

\begin{defn}
  In the above situation, if~\eqref{eq:to-be-pres-lim} exhibits $\ell$
  as a $J$-weighted colimit of $d$, we say that $f$ \textbf{preserves}
  this colimit if~\eqref{eq:pres-lim-2cell} exhibits $f\ell$ as a
  $J$-weighted colimit of $fd$.
  Similarly, if~\eqref{eq:pres-lim-2cell} exhibits $f\ell$ as a
  $J$-weighted colimit of $fd$, we say that $f$ \textbf{reflects} this
  colimit if~\eqref{eq:to-be-pres-lim} exhibits $\ell$ as a
  $J$-weighted colimit of $d$.
\end{defn}

Dually, we define what it means for a \sV-functor to preserve and
reflect a weighted limit.  The following observations are expected.

\begin{prop}
  If $f\maps \sC\to \sD$ is a left adjoint, then $f$ preserves
  any colimits which exist in $\sC$.  Dually, right adjoints
  preserve all limits.
\end{prop}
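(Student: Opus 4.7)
The plan is to show that the representing profunctor $\sD(u\ell,1)$ satisfies the universal property of a $J$-weighted colimit of $ud$, by chaining together \autoref{thm:coyoneda}, the profunctor characterization of the adjunction $u\dashv v$, and a commutation of ``composing $\sC(1,v)$ on the left'' with the right-hom construction $J\rhd(-)$.

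Write $v$ for the right adjoint of $u$. Then \autoref{thm:yoneda-adj-large} gives an isomorphism $\sD(u,1)\iso\sC(1,v)$ of profunctors $\sD\hto\sC$. Combining this with the two coyoneda isomorphisms $\sD(u\ell,1)\iso\sD(u,1)\odot\sC(\ell,1)$ and $\sD(ud,1)\iso\sD(u,1)\odot\sC(d,1)$ of \autoref{thm:coyoneda} yields $\sD(u\ell,1)\iso\sC(1,v)\odot\sC(\ell,1)$ and $\sD(ud,1)\iso\sC(1,v)\odot\sC(d,1)$. Substituting the assumed colimit isomorphism $\sC(\ell,1)\iso J\rhd\sC(d,1)$ into the first of these reduces the problem to exhibiting a canonical isomorphism
\[ \sC(1,v)\odot\bigl(J\rhd\sC(d,1)\bigr) \;\iso\; J\rhd\bigl(\sC(1,v)\odot\sC(d,1)\bigr). \]

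This commutation is the main technical point. It holds because, by the standard adjunction $\sC(1,v)\dashv\sC(v,1)$ of representable profunctors together with the general fact that composing on the left with a left-adjoint $1$-cell in a (virtual) equipment is a right-adjoint functor between hom-categories, $\sC(1,v)\odot(-)$ has a left adjoint $\sC(v,1)\odot(-)$. Since right adjoints preserve every construction characterized by a mapping-in universal property, and $J\rhd H$ is by definition such a construction (via $\Vmmor(X;J\rhd H)\iso\Vmmor(X,J;H)$), the displayed isomorphism follows. Equivalently, one can verify it by a direct multimorphism chase, using the profunctor adjunction twice and the universal property of $\rhd$ once, to produce natural bijections
\[ \Vmmor(\vec L;\sC(1,v)\odot(J\rhd\sC(d,1))) \iso \Vmmor(\sD(1,u),\vec L,J;\sC(d,1)) \iso \Vmmor(\vec L,J;\sC(1,v)\odot\sC(d,1)). \]

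Assembling the isomorphisms yields $\sD(u\ell,1)\iso J\rhd\sD(ud,1)$, which is exactly the defining condition for $u\ell$ to be a $J$-weighted colimit of $ud$. A routine unwinding confirms that this isomorphism is induced by the canonical bimorphism $\sD(u\ell,1),J\to\sD(ud,1)$ obtained from the colimit bimorphism for $\ell$ by post-composing with $u$, so $u$ genuinely preserves the colimit in the sense of the definition. The dual statement, that right adjoints preserve limits, follows by the same argument with $\rhd$ replaced by $\lhd$ and the roles of the two profunctor adjunctions reversed.
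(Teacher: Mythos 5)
Your overall strategy is the same as the paper's: both arguments reduce the claim to a chase through multimorphism sets built on the profunctor isomorphism $\sD(u,1)\iso\sC(1,v)$ supplied by the adjunction, together with the Yoneda lemmas; you have simply packaged the chase as an isolated commutation isomorphism $\sC(1,v)\odot(J\rhd M)\iso J\rhd(\sC(1,v)\odot M)$. The reduction to that commutation, via $\sD(u\ell,1)\iso\sD(u,1)\odot\sC(\ell,1)$ and $\sD(ud,1)\iso\sD(u,1)\odot\sC(d,1)$, is correct, and the commutation itself is true.

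There are, however, two problems with how you justify it. First, the appeal to ``right adjoints preserve constructions with a mapping-in universal property'' is too weak as stated: $J\rhd M$ is not characterized as a limit in the hom-category $\VPROF(\sC,K)$ but by the stronger property $\Vmmor(\vec L;J\rhd M)\iso\Vmmor(\vec L,J;M)$ for \emph{arbitrary} strings $\vec L$ (this distinction is precisely why the paper works in a virtual equipment), so knowing that $\sC(1,v)\odot(-)$ is a right adjoint between hom-categories does not by itself yield what is needed; the adjunction must be run at the multimorphism level, which is what your ``direct chase'' attempts. Second, in that displayed chase the profunctor inserted on the left of the string must be $\sC(v,1)$ --- the right adjoint of $\sC(1,v)$ in the adjunction $\sC(1,v)\dashv\sC(v,1)$ that you yourself invoke --- and not $\sD(1,u)$, which is the \emph{left} adjoint of $\sD(u,1)\iso\sC(1,v)$ and is not isomorphic to $\sC(v,1)$ unless $v\dashv u$ as well. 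With $\sD(1,u)$ in that position neither of your displayed bijections can be constructed, because the unit and counit of $\sD(1,u)\dashv\sD(u,1)$ point the wrong way. The corrected chase
\begin{multline*}
\Vmmor\big(\vec L;\sC(1,v)\odot(J\rhd\sC(d,1))\big)
\iso\Vmmor\big(\sC(v,1),\vec L,J;\sC(d,1)\big)\\
\iso\Vmmor\big(\vec L,J;\sC(1,v)\odot\sC(d,1)\big)
\end{multline*}
does go through and completes your argument; at that point it is essentially the computation appearing in the paper.
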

\begin{proof}
  Recall that an adjunction $f\adj g$ implies an isomorphism $\sD(f,1)
  \iso \sC(1,g)$.  Therefore, if $\ell\maps K\to \sC$ is a colimit of
  $d\maps A\to \sC$ weighted by $J\maps K\hto A$, then for any
  well-typed $\vec H = H_1,\dots,H_n$, we have
  \begin{align*}
    \Vmmor(\vec H, J; \sD({fd},1))
    &\iso \Vmmor(\vec H, J, \sD(1,d); \sD(f,1))\\
    &\iso \Vmmor(\vec H, J, \sD(1,d); \sC(1,g))\\
    &\iso \Vmmor(\sC(g,1),\vec H, J; \sD(d,1))\\
    &\iso \Vmmor(\sC(g,1),\vec H; \sD(\ell,1))\\
    &\iso \Vmmor(\vec H,\sD(1,\ell);\sC(1,g))\\
    &\iso \Vmmor(\vec H,\sD(1,\ell);\sD(f,1))\\
    &\iso \VPROF(M, \sD(f\ell,1)).
  \end{align*}
  The case of right adjoints is dual.
\end{proof}

\begin{prop}
  A fully faithful \sV-functor reflects all limits and colimits.
\end{prop}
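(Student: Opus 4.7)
The plan is to deduce the multimorphism characterization of $\ell$ as a $J$-weighted colimit of $d$ in $\sC$ from the analogous characterization for $f\ell$ in $\sD$, using full faithfulness of $f$ to transfer universal properties along the representable profunctors $\sD(f,1)$ and $\sD(1,f)$. Recall that $f$ is fully faithful iff the unit $\sC\to\sD(1,f)\odot\sD(f,1)$ of the adjunction $\sD(1,f)\dashv\sD(f,1)$ is an isomorphism. Write $\psi\maps \sC(\ell,1),J\to\sC(d,1)$ for the given bimorphism and $\bar\psi\maps \sD(f\ell,1),J\to\sD(fd,1)$ for its image under $f$ constructed in~\eqref{eq:pres-lim-2cell}; by hypothesis, composition with $\bar\psi$ yields bijections $\Vmmor(\vec K,J;\sD(fd,1))\iso\Vmmor(\vec K;\sD(f\ell,1))$ for every composable chain $\vec K$ of profunctors from $\sD$ to $K$, and the goal is to deduce the analogous bijection in $\sC$ via $\psi$.

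The key auxiliary fact is that, for any chain $\vec H$ of profunctors from $\sC$ to a \V-category $C$ and any $X\maps\sC\hto C$, there is a natural bijection
\[
  \Vmmor(\vec H;X) \iso \Vmmor(\sD(f,1),\vec H;\,\sD(f,1)\odot X).
\]
Its forward direction whiskers a multimorphism $\phi\maps\vec H\to X$ with $\sD(f,1)$ and composes via the universal bimorphism; its inverse prepends $\sD(1,f)$ to the source, uses the isomorphism $\sD(1,f)\odot\sD(f,1)\odot X\iso\sC\odot X\iso X$ to simplify the target, and collapses the resulting $\sC$ in the source by \autoref{thm:large-units}. Mutual inverseness follows from the triangle identities for $\sD(1,f)\dashv\sD(f,1)$, whose unit is an isomorphism because $f$ is fully faithful. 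Applying this bijection with $X=\sC(d,1)$ and with $X=\sC(\ell,1)$, using the coyoneda identifications $\sD(fd,1)\iso\sD(f,1)\odot\sC(d,1)$ and $\sD(f\ell,1)\iso\sD(f,1)\odot\sC(\ell,1)$ from \autoref{thm:coyoneda}, and invoking the hypothesis for $\vec K=(\sD(f,1),\vec H)$, produces the chain
\[
  \Vmmor(\vec H,J;\sC(d,1)) \iso \Vmmor(\sD(f,1),\vec H,J;\sD(fd,1)) \iso \Vmmor(\sD(f,1),\vec H;\sD(f\ell,1)) \iso \Vmmor(\vec H;\sC(\ell,1)).
\]
Because $\bar\psi$ was defined in~\eqref{eq:pres-lim-2cell} precisely by whiskering $\psi$ with $\sD(f,1)$, tracing through the construction shows that the composite bijection is exactly composition with $\psi$; hence $\psi$ exhibits $\ell$ as the $J$-weighted colimit of $d$. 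The limit case is dual, with $\sD(1,f)$ and left homs $\lhd$ replacing $\sD(f,1)$ and right homs $\rhd$.

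The main obstacle is combinatorial rather than conceptual: both the mutual inverseness of the auxiliary bijection and the identification of the final composite with composition by $\psi$ require tracking universal bimorphisms carefully through the triangle identities for $\sD(1,f)\dashv\sD(f,1)$. The virtue of the argument is that it needs no cocompleteness assumption on $\sC$, and the reflection holds with respect to every weight $J$.
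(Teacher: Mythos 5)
Your argument is correct and is essentially the paper's own proof: whisker the multimorphism with $\sD(f,1)$, apply the universal property of the colimit $f\ell$ in $\sD$, whisker back with $\sD(1,f)$, and collapse $\sD(1,f)\odot\sD(f,1)\iso\sC$ using full faithfulness. Your packaging of this as the auxiliary bijection $\Vmmor(\vec H;X)\iso\Vmmor(\sD(f,1),\vec H;\sD(f,1)\odot X)$, with mutual inverseness checked via the triangle identities, just makes explicit the verification the paper leaves to the reader.
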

\begin{proof}
  Let $f\maps \sC\to \sD$ be fully faithful and let $f\ell$ be a
  $J$-weighted colimit of $fd$.  Suppose we have a multimorphism $\vec
  H, J\to \sC(d,1)$; we want to show that it factors uniquely through
  a multimorphism $\vec H \to \sC(\ell,1)$.  We can compose on the
  left with $\sD(f,1)$ to obtain a multimorphism
  \[\sD(f,1) , \vec H , J \too \sD(f,1)\odot \sC(d,1) \iso \sD({fd},1),\] and
  since $\sD({f\ell},1) \iso J\rhd \sD({fd},1)$, this factors uniquely
  through $\sD({f\ell},1)$ via a multimorphism $\sD(f,1), \vec H\to
  \sD({f\ell},1)$.  Now composing on the left with $\sD(1,f)$ gives a
  multimorphism
  \[\sD(1,f)\odot \sD(f,1) , \vec H \too
  \sD(1,f) \odot \sD(f,1) \odot \sC(\ell,1).
  \]
  But since $f$ is fully faithful, we have $\sD(1,f)\odot \sD(f,1)\iso
  \sC$, so this is equivalent to a multimorphism $\vec H\to
  \sC(\ell,1)$.  We leave it to the reader to verify that this is the
  desired factorization, and that it is unique.  The case of limits is
  analogous.
\end{proof}

If $i\maps \sD\to \sC$ has a left adjoint $r\maps \sC\to \sD$ whose counit
$\ep\maps ri\to 1$ is an isomorphism, we say that $i$ exhibits $\sD$ as
a \textbf{reflective sub-\sV-category} of $\sC$.

\begin{prop}
  If $\sD$ is a reflective sub-\sV-category of $\sC$, then $\sD$ admits all limits
  and colimits which $\sC$ does.
\end{prop}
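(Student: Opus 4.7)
The plan is to handle the two cases separately; in each, the idea is to form the relevant (co)limit in $\sC$ and then apply the reflection $r\maps \sC\to\sD$.

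For colimits the argument is nearly immediate. Given a weight $J\maps K\hto A$ and a diagram $d\maps A\to \sD$, form $\ell=\colim^J(id)$ in $\sC$. By the previous proposition, the left adjoint $r$ preserves this colimit, so $r\ell$ is a $J$-weighted colimit of $rid$; since $ri\cong 1$ gives $rid\cong d$, we conclude that $r\ell$ is a $J$-weighted colimit of $d$ in $\sD$.

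For limits, given $J\maps A\hto K$ and $d\maps A\to\sD$, the candidate is again $r\ell$, with $\ell = \lim^J(id)$ in $\sC$. The hypothesis $ri\cong 1$ forces $i$ to be fully faithful by the standard triangle-identity argument, so by the preceding proposition $i$ reflects limits. It is therefore enough to show that $ir\ell\cong\ell$ in $\sC$, that is, that the unit $\eta_\ell\maps \ell\to ir\ell$ of the adjunction $r\adj i$ is an isomorphism: once this is in hand, $ir\ell$ is a $J$-weighted limit of $id$ by transport of structure, and reflection by $i$ then yields $r\ell$ as a $J$-weighted limit of $d$ in $\sD$.

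The main obstacle is thus the verification that $\eta_\ell$ is iso, which is the enriched/indexed analogue of the classical fact that a reflective subcategory is closed under whatever limits exist in the ambient category. One natural route is to work entirely in the profunctor calculus: start from the defining isomorphism $\sC(1,\ell)\cong\sC(1,id)\lhd J$, use the iso $\sD(r,1)\cong\sC(1,i)$ from \autoref{thm:yoneda-adj-large} to rewrite $\sC(1,id)\cong \sD(1,d)\odot \sD(r,1)$, and then simplify using the counit-iso assumption. Alternatively, one can argue diagrammatically: apply $r$ to the limit cone into $id$, then $i$, and factor through $\ell$ by its universal property; the triangle identities show that the resulting map $ir\ell\to\ell$ is inverse to $\eta_\ell$. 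All other pieces of the proof follow formally from results already established in this section.
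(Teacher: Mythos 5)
Your colimit argument is exactly the paper's: form $\colim^J(id)$ in $\sC$ and apply the left adjoint $r$, which preserves it, then use $rid\cong d$. For limits you take a genuinely different route. The paper never looks at the unit $\eta_\ell$; it works entirely with profunctors, first showing $\sC(1,\ell)\cong(\sD(1,d)\lhd J)\odot\sD(r,1)$ and then computing $\sD(1,r\ell)\cong\sD(1,d)\lhd J$ directly, which exhibits $r\ell$ as the limit in $\sD$ in one stroke. Your plan---prove $\eta_\ell\maps\ell\to ir\ell$ invertible and then invoke reflection of limits along the fully faithful $i$---is sound and has the virtue of making explicit that $\sD$ is closed under limits in $\sC$ (which the paper only remarks on afterwards). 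The caveat is that the step you defer (``simplify using the counit-iso assumption'') is where all the content lives, and it is precisely the paper's computation: after rewriting $\sC(1,id)\cong\sD(1,d)\odot\sD(r,1)$ one must commute $-\odot\sD(r,1)$ past $\lhd J$, which is done by trading $H\odot\sD(r,1)$ for $\sD(1,r)\rhd H$ (both are $H(r,1)$, by \autoref{thm:coyoneda} and \autoref{thm:yoneda2large}) and then applying the associativity $(M\rhd H)\lhd J\cong M\rhd(H\lhd J)$ of \autoref{thm:prof-assoc}; existence of all the composites and homs involved is guaranteed by those same lemmas. Your diagrammatic alternative also works but is fiddlier than the sketch suggests: $t\circ\eta_\ell=1$ follows from the universal property and one triangle identity, but $\eta_\ell\circ t=1$ additionally needs that $r(\eta_\ell)$ is invertible (the other triangle identity plus invertibility of $\ep$) together with naturality of $\eta$. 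Neither issue is a fatal gap, but neither route is quite as immediate as the proposal implies.
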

\begin{proof}
  For the case of colimits, if $J\maps K\hto A$ is a \V-profunctor and
  $f\maps A\to \sD$ a \V-functor, we can consider the composite $if$.
  If $\sC$ admits the $J$-weighted colimit of $if$, then since $r$ is
  a left adjoint, it preserves this colimit; thus $r(\colim^J if)$ is
  a $J$-weighted colimit of $rif\iso f$.

  For the case of limits, let $J\maps K\hto A$ again be a weight and
  $g\maps K\to \sD$ a \V-functor, and suppose that $\ell\maps A\to
  \sC$ is a $J$-weighted limit of $ig$; thus we have
  \begin{align*}
    \sC(1,\ell) &\iso \sC(1,{ig}) \lhd J \\
    &\iso (\sD(1,g) \odot \sC(1,i)) \lhd J\\
    &\iso (\sD(1,g) \odot \sD(r,1)) \lhd J\\
    &\iso (\sD(1,r) \rhd \sD(1,g)) \lhd J\\
    &\iso \sD(1,r) \rhd (\sD(1,g) \lhd J)\\
    &\iso (\sD(1,g) \lhd J) \odot \sD(r,1).
  \end{align*}
  It follows that
  \begin{align*}
    \sD(1,{r\ell}) &\iso \sC(1,\ell) \odot \sD(1,r)\\
    &\iso (\sD(1,g) \lhd J) \odot \sD(r,1) \odot \sD(1,r)\\
    &\iso (\sD(1,g) \lhd J) \odot \sC(1,i) \odot \sD(1,r)\\
    &\iso \sD(1,g) \lhd J
  \end{align*}
  since $ri\iso 1$.  Thus $r\ell$ is a $J$-weighted limit of $g$.
  Note that since $i$ is a right adjoint, it preserves all limits, so
  we can say more strongly that $\sD$ is `closed under limits' in
  $\sC$.
\end{proof}

\section{Limits in indexed \V-categories}
\label{sec:indexed-limits}

All the definitions and theorems in \SS\ref{sec:limits-colimits} make
sense in any (virtual) equipment.  Now, however, we truly specialize
to the case of enriched indexed categories, considering several
special types of limits and colimits and their relationship to more
familiar ones.  We will see that many such limits and colimits can be
described as `fiberwise' limits together with conditions ensuring the
limits are (1) compatible with the enrichment and (2) preserved by
restriction.  For classical indexed categories, the conditions (1)
tend to be automatic and the conditions (2) are significant, while for
classical enriched categories, the situation is reversed.  For general
\sV, both conditions will be nontrivial.

As mentioned in the introduction, in the case $\V=\self(\S)$ this
reduction of the abstract limit-notions to familiar indexed ones is
due to~\cite{cplt-locintern}; while the combination of indexed and
enriched universal properties for a general \V was first considered
(in an \emph{ad hoc} manner) in~\cite{gg:fib-rel}.

For simplicity, in this section we generally assume \V to be an
\S-indexed cosmos.  Most of the results could be rephrased with some
care under weaker assumptions on \V (in particular, symmetry is never
really necessary), but we mostly leave this to the interested reader.

We will also assume that the \V-category \sC in which we consider
limits is a \V-fibration.  In particular, by
\autoref{thm:fib-indexed-ok} this implies that up to isomorphism, we
may as well assume that any \V-functor with codomain \sC is indexed,
and we will generally do so.

First, suppose that $K=\delta X$ and $A=\delta Y$ are small discrete
\sV-categories.  In this case, a weight $J\maps \delta X\hto \delta Y$
is simply an object of $\V^{X\times Y}$.  Since (indexed) \sV-functors
$\delta X\to\sC$ are equivalent to objects of the fiber $\sC^X$,
$J$-weighted colimits take the fiber $\sC^X$ to the fiber $\sC^Y$, and
$J$-weighted limits take $\sC^Y$ to $\sC^X$.


For such a $J\in\sV^{X\times Y}$, we call a $J$-weighted colimit of
$x\in\sC^X$ a \textbf{global \sV-tensor} of $x$ with $J$, and write it
as $J\oast_{[X]} x$.  (We use the word `global' to distinguish these
tensors from the `fiberwise' ones that we will consider later.)
Invoking the definition of colimits and \autoref{rmk:disc-homs}, we
find that $J\oast_{[X]} x$ is an object of $\sC^Y$ characterized by an
isomorphism of profunctors $\sC\hto \delta Y$:
\[\usC\big(J\oast_{[X]} x, 1\big) \iso \usV^{[X]}\big(J,\usC(x,1)\big).\]

Dually, we call a $J$-weighted limit of $y\in\sC^Y$ a \textbf{global
  \sV-cotensor} with $J$ and write it as $\cten{J}{y}{[Y]}$.  By
definition and \autoref{rmk:disc-homs}, it is an object of $\sC^X$
characterized by an isomorphism of profunctors $\delta X\hto \sC$:
\[\usC\big(1, \cten{J}{y}{[Y]}\big) \iso \usV^{[Y]}\big(J,\usC(1,y)\big).\]

\begin{eg}\label{eg:enr-global-tensors}
  Suppose $\V=\fam(\bV)$ and $\sC=\fam(\bC)$ as in
  \autoref{eg:enriched-as-indexed}.  Then when $X=Y=1$, global
  \V-tensors are the same as classical tensors in enriched category
  theory.

  For general $X$ and $Y$, global tensors combine classical tensors
  with coproducts.  Namely, if $J = (J_{y,x})_{(y,x)\in Y\times X}$ is
  an object of $\bC^{Y\times X}$ and $M= (M_x)_{x\in X}$ is an object
  of $\sD^X$ for a \bC-enriched category \sD, then we have
  \[(J \oast_{[X]} M)_y \iso \coprod_{x\in X} J_{y,x} \oast M_x.
  \]
  In particular, if $Y=X$ and $J_{y,x} =\emptyset$ is an initial
  object for $y\neq x$ (which is to say that $J=\Delta_! J'$ for some
  $J'\in\sD^X$), then $(J \oast_{[X]} M)_x = J_{x,x}\oast M_x$
  involves no coproducts.  This is also the tensor of $M$ with $J'$
  in the $\bV^X$-enriched category $\bC^X$.

  On the other hand, if we have a function $g\maps X\to Y$ and we
  define
  \[J_{y,x} =
  \begin{cases}
    \I & \text{if } f(x)=y\\
    \emptyset & \text{otherwise}
  \end{cases},
  \]
  then $(J \oast_{[X]} M)_y = \coprod_{f(x)=y} M_x$ involves
  \emph{only} coproducts.
\end{eg}

This example suggests that for general \sV\ we may also profitably
split up the study of global \sV-tensors into those of the form
$\Delta_!J'$ and those induced by morphisms in \bS.  We start by
considering the latter.

Suppose that $f\maps X\to Y$ is a morphism in \bS.  Then it gives rise
to profunctors $Y(1,f): \delta X\hto \delta Y$ and $Y(f,1) :\delta
Y\hto \delta X$.  Explicitly, we have
\begin{align}
  \U{Y(1,f)} &= (f\times 1)^*(\Delta_Y)_!\I_Y
  \mathrlap{\qquad\text{and}}\\
  \U{Y(f,1)} &= (1\times f)^*(\Delta_Y)_!\I_Y.
\end{align}
By \autoref{thm:coyoneda} and \autoref{thm:yoneda2large}, for $M \in
\VProf(\delta Y,\delta Z) = \V^{Y\times Z}$, we have natural
isomorphisms
\[ Y(1,f) \odot M \;\cong\; (f\times 1)^* M
\;\cong\; M \lhd Y(f,1).
\]
Similarly, for $N \in \VProf(\delta X,\delta Z) = \V^{X\times Z}$ we
have
\begin{align}
  Y(f,1) \odot N &\cong (f\times 1)_! N\\
  N \lhd Y(1,f) &\cong (f\times 1)_* N.\label{eq:lhd-coext}
\end{align}
and symmetrically in all cases.

Now by \autoref{eg:limcomp}, for any $y\in\sC^Y$, the
$Y(1,f)$-weighted colimit and the $Y(f,1)$-weighted limit of the
corresponding indexed \V-functor $y:\delta Y\to \sC$ are just the
composite $f y : \delta X \to \sC$.  (If we demand the (co)limit to be
an \emph{indexed} \V-functor, then it must be precisely a
\emph{restriction} of $y$ along $f$ as in \autoref{def:restriction}.)


\begin{thm}\label{thm:vbc-cbc}
  If \sC is an indexed \V-category admitting $Y(f,1)$-weighted
  colimits for all $f: X\to Y$ in \S, then the \S-indexed category
  $\ord\sC$ has \S-indexed coproducts.  Dually, if \sC\ admits
  $Y(1,f)$-weighted limits for all $f\maps X\to Y$, then $\ord\sC$ has
  \S-indexed products.
\end{thm}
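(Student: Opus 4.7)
The plan is to identify $\S$-indexed coproducts in $\ord\sC$ with $Y(f,1)$-weighted colimits. For $f\maps X\to Y$ in $\S$ and $x\in\sC^X$, regard $x$ as an indexed \V-functor $\delta X\to \sC$, and define $f_! x\in\sC^Y$ to be the object corresponding to the indexed \V-functor $\colim^{Y(f,1)} x\maps \delta Y\to \sC$ (which may be chosen indexed, by \autoref{thm:fib-indexed-ok}, since $\sC$ is a \V-fibration). Functoriality of $f_!\maps \ord{\sC^X}\to\ord{\sC^Y}$ then follows from the universal property of weighted colimits, combined with the identification (\autoref{rmk:disc-vnat} and \autoref{eg:underlying}) of \V-natural transformations between indexed functors $\delta Z\to \sC$ with morphisms of $\ord{\sC^Z}$.

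To see that $f_!\dashv f^*$ in $\ord\sC$, recall from \autoref{eg:limcomp} that the $Y(f,1)$-weighted \emph{limit} of any $y\maps \delta Y\to\sC$ is the composite $f y$, i.e.\ the restriction $f^*y$. Then \autoref{thm:limcolim-adj} yields a natural bijection
\[\VCAT(\delta Y,\sC)(f_! x,\,y)\;\iso\;\VCAT(\delta X,\sC)(x,\,f^*y),\]
which, under the identifications above, is exactly the asserted adjunction in $\ord\sC$.

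For Beck-Chevalley on a pullback square with $h\maps W\to X$, $k\maps W\to Z$, $f\maps X\to Y$, $g\maps Z\to Y$, observe that $h^*$ and $g^*$ are themselves weighted colimits (\autoref{eg:limcomp}). Hence \autoref{thm:composing-limits} exhibits both $k_!h^*x$ and $g^*f_! x$ as weighted colimits of $x$, with weights the composite profunctors $Z(k,1)\odot X(1,h)$ and $Y(1,g)\odot Y(f,1)$, respectively. Both composites are profunctors $\delta Z\hto \delta X$, hence simply objects of the appropriate fiber of \V; using the composition formulas $Y(f,1)\odot N\iso (f\times 1)_! N$ and $Y(1,f)\odot M\iso (f\times 1)^* M$ preceding the theorem, they unfold to $(k\times 1)_!(h\times 1)^*(\Delta_X)_!\I_X$ and $(g\times 1)^*(1\times f)^*(\Delta_Y)_!\I_Y$. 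Successive applications of Beck-Chevalley in \V --- on the left side, to the pullback of $\Delta_X$ against $h\times 1$ followed by the composition $(k\times 1)\circ(1,h)$; on the right, to the pullbacks of $\Delta_Y$ against $1\times f$ and of $(f,1)\maps X\to Y\times X$ against $g\times 1$ --- reduce both weights to $(k,h)_!\I_W$, where $(k,h)\maps W\to Z\times X$ is the canonical map from the pullback. Transporting this isomorphism of weights through the universal property of weighted colimits delivers the Beck-Chevalley iso $k_!h^*\iso g^*f_!$. The dual statement on indexed products is argued symmetrically with $f_* x := \lim^{Y(1,f)} x$, using that $Y(1,f)$-weighted colimits of $y\maps \delta Y\to\sC$ recover $f^*y$. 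The main obstacle is the Beck-Chevalley bookkeeping: each individual reduction in \V is routine, but chaining the $\Delta$-pullback reductions while keeping the indexing conventions on the composite weights straight requires some care.
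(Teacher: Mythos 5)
Your proposal is correct and follows the paper's own proof: the adjunction $f_!\dashv f^*$ on underlying fibers is obtained from \autoref{thm:limcolim-adj} together with the observation (\autoref{eg:limcomp}) that $Y(1,f)$-weighted colimits and $Y(f,1)$-weighted limits are restrictions, and the Beck--Chevalley condition is reduced via \autoref{thm:composing-limits} to an isomorphism between the two composite weights, which ultimately rests on the Beck--Chevalley condition in \V itself. The only divergence is in that last step: the paper identifies the canonical comparison of weights with the Beck--Chevalley morphism of \V by computing the represented functors $J\odot(-)$ as $(g\times 1)_!(f\times 1)^*$ and $(h\times 1)^*(k\times 1)_!$ and invoking the bicategorical Yoneda lemma for \VProf, whereas you evaluate both weights directly down to $(k,h)_!\I_W$ --- an equivalent computation, though in writing it up you should confirm that the isomorphism so produced is the \emph{canonical} comparison map (each reduction being a component of that canonical map), rather than merely exhibiting some isomorphism $k_!h^*\cong g^*f_!$.
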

\begin{proof}
  By \autoref{thm:limcolim-adj} and the above observation,
  $Y(f,1)$-weighted colimits define a left adjoint $f_!\maps
  \ord{\sC^X}\to \ord{\sC^Y}$ to the restriction functor $f^*$, and
  dually.  So it remains only to check the Beck-Chevalley condition.
  Thus, suppose that
  \[\xymatrix{W \pullback \ar[r]^f\ar[d]_g & X \ar[d]^k\\
    Y \ar[r]_h & Z}\]
  is a pullback square in \bS; the question is whether the canonical
  transformation
  \[\colim^{Y(g,1)} \colim^{X(1,f)} \too
  \colim^{Z(1,h)} \colim^{Z(k,1)}
  \]
  is an isomorphism.  By \autoref{thm:composing-limits}, this can be
  reduced to the question of whether the canonical transformation
  \begin{equation}
    Y(g,1)\odot X(1,f) \too Z(1,h)\odot Z(k,1)\label{eq:cbciso}
  \end{equation}
  is an isomorphism.  However, by the above remarks, the functors
  \begin{gather}
    Y(g,1)\odot X(1,f) \odot - \mathrlap{\qquad\text{and}}\\
    Z(1,h)\odot Z(k,1) \odot -
  \end{gather}
  are naturally isomorphic to $(g\times 1)_!(f\times 1)^*$ and
  $(h\times 1)^*(k\times 1)_!$, respectively, and under these
  isomorphisms~\eqref{eq:cbciso} is identified with the Beck-Chevalley
  morphism in \sV.  Since \V has indexed coproducts, this
  transformation is an isomorphism; hence by the bicategorical Yoneda
  lemma for \VProf, so is~\eqref{eq:cbciso}.  The case of indexed
  products is dual.
\end{proof}

Of course, if \V only satisfies the Beck-Chevalley condition for some
pullback squares in \S, as in \autoref{eg:actions}, then $\ord\sC$
only inherits the Beck-Chevalley condition for those same pullback
squares.

\begin{defn}
  We say that \sC\ has \textbf{\S-indexed \sV-coproducts} if it admits
  $Y(f,1)$-weighted colimits for all $f\maps X\to Y$ in \S.
  Similarly, we say it has \textbf{\S-indexed \sV-products} if it
  admits all $Y(1,f)$-weighted limits.
\end{defn}

Just as a limit in the underlying category of a classical enriched
category need not be an enriched limit, it is not necessarily true
that indexed (co)products in $\ord\sC$ imply indexed \V-(co)products
in \sC.  We need to also require that the adjunction $f_!\adj f^*$ or
$f^*\adj f_*$ is ``enriched'' in a suitable sense.  To explain this
condition, suppose we have an adjunction $f_!\adj f^*$ relating
$\ord\sC^X$ and $\ord\sC^Y$; then for $x\in\sC^X$ and $y\in\sC^Y$ we
have a transformation
\begin{equation}\label{eq:indvcoprod-mate}
  f^* \left(\usC^Y(f_!x,y)\right) \toiso
  \usC^X(f^*f_!x,f^*y) \to
  \usC^X(x,f^*y)
\end{equation}
in which the second map is precomposition with the unit $x\to f^* f_!
x$ of the adjunction $f_!\dashv f^*$.  The mate
of~\eqref{eq:indvcoprod-mate} under the adjunction $f^*\dashv f_*$ in
\V is a transformation
\begin{equation}\label{eq:enr-indexed-coprod}
  \usC^Y(f_!x,y) \too f_*\left(\usC^X(x,f^*y)\right).
\end{equation}



\begin{thm}\label{thm:indexed-coproducts}
  A \V-fibration \sC\ has indexed \sV-coproducts if and only if
  $\ord\sC$ has indexed coproducts and every
  map~\eqref{eq:enr-indexed-coprod} is an isomorphism.  In this case,
  $f_!$ extends to a $\sV^Y$-enriched functor $(f_*)_\bullet
  \sC^X\to\sC^Y$.

  Dually, \sC\ has indexed \sV-products if and only if $\ord\sC$ has
  indexed products and every canonical map
  \begin{equation}\label{eq:enr-indexed-prod}
    \usC^Y(y,f_*x) \too f_*\left(\usC^X(f^*y,x)\right)
  \end{equation}
  is an isomorphism, in which case $f_*$ extends to a $\sV^Y$-enriched
  functor $(f_*)_\bullet \sC^X\to\sC^Y$.
\end{thm}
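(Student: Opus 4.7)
The plan is to convert the abstract $Y(f,1)$-weighted colimit universal property for a \sV-fibration $\sC$ into the explicit enrichment condition~\eqref{eq:enr-indexed-coprod} via a chain of canonical isomorphisms that works the same way in both directions. The key is to exploit the \V-fibration property of $\sC$ to express profunctors out of $\sC$ in terms of external hom-objects $\usC(x,c)$, then repeatedly apply \autoref{rmk:disc-homs}, \autoref{thm:bc-enriched-canceling}, and Beck-Chevalley in \V.

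I would begin by unfolding $\sC(\ell,1) \cong Y(f,1) \rhd \sC(x,1)$ pointwise. Since the middle category $\delta X$ is discrete, \autoref{rmk:disc-homs} gives, for each $c\in \sC$ of extent $Z$,
\[
  \usC(\ell, c) \;\cong\; \uV^{[X]}\!\big((1_Y \times f)^*(\Delta_Y)_!\I_Y,\; \usC(x, c)\big).
\]
Using the cancelling-hom base-change identities of \autoref{thm:bc-enriched-canceling}, together with the fact that $(\Delta_Y)_!\I_Y$ acts as a unit for the cancelling product over $Y$, I would identify the right-hand side with $(1_Z\times f)_*\usC(x,c) \in \V^{Z\times Y}$. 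Specializing to $c = y\in\sC^Y$ and applying $\Delta_Y^*$ then yields an iso in $\V^Y$. Beck-Chevalley in \V for the pullback square in \S
\[ \xymatrix{
  X \ar[r]^-{(f,1_X)}\ar[d]_f & Y\times X \ar[d]^{1_Y\times f}\\
  Y \ar[r]_-{\Delta_Y} & Y\times Y
} \]
identifies $\Delta_Y^*(1_Y\times f)_*\usC(x,y)$ with $f_*(f,1_X)^*\usC(x,y)$; and since $(f,1_X) = (f\times 1_X)\circ\Delta_X$, the \V-fibration property $\usC(x, f^*y) \iso (f\times 1_X)^*\usC(x,y)$ converts this to $f_*\usC^X(x, f^*y)$. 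A mate diagram chase then confirms that the composite iso $\usC^Y(\ell, y) \iso f_*\usC^X(x, f^*y)$ is precisely the canonical map~\eqref{eq:enr-indexed-coprod}.

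For the forward direction, given $Y(f,1)$-weighted colimits, \autoref{thm:vbc-cbc} yields $f_!\dashv f^*$ in $\ord\sC$ with $f_!x = \ell$, and the chain above exhibits~\eqref{eq:enr-indexed-coprod} as an iso. For the converse, from the ordinary adjunction $f_!\dashv f^*$ in $\ord\sC$ together with the assumption that~\eqref{eq:enr-indexed-coprod} is an iso, every step of the chain is reversible: I can assemble the profunctor iso $\sC(f_!x, 1) \iso Y(f,1)\rhd \sC(x,1)$ component-wise on each $c \in \sC$, with naturality in $c$ and $x$ following from the \V-fibration property of $\sC$ and functoriality of each step. Finally, the enriched functor structure $f_! \maps (f_*)_\bullet\sC^X \to \sC^Y$ arises by specializing~\eqref{eq:enr-indexed-coprod} at $y = f_!x'$ and precomposing with $f_*$ applied to the morphism $\usC^X(x,x') \to \usC^X(x, f^*f_!x')$ induced by the unit $x' \to f^*f_!x'$; its functor axioms follow from those of the adjunction $f_!\dashv f^*$ together with the naturality of~\eqref{eq:enr-indexed-coprod}.

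The main obstacle is the mate/diagram chase certifying that the chain of canonical isomorphisms produced above really is~\eqref{eq:enr-indexed-coprod} and not some twisted variant; this is a routine but bookkeeping-heavy exercise with adjunction mates. The dual case of indexed \V-products is entirely analogous, using the transposed pullback square and the dual identities of \autoref{thm:bc-enriched-canceling} and \autoref{thm:bc-enriched-ext}.
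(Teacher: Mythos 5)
Your forward direction and your construction of the enriched structure on $f_!$ follow the paper's proof essentially step for step: the same chain
$\usC^Y(f_!x,y)\iso\Delta_Y^*\big(Y(f,1)\rhd\usC(x,y)\big)\iso\Delta_Y^*(1\times f)_*\usC(x,y)\iso f_*\usC^X(x,f^*y)$,
the same Beck--Chevalley square for $\Delta_Y^*(1\times f)_*$, and the same deferred mate-check that the composite is the canonical map~\eqref{eq:enr-indexed-coprod}.

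The gap is in the converse. You assert that ``every step of the chain is reversible'' and that the profunctor isomorphism $\sC(f_!x,1)\iso Y(f,1)\rhd\sC(x,1)$ can therefore be assembled componentwise from~\eqref{eq:enr-indexed-coprod}. But the forward chain contains one genuinely irreversible step: the specialization to objects $y$ of extent $Y$ followed by $\Delta_Y^*$. The hypothesis~\eqref{eq:enr-indexed-coprod} only concerns $\usC^Y(f_!x,y)$ for $y\in\sC^Y$, whereas exhibiting $f_!x$ as a $Y(f,1)$-weighted colimit requires the isomorphism $\usC(f_!x,z)\iso(1\times f)_*\usC(x,z)$ for $z$ of \emph{arbitrary} extent $Z$. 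This is not a naturality point that the \V-fibration property hands you for free; it needs a separate argument. The paper supplies it by invoking the hypothesis not for $f$ but for $1_Z\times f\maps Z\times X\to Z\times Y$, after first using the Beck--Chevalley condition in $\ord\sC$ (part of the assumption that $\ord\sC$ has indexed coproducts) to rewrite $\pi_Z^*f_!x\iso(1\times f)_!\pi_Z^*x$; concretely,
\[
\usC(f_!x,z)\iso\usC^{Z\times Y}(\pi_Z^*f_!x,\pi_Y^*z)
\iso\usC^{Z\times Y}\big((1\times f)_!\pi_Z^*x,\pi_Y^*z\big)
\iso(1\times f)_*\usC^{Z\times X}(\pi_Z^*x,\pi_X^*z)
\iso(1\times f)_*\usC(x,z),
\]
which is $Y(f,1)\rhd\usC(x,z)$ by the dual of~\eqref{eq:lhd-coext}. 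Without this bootstrapping from the fiber over $Y$ to arbitrary extents, your converse does not go through; once it is added, the remainder of your argument (naturality in $z$, the dual case for products) is fine.
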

\begin{proof}
  First, assume that \sC\ has indexed \sV-coproducts.  We have already
  shown that then $\ord\sC$ has indexed coproducts.  And for $x\in\sC^X$
  and $y\in\sC^Y$ we have
  \begin{alignat}{2}
    \usC^Y(f_!x,y)
    &\iso \Delta_Y^* \usC(f_!x,y)\\
    &\iso \Delta_Y^* \big(Y(f,1) \rhd \usC(x,y)\big)
    &\quad\text{(by definition of indexed \V-coproducts)}\\
    &\iso \Delta_Y^* (1\times f)_* \usC(x,y)
    &\quad\text{(by the dual of~\eqref{eq:lhd-coext})}\\
    &\iso f_* \Delta_X^* (f\times 1)^*\usC(x,y)
    &\quad\text{(by the Beck-Chevalley condition)}\\
    &\iso f_* \Delta_X^* \usC(x,f^*y)
    &\quad\text{(since $f^*y$ is a restriction of $y$)}\\
    &\iso f_* \usC^X(x,f^*y).
  \end{alignat}
  We leave it to the reader to check that this isomorphism is in fact
  the canonical map~\eqref{eq:enr-indexed-coprod}.  Now using this
  isomorphism, we can define a morphism
  \[f_*\usC^X(x,w) \too f_*\usC^X(x,f^*f_!w) \iso \usC^Y(f_!x,f_!w)\]
  and check that it makes $f_!$ into a $\sC^Y$-enriched functor
  $(f_*)_\bullet \sC^X\to\sC^Y$, and the
  isomorphism~\eqref{eq:enr-indexed-coprod} into an enriched
  adjunction.

  For the other direction, suppose that $\ord\sC$ has indexed
  coproducts and that~\eqref{eq:enr-indexed-coprod} is always an
  isomorphism.  Then for any $z\in\sC$, we have
  \begin{alignat}{2}
    \usC(f_!x,z) &\iso \usC^{Z\times Y}(\pi_Z^*f_!x, \pi_Y^*z)\\
    &\iso \usC^{Z\times Y}\big((1\times f)_!\pi_Z^* x, \pi_Y^*z\big)
    &\quad\text{(by the Beck-Chevalley condition)}\\
    &\iso (1\times f)_*\usC^{Z\times X}\big(\pi_Z^* x, (1\times f)^*\pi_Y^*z\big)
    &\quad\text{(by~\eqref{eq:enr-indexed-coprod} for $1\times f$)}\\
    &\iso (1\times f)_*\usC^{Z\times X}\big(\pi_Z^* x, \pi_X^*z\big)\\
    &\iso (1\times f)_*\usC(x,z)\\
    &\iso Y(f,1) \rhd \usC(x,z)
    &\quad\text{(by the dual of~\eqref{eq:lhd-coext})}.
  \end{alignat}
  Since this isomorphism is suitably natural in $z$, by definition
  $f_! x$ is an indexed \V-coproduct.  The case of $f_*$ is similar.
\end{proof}

\begin{eg}
  If $\V=\fam(\bV)$ and $\sC=\fam(\bC)$ as in
  \autoref{eg:enriched-as-indexed}, then \sC has indexed
  $\fam(\bV)$-coproducts just when \bC\ has small \bV-coproducts.
  Here the Beck-Chevalley condition is automatic,
  but~\eqref{eq:enr-indexed-coprod} gives the coproducts their
  \bV-enriched universal property, beyond merely being coproducts in
  $\ord\bC$.
\end{eg}

\begin{eg}\label{thm:indcoprod-lsfib}
  On the other hand, for an indexed $\self(\S)$-category, one can show
  that condition~\eqref{eq:enr-indexed-coprod} is automatic; here all
  the content is in the Beck-Chevalley condition.  This is also true
  for indexed $\sPsh(\S,\nSet)$-categories (that is, ordinary indexed
  categories).
  More generally, for indexed $\sPsh(\S,\bV)$-categories, we need the
  adjunctions $f_!\dashv f^*$ to be \bV-enriched, in addition to the
  Beck-Chevalley condition.
\end{eg}

We now consider the other principal type of global tensors.  If
$J\in\sV^X$ and $x\in\sC^X$, we write $J\oast_X x$ for the
$\Delta_!J$-weighted colimit of $x$.  By definition, it is
characterized by an isomorphism
\begin{alignat}{2}
  \usC(J\oast_X x, z) &\iso \usV^{[X]}(\Delta_{X!} J, \usC(x,z))\\
  &\iso \pi_{X*}\Delta_X^*\big(\usV(\Delta_{X!}J, \usC(x,z))\big)\\
  &\iso \pi_{X*}\Delta_X^*(1\times \Delta_{X})_*\big(\usV(J, \usC(x,z))\big)
  &\quad\text{(by \autoref{thm:bc-enriched-ext})}\\
  &\iso \pi_{X*}\Delta_{X*}(\Delta_X\times 1)^*\big(\usV(J, \usC(x,z))\big)
  &\quad\text{(by Beck-Chevalley)}\\
  &\iso (\Delta_X\times 1)^*\big(\usV(J, \usC(x,z))\big)\\
  &\iso \usV^X(J, \usC(x,z)).\label{eq:fibtenchar}
\end{alignat}
If we choose $z$ such that $z\in\sC^X$, and apply $\Delta_X^*$ to the
above isomorphism, we obtain as a special case
\begin{align}
  \usC^X(J\oast_X x, z) &=
  \Delta_X^* \big(\usC(J\oast_X x,z)\big)\\
  &\iso \Delta_X^*(\Delta_X\times 1)^*\big(\usV(J, \usC(x,z))\big)\\
  &\iso \Delta_X^*(1\times \Delta_X)^*\big(\usV(J, \usC(x,z))\big)\\
  &\iso \Delta_X^* \big( \usV(J, \Delta_X^*\usC(x,z))\big)\\
  &\iso \usV^X\left(J, \usC^X(x,z)\right).\label{eq:fibteniso}
\end{align}
By definition, this isomorphism (natural in $z$) says that $J\oast_X
x$ is a $\sV^X$-enriched tensor of $x$ by $J$ in the fiber category
$\sC^X$.  (This is what we expected, from
\autoref{eg:enr-global-tensors}.)

\begin{defn}
  We say \sC has \textbf{fiberwise \sV-tensors} if it admits all
  colimits with weights of the form $\Delta_! J$.  Dually, we say \sC
  has \textbf{fiberwise \sV-cotensors} if it has global cotensors with
  all weights of the form $\Delta_!J$.
\end{defn}

Fiberwise cotensors are characterized by an isomorphism
\[\usC(z,\cten{J}{x}{X}) \iso \usV^X(J,\usC(z,x)).\]
and are, in particular, cotensors in the fibers.

Of course, if the fiber categories $\sC^X$ have $\sV^X$-enriched
tensors, it doesn't necessarily follow that \sC\ has fiberwise
\sV-tensors.  Here what is missing is not the enrichment (which is
already there in the definition of tensors) but stability under
restriction (which corresponds to the Beck-Chevalley condition for
indexed coproducts).

\begin{thm}\label{thm:fiberten-as-frbiten}
  \sC\ has fiberwise \sV-tensors if and only if each fiber $\sC^X$ has
  $\sV^X$-enriched tensors, and for any $f\maps X\to Y$, $J\in\sV^Y$,
  and $y\in \sC^Y$ the canonical map
  \[f^*J \oast_X f^*y \too f^*(J\oast_Y y)
  \]
  is an isomorphism.  Dually, \sC\ has fiberwise \sV-cotensors if and
  only if each fiber $\sC^X$ has $\sV^X$-enriched cotensors preserved
  by restriction.
\end{thm}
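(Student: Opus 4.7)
The plan is to verify both directions by tracing through the external hom characterization~\eqref{eq:fibtenchar} of fiberwise $\V$-tensors, using the $\V$-fibration structure on $\sC$ to translate between external and fiberwise hom-objects. The forward direction is essentially complete in the derivation preceding the theorem: the isomorphism~\eqref{eq:fibteniso} already exhibits $J \oast_X x$ as a $\V^X$-enriched tensor in the fiber $\sC^X$. For stability under restriction, I would show that $f^*(J \oast_Y y)$ satisfies the external hom universal property characterizing $f^*J \oast_X f^*y$ as a $\Delta_{X!}f^*J$-weighted colimit. Concretely, since $\sC$ is a $\V$-fibration, $\usC(f^*(J \oast_Y y), z) \iso (1 \times f)^* \usC(J \oast_Y y, z)$; applying~\eqref{eq:fibtenchar} to $J \oast_Y y$ and commuting $(1 \times f)^*$ past the enriched hom via \autoref{thm:bc-enriched-ext} and \autoref{thm:bc-enriched} rewrites the right-hand side as the external hom formula for $f^*J \oast_X f^*y$. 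The Yoneda lemma for $\VPROF$ then produces the required isomorphism, which by construction must be the canonical comparison map.

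For the reverse direction, assume each fiber has $\V^X$-enriched tensors preserved by restriction, and set $T := J \oast_X x$ to be the enriched tensor computed in $\sC^X$. It suffices to verify~\eqref{eq:fibtenchar} for $T$, since the chain of isomorphisms in~\eqref{eq:fibtenchar} can be read backwards to recover the defining universal property $\usC(T, z) \iso \usV^{[X]}(\Delta_{X!}J, \usC(x, z))$ of a $\Delta_{X!}J$-weighted colimit. For $z \in \sC^Z$, the $\V$-fibration structure lets us express $\usC(T, z)$ as a fiberwise hom in $\V^{Z \times X}$ after pulling $T$ and $z$ into a common fiber via restrictions $\pi_Z^*T$ and $\pi_X^*z$. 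Stability of enriched tensors under restriction, applied to $\pi_Z \maps Z \times X \to X$, gives $\pi_Z^*T \iso \pi_Z^*J \oast_{Z \times X} \pi_Z^*x$; the $\V^{Z \times X}$-enriched tensor property in this fiber then identifies the result with the right-hand side of~\eqref{eq:fibtenchar}.

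The cotensor case follows by the formal dual argument, with fiberwise cotensors, indexed products, and right homs replacing fiberwise tensors, indexed coproducts, and left homs. The main obstacle throughout is bookkeeping: one must keep careful track of which hom-objects are fiberwise and which external, in which fiber category each object lives, and how each pullback functor $f^*$ interacts with fiberwise hom, tensor, and external hom. The compatibility propositions \autoref{thm:bc-enriched} and \autoref{thm:bc-enriched-ext}, together with the Beck-Chevalley condition, have to be invoked at each stage. No single step is conceptually difficult, but the correctness of the whole rests essentially on the $\V$-fibration structure of $\sC$, which is what allows the external and fiberwise hom-object presentations to be translated into each other coherently.
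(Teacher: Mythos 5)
Your proposal is correct, and its ``if'' direction coincides with the paper's own argument: both pull $J\oast_X x$ and $z$ into the common fiber $\sC^{Z\times X}$ via $\pi_Z^*$ and $\pi_X^*$, invoke preservation of enriched tensors under the restriction $\pi_Z^*$, apply the enriched tensor property in that fiber, and read the chain~\eqref{eq:fibtenchar} backwards. Where you genuinely diverge is in the ``only if'' direction, on preservation by restriction. The paper argues formally: since restriction along $f$ is itself a $Y(1,f)$-weighted colimit, \autoref{thm:composing-limits} reduces everything to the single isomorphism of composite weights
\[
Y(1,f)\odot \Delta_{Y!}J \;\iso\; \Delta_{X!}(f^*J)\odot Y(1,f),
\]
which immediately identifies the restriction of a fiberwise tensor with the fiberwise tensor of the restrictions --- the same pattern already used in \autoref{thm:vbc-cbc}. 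You instead compute directly that $f^*(J\oast_Y y)$ represents the functor $z\mapsto \usV^X\big(f^*J,\usC(f^*y,z)\big)$ by commuting $(1\times f)^*$ past the hom-objects via \autoref{thm:bc-enriched} and \autoref{thm:bc-enriched-ext}, and then conclude by Yoneda. Both routes are sound. The paper's buys brevity and reuses the equipment-theoretic machinery uniformly across all the results of this section; yours is more elementary and makes the role of the compatibility isomorphisms of \autoref{thm:three-homs} explicit, at the cost of the bookkeeping you rightly flag (in particular, one must verify at the end that the abstract isomorphism produced by Yoneda really is the canonical comparison map, a point the paper's weight-level argument sidesteps because \autoref{thm:composing-limits} produces the comparison cocone itself).
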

\begin{proof}
  We have already shown that if \sC\ has fiberwise \sV-tensors, then
  the fibers have tensors, so for the `only if' direction it suffices
  to show that they are preserved by restriction.  Like
  \autoref{thm:vbc-cbc}, this follows from
  \autoref{thm:composing-limits}, the fact that $f^*$ is given by a
  $Y(1,f)$-weighted colimit, and the composite isomorphism
  \[Y(1,f) \odot \Delta_{Y!}J \iso
  (f\times 1)^* \Delta_{Y!} J \iso
  (1\times f)_! \Delta_{X!} f^* J \iso 
  \Delta_{X!}{f^*J} \odot Y(1,f).
  \]
  Conversely, if we suppose that the fibers have tensors preserved by
  restriction, then we can calculate
  \begin{align*}
    \usC(J\oast_X x, z)
    &\iso \usC^{Z\times X}(\pi_Z^*(J\oast_X x), \pi_X^*z)\\
    &\iso \usC^{Z\times X}(\pi_Z^*J \oast_{X\times Z} \pi_Z^*x, \pi_X^*z)\\
    &\iso \usV^{X\times Z}\Big(\pi_Z^*J, \usC^{Z\times X}(\pi_Z^*x, \pi_X^*z)\Big)\\
    &\iso \usV^{X\times Z}\big(\pi_Z^*J, \usC(x,z)\big)\\
    &\iso \usV^X(J,\usC(x,z))
  \end{align*}
  which is~\eqref{eq:fibteniso}.  The case of cotensors is dual.
\end{proof}

\begin{eg}
  The $\fam(\bV)$-category $\fam(\bC)$ has fiberwise
  $\fam(\bV)$-tensors exactly when \bC has \bV-enriched tensors in the
  usual sense.
\end{eg}

\begin{eg}\label{eg:fibtens-lsfib}
  On the other hand, if a $\self(\bS)$-category \sC has indexed
  coproducts (hence indexed $\self(\bS)$-coproducts, by
  \autoref{thm:indcoprod-lsfib}), then it automatically has fiberwise
  $\self(\bS)$-tensors.  Namely, if $x\in\sC^X$ and $p\maps J\to X$ is
  an object of $\self(\bS)^X=\bS/X$, then $p_!p^*x$ is a fiberwise
  tensor of $x$ by $J$.  For if $z\in\sC^Z$, we have
  \begin{align}
    \U{\self(\bS)}^X(J, \usC(x,z))
    &\iso (1\times p)_*(1\times p)^*\usC(x,z)\\
    &\iso (1\times p)_*\usC(p^*x,z)\\
    &\iso X(p,1) \rhd \usC(p^*x,z)\\
    &\iso \usC(p_!p^*x,z).
  \end{align}
  Dually, if \sC has indexed products, it has fiberwise
  $\self(\bS)$-cotensors.
\end{eg}

\begin{eg}\label{eg:small-tensors}
  For size reasons, it is unreasonable to expect an indexed
  $\sPsh(\S,\bV)$-category to have all fiberwise tensors or cotensors.
  However, we can ask for fiberwise tensors by \emph{small} objects in
  the sense of \autoref{eg:psh}.  As in \autoref{eg:fibtens-lsfib}, a
  fiberwise tensor of $x\in \sC^X$ by the representable object $F_g$
  is given by $g_!g^* x$, and fiberwise tensors by small objects are
  \bV-weighted colimits of these preserved by restriction.  In
  particular, fiberwise tensors by $V\otimes F_{1_X}$, for $V\in\bV$,
  are tensors by $V$ in the \bV-enriched category $\sC^X$ which are
  preserved by restriction.
\end{eg}


\begin{rmk}
  Of course, our use of $\oast_{[X]}$ for global tensors and $\oast_X$
  for fiberwise tensors is not an accident.  The canceling product is,
  in fact, a global tensor in the \sV-category \sV, while the fiberwise
  product is a fiberwise tensor.  Dually, the canceling hom is a
  global cotensor and the fiberwise hom is a fiberwise cotensor.  (We
  could also, if we wished, define `external' tensors and cotensors in
  arbitrary \sV-categories.)
\end{rmk}


For our next example, suppose that $\bA$ is a $\sV^X$-enriched
category, in the classical sense.  Then we can construct a large
\sV-category $X[\bA]$ whose objects are those of $\bA$, all with
extent $X$, and with hom-objects
\[\U{X[\bA]}(a,b) = \Delta_{X!} \big(\ubA(a,b)\big).
\]
Then for any \sV-fibration \sC, indexed \sV-functors $X[\bA]\to\sC$
are equivalent to $\sV^X$-enriched functors $\bA\to\sC^X$.

Now suppose additionally that $J\maps \bA\op\to\sV^X$ is a
$\sV^X$-enriched functor.  Then we can talk about $J$-weighted
colimits in any $\sV^X$-enriched category, and in particular in
$\sC^X$.  On the other hand, we can build a \sV-profunctor $X[J]\maps
\delta X\hto X[\bA]$ by setting $X[J](\star,a) = \Delta_{X!} (Ja)$,
and ask about $X[J]$-weighted colimits.  It should no longer be
surprising that $X[J]$-weighted colimits will turn out to be
$J$-weighted colimits which are preserved by restriction.

To make the latter precise in this case, let $f:Y\to X$ be a morphism
in \S, and observe that any $\V^X$-enriched functor $d:\bA\to\sC^X$
gives rise to a $\V^Y$-enriched functor
\[(f^*)_\bullet \bA
\xto{(f^*)_\bullet d} (f^*)_\bullet \sC^X
\xto{f^*} \sC^Y
\]
which we denote \dhat.  Similarly, $J\maps \bA\op\to\sV^X$ gives rise
to a $\V^Y$-enriched functor
\[(f^*)_\bullet \bA\op
\xto{(f^*)_\bullet J} (f^*)_\bullet \V^X
\xto{f^*} \V^Y
\]
which we denote \Jhat, and any $J$-weighted cocone under $d$ in
$\sC^X$ induces a \Jhat-weighted cocone under \dhat in $\sC^Y$.

\begin{thm}\label{thm:fiberlim-as-frbilim}
  In the above situation, a \V-fibration \sC admits $X[J]$-weighted
  colimits if and only if the fiber $\sC^X$ admits $J$-weighted
  colimits and moreover for any $f:Y\to X$, the functor
  $f^*:(f^*)_\bullet\sC^X \to \sC^Y$ takes $J$-colimiting cocones to
  \Jhat-colimiting ones.
\end{thm}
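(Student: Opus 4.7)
The plan is to model the argument on the proof of \autoref{thm:fiberten-as-frbiten}, treating $X[J]$-weighted colimits via profunctor composition. The key idea is that the restriction functor $f^* : \sC^X \to \sC^Y$ coincides, under the identification of objects of $\sC^X$ with indexed \V-functors $\delta X \to \sC$, with taking an $X(1,f)$-weighted colimit in the sense of \autoref{eg:limcomp}, since $f^*\ell = \ell \circ \delta f$ in a \V-fibration. \autoref{thm:composing-limits} then lets us pass back and forth between $X[J]$-weighted colimits in \sC, $J$-weighted colimits in the fiber $\sC^X$, and their images under $f^*$.

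For the ``only if'' direction, suppose $\ell : \delta X \to \sC$ is an $X[J]$-weighted colimit of $d : X[\bA]\to\sC$. Restricting the defining isomorphism $\sC(\ell,1)\cong X[J]\rhd \sC(d,1)$ along the appropriate diagonal and using the formulas of \autoref{thm:bc-enriched-canceling}, one extracts the $\V^X$-enriched universal property exhibiting (the object corresponding to) $\ell$ as the $J$-weighted colimit of $d$ in $\sC^X$; this is analogous to the computation leading to~\eqref{eq:fibteniso}. For preservation by $f^*$, \autoref{thm:composing-limits} implies that $f^*\ell$ is a $(X(1,f)\odot X[J])$-weighted colimit of $d$, and the same extraction applied over $Y$ (together with the identification of $X(1,f)\odot X[J]$ with $Y[\Jhat]\odot X[\bA](1,\phi)$, where $\phi : Y[(f^*)_\bullet\bA] \to X[\bA]$ is the evident change-of-extent functor, and the natural isomorphism $d\circ \phi \cong \dhat$ furnished by \autoref{thm:fib-indexed-ok}) exhibits $f^*\ell$ as a $\Jhat$-weighted colimit of $\dhat$ in $\sC^Y$. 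The ``if'' direction reverses this calculation: define $\ell\in\sC^X$ as the $\V^X$-enriched $J$-weighted colimit of $d$ in $\sC^X$, and verify the isomorphism $\sC(\ell,1)\cong X[J]\rhd \sC(d,1)$ by a direct computation parallel to the end of the proof of \autoref{thm:fiberten-as-frbiten}, using the restriction-preservation hypothesis to identify the two sides via the enriched universal property in the appropriate fiber $\sC^{\e z\times X}$.

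The main obstacle will be the profunctor-composition identity $X(1,f)\odot X[J] \cong Y[\Jhat]\odot X[\bA](1,\phi)$, which is the formal expression of compatibility between restriction along $f : Y\to X$ and the $\V^X$-enriched weight $J$. This should reduce, via \autoref{thm:coyoneda} and the Beck--Chevalley condition, to the identity $(1\times f)^*\Delta_{X!}J \cong \Delta_{Y!}f^*J$ applied pointwise in $\bA$. Once it is in place, the remaining steps are variants of arguments already carried out in \autoref{thm:fiberten-as-frbiten} and \autoref{thm:indexed-coproducts}, with the additional bookkeeping required to translate between the $\V^X$-enriched category $\bA$ together with the $\V^X$-enriched weight $J$, and the \V-category $X[\bA]$ together with the \V-profunctor weight $X[J]$.
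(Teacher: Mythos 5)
Your proposal is correct and follows essentially the same route as the paper: translate the universal property of the $X[J]$-weighted colimit into the $\V^X$-enriched one by converting canceling homs $\uV^{[X]}(\Delta_{X!}Ja,-)$ into fiberwise homs $\uV^X(Ja,-)$ and applying $\Delta_X^*$ (the computation of~\eqref{eq:fibtenchar}), then handle stability under restriction via \autoref{thm:composing-limits} and a Beck--Chevalley swap of the weight past $X(1,f)$, exactly as in \autoref{thm:fiberten-as-frbiten}. The only difference is one of explicitness: the paper writes out the equalizer-of-products presentation of $X[J]\rhd\sC(d,1)$ and then defers the restriction step to the earlier proof, whereas you spell out the profunctor identity $X(1,f)\odot X[J]\cong Y[\Jhat]\odot X[\bA](1,\phi)$ and the isomorphism $d\circ\phi\cong\dhat$ that the paper leaves implicit.
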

\begin{proof}
  For simplicity we assume that \bA is \ka-small and that \V has
  fiberwise \ka-small products, so that homs over $X[\bA]$ can
  be constructed as in \autoref{thm:vproflargeclosed}.  Then by
  definition, an $X[J]$-weighted colimit $\ell$ of $d:X[\bA] \to \sC$ is
  characterized by an equalizer
  \begin{equation}
    \usC(\ell, z)
    \too \prod_{a\in \bA} \uV^{[X]}(\Delta_{X!} Ja, \usC(da,z))
    \;\toto\;
    \prod_{a,a'\in \bA} \uV^{[X]}(\Delta_{X!} Ja \otimes_{[X]}
    \Delta_{X!} \bA(a,a'), \usC(da',z)).
  \end{equation}
  As in~\eqref{eq:fibtenchar}, this is equivalent to an equalizer
  \begin{equation}
    \usC(\ell, z)
    \too \prod_{a\in \bA} \uV^{X}(Ja, \usC(da,z))
    \;\toto\;
    \prod_{a,a'\in \bA} \uV^{X}(Ja \otimes_{X} \bA(a,a'), \usC(da',z)).
  \end{equation}
  Again, choosing $z\in\sC^X$ and applying $\Delta_X^*$ yields an
  equalizer
  \begin{equation}
    \usC^X(\ell, z)
    \too \prod_{a\in \bA} \uV^{X}(Ja, \usC^X(da,z))
    \;\toto\;
    \prod_{a,a'\in \bA} \uV^{X}(Ja \otimes_{X} \bA(a,a'), \usC^X(da',z))
  \end{equation}
  whence $\ell$ is the $J$-weighted colimit of $d:\bA\to\sC^X$.
  Preservation by restriction follows exactly as in the proof of
  \autoref{thm:fiberten-as-frbiten}, as does the converse.
\end{proof}

In particular, if \bA\ is an unenriched category and $J=\Delta 1$ the
standard conical weight $\bA\op\to\nSet$, then for any $X\in\bS$ we
can first take the free $\sV^X$-enriched category $\sV^X[\bA]$ and
weight $\sV^X[J]$ and then apply this construction.  We refer to the
resulting $X[\sV^X[J]]$-weighted limits and colimits as
\textbf{fiberwise \sV-limits and colimits}.  Thus we have notions of
\emph{fiberwise \sV-equalizers}, \emph{fiberwise \sV-products}, and so
on.  These can all be expressed more explicitly; for example, the
fiberwise \sV-product of two objects $x,y\in\sC^X$ is an object
$z\in\sC^X$ together with an isomorphism
\[\sC(1,z) \iso \sC(1,x)\times\sC(1,y)
\]
of profunctors $\delta X \hto \sC$.  Of course,
$\sC(1,x)\times\sC(1,y)$ denotes the profunctor whose value at $z$ is
$\sC(z,x)\times\sC(z,y)$, the product taking place in $\sV^{\ez\times
  X}$.

Since $f^*:\V^X \to \V^Y$ is a strong monoidal left adjoint, it
commutes with the free enriched category construction up to
isomorphism: $(f^*)_\bullet \V^X[\bA] \cong \V^Y[\bA]$.  Thus, to give
any sort of fiberwise conical limit is equivalently to give a
$\V^X$-enriched conical limit in $\sC^X$ which is preserved by
restriction, in the appropriate sense.

\begin{eg}
  Clearly, the indexed $\fam(\bV)$-category $\fam(\bC)$ has fiberwise
  $\fam(\bV)$-limits and colimits just when \bC has the relevant
  \bV-enriched ones.  Here again the preservation by restriction is
  automatic, but the fact that the limits are enriched in each fiber
  is crucial.
\end{eg}

\begin{eg}\label{eg:psh-fiblim}
  As usual, by contrast, in an indexed $\self(\S)$-category or
  $\sPsh(\S,\nSet)$-category, it is the preservation by restriction
  which contains the content.  Once we know that limits exist in
  fibers and are preserved by restriction, the fact that they are
  enriched in each fiber follows automatically.  Similarly, fiberwise
  $\sPsh(\S,\bV)$-limits are \bV-enriched limits in fibers preserved
  by restriction.
\end{eg}

Finally, it is well-known in classical enriched category theory that
if cotensors exist, then the distinction between enriched and
unenriched ordinary limits disappears
(see~\cite[\SS3.8]{kelly:enriched}).  The analogue of this for
\sV-categories is the following.

\begin{thm}\label{thm:coten-enriched-colim}
  Let \sC\ be a \sV-fibration with fiberwise \V-cotensors.
  \begin{enumerate}
  \item \sC\ has fiberwise \sV-colimits of a given (conical) type if
    and only if $\ord\sC$ has fiberwise colimits of that
    type.\label{item:coten-enr-colim}
  \item \sC\ has indexed \sV-coproducts if and only if $\ord\sC$ has
    indexed coproducts.\label{item:coten-enr-indcoprod}
  \end{enumerate}
\end{thm}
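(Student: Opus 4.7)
The plan is to reduce both statements to the classical fact that in an enriched category with cotensors, enriched universal properties can be checked on underlying ordinary categories. The forward directions are essentially immediate: if \sC has fiberwise \V-colimits, then by \autoref{thm:fiberlim-as-frbilim} each fiber $\sC^X$ has $\V^X$-enriched colimits preserved by restriction, and these have underlying ordinary colimits in $\ord{\sC^X}$ preserved by $f^*$; similarly, \autoref{thm:indexed-coproducts} already gives the forward direction of (ii). So the substance lies in the reverse implications.

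For (i) backward, suppose $\ord\sC$ has fiberwise colimits of the specified conical shape \bA. By \autoref{thm:fiberlim-as-frbilim}, it suffices to show each $\sC^X$ has $\V^X$-enriched colimits of this shape, preserved by restriction. Given $d\maps \bA \to \sC^X$ with ordinary colimit $\ell$, I would verify the enriched universal property by Yoneda: for $v\in\V^X$ and $z\in\sC^X$, the cotensor adjunction and the ordinary colimit property yield
\begin{equation*}
  \V^X(v,\usC^X(\ell,z)) \iso \ord{\sC^X}(\ell,\cten{v}{z}{X})
  \iso \lim_a \ord{\sC^X}(da,\cten{v}{z}{X})
  \iso \lim_a \V^X(v,\usC^X(da,z)),
\end{equation*}
exhibiting $\usC^X(\ell,z)$ as the required enriched limit of $\usC^X(d-,z)$. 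Preservation by restriction then follows: $f^*\ell$ is the ordinary colimit of $f^*d$ by hypothesis, and $f^*$ commutes with cotensors by \autoref{thm:fiberten-as-frbiten}, so running the same Yoneda argument in $\sC^Y$ shows $f^*\ell$ has the enriched universal property of the colimit of the transported diagram.

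For (ii) backward, suppose $\ord\sC$ has indexed coproducts with left adjoints $f_!\dashv f^*$. By \autoref{thm:indexed-coproducts}, I need only verify that the canonical map~\eqref{eq:enr-indexed-coprod} is an isomorphism. Fix $x\in\sC^X$ and $y\in\sC^Y$; again using fiberwise cotensors, the adjunction $f_!\dashv f^*$, and the preservation of cotensors by $f^*$, I compute for $v\in\V^Y$:
\begin{align*}
  \V^Y(v,\usC^Y(f_!x,y))
  &\iso \ord{\sC^Y}(f_!x,\cten{v}{y}{Y}) \\
  &\iso \ord{\sC^X}(x, f^*\cten{v}{y}{Y}) \\
  &\iso \ord{\sC^X}(x,\cten{f^*v}{f^*y}{X}) \\
  &\iso \V^X(f^*v,\usC^X(x,f^*y)) \\
  &\iso \V^Y(v, f_*\usC^X(x,f^*y)).
\end{align*}
By the Yoneda lemma, this yields the required isomorphism $\usC^Y(f_!x,y) \iso f_*\usC^X(x,f^*y)$, and one checks it agrees with the canonical comparison map.

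The main technical point to verify carefully is that the isomorphisms produced by the Yoneda calculations are indeed the canonical ones appearing in \autoref{thm:fiberlim-as-frbilim} and \autoref{thm:indexed-coproducts}; this is routine diagram-chasing using naturality, but it is what distinguishes the argument from a mere coincidence of objects. No truly new ideas beyond the classical cotensor trick are required, and symmetry is not needed, so the proofs can be presented in parallel.
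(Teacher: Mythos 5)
Your proposal is correct and follows essentially the same route as the paper: both directions of (i) reduce via \autoref{thm:fiberlim-as-frbilim} to the classical fact that ordinary conical colimits in a cotensored enriched category are automatically enriched (which you spell out, where the paper simply cites the classical theorem), and for (ii) your chain of isomorphisms verifying that~\eqref{eq:enr-indexed-coprod} is invertible is the identical Yoneda computation through $\cten{J}{y}{Y}$, the adjunction $f_!\dashv f^*$, and preservation of cotensors by $f^*$ that appears in the paper. The only point you flag that the paper also leaves implicit is checking that the Yoneda-produced isomorphism agrees with the canonical comparison map, so there is nothing substantive to add.
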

Of course, there is a dual result for tensors and limits.
\begin{proof}
  Both `only if' statements have already been proven.  By
  \autoref{thm:fiberlim-as-frbilim}, for~\ref{item:coten-enr-colim} it
  suffices to show that fiberwise colimits in $\ord\sC^Y$ are actually
  $\sV^Y$-enriched for any $Y$.  But this follows from the classical
  version of this theorem, since $\sC^X$ is $\sV^X$-enriched and
  cotensored.

  For~\ref{item:coten-enr-indcoprod}, by
  \autoref{thm:indexed-coproducts} it suffices to show
  that~\eqref{eq:enr-indexed-coprod} is an isomorphism.  For $f\maps
  X\to Y$, $x\in\sC^X$, $y\in\sC^Y$, and any $J\in\sV^X$, we compute
  \begin{align*}
    \sV^Y\left(J, \usC^Y(f_!x, y)\right)
    &\iso \sC^Y\left(f_!x, \cten{J}{y}{Y}\right)\\
    &\iso \sC^X\left(x, f^* \cten{J}{y}{Y}\right)\\
    &\iso \sC^X\left(x, \cten{f^*J}{f^*y}{X}\right)\\
    &\iso \sV^X\left(f^*J, \usC^X(x,f^*y)\right)\\
    &\iso \sV^Y\left(J, f_* \usC^X(x,f^*y)\right)
  \end{align*}
  so the desired isomorphism~(\ref{eq:enr-indexed-coprod}) follows by
  the Yoneda lemma in $\sV^Y$.
\end{proof}

We now turn to the question of constructing general \V-limits and
colimits out of basic ones such as those we have just studied.  Our
first observation is that the generality in allowing $K$ to be a
non-discrete or large \sV-category comes for free.

\begin{thm}\label{thm:parametrized-limits}
  Let $J\maps \sK\hto A$ be a \V-profunctor, let $f\maps A\to \sC$ be
  a \sV-functor and suppose that the $J(1,k)$-weighted colimit of $f$
  exists for all objects $k$ of \sK.  Then the $J$-weighted colimit of
  $f$ also exists, and agrees with $\colim^{J(1,k)} f$ upon restriction
  to $\delta(\e k)$ for each $k$.
\end{thm}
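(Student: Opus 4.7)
The plan is to define the desired colimit $\ell\colon \sK \to \sC$ on objects by setting $\ell(k) := \colim^{J(1,k)} f$, to assemble these into a \V-functor using the left action of $\sK$ on $J$, and finally to verify the defining \V-profunctor isomorphism $\sC(\ell,1) \iso J \rhd \sC(f,1)$.

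For each object $k$ of $\sK$, the hypothesis supplies an object $\ell(k)\in\sC$ with $\e\ell(k)=\e k$, together with a universal bimorphism $\phi_k\colon \sC(\ell(k),1),\,J(1,k) \to \sC(f,1)$ exhibiting $\sC(\ell(k),1) \iso J(1,k) \rhd \sC(f,1)$ as profunctors $\sC\hto\delta(\e k)$. To define the hom-object action of $\ell$, I would use the left action of $\sK$ on $J$, which after restriction along $k,k'$ yields a bimorphism $\sK(k',k),\,J(1,k) \to J(1,k')$. Precomposing $\phi_{k'}$ with this gives a multimorphism
\[
\sC(\ell(k'),1),\,\sK(k',k),\,J(1,k) \too \sC(f,1),
\]
which by the universal property of $\phi_k$ factors uniquely through a bimorphism $\sC(\ell(k'),1),\,\sK(k',k) \to \sC(\ell(k),1)$. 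Evaluating at $\ell(k')$ and composing with $\mathrm{ids}\colon \I_{\e k'} \to \sC(\ell(k'),\ell(k'))$ extracts the required morphism $\sK(k',k) \to \sC(\ell(k),\ell(k'))$ in $\V^{\e k\times \e k'}$. Functoriality (associativity and unitality of the action) follows by uniqueness of factorization from the corresponding properties of the profunctor structure on $J$.

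By construction, the family $\{\phi_k\}$ assembles into a single bimorphism $\phi\colon \sC(\ell,1),\,J \to \sC(f,1)$, with compatibility under the right actions of $\sK$ on $\sC(\ell,1)$ and $J$ being exactly the factorization diagram used to define the hom-action of $\ell$. To see that $\phi$ is the universal right-hom bimorphism, given any bimorphism $\psi\colon M,\,J \to \sC(f,1)$, its restriction $\psi_k\colon M(k,1),\,J(1,k) \to \sC(f,1)$ factors uniquely through $\phi_k$ via some $\tilde\psi_k\colon M(k,1) \to \sC(\ell(k),1)$. A second application of uniqueness shows these assemble into a single profunctor morphism $\tilde\psi\colon M \to \sC(\ell,1)$ compatible with the right $\sK$-actions, and $\tilde\psi$ is itself unique for the same reason. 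This gives $\sC(\ell,1) \iso J \rhd \sC(f,1)$, so $\ell$ is a $J$-weighted colimit of $f$, and by construction its restriction to $\delta(\e k)$ recovers $\colim^{J(1,k)} f$.

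The main obstacle is purely bookkeeping: tracking restrictions, actions, and factorizations within the virtual equipment to confirm that the pointwise data glue coherently into a single functor and a single universal bimorphism. Conceptually, this is the right-hom analogue of the remark accompanying \autoref{thm:objwise-comp} on objectwise composites, specialized to the representable setting in which pointwise representability of each ``column'' of $J \rhd \sC(f,1)$ lifts to global representability by $\ell$.
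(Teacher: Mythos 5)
Your proposal is correct and follows essentially the same route as the paper's proof: you define $\ell$ objectwise as $\colim^{J(1,k)}f$, obtain the hom-action $\usK(k,k')\to\usC(\ell(k),\ell(k'))$ by factoring the composite of the $\sK$-action on $J$ with the universal bimorphism of $\colim^{J(1,k')}f$ through the universal property of $\colim^{J(1,k)}f$, and then deduce the global universal property from the pointwise ones, which is exactly the paper's argument (including the appeal to the hom-analogue of the objectwise-composite remark). No substantive differences.
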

\begin{proof}
  We must define a \V-functor $\colim^J f : \sK \to \sC$.  Its action
  on objects is fixed: we send $k$ to $\colim^{J(1,k)} f$.  What remains
  of the data is morphisms
  \[\usK(k,k') \to \usC(\colim^{J(1,k)} f, \colim^{J(1,k')} f).\]
  But if we consider this to be a morphism of profunctors $\delta(\e
  k') \hto \delta(\e k)$, then we can obtain it by passing across the
  isomorphism
  \begin{multline}
    \VProf(\usK(k,k'), \sC(\colim^{J(1,k)} f, \colim^{J(1,k')} f))\\
    \begin{aligned}
      &\cong \Vmmor( \sC(\colim^{J(1,k')} f,1), \usK(k,k') ;
      \sC(\colim^{J(1,k)} f,1) )\\
      &\cong \Vmmor( \sC(\colim^{J(1,k')} f,1), \usK(k,k'), J(1,k) ;
      \sC(f,1) )
    \end{aligned}
  \end{multline}
  from the composite multimorphism
  \[ \sC(\colim^{J(1,k')} f,1), \usK(k,k'), J(1,k) \too
  \sC(\colim^{J(1,k')} f,1), J(1,k') \too
  \sC(f,1)
  \]
  built out of the action of \sK on $J$ and the universal bimorphism
  of $\colim^{J(1,k')} f$.  It is straightforward to show that this
  defines a \V-functor, and since isomorphisms of profunctors $\sC\hto
  \sK$ are detected at each object of \sK, this functor must be the
  $J$-weighted colimit of $f$.
\end{proof}

\begin{cor}
  For a fixed \sV-category $A$, if \sC\ admits all colimits with
  weights $J\maps \delta X\hto A$, then it admits all colimits with
  weights $J\maps \sK\hto A$.\endproof
\end{cor}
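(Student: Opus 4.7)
The plan is to derive this directly from \autoref{thm:parametrized-limits}. Given any weight $J\maps \sK\hto A$ and any \sV-functor $f\maps A\to\sC$, I need to produce the $J$-weighted colimit of $f$. The theorem reduces this to exhibiting, for every object $k$ of $\sK$, the colimit of $f$ weighted by the restricted profunctor $J(1,k)\maps \delta(\e k)\hto A$.

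First I would check that $J(1,k)$ is indeed a profunctor of the required form $\delta(\e k)\hto A$: this is immediate from the definition of $J(1,k) = (1\times \e k)^*(\text{suitable restriction of } \uJ)$ together with the fact that the $\delta(\e k)$-action is trivial, so $J(1,k)$ lies in the class of weights covered by the hypothesis. Hence, by assumption, $\colim^{J(1,k)} f$ exists in \sC for every $k$.

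With existence of these pointwise colimits in hand, \autoref{thm:parametrized-limits} applies verbatim, assembling them into a \sV-functor $\colim^J f \maps \sK\to\sC$ satisfying the required universal property~\eqref{eq:colimiso}. No further calculation is needed since the theorem does all the work of verifying the functoriality and the universal property. The only ``obstacle'' is really a notational one: making sure that the restriction of $J$ along each object $k$ of $\sK$ genuinely has the shape $\delta(\e k)\hto A$ hypothesized, which follows from \autoref{rmk:disc-prof} and the definition of $J(1,k)$ as the pullback of $J$ along $k\maps \delta(\e k)\to \sK$.
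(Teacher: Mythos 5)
Your proposal is correct and is precisely the paper's intended argument: the corollary is stated with no proof because it is an immediate consequence of \autoref{thm:parametrized-limits}, exactly as you invoke it, noting that each restriction $J(1,k)$ is a weight of the form $\delta(\e k)\hto A$ covered by the hypothesis. The only blemish is the loose notation for $J(1,k)$ as a pullback of $\uJ$, but the substance is right.
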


Next, we observe that the two special cases of tensors considered
above actually suffice to reconstruct all global \sV-tensors.

\begin{thm}
  If \sC\ is a \sV-category with indexed \sV-coproducts and fiberwise
  \sV-tensors, then \sC\ admits all global tensors.  Dually, if \sC\
  has indexed \sV-products and fiberwise \sV-cotensors, then it admits
  all global cotensors.
\end{thm}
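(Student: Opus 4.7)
The plan is to give an explicit construction of an arbitrary global \V-tensor as a two-step composite built from the two special kinds of colimit we are given. Concretely, for $J \in \V^{X \times Y}$ and $x \in \sC^X$, I would define
\[
  J \oast_{[X]} x \;:=\; (\pi_X)_!\!\bigl(J \oast_{X \times Y} \pi_Y^* x\bigr),
\]
where $\pi_Y^* x \in \sC^{X \times Y}$ is the restriction of $x$ along $\pi_Y \maps X \times Y \to X$ (available since \sC is a \sV-fibration by \autoref{thm:fib-indexed-ok}), the fiberwise \sV-tensor inside $\sC^{X \times Y}$ is supplied by the fiberwise-tensor hypothesis, and $(\pi_X)_!$ is the indexed \sV-coproduct along $\pi_X \maps X \times Y \to Y$, supplied by the indexed-coproduct hypothesis.

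To establish the required universal property, I would invoke \autoref{thm:composing-limits}. The weight $J \maps \delta X \hto \delta Y$ is exhibited as a composite in \Lprof{\V} of three simpler weights: (i) the representable weight $\delta X(1, \delta \pi_Y) \maps \delta(X \times Y) \hto \delta X$, whose weighted colimit of $x$ is precisely $\pi_Y^* x$ (see \autoref{eg:limcomp}); (ii) the fiberwise-tensor weight $\Delta_{(X \times Y)!} J \maps \delta(X \times Y) \hto \delta(X \times Y)$, whose weighted colimit of $\pi_Y^* x$ is the fiberwise tensor $J \oast_{X \times Y} \pi_Y^* x$; and (iii) the representable weight $\delta Y(\delta \pi_X, 1) \maps \delta Y \hto \delta(X \times Y)$, whose weighted colimit is the indexed \V-coproduct $(\pi_X)_!(-)$. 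That these compose to $J$ follows from unwinding the formulas for $\odot$ against representables via \autoref{thm:coyoneda} together with the compatibility identities~\eqref{eq:canceling-compat-1}--\eqref{eq:canceling-compat-3} and the Beck-Chevalley condition for the pullback $X \times Y \too X \times X \times Y$ arising along $\Delta_{X \times Y}$; this parallels the reduction used in \autoref{thm:vbc-cbc}. Once the composite is identified, \autoref{thm:composing-limits} delivers that $J \oast_{[X]} x$ is a genuine $J$-weighted colimit. \autoref{thm:parametrized-limits} then promotes this to colimits weighted by arbitrary profunctors $J \maps \sK \hto \delta X$, so in fact all global \V-tensors exist.

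The dual statement for global cotensors is obtained by the completely dual construction, setting $\cten{J}{y}{[Y]} := (\pi_Y)_*\cten{J}{\pi_X^* y}{X \times Y}$ and invoking fiberwise \sV-cotensors and indexed \sV-products in place of their colimit counterparts.

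The main technical obstacle is the identification of the three-fold composite of weights in step~(ii) as the single weight $J$. While this is morally just a repackaging of the definition of the canceling product $\oast_{[X]}$ as an indexed-coproduct-of-a-fiberwise-product, the diagram chase takes careful bookkeeping of which copy of $X$ is being tensored, restricted, or summed away at each step, and relies crucially on the interchange between $\Delta_!$, projection-pullback, and $(-)_!$ along projections. By contrast, once the weight decomposition is in hand, the invocation of \autoref{thm:composing-limits} is entirely mechanical.
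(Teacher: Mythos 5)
Your proposal is correct and follows essentially the same route as the paper: the paper likewise sets $J' = \Delta_{(Y\times X)!}J$, factors the weight as $J \iso Y(\pi_X,1) \odot J' \odot X(1,\pi_Y)$ using $(\pi_X\times\pi_Y)\circ\Delta_{Y\times X} = \mathrm{id}$, and then applies \autoref{thm:composing-limits} to assemble the global tensor from a restriction, a fiberwise tensor, and an indexed \sV-coproduct. The only difference is one of emphasis — the paper verifies the weight decomposition directly from $(\pi_X\times\pi_Y)_!(J')\iso J$ rather than via the compatibility isomorphisms you cite — but the argument is the same.
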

\begin{proof}
  Let $J\in\sV^{Y\times X}$ be a weight for a global tensor.  Then we
  can define
  \[ J' = \Delta_{(Y\times X)!}J\in\sV^{Y\times X\times Y\times X},\]
  which we can regard as a profunctor $J'\maps \delta (Y\times X)\hto
  \delta (Y\times X)$.
  Now since
  $(\pi_X\times\pi_Y)\circ \Delta_{Y\times X}$ is the identity, we have
  \begin{align*}
    J &\iso (\pi_X\times \pi_Y)_!(J')\\
    &\iso Y(\pi_X,1) \odot J' \odot X(1, {\pi_Y}).
  \end{align*}
  Thus, by \autoref{thm:composing-limits}, $J$-weighted colimits can
  be built from $X(1, {\pi_Y})$-weighted colimits, $J'$-weighted
  colimits, and $Y(\pi_X,1)$-weighted colimits.  However, $X(1,
  {\pi_Y})$-weighted colimits are restrictions, $J'$-weighted colimits
  are fiberwise \sV-tensors, and $Y(\pi_X,1)$-weighted colimits are
  indexed \sV-coproducts.
\end{proof}

Finally, we have an enriched indexed version of the classical
construction of colimits out of tensors, coproducts, and coequalizers.

\begin{thm}
  If \sC\ admits global \sV-tensors, fiberwise \sV-coequalizers, and
  fiberwise \sV-coproducts of the size of the set of objects of $A$,
  then it admits all colimits with weights $J\maps K\hto A$.
\end{thm}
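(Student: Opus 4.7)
The plan is the classical coend construction: express the weighted colimit as a fiberwise coequalizer of a fiberwise coproduct of global \V-tensors, generalizing Kelly's formula for enriched categories.

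\textbf{Reduction.} By \autoref{thm:parametrized-limits}, it suffices to handle weights of the form $J\maps \delta X \hto A$ for $X \in \S$. Such a $J$ amounts to an object $\uJ \in \V^{X \times \eA}$ equipped with a right $A$-action, and we seek $\ell \in \sC^X$ satisfying $\usC(\ell, 1) \iso J \rhd \usC(f, 1)$ as profunctors $\sC \hto \delta X$.

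\textbf{Construction.} For $a, b \in A$, set
\[V_a \;:=\; \uJ(a,\star) \oast_{[\e a]} f(a)
\quad\text{and}\quad
U_{a,b} \;:=\; \uJ(b,\star) \oast_{[\e b]} \uA(a,b) \oast_{[\e a]} f(a),\]
both in $\sC^X$. The iterated tensor in $U_{a,b}$ is unambiguous because global \V-tensors compose with canceling products; this is an instance of \autoref{thm:composing-limits} applied to the weights $\uJ(b,\star)\maps \delta X \hto \delta(\e b)$ and $\uA(a,b)\maps \delta(\e b) \hto \delta(\e a)$ and the diagram $f(a)\maps \delta(\e a) \to \sC$. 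There are two canonical maps $\coprod_{(a,b)} U_{a,b} \toto \coprod_a V_a$: the first uses the right action $\uJ(b,\star) \ten_{[\e b]} \uA(a,b) \to \uJ(a,\star)$ of $A$ on $J$, tensored with $f(a)$, to land in $V_a$; the second uses the morphism $\uA(a,b) \oast_{[\e a]} f(a) \to f(b)$ adjoint to the functoriality map $\uA(a,b) \to \usC(f(a), f(b))$, tensored with $\uJ(b,\star)$, to land in $V_b$. Now define
\[\ell \;:=\; \coeq\Big(\textstyle\coprod_{(a,b)} U_{a,b} \;\toto\; \coprod_a V_a\Big),\]
a fiberwise \V-coequalizer of fiberwise \V-coproducts in $\sC^X$, all existing by hypothesis (the doubly-indexed coproduct has the same cardinality as the singly-indexed one when $A$ is infinite, and is finite otherwise).

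\textbf{Universal property and main obstacle.} For each $z \in \sC$, applying $\usC(-,z)$ turns the fiberwise \V-colimits into fiberwise \V-limits (this is their defining enriched universal property, preserved under restriction by \autoref{thm:fiberlim-as-frbilim}), yielding
\[\usC(\ell, z) \;\iso\; \eq\Big(\textstyle\prod_a \usC(V_a,z) \;\toto\; \prod_{(a,b)} \usC(U_{a,b},z)\Big).\]
The defining adjunction of global \V-tensors, combined with the canceling-hom/canceling-tensor adjunctions of \autoref{thm:bc-enriched-canceling}, then identifies $\usC(V_a,z) \iso \uV^{[\e a]}(\uJ(a,\star), \usC(f(a),z))$ and $\usC(U_{a,b},z) \iso \uV^{[\e b]}(\uJ(b,\star), \uV^{[\e a]}(\uA(a,b), \usC(f(a),z)))$. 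The resulting equalizer matches the explicit formula for $\underline{J \rhd \usC(f,1)}(\star,z)$ from \autoref{thm:vproflargeclosed}, and the two parallel maps correspond because both pairs arise from the same $A$-actions on $J$ and $f$. The bulk of the work lies in verifying this correspondence of parallel maps, which amounts to a diagram chase through mates of the canceling adjunctions, and in upgrading the pointwise isomorphism to a profunctor isomorphism $\usC(\ell,1) \iso J \rhd \usC(f,1)$; the latter follows from the variant of \autoref{thm:objwise-comp} for right homs together with naturality in $z$.
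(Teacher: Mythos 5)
Your proof is correct and follows essentially the same route as the paper's: reduce to a discrete domain $\delta X$ via \autoref{thm:parametrized-limits}, realize $J\rhd \sC(f,1)$ as the equalizer of products of representables coming from \autoref{thm:vproflargeclosed}, and represent each stage in turn by a global \V-tensor, a fiberwise \V-coproduct, and a fiberwise \V-coequalizer. The only cosmetic difference is that you build the colimiting object first and then verify representability by applying $\usC(-,z)$, whereas the paper works entirely on the represented side and produces the object at the last step.
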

\begin{proof}
  By \autoref{thm:parametrized-limits} it suffices to assume that $K$
  is a discrete small \sV-category $\delta Y$.  We then have to show
  that for any $f\maps A\to \sC$ there is an object $\colim^J
  f\in\sC^X$ with an isomorphism
  \[\usC(\colim^J f, 1) \iso J \rhd \usC(f,1)\]
  of profunctors $\sC\hto \delta Y$.  Now, by the construction of
  $\rhd$ in \autoref{thm:vproflargeclosed}, we have
  \[J\rhd \usC(f,-) \iso \eq \left(\prod_{a\in A} \uJ(\star,a)\rhd \usC(fa,-)
    \toto \prod_{a,b\in A} (\uJ(\star,b)\odot \uA(a,b)) \rhd \usC(fa,-)
  \right).
  \]
  Since \sC\ admits global \sV-tensors, for each $a\in A$ we have an
  object $x_a\in\sC^X$ such that
  \[\usC(x_a,-)\iso \uJ(\star,a)\rhd \usC(fa,-).\]
  Similarly, for each pair $a,b\in A$ we have an object
  $y_{a,b}\in\sC^X$ such that
  \[\usC(y_{a,b},-) \iso (\uJ(\star,b)\odot \uA(a,b)) \rhd \usC(fa,-).\]
  Therefore, we have
  \[J\rhd \usC(f,-) \iso \eq \left(\prod_{a\in A} \usC(x_a,-)
    \toto \prod_{a,b\in A} \usC(y_{a,b},-)
  \right).
  \]
  where the two maps are induced by canonical morphisms $y_{a,b}\to
  x_a$ and $y_{a,b}\to x_b$ in $\sC^X$.  Similarly, since \sC\ has
  $\kappa$-sized fiberwise \sV-coproducts, there is an object $z$ such
  that
  \[\usC(z,-)\iso \prod_{a\in A} \usC(x_a,-),\]
  and an object $w$ such that
  \[\usC(w,-)\iso \prod_{a,b\in A} \usC(y_{a,b},-).\]
  Thus we have
  \[J\rhd \usC(f,-) \iso \eq \Big(\usC(z,-)\toto \usC(w,-)\Big).\]
  Finally, because \sC\ has fiberwise \sV-coequalizers, there is an
  object $\colim^J f$ such that 
  \[\usC(\colim^J f,-) \iso \eq \Big(\usC(z,-)\toto \usC(w,-)\Big),\]
  which completes the proof.
\end{proof}

\begin{cor}\label{thm:ggcomplete-frbicomplete}
  If \sC admits indexed \sV-coproducts, fiberwise \sV-tensors, and
  fiberwise \sV-coequalizers, then it admits all \sV-colimits with
  weights $J\maps K\hto A$ where $A$ is small in the sense of
  \SS\ref{sec:small-cats}.\endproof
\end{cor}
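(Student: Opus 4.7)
The plan is to chain the two preceding theorems. First I would invoke the penultimate theorem to conclude that indexed \V-coproducts and fiberwise \V-tensors together yield all global \V-tensors in \sC. Then I would feed these global \V-tensors, along with the assumed fiberwise \V-coequalizers, into the immediately preceding theorem, which constructs all colimits weighted by profunctors $J\maps K\hto A$ provided also that \sC has fiberwise \V-coproducts of the size of the set of objects of $A$.

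The point that makes this chaining actually give the stated corollary is the observation recalled in \SS\ref{sec:large-cats}: \VCat is the full sub-2-category of \VCAT consisting of large \V-categories with exactly one object. Thus a small \V-category $A$ in the sense of \SS\ref{sec:small-cats}, regarded as a large \V-category, has a singleton set of objects, namely the unique one whose extent is $\e A$. The ``set-of-objects indexed'' fiberwise coproducts required by the preceding theorem are therefore singleton coproducts, which trivially exist. So the hypotheses of that theorem are met and we obtain every $J$-weighted colimit with $J\maps K\hto A$.

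The step I would want to check most carefully — though I do not expect it to be a real obstacle — is that the singleton-of-objects viewpoint is the right reading of ``small in the sense of \SS\ref{sec:small-cats}'' in the hypothesis: the smallness refers to the object-collection (a singleton, viewed externally), not to the hom-object $\uA\in\V^{\e A\times\e A}$, whose internal structure can be arbitrary. The preceding theorem's proof uses the set-sized products $\prod_{a\in A}$ and $\prod_{a,b\in A}$ over object-labels only; the internal structure of $\uA$ enters solely through the composition, identities, and the actions on $J$, all of which are already handled by the global \V-tensor construction (which for $A$ small in the sense of \SS\ref{sec:small-cats} absorbs all of $\uA$ via the $\rhd$-formula of \autoref{thm:vproflargeclosed}) and by the fiberwise \V-coequalizers. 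Hence no additional size hypothesis on $\uA$ or on the base \V is required beyond those already in the statement.
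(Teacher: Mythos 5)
Your proposal is correct and is exactly the argument the paper intends: the corollary is stated with an immediate end-of-proof because it follows by chaining the two preceding theorems, with the only point of substance being that a small \V-category in the sense of \SS\ref{sec:small-cats} has a single object, so the required fiberwise \V-coproducts are singleton and hence automatic. Your closing observation about the size hypothesis referring only to the object-collection (not to $\uA$) is also the right reading, as confirmed by the paper's subsequent remark that the converse fails because the coequalizer diagram category has \emph{two} objects.
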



In~\cite{gg:fib-rel}, an indexed \V-category satisfying the hypotheses
of \autoref{thm:ggcomplete-frbicomplete} was called \emph{cocomplete}.
Unfortunately,
however, the converse of \autoref{thm:ggcomplete-frbicomplete} fails,
since fiberwise \sV-coequalizers are not a small \sV-colimit: the
relevant \sV-category $X[\sV^X[\bA]]$ has two objects.  But if we add
at least finite fiberwise coproducts, we do get an equivalence.

\begin{cor}\label{thm:constr-of-colim}
  Let $\kappa$ be an infinite regular cardinal.
  The following are equivalent for a \sV-category \sC.
  \begin{enumerate}
  \item \sC\ admits indexed \sV-coproducts, fiberwise \sV-tensors, and
    fiberwise \sV-colimits of cardinality $<\kappa$.
  \item \sC\ admits all colimits with weights $J\maps K\hto A$ where
    $A$ is \kappa-small.\endproof
  \end{enumerate}
\end{cor}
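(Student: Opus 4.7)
The plan is to observe that both directions follow by direct assembly from the results immediately preceding the corollary, with essentially no new ideas needed.

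For the direction (ii) $\Rightarrow$ (i), I would simply exhibit each of the three classes of colimits in (i) as a special case of $J$-weighted colimits with $\kappa$-small $A$. Indexed \sV-coproducts are $Y(f,1)$-weighted colimits with weight having codomain $\delta Y$, a discrete small \sV-category with a single object, hence $\kappa$-small for any infinite regular $\kappa$. Fiberwise \sV-tensors are $\Delta_!J$-weighted colimits with codomain $\delta X$, again a one-object \sV-category. Fiberwise \sV-colimits of cardinality $<\kappa$ are by definition those weighted by profunctors of the form $X[\sV^X[\bA]]$ with $\bA$ an ordinary category of cardinality $<\kappa$, whose associated large \sV-category $X[\sV^X[\bA]]$ is $\kappa$-small.

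For the direction (i) $\Rightarrow$ (ii), the argument is a two-step reduction. First, given a weight $J\maps K\hto A$ with $A$ being $\kappa$-small, I would apply \autoref{thm:parametrized-limits} to reduce to the case $K=\delta Y$ for some $Y\in\S$; this is legitimate because $J(1,k)$ has discrete domain whenever $k$ is a single object, so once we construct those component colimits, the theorem assembles them into the full $J$-weighted colimit. Second, the combination of indexed \sV-coproducts and fiberwise \sV-tensors produces all global \sV-tensors by the theorem proved immediately before \autoref{thm:ggcomplete-frbicomplete}. Then I would invoke the theorem just before the corollary, which constructs $J$-weighted colimits (with $K=\delta Y$) out of global \sV-tensors, fiberwise \sV-coequalizers, and fiberwise \sV-coproducts indexed by $\mathrm{ob}(A)$.

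The only thing to verify is that the colimits demanded by that construction theorem are covered by ``fiberwise \sV-colimits of cardinality $<\kappa$''. Since $A$ is $\kappa$-small, $|\mathrm{ob}(A)|<\kappa$, so the required fiberwise \sV-coproducts are of cardinality $<\kappa$; and fiberwise \sV-coequalizers are indexed by a finite (hence $<\kappa$) ordinary category, so they too are among the fiberwise \sV-colimits assumed in (i). Thus no real obstacle arises; the main thing to keep track of is simply which ``size'' is bounded at each stage and confirming that $\kappa$ being infinite and regular makes all the required fiberwise diagrams small enough. The rest of the proof is just citing the previous theorems in order, which is why the proof in the text will presumably be a one-line reference to the preceding material.
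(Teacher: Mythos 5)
Your proposal is correct and is exactly the assembly the paper intends (the corollary is stated with no written proof precisely because it follows by citing \autoref{thm:parametrized-limits}, the theorem producing global \sV-tensors from indexed \sV-coproducts and fiberwise \sV-tensors, and the construction of colimits from global tensors, fiberwise coequalizers, and fiberwise coproducts of size $|\mathrm{ob}(A)|$). Your bookkeeping of which sizes are bounded where, and the observation that $\kappa$ being infinite is what makes coequalizers and the one- and two-object weight categories fall under clause (i), matches the paper's surrounding discussion.
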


In the situation of \autoref{thm:constr-of-colim}, we will say that
\sC is \textbf{\ka-cocomplete}.  The two most important cases are when
$\kappa=\om$, since \om-cocompleteness is an elementary condition, and
when $\kappa=\infty$ is the size of the universe, in which case we
simply say \sC is \textbf{cocomplete}.

Finally, combining \autoref{thm:constr-of-colim} with
\autoref{thm:coten-enriched-colim}, we see that when \sC\ is tensored
and cotensored, it suffices to construct colimits in the underlying
fibration.

\begin{cor}
  If \sC\ has fiberwise \sV-tensors and \V-cotensors, and $\ord\sC$ has
  indexed coproducts and fiberwise \kappa-small colimits, then \sC is
  \kappa-cocomplete.
\end{cor}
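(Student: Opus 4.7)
The plan is to reduce the statement to \autoref{thm:constr-of-colim} by verifying its three hypotheses---indexed \V-coproducts, fiberwise \V-tensors, and fiberwise \V-colimits of cardinality $<\ka$---using \autoref{thm:coten-enriched-colim} to pass back and forth between \V-enriched and ordinary (co)limit statements in $\ord\sC$.

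First, fiberwise \V-tensors are present by hypothesis, so there is nothing to check for that condition. Next, \autoref{thm:coten-enriched-colim}\ref{item:coten-enr-indcoprod} applied to \sC---which has fiberwise \V-cotensors by hypothesis---tells us that \sC has indexed \V-coproducts if and only if $\ord\sC$ has indexed coproducts; the latter is given. Finally, for fiberwise \V-colimits of cardinality $<\ka$, we apply \autoref{thm:coten-enriched-colim}\ref{item:coten-enr-colim} (again using the fiberwise \V-cotensors) to each conical fiberwise colimit type of cardinality $<\ka$: such a \V-colimit exists in \sC if and only if the corresponding fiberwise colimit exists in $\ord\sC$, which holds by the hypothesis that $\ord\sC$ has fiberwise $\ka$-small colimits.

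With all three conditions of \autoref{thm:constr-of-colim} verified, we conclude that \sC admits all \V-colimits with weights $J:K\hto A$ where $A$ is $\ka$-small, which is precisely $\ka$-cocompleteness. The proof is essentially just a bookkeeping exercise, and no step presents a real obstacle; the only thing to be a bit careful about is the implicit assumption that \sC is a \V-fibration, which is standard throughout this section and is required in order to invoke \autoref{thm:coten-enriched-colim} (since the notion $\ord\sC$ and its comparison with the fibers $\sC^X$ of \SS\ref{sec:cocuf} presupposes this).
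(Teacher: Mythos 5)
Your proposal is correct and matches the paper's intended argument exactly: the paper offers no written proof beyond the remark that the corollary follows by ``combining \autoref{thm:constr-of-colim} with \autoref{thm:coten-enriched-colim},'' and your verification of the three conditions of \autoref{thm:constr-of-colim} via the two parts of \autoref{thm:coten-enriched-colim} is precisely that combination. Your closing caveat about \sC being a \V-fibration is also consistent with the standing assumptions of the section.
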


Of course, everything we have proven applies dually to limits as well,
and we have a notion of \textbf{\ka-completeness}.

\begin{eg}\label{eg:psh-cplt}
  The same proofs show that for a locally small \S with pullbacks and
  a classical cosmos \bV, an indexed $\sPsh(\S,\bV)$-category admits
  colimits with all weights $J:K\hto A$, where $A$ is set-small and
  locally small (in the sense of \autoref{thm:psh-comphom}) and $J$ is
  locally small (in the same sense), if and only if it has indexed
  coproducts as in \autoref{thm:indcoprod-lsfib}, fiberwise tensors by
  small objects as in \autoref{eg:small-tensors}, and small fiberwise
  colimits as in \autoref{eg:psh-fiblim}.  This is equivalent to
  asking for \bV-enriched adjunctions $f_!\dashv f^*$ satisfying the
  Beck-Chevalley condition, plus that each \bV-enriched category
  $\sC^X$ is \bV-cocomplete, with colimits preserved by the
  restriction functors $f^*$.
\end{eg}

\section{Presheaf \V-categories}
\label{sec:psh}

Our goal is now to define presheaf \V-categories.  Here again it is
convenient to use the machinery of profunctors.  In particular, for
bicategory-enrichment the construction was already done
by~\cite{street:enr-cohom}.  We will refine it slightly, so as to
simultaneously give a notion of ``small-presheaf category'' as
in~\cite{dl:lim-smallfr} and a version suitable for an elementary
context.

Let \S have finite products, and let \V be an \S-indexed cosmos which
is \ka-complete and \ka-cocomplete for some chosen regular cardinal
\ka.  We also include the case ``$\ka=\infty$'' for which every
(small) set is \ka-small.  (As in \SS\ref{sec:indexed-limits}, with
appropriate care we could weaken these assumptions.  In particular,
symmetry is not really needed.)  The cases of most interest are
$\ka=\om$ (for when we care about being first-order) and $\ka=\infty$.

\begin{defn}\label{def:smprof}
  A \V-profunctor $H:\sB\hto \sA$ is \textbf{\ka-small} if for every
  $b\in\sB$, there exist a \ka-small \V-category $A'$, a
  \V-functor $i:A'\to\sA$, and a \V-profunctor $H':\delta(\eb)\hto A'$
  such that $H(1,b)\cong H' \odot \sA(1,i)$.
\end{defn}

Note that since \V is \ka-cocomplete and $A'$ is \ka-small, the
composite $H' \odot \sA(1,i)$ automatically exists.

\begin{eg}
  The unit profunctor $\sA:\sA\hto\sA$ is always \ka-small; for
  each $a\in\sA$ we may take $A'=\delta(\ea)$, $H'$ the unit
  profunctor, and $i:\delta(\ea)\to\sA$ the inclusion.
\end{eg}

\begin{eg}\label{thm:smallcat-smallprof}
  If \sA is itself \ka-small, then every profunctor $\sB\hto\sA$ is
  \ka-small, as for any $b$ we may take $A'=\sA$ and $i=1_{\sA}$.
\end{eg}

\begin{eg}
  If $H:\sB\hto\sA$ is \ka-small, then so is $H(1,f)$ for any
  $f:\sC\to\sB$, since $H(1,f)(1,c) = H(1,f(c))$.
\end{eg}

\begin{eg}
  Finally, the connection with the most classical case is a little bit
  surprising.  Suppose $\V=\fam(\bV)$ and $\sA = \fam(\bA)$ as in
  \autoref{eg:enriched-as-indexed}, and that $H$ is likewise induced
  from a \bV-enriched profunctor $\bB\hto\bA$.  Note that a \ka-small
  $\fam(\bV)$-category is equivalently a small \bV-enriched category
  with a partition of its objects into a family of sets with a
  \ka-small indexing set (but no cardinality restrictions on the
  individual sets in the partition).  Profunctors between such
  categories are, up to equivalence, just \bV-enriched profunctors,
  and functors are those that respect the partitions.  It follows that
  $H$ is \ka-small in the sense of \autoref{def:smprof} if and only if
  it is small in the sense of~\cite{dl:lim-smallfr}---in particular,
  the cardinal \ka is completely irrelevant!

  This makes more sense if we realize that \ka does not exactly
  measure the ``size'' of a \V-category or profunctor, but rather its
  ``departure from elementarity'', the case $\ka=\om$ being the purely
  elementary one.  The point is that for \nSet-indexed categories of
  families, sets of arbitrary cardinality are already built into the
  indexing and have ``become elementary''.
\end{eg}

\begin{rmk}\label{rmk:smprof-ffind}
  In \autoref{def:smprof} we are free to assume that $i$ is fully
  faithful and indexed.  For if not, define a new \ka-small
  \V-category $A''$ with one object $\xhat$ for every object $x\in
  A'$, and with $\e\xhat = \e(ix)$ and $\U{A''}(\xhat,\yhat) =
  \usA(ix,iy)$.  Then $i$ factors as $A' \xto{k} A'' \xto{j} \sA$,
  where $j$ is fully faithful and indexed, and we have
  \[ H' \odot \sA(1,i) \cong (H' \odot A''(1,k)) \odot \sA(1,j), \]
  the composite $H' \odot A''(1,k)$ existing since $A'$ is \ka-small.
\end{rmk}

We also observe that when the domain of a small profunctor is a small
category, then the decompositions of \autoref{def:smprof} can be
assembled into a single one.

\begin{lem}\label{thm:smprof-assemble}
  If $H:B\hto \sA$ is a \ka-small profunctor, where $B$ is a \ka-small
  \V-category, then there exists a \ka-small \V-category $A'$, a
  \V-profunctor $H':B\hto A'$, and a \V-functor $i:A'\to \sA$ such that
  $H\cong H'\odot \sA(1,i)$.
\end{lem}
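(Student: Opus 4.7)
The plan is to assemble the per-object decompositions provided by $\ka$-smallness of $H$ into a single decomposition by ``taking a disjoint union'' of the witnessing categories. First, for each $b \in B$, apply \autoref{def:smprof} and \autoref{rmk:smprof-ffind} to choose a $\ka$-small $\V$-category $A_b'$, a fully faithful indexed $\V$-functor $i_b \maps A_b' \to \sA$, and a $\V$-profunctor $H_b' \maps \delta(\eb) \hto A_b'$ with a chosen isomorphism $H(1,b) \cong H_b' \odot \sA(1, i_b)$.

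Next I construct $A'$ as the disjoint-union $\V$-category whose objects are pairs $(b,x)$ with $b \in B$ and $x \in A_b'$, with extent $\e(b,x) = \ex$ and hom-object $\U{A'}((b,x),(b',x'))$ equal to $\U{A_b'}(x,x')$ when $b = b'$ and equal to the fiberwise initial object $0$ of the relevant fiber of $\V$ otherwise. Identities and composition are inherited on the diagonal and forced off the diagonal by the initial-object universal property, using that $\ten$ preserves fiberwise colimits in an indexed cosmos and so $0 \ten (-) \cong 0$. By regularity of $\ka$, the category $A'$ is $\ka$-small. Define $i \maps A' \to \sA$ on objects by $i(b,x) = i_b(x)$, inheriting the action on hom-objects from the $i_b$ on the diagonal (uniquely off-diagonal), and define $H' \maps B \hto A'$ by $\U{H'}((b',x),b) = \U{H_b'}(x)$ if $b = b'$ and $= 0$ otherwise, with left $B$-action and right $A'$-action inherited from $H_b'$ and $A_b'$ on the diagonal.

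To verify $H \cong H' \odot \sA(1,i)$, use \autoref{thm:objwise-comp} to reduce to an objectwise check at each $b \in B$. The composite $(H' \odot \sA(1,i))(1,b)$ is computed by the coequalizer formula in the proof of \autoref{thm:vproflargecomp}, summed over objects and pairs of objects of $A'$; because $\U{H'}((b',x),b) = 0$ unless $b' = b$ and $\U{A'}((b_2,x_2),(b_1,x_1)) = 0$ unless $b_1 = b_2$, and because initial objects are absorbed by $\ten_{[-]}$ and by coproducts, this coequalizer collapses to the coequalizer indexed by objects and pairs of objects of $A_b'$, which computes $H_b' \odot \sA(1,i_b) \cong H(1,b)$. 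The main obstacle is the bookkeeping in the previous paragraph: verifying that the disjoint-union constructions of $A'$, $i$, and $H'$ really satisfy the axioms of a $\V$-category, a $\V$-functor, and a $\V$-profunctor. This is routine once one uses systematically that $\ten$ preserves fiberwise colimits in each variable, so that initial objects absorb the relevant tensor products and the uniquely determined identity, composition, and action maps involving $0$ automatically satisfy all coherence conditions.
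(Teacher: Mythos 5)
Your proposal is correct and follows essentially the same route as the paper: choose per-object decompositions with $i_b$ fully faithful and indexed, form the disjoint-union category $A'$ with fiberwise initial hom-objects off the diagonal, and check the isomorphism $H\cong H'\odot\sA(1,i)$ objectwise at each $b\in B$. The only cosmetic difference is that you verify the objectwise collapse via the explicit coequalizer formula, whereas the paper observes $H'(1,b)\cong H_b\odot A'(1,j_b)$ and assembles the given isomorphisms into a global map that is objectwise invertible; these amount to the same computation.
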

\begin{proof}
  By assumption, for each $b\in B$ we have a \ka-small \V-category
  $A_b$, a profunctor $H_b:\delta(\eb)\hto A_b$, and a functor
  $i_b:A_b \to A$ with $H(1,b) \cong H_b \odot \sA(1,i_b)$.  By
  \autoref{rmk:smprof-ffind} we may assume each $i_b$ to be fully
  faithful and indexed.

  Define $A'$ to have as objects the disjoint union of the objects of
  the $A_b$, for all $b$.  This is a \ka-small set since each $A_b$ is
  \ka-small and so is $B$.  We let these objects inherit their extents
  from $A_b$ (and hence from \sA), and take their hom-objects to be
  \begin{equation}
    \U{A'}(a,a') =
    \begin{cases}
      \U{A_b}(a,a') &\quad\text{if } a,a' \in A_b\\
      \emptyset &\quad\text{if } a\in A_b \text{ and } a'\in A_{b'}
      \text{ with } b\neq b'
    \end{cases}
  \end{equation}
  where $\emptyset$ denotes a fiberwise initial object.  It is easy to
  check that this defines a \V-category and that we have functors
  $i:A'\to \sA$ and $j_b:A_b \to A'$ with $i j_b = i_b$.

  Similarly, we define $H':B\hto A'$ by
  \begin{equation}
    \U{H'}(a,b) =
    \begin{cases}
      \U{H_b}(a,\star) &\quad\text{if } a \in A_b\\
      \emptyset &\quad\text{otherwise}.
    \end{cases}
  \end{equation}
  Then $H'(1,b) = H_b \odot A'(1,j_b)$.  Thus, the isomorphisms $H_b
  \odot \sA(1,i_b) \toiso H(1,b)$ assemble into a morphism $H' \odot
  \sA(1,i) \to H$, which restricts to an isomorphism at each $b$ and
  hence is itself an isomorphism.
\end{proof}

The functor $i:A'\to \sA$ constructed in the proof of
\autoref{thm:smprof-assemble} is not fully faithful, but we may apply
the argument of \autoref{rmk:smprof-ffind} to make it so.

Note also that the converse of \autoref{thm:smprof-assemble} is
universally valid: if $A'$ is \ka-small, then for any $i:A'\to \sA$
and $H:\sB\hto A'$, the composite $H\odot \sA(1,i)$ is \ka-small,
since $(H\odot \sA(1,i))(1,b) \cong H(1,b) \odot \sA(1,i)$.

\begin{lem}\label{thm:smprof-comp}
  If $K:B\hto C$ is a \ka-small \V-profunctor, $B$ is a \ka-small
  \V-category, and $H:A\hto B$ is any \V-profunctor, then the
  composite $H\odot K$ is \ka-small.
\end{lem}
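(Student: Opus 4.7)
The plan is to reduce to the structural decomposition lemma for small profunctors out of small categories (\autoref{thm:smprof-assemble}) and then verify the smallness condition object by object on $A$ using associativity of composition and the co-Yoneda lemma.

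First, I would note that the composite $H \odot K$ actually exists: since $B$ is $\ka$-small and $\V$ is a $\ka$-cocomplete indexed cosmos (so in particular has indexed coproducts, fiberwise coequalizers, and fiberwise $\ka$-small coproducts, all preserved by $\ten$), this is immediate from \autoref{thm:vproflargecomp}.

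Next, since $K \maps B \hto C$ is $\ka$-small with $\ka$-small domain $B$, I would invoke \autoref{thm:smprof-assemble} to obtain a single global decomposition: a $\ka$-small $\V$-category $C'$, a $\V$-profunctor $K' \maps B \hto C'$, and a $\V$-functor $i \maps C' \to C$ with an isomorphism
\[ K \;\cong\; K' \odot C(1,i). \]
Now to verify that $H \odot K$ is $\ka$-small in the sense of \autoref{def:smprof}, I fix an object $a \in A$ and compute using \autoref{thm:coyoneda} and associativity of composites (\autoref{thm:prof-assoc}):
\[ (H \odot K)(1,a) \;\cong\; A(1,a) \odot H \odot K \;\cong\; H(1,a) \odot K \;\cong\; H(1,a) \odot K' \odot C(1,i). \]
Setting $L := H(1,a) \odot K' \maps \delta(\e a) \hto C'$, this gives $(H \odot K)(1,a) \cong L \odot C(1,i)$, which is precisely the witness for $\ka$-smallness. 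The composite $L = H(1,a) \odot K'$ exists because its middle category $B$ is $\ka$-small, and the final triple composite with $C(1,i)$ exists because $C'$ is $\ka$-small; the rebracketing used above is justified by \autoref{thm:prof-assoc}.

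There is no real obstacle: everything is a formal consequence of \autoref{thm:smprof-assemble} together with the associativity and co-Yoneda properties of the (virtual) equipment of $\V$-profunctors. The only point requiring mild attention is to track the existence of each intermediate composite, which is ensured at every stage by the $\ka$-smallness of either $B$ or $C'$.
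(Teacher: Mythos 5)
Your proposal is correct and follows essentially the same route as the paper: the paper's proof is exactly to write $K \cong K' \odot C(1,i)$ via \autoref{thm:smprof-assemble} and then observe $H\odot K \cong (H\odot K')\odot C(1,i)$, which is $\ka$-small since $C'$ is. Your version merely spells out the objectwise verification and the existence of the intermediate composites, which the paper leaves to the remark following \autoref{thm:smprof-assemble}.
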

\begin{proof}
  Write $K = K' \odot C(1,i)$ as in \autoref{thm:smprof-assemble}.
  Then $H\odot K \cong (H\odot K')\odot C(1,i)$.
\end{proof}

\begin{lem}\label{thm:colim-smwgt}
  If a \V-category \sC is \ka-cocomplete (i.e.\ admits all colimits
  with weights $J:K\hto A$ where $A$ is \ka-small) then it admits all
  colimits with \ka-small weights.  Similarly, any \V-functor that
  preserves \ka-small colimits preserves all colimits with \ka-small
  weights.
\end{lem}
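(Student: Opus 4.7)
The plan is to reduce the statement to an application of \autoref{thm:composing-limits} by exploiting the defining decomposition of a \ka-small profunctor. Given a \ka-small weight $J:K\hto A$ and a \V-functor $f:A\to\sC$, I would first invoke \autoref{thm:parametrized-limits} to reduce to the case $K=\delta X$ for varying $X$: it suffices to exhibit a $J(1,k)$-weighted colimit of $f$ for each $k\in K$, since these will assemble into a $J$-weighted colimit.

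With $J:\delta X\hto A$ a \ka-small weight, \autoref{def:smprof} supplies a \ka-small \V-category $A'$, a \V-functor $i:A'\to A$, and a profunctor $J':\delta X\hto A'$ with $J\cong J'\odot A(1,i)$. I would build the $J$-colimit of $f$ in two stages. First, by \autoref{eg:limcomp} the $A(1,i)$-weighted colimit of $f$ is simply the composite $fi:A'\to\sC$. Second, since $A'$ is \ka-small and \sC is \ka-cocomplete, the $J'$-weighted colimit of $fi$ exists; call it $\ell:\delta X\to\sC$. Because $A'$ is \ka-small and \V is \ka-cocomplete, the composite $J'\odot A(1,i)\cong J$ automatically exists, so \autoref{thm:composing-limits} exhibits $\ell$ as the desired $J$-weighted colimit of $f$.

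For the preservation statement, let $F:\sC\to\sD$ preserve \ka-small colimits. Then $F$ trivially preserves the $A(1,i)$-colimit of $f$, since it is simply precomposition with $i$, so $F(fi)=(Ff)i$ is still an $A(1,i)$-colimit of $Ff$. And $F$ preserves the $J'$-colimit of $fi$ by hypothesis, because $A'$ is \ka-small. Applying \autoref{thm:composing-limits} to $Ff$ in $\sD$ then shows that $F\ell$ is the $J$-weighted colimit of $Ff$, so $F$ preserves the $J$-colimit.

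I do not anticipate any real obstacle: the argument is essentially a direct invocation of \autoref{thm:composing-limits} combined with the shape of \autoref{def:smprof}. The only mild subtlety is verifying that the pointwise colimits at each $k\in K$ assemble coherently into a $J$-weighted colimit, but \autoref{thm:parametrized-limits} is precisely the objectwise characterization needed, so nothing extra must be checked.
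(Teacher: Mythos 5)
Your proposal is correct and follows essentially the same route as the paper's proof: reduce to $K=\delta X$ via \autoref{thm:parametrized-limits}, decompose $J\cong J'\odot A(1,i)$ per \autoref{def:smprof}, and apply \autoref{thm:composing-limits} using the fact that the $A(1,i)$-weighted colimit is just $fi$. The extra detail you supply for the preservation statement (which the paper dismisses as immediate) is accurate.
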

\begin{proof}
  By \autoref{thm:parametrized-limits}, it suffices to show that \sC
  admits $J$-weighted colimits for any \ka-small profunctor $J:\delta
  X \hto A$.  But then we have $J \cong J' \odot A(1,i)$ for some
  $J':\delta X\hto A'$ and $i:A'\to A$ with $A'$ being \ka-small, and
  thus for any $f:A\to \sC$,
  \[ \colim^J f \cong \colim^{J' \odot A(1,i)} f
  \cong \colim^{J'} \colim^{A(1,i)} f
  \cong \colim^{J'} f i
  \]
  (using \autoref{thm:composing-limits}), which exists because $A'$ is
  \ka-small.  The second statement follows immediately.
\end{proof}

\begin{lem}\label{thm:smprof-hom}
  For any \ka-small profunctor $H:\sB\hto\sA$ and any profunctor
  $K:\sC\hto\sA$, the hom $H\rhd K$ exists.
\end{lem}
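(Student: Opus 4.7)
The plan is to exploit the $\kappa$-smallness of $H$ to reduce the construction of $H\rhd K$ to the case covered by \autoref{thm:vproflargeclosed}(i), and then to assemble the resulting pointwise homs into a single profunctor via the hom analogue of \autoref{thm:objwise-comp}.

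First, for each object $b\in\sB$, I would invoke the $\kappa$-smallness hypothesis on $H$ to obtain a $\kappa$-small \V-category $A_b$, a \V-functor $i_b\maps A_b\to\sA$, and a \V-profunctor $H'_b\maps \delta(\eb)\hto A_b$ with $H(1,b)\cong H'_b\odot \sA(1,i_b)$. Combining the associativity isomorphism $(P\odot Q)\rhd R\cong P\rhd(Q\rhd R)$ from \autoref{thm:prof-assoc} with the second Yoneda lemma $\sA(1,i_b)\rhd K\cong K(i_b,1)$ of \autoref{thm:yoneda2large}, one computes
\[
  H(1,b)\rhd K \;\cong\; \bigl(H'_b\odot \sA(1,i_b)\bigr)\rhd K
  \;\cong\; H'_b\rhd \bigl(\sA(1,i_b)\rhd K\bigr)
  \;\cong\; H'_b\rhd K(i_b,1).
\]
Since \V is $\kappa$-complete (in particular has fiberwise equalizers and fiberwise $\kappa$-small products) and $A_b$ is $\kappa$-small, the right-hand hom exists by \autoref{thm:vproflargeclosed}(i). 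Thus each pointwise hom $H(1,b)\rhd K$ exists as a \V-profunctor $\sC\hto\delta(\eb)$.

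Finally, I would invoke the analogue of \autoref{thm:objwise-comp} for homs to glue these pointwise representing objects into a single profunctor $H\rhd K\maps \sC\hto\sB$. Concretely, one takes $\U{H\rhd K}(c,b)$ to be the underlying $\V^{\eb\times\ec}$-object of $H(1,b)\rhd K$; the $\sC$-action is inherited from $H(1,b)\rhd K$, while the $\sB$-action is induced by transporting the right action of $\sB$ on the family $\{H(1,b)\}_b$ across the pointwise universal properties. The universal bimorphism $(H\rhd K),H\to K$ then arises by evaluation at each $b$ of the universal bimorphisms of the pointwise homs.

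The main obstacle is the bookkeeping in this last assembly step---checking that the inherited actions satisfy the profunctor axioms and that the resulting bimorphism is genuinely universal with respect to multimorphisms of arbitrary arity. However, these verifications parallel the proof of \autoref{thm:vproflargeclosed} exactly, with the index over $\sA$ replaced, for each $b$, by the $\kappa$-small index set of $A_b$; the relevant fiberwise equalizer and product diagrams all exist because of the $\kappa$-completeness of \V and the $\kappa$-smallness of each $A_b$.
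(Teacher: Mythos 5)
Your argument is correct and is essentially the paper's own proof: both reduce via the hom analogue of \autoref{thm:objwise-comp} to pointwise homs, apply the \ka-smallness decomposition $H(1,b)\cong H'\odot\sA(1,i)$, and transfer across the representable $\sA(1,i)$ so that the remaining hom is indexed over a \ka-small category and hence exists by \autoref{thm:vproflargeclosed}. The only cosmetic differences are that the paper also restricts in the $\sC$-variable (working with $H(1,b)\rhd K(1,c)$) and absorbs $\sA(1,i)$ into the target as the composite $K(1,c)\odot\sA(i,1)$ via \autoref{thm:coyoneda} rather than as the hom $\sA(1,i)\rhd K\cong K(i,1)$ via \autoref{thm:yoneda2large}, and that it spells out the chain of multimorphism bijections directly rather than quoting the associativity of \autoref{thm:prof-assoc} (whose statement presupposes the existence you are trying to prove, though its proof gives exactly what you need).
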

\begin{proof}
  By \autoref{thm:objwise-comp}, it suffices to show that $H(1,b) \rhd
  K(1,c)$ exists for all $b\in\sB$ and $c\in\sC$.  Fixing such, let
  $H(1,b) \cong H' \odot \sA(1,i)$, for $i:A'\to \sA$ and
  $j:A''\to\sA$ with $A'$ \ka-small.  Then for any well-typed $\vec L
  = L_1,\dots,L_n$, we have
  \begin{align}
    \VBimor(\vec L, H(1,b); K(1,c))
    &\cong \Vmmor(\vec L, H', \sA(1,i); K(1,c))\\
    &\cong \Vmmor(\vec L, H'; K(1,c) \odot \sA(i,1))\\
    &\cong \Vmmor(\vec L; H' \rhd (K(1,c) \odot \sA(i,1))).
  \end{align}
  The composite $K(1,c) \odot \sA(i,1)$ exists by
  \autoref{thm:coyoneda}, and the hom $H' \rhd (K(1,c) \odot
  \sA(i,1))$ exists because $A'$ is \ka-small.
\end{proof}

Finally, we are ready to define presheaf \V-categories.

\begin{defn}\label{def:presheaves}
  Let $A$ be any \V-category.  Then there is a \V-fibration
  $\cPk A$ defined as follows.
  \begin{enumerate}
  \item Its objects are \ka-small \V-profunctors $H:\delta X \hto A$,
    for some $X\in\S$.
  \item The extent of $H:\delta X \hto A$ is $X$.
  \item The hom-object $\U{\cPk A}(H,K)\in\V^{X\times Y}$ is $H\rhd K$
    (which exists by \autoref{thm:smprof-hom}).
  \item The unit morphism $\I_X \to H\rhd H$ is adjunct to the
    identity $H\to H$.
  \item The composition morphism $(K\rhd L) \odot (H\rhd K) \to (H\rhd
    L)$ is adjunct to
    \[ (K\rhd L) \odot (H\rhd K) \odot H \to (K\rhd L)\odot K \to L. \]
  \item The restriction of $H:\delta X \hto A$ along $f:Y\to X$ is
    $H(1,f):\delta Y\hto A$.
  \end{enumerate}
\end{defn}

\begin{rmk}
  Recall that if $A$ is \ka-small, then so is every profunctor into
  it.  In particular, if $\ka'\ge\ka$ then $\cP_{\ka'} A = \cPk A$.
  Again we see that $\ka$ measures not the ``size'' of the
  cocompletion \emph{per se}, but its non-elementariness, and once it
  is above the level of $A$ no further change takes place.  When \ka
  has ``stabilized'' in this sense, we may write merely $\cP A$.
\end{rmk}

\begin{eg}
  If $\V=\fam(\bV)$ and $\sA=\fam(\bA)$, then we have remarked that a
  \V-profunctor into \sA is small just when it is induced by a small
  \bV-enriched profunctor into \bA.  This makes it easy to identify
  $\cPk \sA$ with $\fam(\cP \bA)$, where $\cP \bA$ is the category of
  small \bV-enriched presheaves from~\cite{dl:lim-smallfr}.
\end{eg}

\begin{eg}
  If $\V=\self(\S)$ and $A$ is an \S-internal category, regarded as a
  small \V-category, then every profunctor into $A$ is \ka-small, and
  $\cPk A$ is the usual locally internal category of internal
  presheaves on $A$.
\end{eg}

\begin{eg}\label{eg:pshvs-over-enriched}
  If $\V=\self(\S)$ and \bA is a \ka-small category \emph{enriched}
  over \S with its cartesian monoidal structure, then we can regard
  \bA as a \ka-small $\self(\S)$ category all of whose objects have
  extent $1$.  In this case, $\cPk \bA$ is a locally internal category
  of \S-enriched presheaves on \bA, with $(\cPk \bA)^X =
  (\S/X)^{\bA\op}$.

  More generally, for any \V we may perform the same construction with
  \bA being a $\V^1$-enriched category, obtaining a \V-fibration $\cPk
  \bA$ with $(\cPk \bA)^X = (\V^X)^{\bA\op}$.  In particular, \bA
  might be freely generated by an \S-internal category as in
  \autoref{thm:free-monfib}.
\end{eg}

These presheaf categories have a universal property relating to the
following universal profunctors.

\begin{defn}
  For any $A$, there is a \V-profunctor $Y_A:\cPk A \hto A$ defined
  for $a\in A$ and $H:\delta X\hto A$ by
  \[ \uY(a,H) = \uH(a,1) \in \V^{X \times \ea}. \]
  Its action by $A$ is determined by the action of $A$ on the $H$'s,
  while its action by $\cPk A$ is determined by adjunction from the
  universal property of the homs $H\rhd K$.
  Since each $H\in\cPk A$ is \ka-small, so is $Y_A$.
\end{defn}

Thus, any \V-functor $f:\sB\to\cPk A$ induces a \ka-small profunctor
$Y_A(1,f):\sB\hto A$.  Since $Y_A(1,f) \cong \cPk A(1,f) \odot Y_A$,
we have a canonical multimorphism
\[ \cPk A(1,g), \cPk A(f,1), Y_A(1,f) \too \cPk A(1,f), Y_A
\too Y_A(1,f).
\]
And since $\cPk A(f,g) \cong \cPk A(1,g) \odot \cPk A(f,1)$, this
multimorphism factors uniquely through a bimorphism
\begin{equation}\label{eq:pshfbimor}
  \cPk A(f,g), Y_A(1,f) \too Y_A(1,g).
\end{equation}

\begin{prop}\label{thm:pshf-ff}
  For any \V-functors $f:\sB\to \cPk A$ and $g:\sC\to\cPk A$, the
  bimorphism~\eqref{eq:pshfbimor} exhibits an isomorphism
  \begin{equation}\label{eq:pshf-homs}
    \cPk A(f,g) \cong Y_A(1,f) \rhd Y_A(1,f).
  \end{equation}
\end{prop}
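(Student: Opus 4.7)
The plan is to verify the asserted isomorphism objectwise in $b\in\sB$ and $c\in\sC$; then by the objectwise criterion used to prove \autoref{thm:objwise-comp} (and in the proof of \autoref{thm:smprof-hom}), it suffices to show that for each such pair, the restriction of~\eqref{eq:pshfbimor} along $b$ and $c$ gives an isomorphism
\[\cPk A(f,g)(c,b) \;\cong\; \bigl(Y_A(1,f) \rhd Y_A(1,g)\bigr)(c,b)\]
in $\V^{\eb\times\ec}$ (taking the right-hand side as $Y_A(1,f)\rhd Y_A(1,g)$, consistent with the typing of the bimorphism~\eqref{eq:pshfbimor}).

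The left-hand side unfolds directly from \autoref{def:presheaves}. Since $\cPk A$ is a \V-fibration whose hom-objects are by fiat the right homs of \ka-small profunctors into $A$, the restriction of $\cPk A(f,g)$ along $b$ and $c$ is the single hom-object $\U{\cPk A}(fb,gc) = fb \rhd gc$, where $fb$ and $gc$ now denote the \ka-small profunctors $\delta(\eb)\hto A$ and $\delta(\ec)\hto A$ named by the objects $fb,gc\in\cPk A$.

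For the right-hand side, I would first record the pointwise formula
\[(H\rhd K)(c,b)\;\cong\; H(1,b)\rhd K(1,c)\]
valid for any $H:\sB\hto A$ and $K:\sC\hto A$ whose right hom exists; this is essentially the objectwise construction exploited in \autoref{thm:smprof-hom}, and can alternatively be obtained by a short Yoneda calculation from the universal property of $\rhd$ combined with the identifications $H(1,b)\cong\sB(1,b)\odot H$ and $K(1,c)\cong\sC(1,c)\odot K$ coming from \autoref{thm:coyoneda}. Applied with $H=Y_A(1,f)$ and $K=Y_A(1,g)$, and using that $Y_A(1,f)(1,b)\cong Y_A(1,fb)=fb$ and similarly $Y_A(1,g)(1,c)=gc$ straight from the definition of the universal profunctor $Y_A$, this identifies the right-hand side with $fb\rhd gc$ as well.

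Both sides thus become $fb\rhd gc$, and it remains to check that~\eqref{eq:pshfbimor}, restricted along $b$ and $c$, becomes the canonical identity under these identifications. By construction~\eqref{eq:pshfbimor} is obtained by factoring a multimorphism built from the action of $\cPk A$ on $Y_A$ through the isomorphism $\cPk A(1,g)\odot\cPk A(f,1)\cong\cPk A(f,g)$; its restriction along $b,c$ is therefore, up to the identifications above, precisely the counit $fb\rhd gc,\ fb \to gc$ of the adjunction defining $fb\rhd gc$, and the triangle identity for this adjunction exhibits this counit as adjunct to the identity, as required. The main obstacle is this last compatibility: tracing the several layers of factorization that define~\eqref{eq:pshfbimor} through composites in the virtual equipment $\Lprof\V$ and confirming that restriction along $b,c$ really picks out the defining counit of $fb\rhd gc$ is where the bookkeeping is most delicate, though each individual step is a routine application of the Yoneda lemmas (\autoref{thm:yoneda1large} and \autoref{thm:yoneda2large}) and the universal properties of composites and homs.
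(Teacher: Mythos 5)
Your argument is correct and is essentially the paper's own proof, which simply cites the definition of the hom-objects of $\cPk A$ as right homs $H\rhd K$ together with the objectwise criterion of \autoref{thm:objwise-comp}; you have just written out the objectwise evaluation (both sides reduce to $fb\rhd gc$) and the compatibility of the bimorphism in full. You were also right to read the right-hand side of~\eqref{eq:pshf-homs} as $Y_A(1,f)\rhd Y_A(1,g)$, correcting an evident typo in the statement.
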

\begin{proof}
  Follows directly from the definition of hom-objects in $\cPk A$ and
  \autoref{thm:objwise-comp}.
\end{proof}

\begin{prop}\label{thm:pshf-eso}
  For any \V-category \sB, the functor
  \begin{equation}\label{eq:wk-pshf-obj-functor}
    \begin{array}{rcl}
      \VCAT(\sB,\cPk A) &\too& \VPROF(\sB,A)\\
      \big[f: \sB\to \cPk A\big] &\mapsto&
      \big[Y_A(1,f)\maps \sB\hto A\big]
    \end{array}
  \end{equation}
  is fully faithful, and its image consists of the \ka-small
  profunctors.
\end{prop}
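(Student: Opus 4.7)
The plan is to prove full faithfulness via a chain of canonical bijections, and then to pin down the essential image: one containment follows directly from the construction of $\cPk A$, and the other by explicitly building a \V-functor out of a given \ka-small profunctor.

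For full faithfulness, given $f, g: \sB \to \cPk A$, I would chain the isomorphisms
\begin{align*}
\VCAT(\sB, \cPk A)(f, g)
&\iso \VPROF(\sB, \cPk A)(\cPk A(1, f), \cPk A(1, g)) \\
&\iso \VPROF(\sB, \sB)(\sB, \cPk A(f, g)) \\
&\iso \VPROF(\sB, \sB)(\sB, Y_A(1, f) \rhd Y_A(1, g)) \\
&\iso \VBimor(\sB, Y_A(1, f); Y_A(1, g)) \\
&\iso \VPROF(\sB, A)(Y_A(1, f), Y_A(1, g)),
\end{align*}
using in turn the large-category version of \autoref{thm:equip}, \autoref{thm:yoneda1large}, \autoref{thm:pshf-ff}, the universal property of $\rhd$, and the identity $\sB \odot Y_A(1, f) \iso Y_A(1, f)$ from \autoref{thm:large-unit-comp} combined with the universal property of composites. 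Since each bijection is given by whiskering or pre/post-composition with a canonical (multi)morphism, the resulting composite bijection must send a \V-natural transformation $\al: f \to g$ to the morphism $Y_A(1, \al): Y_A(1, f) \to Y_A(1, g)$, which is precisely the action of~\eqref{eq:wk-pshf-obj-functor} on 2-cells.

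For the image description, one inclusion is easy: if $f: \sB \to \cPk A$, then for each $b \in \sB$ the profunctor $Y_A(1, f)(1, b)$ is, up to restriction along $\e f_b$, the \ka-small object $f(b) \in \cPk A$ viewed as a profunctor $\delta(\eb) \hto A$, and precomposition by a functor preserves the decomposition of \autoref{def:smprof}, so $Y_A(1, f)$ is \ka-small. For the converse, given a \ka-small profunctor $H: \sB \hto A$, I would construct $f: \sB \to \cPk A$ by setting $f(b) := H(1, b)$ (which is \ka-small by hypothesis, with extent $\eb$, making $f$ indexed) and defining $f_{b, b'}: \usB(b, b') \to H(1, b) \rhd H(1, b') = \U{\cPk A}(f(b), f(b'))$ as the adjunct, under the universal property of $\rhd$, of the bimorphism $\usB(b, b'), H(1, b) \to H(1, b')$ given by the left action of $\sB$ on $H$. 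The associativity and unit axioms for this action translate, via the $\rhd$-adjunction, into the functoriality axioms for $f$. Finally, $Y_A(1, f)(1, b) = f(b) = H(1, b)$ tautologically, and unwinding the definition of $f_{b, b'}$ shows the $\sB$-action on $Y_A(1, f)$ coincides with that on $H$, giving the desired isomorphism $Y_A(1, f) \iso H$.

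The hard part will be the bookkeeping: identifying each bijection in the chain as whiskering so as to confirm that the composite equals the action of $Y_A(1, -)$, and separately verifying that the axioms for the $\sB$-action on $H$ yield functoriality of $f$ and the desired agreement of $\sB$-actions on $H$ and $Y_A(1, f)$. These are all routine manipulations using the multicategory structure on $\VPROF$ and the profunctor action axioms; the conceptual content lies in \autoref{thm:pshf-ff} and in the smallness decomposition of \autoref{def:smprof}.
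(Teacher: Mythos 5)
Your proposal is correct and follows essentially the same route as the paper: full faithfulness via the chain through $\cPk A(f,g)\cong Y_A(1,f)\rhd Y_A(1,g)$ (\autoref{thm:pshf-ff}) and the unit property of $\sB$, and essential surjectivity onto \ka-small profunctors by sending $b$ to $H(1,b)$ and taking adjuncts of the left $\sB$-action. The only difference is that you spell out a few intermediate bijections and verifications that the paper compresses or leaves to the reader.
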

\begin{proof}
  Invoking \autoref{thm:pshf-ff}, for $f,g:\sB\to\cPk A$ we have
  \begin{align}
    \VCAT(\sB,\cPk A)(f,g)
    &\cong \VPROF(\sB, \cPk A(f,g))\\
    &\cong \VBimor(\sB, Y_A(1,f); Y_A(1,g))\\
    &\cong \VPROF(Y_A(1,f); Y_A(1,g)).
  \end{align}
  It is straightforward to verify that this isomorphism is the action
  of~\eqref{eq:wk-pshf-obj-functor} on homs;
  thus~\eqref{eq:wk-pshf-obj-functor} is fully faithful.

  Now, this functor certainly takes values in \ka-small profunctors.
  Conversely, suppose $H: \sB\hto A$ is \ka-small.  Of course, $H$
  consists of objects $\uH(a,b)\in \V^{\ea \times \eb}$, for each pair
  of objects $a\in A$ and $b\in\sB$, together with action maps
  \begin{align}
    \uH(a,b)\odot \uA(a',a) &\to \uH(a',b)
    \mathrlap{\quad\text{and}}\label{eq:vdiag-is-pshf-act-1}\\
    \usB(b,b')\odot \uH(a,b) &\to \uH(a,b').
    \label{eq:vdiag-is-pshf-act-2}
  \end{align}
  The maps~\eqref{eq:vdiag-is-pshf-act-1} make each $\uH(-,b)$ into a
  profunctor $\delta(\eb) \hto A$, which is \ka-small since $H$ is;
  thus it is an object of $\cPk A$ with extent \eb.  And the
  maps~\eqref{eq:vdiag-is-pshf-act-2} have adjuncts
  \[\usB(b,b') \too \uH(a,b) \rhd \uH(a,b') = \U{\cPk A}(\uH(-,b), \uH(-,b')),\]
  so we can define an indexed \sV-functor $\sB\to \cPk A$ sending each
  $b$ to $\uH(-,b)$.  We leave it to the reader to check that this
  works.
\end{proof}




Since the unit \V-profunctor $A\maps A\hto A$ is \ka-small, it has a
classifying \V-functor $y_A\maps A\to \cPk A$, which we call the
\textbf{Yoneda embedding}.  Thus, by definition, we have $Y_A(1,y_A)
\cong A$.  On the other hand, we can recover $Y_A$ from $y_A$, since
by \autoref{thm:pshf-ff}
\begin{equation}\label{eq:Yfromy}
  \cPk(y_A,1) \;\cong\; Y_A(1,y_A) \rhd Y_A
  \;\cong\; A \rhd Y_A
  \;\cong\; Y_A.
\end{equation}


\begin{lem}\label{thm:yoneda-ff}
  The Yoneda embedding is fully faithful.
\end{lem}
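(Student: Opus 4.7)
The plan is to compute $\cPk A(y_A, y_A)$ directly and identify it with the unit profunctor $A$ in such a way that the structure map of $y_A$ on hom-objects becomes the identity.

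First, I would apply \autoref{thm:pshf-ff} to the case $f = g = y_A$, which gives a canonical isomorphism
\[ \cPk A(y_A, y_A) \;\cong\; Y_A(1,y_A) \rhd Y_A(1,y_A). \]
By the defining property of $y_A$ (namely that it classifies the unit profunctor $A : A\hto A$), we have $Y_A(1,y_A) \cong A$, so this reduces to $\cPk A(y_A,y_A) \cong A \rhd A$.

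Next, I would observe that by \autoref{thm:large-unit-comp} (or equivalently the universal property of units in \autoref{thm:large-units}), the right hom $A \rhd A$ exists and is canonically isomorphic to $A$ itself. Concatenating these isomorphisms yields $\cPk A(y_A, y_A) \cong A$ as \V-profunctors $A \hto A$. By \autoref{thm:yoneda1large}, an isomorphism $A \cong \cPk A(y_A, y_A) \cong \cPk A(1, y_A) \lhd \cPk A(y_A, 1)$ is equivalent to a morphism of \V-profunctors $\sA \to \cPk A(y_A, y_A)$, which by the usual argument (cf.\ the proof of \autoref{thm:equip} in the virtual equipment $\Lprof{\V}$) corresponds to a \V-natural transformation between \V-functors $A \to \cPk A$; we want this to correspond to the identity transformation on $y_A$.

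The main obstacle, and indeed the only nontrivial verification, is to check that the isomorphism $\sA \toiso \cPk A(y_A,y_A)$ just constructed really is the action of the \V-functor $y_A$ on hom-objects. For this I would trace through the definitions: the structure map of $y_A$ on homs is, by the universal property of $\cPk A(y_A,y_A) = Y_A(1,y_A) \rhd Y_A(1,y_A) \cong A \rhd A$, the adjunct of some bimorphism $\sA, Y_A(1,y_A) \to Y_A(1,y_A)$, which unwinding the definition of $y_A$ (and the fact that $Y_A(1,y_A)\cong A$ comes from $y_A$ classifying the unit) is precisely the composition bimorphism $\sA,\sA \to \sA$. But composition is also the bimorphism whose adjunct is the unit isomorphism $\sA \cong \sA \rhd \sA$ of \autoref{thm:large-unit-comp}. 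Hence the two maps agree, the structure map of $y_A$ on homs is an isomorphism, and $y_A$ is fully faithful.
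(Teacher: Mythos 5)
Your proposal is correct and follows essentially the same route as the paper, whose entire proof is the chain of isomorphisms $\cPk A(y_A,y_A)\iso Y_A(1,y_A)\rhd Y_A(1,y_A)\iso A\rhd A\iso A$ obtained by setting $f=g=y_A$ in \autoref{thm:pshf-ff}. Your additional verification that this composite isomorphism coincides with the canonical structure map of $y_A$ on hom-objects is a point the paper leaves implicit, and it is the right thing to check given the definition of full faithfulness as the \emph{induced} morphism $\sA\to\sB(f,f)$ being an isomorphism.
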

\proof
  Taking $f=g=y_A$ in \autoref{thm:pshf-ff} yields
  \begin{equation}
    \cPk A(y_A,y_A) \;\iso\; Y_A(1,y_A) \rhd Y_A(1,y_A) 
    \;\iso\; A \rhd A
    \;\iso\; A.
    \tag*{\endproofbox}
\end{equation}

We now move on to familiar completeness properties of presheaf
categories.

\begin{thm}\label{thm:pshf-cplt}
  For any $B$, the \V-category $\cPk B$ is \ka-cocomplete.
\end{thm}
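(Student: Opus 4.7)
The plan is to use the Yoneda-like correspondence of \autoref{thm:pshf-eso}, which identifies \V-functors $\sB\to\cPk A$ with \ka-small \V-profunctors $\sB\hto A$, and then realize weighted colimits in $\cPk B$ as composites of \V-profunctors into $B$. By \autoref{thm:colim-smwgt} it suffices to exhibit $J$-weighted colimits for every weight $J\maps K\hto A$ with $A$ itself \ka-small, since we need only admit all colimits whose weight has a \ka-small domain.

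First, given such $J$ and a \V-functor $f\maps A\to\cPk B$, I would use \autoref{thm:pshf-eso} to translate $f$ into the corresponding \ka-small \V-profunctor $Y_B(1,f)\maps A\hto B$. Since $A$ is \ka-small and $Y_B(1,f)$ is \ka-small, \autoref{thm:smprof-comp} yields both the existence of the composite profunctor $J\odot Y_B(1,f)\maps K\hto B$ and the fact that it is again \ka-small. Applying the other direction of \autoref{thm:pshf-eso}, this \ka-small profunctor classifies a \V-functor $\ell\maps K\to\cPk B$ with $Y_B(1,\ell)\cong J\odot Y_B(1,f)$.

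It then remains to verify that $\ell$ really is $\colim^J f$, i.e.\ that $\cPk B(\ell,1)\cong J\rhd \cPk B(f,1)$ as profunctors $\cPk B\hto K$. Applying the hom-formula~\eqref{eq:pshf-homs} from \autoref{thm:pshf-ff} twice, together with the associativity of hom and composite from \autoref{thm:prof-assoc}, I would compute
\[
  \cPk B(\ell,1) \;\cong\; Y_B(1,\ell)\rhd Y_B
  \;\cong\; \bigl(J\odot Y_B(1,f)\bigr)\rhd Y_B
  \;\cong\; J\rhd \bigl(Y_B(1,f)\rhd Y_B\bigr)
  \;\cong\; J\rhd \cPk B(f,1),
\]
and check that under the bijections of Propositions~\ref{thm:pshf-ff} and~\ref{thm:pshf-eso} the resulting isomorphism is induced by the expected canonical bimorphism $\cPk B(\ell,1), J\to \cPk B(f,1)$.

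The main obstacle is bookkeeping rather than content: one must check that the isomorphism displayed above is not just abstractly present but is induced by the correct universal bimorphism, so that $\ell$ genuinely carries the colimit structure rather than being merely abstractly isomorphic to one. This amounts to tracing the universal multimorphism $J, Y_B(1,f)\to J\odot Y_B(1,f)=Y_B(1,\ell)$ through the Yoneda correspondence of \autoref{thm:pshf-eso}, and confirming that the bimorphism it induces on $\cPk B$-homs agrees with the one characterizing weighted colimits; this is the sort of routine but unavoidable naturality check that \autoref{thm:pshf-eso} and the associativity statements in \autoref{thm:prof-assoc} are designed to handle.
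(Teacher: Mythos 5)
Your proposal is correct and follows essentially the same route as the paper: reduce to weights $J\maps K\hto A$ with $A$ \ka-small, classify $f$ as the \ka-small profunctor $Y_B(1,f)$, take $\ell$ to classify $J\odot Y_B(1,f)$ (which exists and is \ka-small by \autoref{thm:smprof-comp}), and verify the colimit isomorphism via~\eqref{eq:pshf-homs} and the associativity $(H\odot K)\rhd L\cong H\rhd(K\rhd L)$. The only cosmetic difference is that the reduction to \ka-small $A$ is just the definition of \ka-cocompleteness from \autoref{thm:constr-of-colim} rather than an application of \autoref{thm:colim-smwgt}.
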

\begin{proof}
  Suppose $A$ is \ka-small, and let $J\maps K\hto A$ be a weight and
  $f\maps A\to \cPk B$ a functor.  Then $f$ classifies a \ka-small
  profunctor $Y_B(1,f) \maps A\hto B$, and we define $\ell\maps K\to
  \cPk B$ to be the classifying map of the composite $J\odot
  Y_B(1,f)$.  This composite exists because $A$ is \ka-small, and the
  composite is itself \ka-small by \autoref{thm:smprof-comp}.  We then
  have
  \begin{align*}
    \cPk B({\ell},1) &\iso Y_B(1,\ell) \rhd Y_B
    & \text{(by~(\ref{eq:pshf-homs}))}\\
    &\iso \big(J\odot Y_B(1,f)\big) \rhd Y_B\\
    &\iso J\rhd \big(Y_B(1,f) \rhd Y_B\big)\\
    &\iso J\rhd \cPk B(f,1) &\text{(by~(\ref{eq:pshf-homs}) again)},
  \end{align*}
  so \ell\ is a $J$-weighted colimit of $f$.
\end{proof}

\begin{cor}\label{thm:pshf-colim-repr}
  For any $f: B\to \cPk A$ we have
  \[f \iso \colim^{Y_A(1,f)} y_A,
  \]
  In other words, every presheaf is a colimit of representables.
\end{cor}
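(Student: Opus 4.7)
The plan is to verify the corollary by computing representable profunctors of both sides, reducing the statement to the defining properties of $y_A$ and $Y_A$ together with the universal property of $J$-weighted colimits. Since every \V-functor into the \V-fibration $\cPk A$ is isomorphic to an indexed one by \autoref{thm:fib-indexed-ok}, there is no harm in treating $f$ that way throughout.

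The first step will be to check that the colimit in question actually exists. The weight $Y_A(1,f):B\hto A$ is \ka-small, because for each $b\in B$ its restriction $Y_A(1,f)(1,b)$ agrees with the \ka-small profunctor $f(b):\delta(\eb)\hto A$ that $f$ names as an object of $\cPk A$. Since $\cPk A$ is \ka-cocomplete by \autoref{thm:pshf-cplt}, \autoref{thm:colim-smwgt} guarantees that $\ell := \colim^{Y_A(1,f)} y_A$ exists in $\cPk A$.

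Next, I would apply the universal property of this colimit to obtain
\[\cPk A(\ell,1) \;\iso\; Y_A(1,f) \rhd \cPk A(y_A,1),\]
and then use the identity $\cPk A(y_A,1)\iso Y_A$ from~\eqref{eq:Yfromy} to rewrite the right-hand side as $Y_A(1,f)\rhd Y_A$. On the other hand, \autoref{thm:pshf-ff} applied to the pair $f$ and $1_{\cPk A}$ yields
\[\cPk A(f,1) \;\iso\; Y_A(1,f)\rhd Y_A(1,1_{\cPk A}) \;\iso\; Y_A(1,f)\rhd Y_A,\]
so we obtain an isomorphism $\cPk A(\ell,1)\iso \cPk A(f,1)$ of \V-profunctors $\cPk A\hto B$.

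The final step will be to invoke the Yoneda-style reflection of isomorphisms by representables: the assignment $g\mapsto \sC(g,1)$ is locally fully faithful (as noted after \autoref{thm:coyoneda}, or as a corollary of \autoref{thm:yoneda-adj-large} applied to the identity adjunction), so an isomorphism of representables comes from a unique isomorphism $\ell\iso f$. No step in this outline is computationally delicate; the only mild care is to confirm that the two descriptions of $Y_A(1,f)\rhd Y_A$ really are induced by the same universal bimorphism, so that applying Yoneda-reflection gives a genuine \V-natural isomorphism $\ell\iso f$ rather than merely an abstract one.
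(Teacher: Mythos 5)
Your proof is correct, but it takes a recognizably different route from the paper's. The paper's argument is a one-line computation with the \emph{explicit construction} of colimits in $\cPk A$ from \autoref{thm:pshf-cplt}: that construction exhibits a weighted colimit as the classifying map of a composite profunctor, so one computes
\[
Y_A\big(1,\textstyle\colim^{Y_A(1,f)}y_A\big)\;\iso\;Y_A(1,f)\odot Y_A(1,y_A)\;\iso\;Y_A(1,f)\odot A\;\iso\;Y_A(1,f)
\]
and concludes by the essential uniqueness of classifying maps (\autoref{thm:pshf-eso}). You instead argue from the abstract universal property of the colimit, computing the representable $\cPk A(\ell,1)\iso Y_A(1,f)\rhd Y_A\iso \cPk A(f,1)$ via \eqref{eq:Yfromy} and \autoref{thm:pshf-ff}, and then applying the Yoneda-type reflection of isomorphisms by representables. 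This obliges you to verify existence of the colimit separately, which you do correctly (\ka-smallness of $Y_A(1,f)$ together with \autoref{thm:pshf-cplt} and \autoref{thm:colim-smwgt}); that step is in fact a small gain in rigor, since the construction in \autoref{thm:pshf-cplt} is stated for diagrams indexed by a \ka-small category, whereas here $A$ is arbitrary (the paper's appeal to it still goes through because the relevant composite is with the unit profunctor $Y_A(1,y_A)\iso A$, which always exists). Your closing worry is unnecessary for the statement as given: any isomorphism $\cPk A(\ell,1)\iso\cPk A(f,1)$ of profunctors yields $\ell\iso f$ by full faithfulness of $g\mapsto\sC(g,1)$, so no compatibility check between the two descriptions of $Y_A(1,f)\rhd Y_A$ is needed.
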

\begin{proof}
  By the construction of colimits in \autoref{thm:pshf-cplt}, we have
  \begin{equation}
    Y_A\Big(1,\colim^{Y_A(1,f)} y_A\Big)
    \;\iso\; Y_A(1,f) \odot Y_A(1,y_A)
    \;\iso\; Y_A(1,f) \odot A
    \;\iso\; Y_A(1,f).
  \end{equation}
  Thus, by the essential uniqueness of classifying arrows,
  $f\iso \colim^{Y_A(1,f)} y_A$.
\end{proof}

\begin{cor}\label{thm:yoneda-dense}
  The Yoneda embedding is dense, i.e.\ $1_{\cPk A}$ is the left Kan
  extension of $y_A$ along itself.
\end{cor}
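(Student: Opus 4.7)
The plan is to derive \autoref{thm:yoneda-dense} as an immediate consequence of \autoref{thm:pshf-colim-repr} specialized to the identity functor. By the definition following \autoref{thm:pointwise-kan}, the left Kan extension of $y_A$ along itself is, by definition, a $\cPk A(y_A,1)$-weighted colimit of $y_A$. Our goal is therefore to show that $1_{\cPk A}$ has this universal property.

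First, I would apply \autoref{thm:pshf-colim-repr} with $f = 1_{\cPk A} : \cPk A \to \cPk A$. This gives
\[ 1_{\cPk A} \;\iso\; \colim^{Y_A(1,1_{\cPk A})} y_A. \]
Since $Y_A(1,1_{\cPk A}) = Y_A$, this says precisely that $1_{\cPk A}$ is a $Y_A$-weighted colimit of $y_A$.

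Next, I would identify the weight $Y_A$ with $\cPk A(y_A, 1)$ using the isomorphism~\eqref{eq:Yfromy}, which reads $\cPk A(y_A,1) \iso Y_A$. Combining these two observations, $1_{\cPk A}$ is a $\cPk A(y_A,1)$-weighted colimit of $y_A$, which is by definition the left Kan extension of $y_A$ along $y_A$.

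There is no real obstacle here: the entire argument is a one-line specialization of \autoref{thm:pshf-colim-repr}, so I expect the proof to consist of roughly these two sentences. The only point to be careful about is noting explicitly that the universal property of a weighted colimit, transported along the isomorphism $Y_A \cong \cPk A(y_A,1)$ of weights, is precisely what the definition of (pointwise) left Kan extension demands.
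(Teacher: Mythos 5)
Your proposal is correct and is exactly the paper's proof: take $f=1_{\cPk A}$ in \autoref{thm:pshf-colim-repr} and identify the weight $Y_A$ with $\cPk A(y_A,1)$ via~\eqref{eq:Yfromy}, so that the resulting colimit is by definition the left Kan extension of $y_A$ along itself.
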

\begin{proof}
  Take $f=1_{\cPk A}$ in \autoref{thm:pshf-colim-repr} and
  use~\eqref{eq:Yfromy}.
\end{proof}


\begin{thm}
  The Yoneda embedding $y\maps B\to \cPk B$ preserves all limits.
\end{thm}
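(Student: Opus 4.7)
The plan is to transport the limit isomorphism $B(1,\ell) \iso B(1,f) \lhd J$ through a Yoneda-style reformulation of $\cPk B$-homs, and then invoke an associativity identity for $\rhd$ and $\lhd$.

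First I would prove a preliminary identity: for any \V-functor $g\maps \sB'\to B$,
\[
  \cPk B(1, y_B g) \;\iso\; Y_B \rhd B(1, g)
\]
as \V-profunctors $\sB'\hto\cPk B$. This is obtained by specializing \autoref{thm:pshf-ff} to the identity functor $1_{\cPk B}$ in its first slot, noting that $Y_B(1,1_{\cPk B}) = Y_B$, and computing $Y_B(1,y_B g) \iso B(1,g)$ from the fact that $y_B$ classifies the unit profunctor $B$ (so $Y_B(1,y_B) \iso B$, and hence $Y_B(1,y_B g) = Y_B(1,y_B)(1,g) \iso B(1,g)$).

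Now suppose $J\maps A\hto K$ is a weight and $\ell\maps K\to B$ is a $J$-weighted limit of $f\maps A\to B$, so that $B(1,\ell) \iso B(1,f) \lhd J$. Applying the preliminary identity at $g=\ell$ and substituting the limit isomorphism,
\[
  \cPk B(1,y_B\ell) \;\iso\; Y_B \rhd B(1,\ell)
  \;\iso\; Y_B \rhd \bigl(B(1,f)\lhd J\bigr).
\]
By the fourth associativity isomorphism of \autoref{thm:prof-assoc}, namely $H\rhd(K\lhd L) \iso (H\rhd K)\lhd L$, this last expression is canonically isomorphic to $\bigl(Y_B \rhd B(1,f)\bigr)\lhd J$; reapplying the preliminary identity at $g=f$ rewrites this as $\cPk B(1,y_B f)\lhd J$. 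Concatenating the chain yields $\cPk B(1,y_B\ell) \iso \cPk B(1,y_B f)\lhd J$, which exhibits $y_B\ell$ as a $J$-weighted limit of $y_B f$; that is, $y_B$ preserves the limit.

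The only subtlety is the existence of the various homs along the chain (per \autoref{rmk:comphom-abuse}). Each $Y_B \rhd B(1,-)$ is by construction a hom-object of the \V-fibration $\cPk B$, which exists by \autoref{def:presheaves} together with \autoref{thm:smprof-hom}, while $B(1,f)\lhd J$ exists by the hypothesis that $\ell$ is a limit; existence of $\cPk B(1,y_B f)\lhd J$ then falls out of the resulting chain of representability isomorphisms, so no size hypothesis on $J$, $A$, or $K$ is needed beyond what is implicit in the existence of $\ell$ itself.
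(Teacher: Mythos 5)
Your proof is correct and follows essentially the same route as the paper: both reduce $\cPk B(1,y_B g)$ to $Y_B \rhd B(1,g)$ via \autoref{thm:pshf-ff} and $Y_B(1,y_B)\iso B$, then pass the limit isomorphism through the associativity $H\rhd(K\lhd L)\iso(H\rhd K)\lhd L$ of \autoref{thm:prof-assoc}. Your closing remark on existence of the homs is a sensible addition the paper leaves implicit, but it does not change the argument.
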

\begin{proof}
  Let $J\maps A\hto K$ be any weight, $g\maps A\to B$ a functor, and
  $\ell\maps K\to B$ a $J$-weighted limit of $g$; thus $B(1,\ell)\iso
  B(1,g) \lhd J$.  We then have
  \begin{multline}
    \cPk B(1,y \ell)
    \;\iso\; Y_B \rhd Y_B(1,y\ell)
    \;\iso\; Y_B \rhd Y_B(1,y)(1,\ell)\\
    \;\iso\; Y_B \rhd B(1,\ell)
    \;\iso\; Y_B \rhd \Big(B(1,g) \lhd J\Big)
    \;\iso\; \Big(Y_B \rhd B(1,g)\Big) \lhd J\\
    \;\iso\; \Big(Y_B \rhd Y_B(1,yg)\Big) \lhd J
    \;\iso\; \cPk B(1,yg) \lhd J,
  \end{multline}
  so $y\ell$ is a $J$-weighted limit of $yg$.
\end{proof}

The following is a generalization of~\cite[Prop.~3.2]{dl:lim-smallfr}.

\begin{thm}
  For \ka-small $A$ and any $J:A\hto K$, a functor $f:A\to \cPk B$ has
  a $J$-weighted limit if and only if the profunctor $Y_B(1,f)\lhd J:
  A\hto B$ is \ka-small.
\end{thm}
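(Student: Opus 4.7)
The proof will parallel \autoref{thm:pshf-cplt}, reducing both implications to an isomorphism of $\ka$-small \V-profunctors via the bijection of \autoref{thm:pshf-eso} between \V-functors $\ell\maps K\to\cPk B$ and $\ka$-small \V-profunctors $Y_B(1,\ell)\maps K\hto B$. The central claim to establish is that such an $\ell$ is a $J$-weighted limit of $f$ if and only if $Y_B(1,\ell)\iso Y_B(1,f)\lhd J$. Granted this, the theorem follows immediately: if $Y_B(1,f)\lhd J$ is $\ka$-small, it has a classifier $\ell$ which is the limit; conversely, any limit $\ell$ forces $Y_B(1,f)\lhd J$ to be isomorphic to the $\ka$-small profunctor $Y_B(1,\ell)$.

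For the ``if'' direction, suppose $Y_B(1,f)\lhd J$ is $\ka$-small and let $\ell$ be its classifier, so $Y_B(1,\ell)\iso Y_B(1,f)\lhd J$. Then \autoref{thm:pshf-ff} gives $\cPk B(1,-)\iso Y_B\rhd Y_B(1,-)$, and combining this with the associativity $(H\rhd K)\lhd L\iso H\rhd(K\lhd L)$ of \autoref{thm:prof-assoc} yields
\[\cPk B(1,\ell)\iso Y_B\rhd Y_B(1,\ell)\iso Y_B\rhd(Y_B(1,f)\lhd J)\iso (Y_B\rhd Y_B(1,f))\lhd J\iso \cPk B(1,f)\lhd J,\]
exhibiting $\ell$ as the $J$-weighted limit.

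For the converse, suppose $\ell$ is a $J$-weighted limit, and apply $-\odot Y_B$ to $\cPk B(1,\ell)\iso\cPk B(1,f)\lhd J$. Recognizing $\cPk B(1,-)\odot Y_B\iso Y_B(1,-)$ via \autoref{thm:coyoneda} produces $Y_B(1,\ell)\iso(\cPk B(1,f)\lhd J)\odot Y_B$. To finish I must identify the right-hand side with $(\cPk B(1,f)\odot Y_B)\lhd J\iso Y_B(1,f)\lhd J$, that is, prove the ``swap identity'' $(H\lhd J)\odot Y_B\iso (H\odot Y_B)\lhd J$ for $H\maps A\hto\cPk B$. This is the principal obstacle, as the identity is not recorded in the paper. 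My plan is to derive it from the representability $Y_B\iso\cPk B(y_B,1)$ of \eqref{eq:Yfromy}: for any profunctor $P$ with target $\cPk B$, \autoref{thm:coyoneda} identifies $P\odot Y_B$ with the restriction $P(y_B,1)$, while \autoref{thm:yoneda2large} identifies $\cPk B(1,y_B)\rhd P$ with the same restriction. Hence $-\odot Y_B$ and $\cPk B(1,y_B)\rhd-$ agree as operations on profunctors with target $\cPk B$, and then \autoref{thm:prof-assoc} (in the form $(X\rhd Y)\lhd Z\iso X\rhd(Y\lhd Z)$) supplies the required commutation of $\lhd J$ past $\cPk B(1,y_B)\rhd(-)$, establishing the swap and thus the theorem.
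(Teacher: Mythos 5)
Your proposal is correct and follows essentially the same route as the paper: both reduce the theorem to showing that $\ell$ is a $J$-weighted limit of $f$ precisely when it classifies $Y_B(1,f)\lhd J$, with the ``if'' direction argued identically via \autoref{thm:pshf-ff} and \autoref{thm:prof-assoc}. Your ``swap identity'' in the converse is exactly the step the paper performs by restricting along $y_B$ (writing $(-)(y_B,1)$ rather than $-\odot Y_B$, which is the same operation since $Y_B\cong\cPk B(y_B,1)$), and your derivation of it from \autoref{thm:yoneda2large}, \autoref{thm:coyoneda}, and the associativity $(H\rhd K)\lhd L\cong H\rhd(K\lhd L)$ is precisely the justification the paper leaves implicit.
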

\begin{proof}
  Note that $Y_B(1,f) \lhd J$ exists since $A$ is \ka-small.  Consider
  a functor $\ell\maps K\to \cPk B$; we will show that $\ell$ is a
  $J$-weighted limit of $f$ if and only if it is a classifying map for
  $Y_B(1,f)\lhd J$.  On the one hand, we have
  \begin{align}
    \cPk B(1,\ell)
    &\iso Y_B \rhd Y_B(1,\ell)\label{eq:pshcplt1}
  \end{align}
  while on the other we have
  \begin{align}
    \cPk B(1,f) \lhd J
    &\iso \Big(Y_B \rhd Y_B(1,f)\Big) \lhd J\\
    &\iso Y_B \rhd \Big(Y_B(1,f) \lhd J\Big).\label{eq:pshcplt2}
  \end{align}
  Now if $\ell$ classifies $Y_B(1,f)\lhd J$, then by definition
  $Y_B(1,\ell) \cong Y_B(1,f) \lhd J$, and thus~\eqref{eq:pshcplt1}
  and~\eqref{eq:pshcplt2} are isomorphic; hence $\ell$ is a
  $J$-weighted limit of $f$.  Conversely, if~\eqref{eq:pshcplt1}
  and~\eqref{eq:pshcplt2} are isomorphic, we have
  \begin{multline}
    Y_B(1,\ell)
    \cong B \rhd Y_B(1,\ell)
    \cong Y_B(1,y) \rhd Y_B(1,\ell)\\
    \cong \big(Y_B \rhd Y_B(1,\ell)\big)(y,1)
    \cong \Big(Y_B \rhd \big(Y_B(1,f) \lhd J\big)\Big)(y,1)\\
    \cong \big(\cPk B(1,f) \lhd J\big)(y,1)
    \cong \cPk B(y,f) \lhd J
    \cong Y_B(1,f) \lhd J
  \end{multline}
  so that $\ell$ classifies $Y_B(1,f) \lhd J$.
\end{proof}

\begin{cor}
  If $B$ is \ka-small, then $\cPk B$ is \ka-cocomplete.\endproof
\end{cor}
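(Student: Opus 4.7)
The plan is to appeal directly to \autoref{thm:pshf-cplt}, which has already established that $\cPk B$ is \ka-cocomplete for every $B$ whatsoever, with no smallness hypothesis on $B$. Specializing that conclusion to \ka-small $B$ yields the stated corollary immediately, which is presumably why the source closes the statement with \texttt{\textbackslash endproof} and no intervening argument.

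To unpack what is happening inside \autoref{thm:pshf-cplt}: given a weight $J:K\hto A$ with $A$ being \ka-small and a functor $f:A\to\cPk B$, pass $f$ through the object-correspondence of \autoref{thm:pshf-eso} to obtain a \ka-small profunctor $Y_B(1,f):A\hto B$; compose with $J$ to get $J\odot Y_B(1,f):K\hto B$, which exists because $A$ is \ka-small and is itself \ka-small by \autoref{thm:smprof-comp}; then classify this composite by a functor $\ell:K\to\cPk B$ and verify the isomorphism $\cPk B(\ell,1)\cong J\rhd \cPk B(f,1)$ using the hom-formula of \autoref{thm:pshf-ff}. The \ka-smallness of $B$ plays no role anywhere in this chain, so the corollary as worded is a trivial specialization rather than a genuinely stronger assertion.

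The only ``main obstacle'' I can identify is interpretive: recognising that the real bite of the hypothesis ``$B$ is \ka-small'' lies not in the cocompleteness conclusion itself but in the theorem immediately preceding the corollary, where it combines with \autoref{thm:smallcat-smallprof} to make every profunctor of the form $Y_B(1,f)\lhd J$ into $B$ automatically \ka-small, and thereby ensures existence of all \ka-small weighted limits in $\cPk B$ as well. Under the literal \ka-cocompleteness reading, however, no fresh argument is required and the single-line proof ``immediate from \autoref{thm:pshf-cplt}'' suffices.
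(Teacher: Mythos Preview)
Your reading is correct on both counts. Taken at face value, the stated conclusion ``$\cPk B$ is \ka-cocomplete'' is already known for arbitrary $B$ by \autoref{thm:pshf-cplt}, so the hypothesis that $B$ be \ka-small adds nothing and the corollary is vacuous as written. You have also correctly diagnosed the intended content: the corollary sits immediately after the theorem characterizing when $J$-weighted \emph{limits} exist in $\cPk B$, and its placement together with the bare \texttt{\textbackslash endproof} makes clear that the word should be ``\ka-complete''. The implicit one-line argument is exactly what you sketch in your final paragraph: when $B$ is \ka-small, \autoref{thm:smallcat-smallprof} forces every profunctor into $B$ (in particular each $Y_B(1,f)\lhd J$) to be \ka-small, so the preceding theorem yields all \ka-small weighted limits.

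So there is no mathematical gap in your proposal; the only thing to adjust is emphasis. Your main text proves the trivial literal statement and relegates the substantive (and intended) completeness argument to a side remark. Reverse that: state outright that ``cocomplete'' is evidently a slip for ``complete'', and give the two-line deduction from the preceding theorem and \autoref{thm:smallcat-smallprof} as the proof. The unpacking of \autoref{thm:pshf-cplt} that you include in your second paragraph is accurate but unnecessary here.
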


Finally, we prove the familiar theorem that presheaf objects are free
cocompletions.  For \sV-categories $B$ and $C$, we write
$\VCAT_{\ka\text-\mathrm{colim}}(B,C)$ for the full subcategory of
$\VCAT(B,C)$ determined by the \V-functors which preserve \ka-small
colimits.

\begin{thm}\label{thm:pshf-free-cocompletion}
  If $A$ is any \V-category and \sB is a \ka-cocomplete \V-category,
  then composition with $y_A\maps A\to \cPk A$ defines an equivalence
  of categories
  \begin{equation}\label{eq:free-cocompletion-eqv}
    \VCAT_{\ka\text-\mathrm{colim}}(\cPk A,\sB) \too \VCAT(A,\sB).
  \end{equation}
  In other words, $\cPk A$ is the free \ka-small cocompletion
  of $A$.
\end{thm}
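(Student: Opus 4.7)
The plan is to construct an inverse-up-to-isomorphism $\Phi \maps \VCAT(A,\sB) \to \VCAT_{\ka\text-\mathrm{colim}}(\cPk A, \sB)$ sending $f\maps A \to \sB$ to $\Phi(f) := \lan_{y_A} f$. First I would exhibit this Kan extension: by \autoref{thm:pointwise-kan} it is a $\cPk A(y_A,1)$-weighted colimit of $f$, and by~\eqref{eq:Yfromy} this weight is $Y_A \maps \cPk A \hto A$. To produce $\Phi(f)$ objectwise I would invoke \autoref{thm:parametrized-limits}: for each $H \in \cPk A$, the profunctor $Y_A(1,H)$ is isomorphic to $H$ itself (unwinding the definition $\uY(a,H) = \uH(a,\star)$), which is \ka-small by the very definition of $\cPk A$, so the required colimit in $\sB$ exists by \autoref{thm:colim-smwgt}.

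Next I would show $\Phi(f)$ preserves \ka-small colimits. For a weight $J\maps K\hto L$ with $L$ \ka-small and $h\maps L \to \cPk A$, \autoref{thm:pshf-cplt} describes $\colim^J h$ as the object of $\cPk A$ classifying $J \odot Y_A(1,h)$, so
\[ \Phi(f)\bigl(\colim^J h\bigr) \;\iso\; \colim^{J \odot Y_A(1,h)} f
\;\iso\; \colim^J \bigl(\colim^{Y_A(1,h)} f\bigr)
\;\iso\; \colim^J\bigl(\Phi(f)\circ h\bigr)
\]
by \autoref{thm:composing-limits}; thus $\Phi(f)$ lies in $\VCAT_{\ka\text-\mathrm{colim}}$.

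For the two isomorphisms exhibiting the equivalence: $\Phi(f) \circ y_A \iso f$ because $y_A$ is fully faithful (\autoref{thm:yoneda-ff}) and left Kan extensions along fully faithful functors recover their input (\autoref{thm:ff-extn-are-honest}). Conversely, given $g$ preserving \ka-small colimits, \autoref{thm:yoneda-dense} says $1_{\cPk A} \iso \lan_{y_A} y_A$, so $1_{\cPk A}$ is a $Y_A$-weighted colimit of $y_A$. The weight $Y_A$ is \ka-small by the observation above, so \autoref{thm:colim-smwgt} together with the cocontinuity hypothesis on $g$ means $g$ preserves this colimit, yielding $g \iso \lan_{y_A}(g \circ y_A) = \Phi(g \circ y_A)$. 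Naturality in the hom-category argument follows from uniqueness of classifying maps of \ka-small profunctors and of weighted colimits.

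The main obstacle is keeping the various universal properties aligned; the crucial computational ingredient is the associativity of weighted colimits (\autoref{thm:composing-limits}) together with the identification $\cPk A(y_A,1) \iso Y_A$ from~\eqref{eq:Yfromy}, which make the ``left Kan extension along $y_A$'' picture and the ``classifying a composite profunctor'' picture perfectly compatible, so that no size-juggling or manual coend computations are needed beyond what the equipment-theoretic machinery already provides.
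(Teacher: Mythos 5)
Your proposal is correct and follows essentially the same route as the paper: the inverse is $\lan_{y_A}$, its existence comes from \ka-smallness of $Y_A$ and \autoref{thm:colim-smwgt}, one triangle identity comes from full-faithfulness of $y_A$ together with \autoref{thm:ff-extn-are-honest}, and the other from density of $y_A$ plus the fact that a functor preserving \ka-small colimits preserves the $Y_A$-weighted one. The only cosmetic difference is in verifying cocontinuity of $\lan_{y_A}f$, which you obtain from \autoref{thm:composing-limits} and the description of colimits in $\cPk A$ as classifying composite profunctors, whereas the paper carries out the equivalent calculation directly on the representable profunctor $\sB(\lan_{y_A}f,1)$ using the hom $\rhd$.
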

\begin{proof}
  Since \sB is \ka-cocomplete, by \autoref{thm:colim-smwgt} any
  functor $f:A\to \sB$ admits a $Y_A$-weighted colimit, which is to
  say a left Kan extension along $y_A:A\to \cPk A$.
  Thus we have a functor
  \[\lan_y\maps \VCAT(A,\sB) \to \VCAT(\cPk A,\sB).\]
  We claim that for any $f\maps A\to \sB$, the functor $\lan_{y_A}
  f:\cPk A \to \sB$ preserves \ka-small colimits.  Suppose that
  $J\maps K\hto C$ is a weight, where $C$ is \ka-small, and $d\maps
  C\to \cPk A$ is a functor.  Let $\ell = \colim^J d\maps K\to \cPk
  A$.  Using again the construction of colimits in $\cPk A$ in
  \autoref{thm:pshf-cplt}, we compute
  \begin{align*}
    \sB\big({(\lan_{y_A} f)\ell},1\big)
    &\iso \sB({\lan_{y_A} f},1) \odot \cPk A(\ell,1) \\
    &\iso \cPk A(1,\ell) \rhd \sB({\lan_{y_A} f},1)\\
    &\iso \Big(J\odot \cPk A(1,d)\Big) \rhd \sB(\lan_{y_A} f,1)\\
    &\iso J\rhd \Big(\cPk A(1,d)\rhd \sB(\lan_{y_A} f,1)\Big)\\
    &\iso J\rhd \sB((\lan_{y_A} f)d,1)
  \end{align*}
  as desired.  Therefore, we have an induced functor
  \[\lan_{y_A}\maps \VCAT(A,\sB) \to \VCAT_{\ka\text-\mathrm{colim}}(\cPk A,\sB),
  \]
  which we claim is an inverse equivalence
  to~(\ref{eq:free-cocompletion-eqv}).

  On the one hand, since $y_A$ is fully faithful by
  \autoref{thm:yoneda-ff}, by \autoref{thm:ff-extn-are-honest} we have
  $(\lan_{y_A} f) y_A \iso f$.  On the other hand, by
  \autoref{thm:yoneda-dense}, $1_{\cPk A}$ is the left Kan extension
  of $y$ along itself.  Therefore, if $g\maps \cPk A\to \sB$ preserves
  \ka-small colimits, hence (by \autoref{thm:colim-smwgt}) also
  colimits with \ka-small weights, it must preserve left Kan
  extensions along $y_A$.  Thus we must have $g\iso \lan_{y_A} (g
  y_A)$.
\end{proof}

\begin{rmk}
  In the case $\V=\sPsh(\S,\bV)$, we can repeat all the above
  arguments but adding ``local smallness'' conditions (in the sense of
  \autoref{thm:psh-comphom}) to all \ka-small categories and
  profunctors.  This yields a free cocompletion of any pseudofunctor
  $\S\op\to\CAT{\bV}$ under fiberwise \bV-enriched colimits and
  \S-indexed \bV-coproducts.
\end{rmk}

As usual, we can also find more general free cocompletions inside
$\cPk A$ by closing up the image of $y_A$ under various types of
colimits.  This is the subject of~\cite{bunge:bddcplt}.

\section{Monoidal \sV-categories and iterated enrichment}
\label{sec:monoidal}

In previous sections we have assumed for simplicity that \V was
symmetric, but so far everything could also be done in the
non-symmetric case, simply by keeping more careful track of right
versus left homs.  Now, however, we consider monoidal structures on
\V-categories, for which we do need a symmetry on \V (or at least a
braiding, although we will not consider that case).

Thus, let \S have finite products and \V be, for now, an \S-indexed
symmetric monoidal category.  This enables us to define opposites and
tensor products of \V-categories.

\begin{defn}
  For a \V-category \sA, its \textbf{opposite} $\sA\op$ has the same
  objects and extents as \sA, with $\U{\sA\op}(x,y) = \ks^*\usA(y,x)$,
  where $\ks:\ex\times \ey\toiso \ey\times\ex$ is the twist
  isomorphism.  Its identities are obvious, and its composition
  morphism is
  \begin{equation}
    \ks^*\usA(z,y) \otimes_{\ey} \ks^*\usA(y,x) \toiso
    \ks^*\big(\usA(y,x) \otimes_{\ey} \usA(z,y)) \xto{\mathrm{comp}}
    \ks^*\usA(z,x)
  \end{equation}
  using the symmetry of \V and the composition morphism of \sA.
\end{defn}

If \sA is a \V-fibration, then so is $\sA\op$, and we have $(\sA\op)^X
= (\sA^X)\op$ (the latter opposite being as a $\V^X$-enriched
category).  On the other hand, for tensor products this may not be the
case.  In fact, we have two seemingly different tensor products for
\V-categories.

\begin{defn}
  For \V-categories \sA and \sB, their \textbf{tensor product}
  $\sA\otimes\sB$ has as objects pairs $(a,b)$ where $a$ is an object
  of \sA and $b$ is an object of \sB, with $\e(a,b) = \ea \times \eb$,
  and hom-objects 
  \begin{equation}
    \U{\sA\otimes\sB}\big((a,b),(a',b')\big) =
    \ks^*\big(\usA(a,a') \otimes \usB(b,b')\big)
  \end{equation}
  where $\ks$ is the isomorphism
  \[ \ea' \times \eb' \times \ea\times\eb
  \toiso \ea' \times \ea \times \eb' \times \eb.
  \]
  Its identities are induced by those of \sA and \sB in an obvious
  way, while its composition morphism is
  \begin{multline}
    \ks^*\big(\usA(a',a'') \otimes \usB(b',b'')\big)
    \otimes_{\ea' \times \eb'}
    \ks^*\big(\usA(a,a') \otimes \usB(b,b')\big)\\
    \toiso
    \ks^*\Big(
    \big(\usA(a',a'') \otimes_{\ea'} \usA(a,a')\big)
    \otimes
    \big(\usB(b',b'') \otimes_{\eb'} \usB(b,b')\big)
    \Big)\\
    \xto{\mathrm{comp}\otimes\mathrm{comp}}
    \ks^*\big(\usA(a,a'') \otimes \usB(b,b'')\big).
  \end{multline}
\end{defn}

\begin{defn}
  For indexed \V-categories \sA and \sB, their \textbf{indexed tensor
    product} $\sA\otimes_\S \sB$ is defined by
  \begin{equation}
    (\sA\otimes_\S\sB)^X = \sA^X \otimes_X \sB^X,
  \end{equation}
  the right-hand side being the tensor product of $\V^X$-enriched
  categories.  Thus, the objects of $(\sA\otimes_\S\sB)^X$ are pairs
  $(a,b)$ with $a\in\sA^X$ and $b\in\sB^X$.
\end{defn}

The tensor product $\otimes$ makes \VCAT into a symmetric monoidal
2-category, with unit object $\delta 1$, while $\otimes_\S$ makes
\iVCAT into a symmetric monoidal 2-category, with a unit object \cI
having $\cI^X$ the unit $\V^X$-enriched category for all $X$.  The two
tensor products are different, but as in \SS\ref{sec:v-fibrations},
both are ``loose enough'' that they agree up to equivalence.

\begin{thm}\label{thm:vfib-vcat-ssmon}
  The biequivalence $\Theta : \iVCAT \simeq \VCAT : \Gamma$ is a
  symmetric monoidal biequivalence.
\end{thm}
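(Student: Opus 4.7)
The plan is to equip $\Theta$ with coherent monoidal comparison equivalences and then verify the symmetric monoidal coherence; since $\Theta$ is already a biequivalence by \autoref{thm:fibrepl}, this suffices to make it a symmetric monoidal biequivalence.

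For the unit, I would define a \V-functor $e : \delta 1 \to \Theta \cI$ sending the unique object to $*_1 \in \cI^1$, with $\e$-component the identity on $1$. Every object $*_X \in \cI^X$ is a restriction of $*_1$ along the unique map $X \to 1$ in \S (since each $\cI^X$ is the unit $\V^X$-enriched category), so $e$ is essentially surjective as a \V-functor in $\VCAT$; it is evidently fully faithful.

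For each pair of indexed \V-categories $\sA, \sB$, I would construct an indexed \V-functor
\[ \Psi_{\sA,\sB} : \Theta \sA \otimes \Theta \sB \;\longrightarrow\; \Theta(\sA \otimes_\S \sB) \]
sending $(a,b)$ with $a \in \sA^X$ and $b \in \sB^Y$ to $(\pi_Y^*a,\pi_X^*b) \in (\sA \otimes_\S \sB)^{X \times Y}$, with $\e$-component the identity on $X \times Y$. Its action on hom-objects is induced by the canonical identification $A \otimes B \cong \pi_Y^* A \otimes_{X\times Y} \pi_X^* B$ of the indexed monoidal category \V, combined with the base-change axiom $\pi^* \usA^U(u,v) \cong \usA^W(\pi^* u, \pi^* v)$ of an indexed \V-category. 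The same identifications show that $\Psi_{\sA,\sB}$ is fully faithful. For essential surjectivity: given $(p,q) \in (\sA \otimes_\S \sB)^Z$, viewing the same pair as objects of $\Theta \sA$ and $\Theta \sB$ of extents $Z, Z$, we have $\Psi_{\sA,\sB}(p,q) = (\pi_Z^* p, \pi_Z^* q)$ at extent $Z \times Z$, whose restriction along $\Delta_Z$ in the \V-fibration $\Theta(\sA \otimes_\S \sB)$ recovers $(p,q)$; these restriction isomorphisms assemble into the components of an invertible \V-natural transformation witnessing that $\Psi_{\sA,\sB}$ is an equivalence, with quasi-inverse sending $(p,q)$ at $Z$ to $(p,q)$ at $Z \times Z$ (with $\e$-component $\Delta_Z$). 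Pseudonaturality in $\sA, \sB$ is routine from functoriality of pullback.

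It remains to verify the associator, unitor, and symmetry coherences. Unwinding the definitions of $\Psi$ and $e$, each of these reduces to a coherence already holding in the symmetric monoidal structure of \V itself---the interplay of the external tensor product, the fiberwise tensor products $\otimes_X$, restrictions along diagonals and projections, and the swap isomorphism $\ks$. The main obstacle is the symmetry hexagon: on the $\VCAT$ side the symmetry $\sA \otimes \sB \simeq \sB \otimes \sA$ invokes $\ks$ together with the symmetry of the external tensor product, whereas on the $\iVCAT$ side the corresponding isomorphism is built from the fiberwise symmetry in each $\V^X$; tracing through the several pullbacks, compatibility under $\Psi$ ultimately reduces to the hexagon axiom for the symmetric structure of \V, which holds by hypothesis. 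Once these coherences are established, $\Theta$ becomes a symmetric monoidal pseudofunctor, and since it is already a biequivalence, it is a symmetric monoidal biequivalence.
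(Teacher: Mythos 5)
Your proposal matches the paper's proof essentially step for step: the unit comparison $\delta 1\simeq\Theta\cI$, the indexed functor $(a,b)\mapsto(\pi_{\eb}^*a,\pi_{\ea}^*b)$ as the tensor comparison, and the non-indexed quasi-inverse with $\e$-component $\Delta_Z$ are exactly the paper's $F$ and $G$, with the coherence verifications likewise deferred to routine checking against the symmetric monoidal structure of \V. Nothing further to add.
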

\begin{proof}
  First of all, evidently $\cI \cong \Gamma(\delta 1)$, hence $\delta
  1 \simeq \Theta\cI$.  Now let \sA and \sB be indexed \V-categories;
  we define an equivalence $\Theta\sA\otimes \Theta\sB \simeq
  \Theta(\sA\otimes_\S \sB)$.  In one direction we have an indexed
  functor:
  \[F:\Theta\sA\otimes \Theta\sB \too \Theta(\sA\otimes_\S \sB)\]
  defined on objects by $F(a,b) = (\pi_{\eb}^* a,\pi_{\ea}^* b)$.  In
  the other direction we have a non-indexed functor
  \[G:\Theta(\sA\otimes_\S \sB) \too \Theta\sA\otimes \Theta\sB \]
  defined at an object $(a,b)\in (\sA\otimes_\S \sB)^X$ by $G(a,b) =
  (a,b)$ with $G_{(a,b)} = \Delta_X$.  We leave it to the reader to
  verify that these are inverse equivalences and support the
  additional coherent structure of a symmetric monoidal biequivalence.
\end{proof}

Now recall that a \emph{monoidal object}, or \emph{pseudomonoid}, in a
monoidal 2-category consists of an object $W$ with a multiplication
$m\maps W\ten W\to W$ and a unit $e\maps I\to W$ together with the
usual coherent associativity and unit isomorphisms.  We thus obtain
notions of \textbf{monoidal \V-category} and \textbf{indexed monoidal
  \V-category} by using the tensor products $\otimes$ and $\otimes_\S$
respectively.

\autoref{thm:vfib-vcat-ssmon} tells us that if \sW is a \V-fibration,
then up to equivalence there is no difference between these two
notions.  However, on the surface they look quite different.

On the one hand, a monoidal \V-category has an \emph{external product}
$\boxtimes:\W\otimes\W\to\W$.  If \W is a \V-fibration, then we may
assume $\boxtimes$ to be indexed, so that $\e(a\boxtimes b) = \e
a\times \e b$, just like for the external produt of \V itself.

On the other hand, an indexed monoidal \V-category is equipped with a
\emph{fiberwise product} $\W\otimes_\S \W\to\W$, which takes two
objects $a,b\in\W^X$ to $a\boxtimes_X b \in\W^X$, just as for the
fiberwise product of \V.  Indeed, an indexed monoidal \V-category is
easily seen to be just an indexed \V-category for which each fiber
$\W^X$ is a monoidal $\sV^X$-enriched category and the transition
functors $f^*$ and their coherence isomorphisms are strong monoidal.

It should not now be surprising that the equivalence between these two
types of monoidal structure on a given \V-fibration \W exactly
parallels the equivalence between external and fiberwise products for
\V described in \SS\ref{sec:indexed-moncats}.  This is easy to see
concretely by tracing through the equivalence constructed in
\autoref{thm:vfib-vcat-ssmon}.  (If \W moreover admits indexed
\V-coproducts as in \SS\ref{sec:indexed-limits}, then we can define a
canceling product for it as well.)

\begin{eg}\label{eg:psh-mon}
  In fact, if we take $\V=\sPsh(\S,\nSet)$ so that indexed
  \V-categories are precisely \S-indexed categories, then this
  equivalence between the two types of monoidal structure on a
  \V-fibration reduces more or less exactly to our development in
  \SS\ref{sec:indexed-moncats}.
\end{eg}

Of course, it is easy to define \emph{symmetric} monoidal
\V-categories, but \emph{closed} ones require the machinery of
profunctors yet again, as pioneered
by~\cite{ds:monbi-hopfagbd,dms:antipodes,street:frob-psmon}.  First we
note the following.

\begin{lem}
  For \V-categories $A,B,C$, there is an equivalence of categories
  \begin{equation}
    \ch: \VPROF(A, B \otimes C) \toiso \VPROF(B\op\otimes A, C)
    \tag*{\endproofbox}
  \end{equation}
\end{lem}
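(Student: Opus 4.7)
My plan is to realize the equivalence $\ch$ as a ``currying'' operation: a profunctor $H\maps A\hto B\otimes C$ carries a left $A$-action together with a right $(B\otimes C)$-action which, because
\[
\U{B\otimes C}\big((b,c),(b',c')\big) = \ks^*\big(\uB(b,b')\ten\uC(c,c')\big),
\]
decomposes canonically into commuting right actions of $B$ and $C$. A profunctor $B\op\otimes A\hto C$ carries commuting left actions of $B\op$ and $A$ together with a right $C$-action, and a left $B\op$-action is the same data as a right $B$-action by the definition of the opposite. All three ingredients therefore match up, and passing between them is purely formal.

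Concretely, I would define $\ch H$ on underlying data by
\[
\U{\ch H}\big(c,(b,a)\big) = \ks^*\,\uH\big((b,c),a\big),
\]
where $\ks$ is the canonical reshuffle of factors $\eb\times\ec\times\ea\toiso\eb\times\ea\times\ec$. The right $C$-component of the right $(B\otimes C)$-action on $H$ is retained as the right $C$-action on $\ch H$; the left $A$-action on $H$ becomes the ``$A$-half'' of the left $(B\op\otimes A)$-action on $\ch H$; and the right $B$-component of the right $(B\otimes C)$-action on $H$, after the twist and using the identification $\uB(b',b)=\ks^*\U{B\op}(b,b')$, becomes the ``$B\op$-half'' of that same left action on $\ch H$. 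The axioms needed for $\ch H$ to be a profunctor follow directly from those of $H$: the commutativity of the two right actions on $H$, which is built into the tensor-product right action, is exactly what is required for the two halves of the left $(B\op\otimes A)$-action on $\ch H$ to assemble into a single associative action, while the compatibilities among the left $A$-action and the right $B$- and $C$-actions on $H$ translate into the compatibility of the left and right actions on $\ch H$.

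The inverse functor is constructed entirely symmetrically by reversing the roles of left and right, and the two composites agree with the respective identities on the nose up to canonical symmetry isomorphisms in \V. Morphisms of profunctors transport in the obvious way, so we obtain functors between the two hom-categories that are mutually inverse (in fact we get an isomorphism of categories, not merely an equivalence). The main obstacle is purely bookkeeping: every object and every action morphism lives in a specific fiber of \V over a product of the extents $\ea$, $\eb$, $\ec$, so the argument consists of repeatedly invoking coherence of the strong symmetric monoidal restriction functors together with the definitions of $B\op$ and $B\otimes C$ in terms of $\ks^*$. In the unindexed enriched case this is the entirely classical currying bijection between bimodules; the indexed enriched version adds nothing genuinely new beyond carrying the symmetry twists through the computation.
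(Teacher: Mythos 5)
Your proposal is correct, and it is exactly the argument the paper intends: the lemma is stated with its proof omitted as immediate, and the standard "currying" identification — a right $(B\otimes C)$-action is a pair of commuting right $B$- and $C$-actions, and a left $B\op$-action is a right $B$-action — together with the symmetry twists on extents is all that is needed. (Only your parenthetical claim that one gets an isomorphism rather than an equivalence of categories is slightly too strong, since the composites of the two functors involve pseudofunctoriality constraints of the form $(\ks')^*\ks^*\cong(\ks\ks')^*$ and so are only canonically isomorphic to the identities; this does not affect the lemma as stated.)
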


We will also need to know that $\ch$ respects composites and homs.

\begin{lem}\label{thm:cptcl-comp}
  For \V-profunctors $H:A\hto B\otimes C$, $K:A'\hto A$,
  $L:B\hto B'$, and $M:C\hto C'$ we have
  \begin{equation}
    \ch\big(K \odot H \odot (L\otimes M)\big) \cong
    (L\op\otimes K) \odot \ch H \odot M
  \end{equation}
  where $L\op$ is the evident induced profunctor $(B')\op\hto
  B\op$.\endproof
\end{lem}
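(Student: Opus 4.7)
The approach is to reduce the claim, via the Yoneda lemma for \V-profunctors and the multimorphism universal property of composites (\autoref{def:composite}), to a formal compatibility of $\ch$ with the multicategorical structure of profunctors. First I would unpack $\ch$ explicitly: for $H\maps A\hto B\otimes C$ with hom-objects $\uH((b,c),a)\in\V^{(\eb\times\ec)\times\ea}$, the profunctor $\ch H\maps B\op\otimes A\hto C$ has $\U{\ch H}(c,(b,a))$ obtained from $\uH((b,c),a)$ via the symmetry of \tV rearranging the base; the right $A$-action on $H$ becomes the $A$-component of the right $(B\op\otimes A)$-action on $\ch H$, the left $B$-action on $H$ becomes the $B\op$-component of the left action (through the symmetry defining $B\op$), and the left $C$-action on $H$ becomes the left $C$-action on $\ch H$ unchanged.

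The main step is to upgrade $\ch$ from an equivalence on hom-categories to a compatibility with multimorphisms. Using the bimorphism axioms~\eqref{eq:bimorr}--\eqref{eq:bimorm} together with the symmetry of \V, I would establish that a multimorphism $\vec N;\vec P\to H$ whose target is a profunctor of codomain $B\otimes C$ corresponds bijectively to a multimorphism of the appropriately transposed shape with target $\ch H$, in which any $L\otimes M$-shaped factor in the source list gets split into an $L\op$-factor relocated to the $B\op$-side and an $M$-factor left on the $C$-side. Applying this multimorphism-level bijection to the universal multimorphism $K, H, L\otimes M \to K\odot H\odot(L\otimes M)$, and invoking uniqueness of representing objects for the functor $\Vmmor(-;\ast)$, then yields the claimed isomorphism. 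Existence of the right-hand composites whenever the left-hand composite exists follows from \autoref{thm:objwise-comp} applied to the pointwise existence of the left-hand side.

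The main obstacle is notational rather than conceptual: ensuring that the several symmetry isomorphisms in the base \V, the passage between left and right actions via the $B\op$-construction, and the splitting of the $L\otimes M$-action into separate $L\op$-action and $M$-action components all fit together coherently into a single natural bijection. Once this bookkeeping is discharged, the result follows by invoking the Yoneda lemma (\autoref{thm:yoneda2large}) to convert the multimorphism-level compatibility into the stated isomorphism of profunctors.
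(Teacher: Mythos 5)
The paper gives no proof of this lemma at all---it is stated with the proof omitted as a routine verification---so there is nothing to compare line by line; your outline correctly supplies exactly the kind of argument the paper leaves implicit, namely establishing that $\ch$ is compatible with multimorphisms (splitting an $L\otimes M$-factor into an $L\op$-factor on the $B\op$-side and an $M$-factor on the $C$-side) and then concluding by uniqueness of representing objects for the functors $\Vmmor(-;-)$, which is the same representability-plus-Yoneda technique the paper uses for \autoref{thm:prof-assoc}. One small correction: your closing citation should not be to \autoref{thm:yoneda2large} (which concerns homs against representable profunctors and plays no role here) but to the ordinary Yoneda lemma applied to the presheaves of multimorphisms, as in the proof of \autoref{thm:prof-assoc}; and note that the strong (multimorphism-in-context) universal property of \autoref{def:composite}, not mere representability of $\VBimor(H,K;-)$, is what lets you treat the iterated composites on both sides as representing the same three-fold multimorphism functor.
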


\begin{lem}\label{thm:cptcl-hom}
  For \V-profunctors $H:A\hto B\otimes C$, $K:A\hto A'$,
  $L:B'\hto B$, and $M:C'\hto C$ we have
  \begin{equation}
    \ch\big((L\otimes M) \rhd H \lhd K\big) \cong
    M \rhd \ch H \lhd (L\op\otimes K).
    \tag*{\endproofbox}
  \end{equation}
\end{lem}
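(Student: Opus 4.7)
The plan is to prove this by the Yoneda lemma for \V-profunctors: it suffices to exhibit a natural bijection of hom-sets $\VPROF(N,-)$ into both sides, for every test \V-profunctor $N\maps B'^{\op}\otimes A' \hto C'$. On the right, iterated use of the universal properties of $\lhd$ and $\rhd$ immediately yields
\[\VPROF\big(N,\; M\rhd \ch H \lhd (L^{\op}\otimes K)\big) \;\iso\; \Vmmor\big(L^{\op}\otimes K,\, N,\, M;\; \ch H\big).\]
On the left, using first that $\ch$ is an equivalence of hom-categories of profunctors (so $\VPROF(N,\ch(-)) \iso \VPROF(\ch^{-1}N, -)$ for $\ch^{-1}$ a quasi-inverse), and then again the universal properties of $\lhd$ and $\rhd$, we obtain
\[\VPROF\big(N,\; \ch((L\otimes M)\rhd H \lhd K)\big) \;\iso\; \Vmmor\big(K,\, \ch^{-1}N,\, L\otimes M;\; H\big).\]

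So it remains to produce a natural bijection between multimorphism sets
\[\Vmmor\big(K,\, \ch^{-1}N,\, L\otimes M;\; H\big) \;\iso\; \Vmmor\big(L^{\op}\otimes K,\, N,\, M;\; \ch H\big).\]
This is a mild extension of \autoref{thm:cptcl-comp} from $\odot$-composites to multimorphisms. When the composite $K\odot\ch^{-1}N\odot(L\otimes M)$ happens to exist, it represents the left-hand set; applying $\ch$ and invoking \autoref{thm:cptcl-comp} together with $\ch\ch^{-1}N\iso N$ produces $(L^{\op}\otimes K)\odot N \odot M$, which represents the right-hand set, and $\ch$ induces the required bijection via its action on $\VPROF(-,-)$. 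In the general virtual case when these composites may not exist, the same permutation of object-variables that defines $\ch$ on single profunctors translates a compatible family of action maps constituting a multimorphism on the left directly into one on the right (and vice versa); this is the direct multimorphism generalization of the argument behind \autoref{thm:cptcl-comp}, and the axioms of a multimorphism translate through $\ch$ term by term.

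The main obstacle is simply bookkeeping: the statement combines an equivalence, a right hom, a left hom, a tensor, and an opposite, with the four auxiliary profunctors $K,L,M,N$ going in different directions, so the delicate part is verifying that the types match and that the $\op$'s and the variable swaps hidden inside $\ch$ and inside $\otimes$ are inserted in the right places at each stage. Once the types are correctly aligned, everything follows formally from the Yoneda lemma, the universal properties of $\rhd$ and $\lhd$, and \autoref{thm:cptcl-comp}.
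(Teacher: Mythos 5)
The paper gives no proof of this lemma at all (the end-of-proof box is attached directly to the statement, as it is for \autoref{thm:cptcl-comp}), so there is nothing to compare against; judged on its own terms your argument is sound and is the natural one. Your reduction via the universal properties of $\rhd$ and $\lhd$ is correct on both sides: $\VPROF\big(N, M\rhd\ch H\lhd(L\op\otimes K)\big)\cong\Vmmor(L\op\otimes K, N, M;\ch H)$ and $\VPROF\big(N,\ch((L\otimes M)\rhd H\lhd K)\big)\cong\Vmmor(K,\ch^{-1}N,L\otimes M;H)$, and the whole lemma does come down to the asserted bijection between these two multimorphism sets, natural in $N$. That bijection is exactly the multimorphism-level statement that $\ch$ is compatible with the reshuffling of variables, i.e.\ the same verification the paper silently omits in \autoref{thm:cptcl-comp}; it holds because the component objects on the two sides agree up to symmetry isomorphisms of \V and the axioms of type~\eqref{eq:bimorr}/\eqref{eq:bimorl}/\eqref{eq:bimorm} are carried into one another (note that the $B$-action, which is an ``outer codomain-end'' axiom on the left, becomes an ``outer domain-end'' axiom for $B\op$ on the right --- this is the one place where the bookkeeping is genuinely delicate). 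Two small caveats. First, your appeal to \autoref{thm:cptcl-comp} in the case where the composites exist is a useful sanity check but cannot be the actual proof in general, since for large categories those composites need not exist; the direct translation of multimorphisms is the real argument, as you say. Second, if one reads the statement under the convention of \autoref{rmk:comphom-abuse} as also asserting existence of one side given the other, then checking $\VPROF(N,-)$ for single test profunctors $N$ is not quite enough --- one needs the bijection for arbitrary strings $\Vmmor(L_1,\dots,L_n;-)$, which however follows from the same term-by-term translation, so this is a presentational rather than a mathematical gap.
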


Now suppose \W is a symmetric monoidal \V-category (as usual, symmetry
of \W is not required, but keeping track of right versus left homs is
tedious).  If $m:\W\otimes \W\to \W$ is its tensor product \V-functor,
we have an induced profunctor $\W(m,1):\W\hto \W\otimes \W$, and hence
a profunctor $\ch(\W(m,1)):\W\op\otimes \W\hto \W$.

\begin{defn}
  A symmetric monoidal \V-category \W is \textbf{closed} if there is a
  \V-functor $h:\W\op\otimes \W \to \W$ and an isomorphism
  \begin{equation}
    \ch({\W(m,1)}) \cong \W(1,h).\label{eq:closedvcat}
  \end{equation}
\end{defn}

If \W is a \V-fibration, we may take $h$ to be an \emph{indexed}
\V-functor $\W\op\otimes_\S \W \to \W$, and by
\autoref{rmk:ivprof-eqv} we may consider~\eqref{eq:closedvcat} an
isomorphism of \emph{indexed} \V-profunctors as in
\autoref{def:ivprof}.  If we unravel this explicitly, what it says is
merely that the symmetric monoidal $\V^X$-enriched category $\W^X$ is
closed, for each $X$, and that the transition functors $(f^*)_\bullet
\W^X \to \W^Y$ are closed monoidal (this is encoded in the
\V-functoriality of $h$).  We denote these \emph{fiberwise homs}
in $\W^X$ by $\uWW^X(-,-)$ (by contrast with
the \V-valued fiberwise hom $\usW^X(x,y)\in\V^{X}$).

\begin{eg}\label{eg:psh-closed}
  In particular, for $\V=\sPsh(\S,\nSet)$ as in \autoref{eg:psh-mon},
  then this notion of closedness for indexed monoidal \V-categories
  reduces essentially to \autoref{thm:three-homs}\ref{item:closed-1}.
\end{eg}

On the other hand, we might remain in the world of non-indexed
\V-profunctors, but still assume that \W is a \V-fibration, so that
$h$ might as well be indexed.  In this case, $h$ has the right type to
be an external-hom such as in
\autoref{thm:three-homs}\ref{item:closed-3}.  Namely, for objects $x$
and $y$ with extents $\ex$ and \ey, $h(x,y)$ is an object of
$\W^{\ex\times\ey}$, which we denote $\uWW(x,y)$ (by contrast with
the \V-valued external hom $\usW(x,y)\in\V^{\ex\times\ey}$).  The
universal property of these homs is expressed by an isomorphism
\begin{equation}\label{eq:extexthom}
  \usW(x\boxtimes y, z)\cong \usW\big(x,\uWW(y,z)\big)
\end{equation}
natural in $x,y,z$.  And as usual, given objects $\uWW(y,z)$ with
isomorphisms~\eqref{eq:extexthom} that are natural in $x$, then we can
construct a unique (indexed) \V-functor $\W\op\otimes\W \to \W$
making~\eqref{eq:extexthom} natural in $y$ and $z$ as well.  In
particular, since this functor preserves restrictions like any indexed
\V-functor, we have
\begin{equation}
  (g\times f)^*\uWW(y,z) \iso \uWW(f^*y,g^*z).\label{eq:extext-compat}
\end{equation}
which is a version of the compatibility condition from
\autoref{thm:three-homs}\ref{item:closed-3}.

However, the universal property~\eqref{eq:extexthom} itself looks
surprisingly different from that in
\autoref{thm:three-homs}\ref{item:closed-3}.  But if \sW has indexed
\V-coproducts preserved by $\boxtimes$, then~\eqref{eq:extexthom} is
equivalent to a universal property looking more like
\ref{thm:three-homs}\ref{item:closed-3}.  Namely,
given~\eqref{eq:extexthom}, for $x\in\W^{X\times Y}$, $y\in\W^Y$, and
$z\in\W^X$, we have
\begin{align}
  \usW^X(x\boxtimes_{[Y]} y, z)
  &\cong \Delta_X^* \usW(\pi_{Y!} \Delta_Y^* (x\boxtimes y), z)\\
  &\cong \Delta_X^* \pi_{Y*} \Delta_Y^* \usW(x\boxtimes y, z)\\
  &\cong \Delta_X^* \pi_{Y*} \Delta_Y^* \usW(x, \uWW(y, z))\\
  &\cong \pi_{Y*}\Delta_{X\times Y}^* \usW(x, \uWW(y, z))\\
  &\cong \pi_{Y*} \usW^{X\times Y}(x, \uWW(y, z))\label{eq:extext-other}
\end{align}
which is a version of~\eqref{eq:closed-3-adjn}, enhanced for
compatibility with \V, analogously to~\eqref{eq:enr-indexed-coprod}.
Conversely, if we assume~\eqref{eq:extext-other} and
also~\eqref{eq:extext-compat}, then for $x\in\W^X$, $y\in\W^Y$, and
$z\in\W^Z$ we have (omitting the symbol $\times$ in most places, for
conciseness):
\begin{align}
  \usW(x\boxtimes y,z)
  &\cong \usW^{X Y Z}
  \big(\pi_Z^*(x \boxtimes y), \pi_{X Y}^*z\big)\\
  &\cong \usW^{X Y Z}
  \big(\pi_{XY!}\Delta_{XY!}\Delta_{XY}^*
  (\pi_{YZ}^*x \boxtimes \pi_X^* y),
  \pi_{X Y}^*z\big)\\
  &\cong \usW^{X Y Z}
  \big(\pi_{XY!}(1\times \Delta_{XY})^*(\Delta_{XY}\times 1)_!
  (\pi_{YZ}^*x \boxtimes \pi_X^* y),
  \pi_{X Y}^*z\big)\\
  &\cong \usW^{X Y Z}
  \big(\Delta_{XY!}\pi_{YZ}^*x \boxtimes_{[XY]} \pi_X^* y,
  \pi_{X Y}^*z\big)\\
  &\cong \pi_{XY*} \usW^{X Y X Y Z}
  \big(\Delta_{XY!}\pi_{YZ}^*x,
  \uWW( \pi_X^* y, \pi_{X Y}^*z)\big)\\
  &\cong \pi_{XY*} \Delta_{XY*} \usW^{X Y Z}
  \big(\pi_{YZ}^*x,
  \Delta_{XY}^* (\pi_X  \pi_{X Y})^* \uWW(y,z)\big)\\
  &\cong \usW^{X Y Z}
  \big(\pi_{Y Z}^* x, \pi_X^*\uWW(y,z)\big)\\
  &\cong \usW(x,\uWW(y,z)).
\end{align}
Of course, the equivalence between the two kinds of \V-profunctors
implies that the fiberwise homs $\uWW^X(-,-)$ and external homs
$\uWW(-,-)$ for a symmetric monoidal \V-category are interderivable,
with formulas just like those in \autoref{thm:three-homs}.  And if \W
has indexed \V-products, we can also define a \emph{canceling hom}
$\uWW^{[X]}(-,-)$ just as in \autoref{thm:three-homs}.

\begin{eg}
  As promised in \SS\ref{sec:indexed-moncats},
  interpreting~\eqref{eq:extexthom} for $\sPsh(\S,\nSet)$-categories
  yields a characterization of external-homs for ordinary indexed
  monoidal categories that doesn't require indexed coproducts.  Recall
  that the $\sPsh(\S,\nSet)$-valued external-hom of an \S-indexed
  category \V is given at $x\in\V^X$ and $y\in\V^Y$ by
  \begin{equation}
    \begin{array}{rcl}
      (\S/(X\times Y))\op &\too& \nSet\\
      (Z\xto{(f,g)} X\times Y) &\mapsto& \V^{Z}(f^*x,g^*y).
    \end{array}
  \end{equation}
  Thus,~\eqref{eq:extexthom} consists of isomorphisms
  \begin{equation}
    \V^U\big((f,g)^*(x\boxtimes y), h^*z\big) \cong
    \V^U\big(f^*x, (g,h)^* \usV(y,z)\big)
  \end{equation}
  for $x\in \V^X$, $y\in\V^Y$, $z\in\V^Z$, $f:U\to X$, $g:U\to Y$, and
  $h:U\to Z$, satisfying appropriate sorts of naturality.  In
  particular, with this characterization the compatibility condition
  $(g\times f)^*\usV(x,y) \cong \usV(f^*x,g^*y)$ is automatic.
\end{eg}

Finally, we have all the ingredients of the following.

\begin{defn}\label{def:vcosmos}
  If \V is an \S-indexed cosmos, then a \textbf{\V-cosmos} is a closed
  symmetric monoidal indexed \V-category which is \om-complete and
  \om-cocomplete (in the sense of \autoref{thm:constr-of-colim}).
\end{defn}

\begin{eg}
  Since indexed $\self(\S)$-categories are just ordinary indexed
  categories with the property of being ``locally small'', a
  $\self(\S)$-cosmos is just an ordinary \S-indexed cosmos with this
  property.
\end{eg}

\begin{eg}
  An indexed $\fam(\bV)$-category of the form $\fam(\bC)$ is a
  $\fam(\bV)$-cosmos just when \bC is a \bV-cosmos in a classical
  sense, namely a complete and cocomplete closed symmetric monoidal
  \bV-enriched category.
\end{eg}

\begin{eg}
  Since $\sPsh(\S,\bV)$ is not in general a cosmos,
  \autoref{def:vcosmos} is not quite right for it.  Instead, we may
  reasonably define an \textbf{\S-indexed \bV-enriched cosmos} to be a
  closed symmetric monoidal indexed $\sPsh(\S,\bV)$-category with
  ``locally small'' limits and colimits in the sense of
  \autoref{eg:psh-cplt}.

  For instance, if \S is locally cartesian closed, complete, and
  cocomplete, then the cosmos $\sAb(\S)$ is enriched over abelian
  groups in this sense.
\end{eg}

\begin{eg}
  If \S is locally cartesian closed, complete and cocomplete, then
  $\self(\S)$ is an \S-indexed \S-enriched cosmos, where in addition
  to being the base of the indexing, we regard \S as a classical
  cartesian monoidal category.  Similarly, the cosmoi \sK and $\sK_*$
  from Examples~\ref{eg:top-mf} and~\ref{eg:ret-mf} are indexed
  enriched cosmoi over a good category of topological spaces.  The
  interaction of these iterated enrichments on $\sK$-categories and
  $\sK_*$-categories, along with variations with an action by a fixed
  topological group (as in \autoref{eg:gactions-mf}) is discussed in
  detail in~\cite[Ch.~10]{maysig:pht}.
\end{eg}

Now, if \W is a symmetric monoidal indexed \V-category, then there is
a lax symmetric monoidal morphism $\ord\W\to\V$ over \S, defined on
the fiber over $X$ by $W\mapsto \usW^X(\I_X,W)$.  Moreover, the
following triangle commutes up to isomorphism:
\begin{equation}\label{eq:laxmoroverS}
  \vcenter{\xymatrix{\V \ar[dr] && \ord\W \ar[dl] \ar[ll]\\
      & \sPsh(\S,\nSet).}}
\end{equation}
Conversely, if $\ord\W$ is a symmetric monoidal \S-indexed category
equipped with a lax morphism satisfying~\eqref{eq:laxmoroverS}, and
moreover $\ord\W$ is \emph{closed}, then by applying the induced
change-of-cosmos functor to the \W-category \W, we obtain a closed
symmetric monoidal indexed \V-category structure on \W.  These two
constructions are inverses, so just as for classical monoidal
categories, we have an equivalence between
\begin{enumerate}
\item lax symmetric monoidal morphisms of fibrations $\sW\to\sV$ over
  \bS, where \sW\ is closed symmetric monoidal,
  satisfying~\eqref{eq:laxmoroverS}, and\label{item:lsmmf}
\item closed symmetric monoidal \sV-fibrations.\label{item:csmvf}
\end{enumerate}
Similarly, we can show that the lax morphism $\ord\W\to\V$ has a
strong monoidal left adjoint if and only if the \V-category \W has
fiberwise tensors, and that in this case $\ord\W$ is a cosmos if and
only if \W is a \V-cosmos.  This gives an alternative approach to many
of our examples from \SS\ref{sec:indexed-moncats}, but we will not
revisit them all here.

We end with a version of the Day convolution monoidal
structure~\cite{day:closed} for \V-categories.  Note that the
monoidal \V-category $A$ appearing below is \ka-small, hence probably
not a \V-fibration; thus its tensor product and unit morphisms may not
be indexed \V-functors.

\begin{thm}\label{thm:day-presheaf}
  Let \V be an indexed cosmos which is \ka-complete and
  \ka-cocomplete, and let $A$ be a \ka-small symmetric monoidal
  \sV-category.  Then $\cPk A$ is a closed symmetric monoidal
  \sV-category.
\end{thm}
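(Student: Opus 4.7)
The plan is to transcribe Day's construction \cite{day:closed} into the profunctor calculus of \SS\ref{sec:small-cats}--\ref{sec:psh}. For $H: \delta X \hto A$ and $K: \delta Y \hto A$ in $\cPk A$, I will define
\[ H \star K := (H \otimes K) \odot A(1, m) : \delta(X \times Y) \hto A, \]
where $H \otimes K: \delta(X \times Y) \hto A \otimes A$ is the external tensor of profunctors (built from $H$, $K$, and the symmetry of $\V$), $m: A \otimes A \to A$ is the monoidal product of $A$, and $A(1, m)$ is the representable profunctor along $m$. The composite exists because $A \otimes A$ is $\ka$-small, and is itself $\ka$-small by \autoref{thm:smallcat-smallprof}, so $H \star K$ lies in $\cPk A$. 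By \autoref{thm:pshf-eso}, this assignment extends uniquely to an indexed $\V$-functor $\star: \cPk A \otimes \cPk A \to \cPk A$. Take the unit to be the representable $A(1, e)$ on the unit $e: \delta 1 \to A$ of $A$; the isomorphism $A(1, e) \star H \cong H$ will follow from the left unit axiom of $A$ together with \autoref{thm:coyoneda}.

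The remaining symmetric pseudomonoid coherences will be induced from the corresponding data for $(A, m, e)$ by combining the associativity of $\odot$ (\autoref{thm:prof-assoc}) with the identity $A(1, f) \odot A(1, g) \cong A(1, gf)$ (a consequence of \autoref{thm:coyoneda}) and the compatibility of the external tensor of profunctors with $\odot$, namely $(P \odot Q) \otimes R \cong (P \otimes R) \odot (Q \otimes \sE)$ for $\sE$ an appropriate unit profunctor. For example, both parenthesizations of a triple star-product reduce to the form $(H \otimes K \otimes L) \odot A(1, m \circ (m \otimes 1))$ and $(H \otimes K \otimes L) \odot A(1, m \circ (1 \otimes m))$ respectively, so the associator for $\star$ is inherited directly from the associator for $m$.

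For closedness I will imitate Day's formula $[K, L](a) = \int_b [K(b), L(a \otimes b)]$. Starting from $L: \delta Z \hto A$, form $L(m, 1) \cong A(1, m) \rhd L : \delta Z \hto A \otimes A$ via \autoref{thm:yoneda2large}, transpose through $\ch$ to get $\ch L(m, 1) : A\op \otimes \delta Z \hto A$, and take the right hom with $K$ to form $K \rhd \ch L(m, 1): A\op \otimes \delta Z \hto \delta Y$. Using the symmetry of $\V$ to reinterpret a profunctor $A\op \otimes \delta Z \hto \delta Y$ as a profunctor $\delta(Y \times Z) \hto A$, define
\[ \uWW(K, L) : \delta(Y \times Z) \hto A, \]
which is automatically $\ka$-small. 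The adjunction $(H \star K) \rhd L \cong H \rhd \uWW(K, L)$ then follows from the chain
\[ (H \star K) \rhd L \cong (H \otimes K) \rhd L(m, 1) \cong H \rhd \uWW(K, L), \]
using \autoref{thm:prof-assoc}, \autoref{thm:yoneda2large}, and the currying identity $(H \otimes K) \rhd P \cong H \rhd (K \rhd \ch P)$ (a version of \autoref{thm:cptcl-hom}).

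The main obstacle will be the bookkeeping around the external tensor $\otimes$ of profunctors and its compatibilities with $\odot$, $\rhd$, and $\ch$. These are most cleanly established by observing that the virtual equipment $\Lprof{\V}$ inherits a symmetric monoidal structure from $\V$, so that all the required identities are naturality statements for a symmetric monoidal (virtual) equipment structure; once these are in place, everything reduces to a mechanical transcription of the classical Day argument, and the $\ka$-smallness hypothesis on $A$ enters only to guarantee that the composites used to define $\star$ and $\uWW(-,-)$ exist.
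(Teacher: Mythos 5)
Your proposal is correct and follows essentially the same route as the paper: the paper defines the product as the classifying map of the profunctor $(Y_A \ten Y_A)\odot A(1,m)\maps \cPk A\ten\cPk A\hto A$ (your $H\star K$ is exactly its value at $(H,K)$), lifts the coherences through full-faithfulness and pseudofunctoriality of representables, and obtains closedness from $\ch(\cPk A(\mhat,1))\cong Y_A\rhd\big(\ch(Y_A(m,1))\lhd(\cPk A\otimes Y_A\op)\big)$ via \autoref{thm:cptcl-hom} and \autoref{thm:pshf-ff}. The only differences are presentational: you work objectwise where the paper manipulates the global profunctors directly, and you are somewhat more explicit about the monoidal-equipment compatibilities that the paper leaves implicit in Lemmas \ref{thm:cptcl-comp} and \ref{thm:cptcl-hom}.
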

\begin{proof}
  We define the product $\mhat:\cPk A\ten \cPk A \to \cPk A$ to be the
  classifying map of the profunctor
  \[ \cPk A \ten \cPk A \xhto{(Y_A \ten Y_A) \odot A(1,m)} A,
  \]
  where $m: A\ten A\to A$ is the tensor product of $A$.  The displayed
  composite exists since $A\ten A$ is \ka-small.  The unit $\delta
  1\to \cPk A$ is the classifying map of $A(1,i)$, where $i: \delta
  1\to A$ is the unit of $A$.  By full-faithfulness and
  pseudofunctoriality of representable profunctors, the coherence data
  for $A$ lift automatically to the corresponding profunctors, and
  thence to their classifying functors; thus $\cPk A$ is symmetric
  monoidal.  For closedness, we use \autoref{thm:pshf-ff} to compute
  \begin{align}
    \cPk A(\mhat,1)
    &\cong \big((Y_A \ten Y_A) \odot A(1,m)\big) \rhd Y_A\\
    &\cong (Y_A \ten Y_A) \rhd \big(A(1,m) \rhd Y_A\big)\\
    &\cong (Y_A \ten Y_A) \rhd Y_A(m,1).
  \end{align}
  Thus, by \autoref{thm:cptcl-hom} and \autoref{thm:pshf-ff} again, we
  have
  \begin{align}
    \ch(\cPk A(\mhat,1))
    &\cong Y_A \rhd \big(\ch(Y_A(m,1)) \lhd (\cPk A \otimes Y_A\op)\big)\\
    &\cong \cPk(1,\hhat)
  \end{align}
  where $\hhat:(\cPk A)\op\times\cPk A \to \cPk A$ is the classifying
  functor of $\ch(Y_A(m,1)) \lhd (\cPk A \otimes Y_A\op)$.  Thus,
  $\cPk A$ is closed.
\end{proof}

\begin{rmk}
  In fact, for \autoref{thm:day-presheaf} it suffices for $A$ to be a
  symmetric monoidal object in $\VPROF$, rather than $\VCAT$ --- that
  is, a \emph{promonoidal} \V-category.  As in the classical
  case~\cite{day:closed,day:closed-ii}, promonidal structures on $A$
  are actually equivalent to closed symmetric monoidal structures on
  $\cPk A$.
\end{rmk}

This allows us to produce easily two of the most important topological
examples.

\begin{eg}
  The cosmos $\sK_*$ of sectioned topological spaces from
  \autoref{eg:ret-mf} is enriched and fiberwise-tensored over the
  cosmos $\sK$ of topological spaces from \autoref{eg:top-mf}, so we
  have a monoidal adjunction $\sK \toot \sK_*$.  Let $\sI$ be the
  topologically enriched category of finite-dimensional inner product
  spaces and linear isometric isomorphisms, and regard it as a
  set-small \sK-category with all objects having extent $1$, as in
  \autoref{eg:pshvs-over-enriched}.

  Now change cosmos along the left adjoint $\sK \to \sK_*$ (which adds
  disjoint sections) to obtain a set-small $\sK_*$-category $\sI_+$.
  Then $\cP (\sI_+\op)$, with its Day convolution monoidal structure
  from \autoref{thm:day-presheaf}, is the $\sK_*$-cosmos of
  \emph{parametrized \sI-spaces} described
  in~\cite[\SS11.1]{maysig:pht}.

  Finally, one-point compactifications yield a sphere object $S\in \cP
  (\sI_+\op)^1$ which is a commutative monoid.  Thus, regarding $\cP
  (\sI_+\op)$ as an ordinary cosmos as above, and applying
  \autoref{eg:modules-mf}, we obtain the $\sK_*$-cosmos of
  parametrized orthogonal spectra from~\cite{maysig:pht}.
  (In~\cite{maysig:pht}, everything has an additional action by a
  fixed topological group; this can easily be added using
  \autoref{eg:gactions-mf}.)
\end{eg}

\begin{eg}
  Recall from \autoref{eg:actions} that we have a
  $\nGrp(\nTop)$-indexed monoidal category $\sAct(\nTop)$, where
  $\nGrp(\nTop)$ is the category of topological groups and
  $\sAct(\nTop)^G$ is the cartesian monoidal category of $G$-spaces.
  From \autoref{eg:ret-mf} we obtain a $\nGrp(\nTop)$-indexed monoidal
  category $\sAct(\nTop)_*$ of based spaces with group actions.

  Let $\cG\subseteq \nGrp(\nTop)$ be the full subcategory of finite
  groups, and $\topg$ the restriction of $\sAct(\nTop)_*$ to \cG.
  This is a fiberwise complete and cocomplete \cG-indexed cosmos.

  Let $\cI_\cG$ be the \topg-category with objects
  $(G\in\cG,n\in\lN, \rho:G\to O(n))$; that is, finite-dimensional
  representations of finite groups.  The extent of such a
  representation is of course $G$, while
  $\U{\cI_\cG}(\rho,\rho')\in(\topg)^{G\times G'}$ is the space of
  linear isometric isomorphisms $\lR^n \toiso \lR^{n'}$ (which is of
  course empty unless $n=n'$) with a disjoint basepoint added, with
  $(G\times G')$-action by conjugation.  Of course, this may be
  obtained by change of cosmos from an unbased version.

  Now $\cI_\cG$ is a set-small \topg-fibration, and it is moreover
  symmetric monoidal under the direct sum of representations.
  Therefore, by \autoref{thm:day-presheaf}, we have a \topg-cosmos
  $\cP(\cI_\cG\op)$ of ``$\cI_\cG$-spaces''.
  (In~\cite{bohmann:globalspectra}, only the objects of
  $\cP(\cI_\cG\op)$ of extent $1$ are called $\cI_\cG$-spaces; those
  of extent $G\in\cG$ have an additional $G$-action on each of their
  spaces.)

  Finally, one-point compactifications yield a canonical sphere object
  $S\in \cP(\cI_\cG\op)^1$ which is a commutative monoid, and so again
  from \autoref{eg:modules-mf} we obtain a \topg-cosmos whose objects
  of extent $1$ are the \emph{orthogonal \cG-spectra}
  of~\cite{bohmann:globalspectra}.
\end{eg}


\end{document}